\numberwithin{equation}{section}
\numberwithin{figure}{section}
\newlength{\lyxlistindent}      
\theoremstyle{plain}
\newtheorem{thm}{\protect\theoremname}
  \theoremstyle{plain}
  \newtheorem{lem}[thm]{\protect\lemmaname}
  \theoremstyle{plain}
  \newtheorem{prop}[thm]{\protect\propositionname}
  \theoremstyle{definition}
  \newtheorem{defn}[thm]{\protect\definitionname}
  \theoremstyle{remark}
  \newtheorem{rem}[thm]{\protect\remarkname}
  \theoremstyle{plain}
  \newtheorem{cor}[thm]{\protect\corollaryname}
  \theoremstyle{definition}
  \newtheorem{example}[thm]{\protect\examplename}
\setlist[enumerate]{leftmargin=*,label=(\roman*),align=left}
\newcommand{\xyR}[1]{ \makeatletter
\xydef@\xymatrixrowsep@{#1} \makeatother} 
\newcommand{\xyC}[1]{ \makeatletter
\xydef@\xymatrixcolsep@{#1} \makeatother} 
\newcommand{\ra}{\longrightarrow}
\newcommand{\field}[1]{\mathbb{#1}}
\newcommand{\R}{\field{R}} 
\newcommand{\N}{\field{N}} 
\newcommand{\D}{\mathcal{D}}
\newcommand{\eps}{\varepsilon} 
\renewcommand{\phi}{\varphi} 
\newcommand{\diff}[1]{\,\hbox{\rm d}#1} 
\newcommand{\Coo}{\mbox{\ensuremath{\mathcal{C}}}^{\infty}} 
\newcommand{\Gcinf}{\mathcal{G}\cinfty} 
\newcommand{\GD}{\mathcal{GD}}
\newcommand{\Rtil}{\widetilde \R} 
\newcommand{\Ctil}{\widetilde \CC} 
\newcommand{\otilc}{\widetilde \Omega_c} 
\newcommand{\gs}{{\mathcal{G}^s}} 
\newcommand{\ns}{\mathcal{N}^s} 
\newcommand{\sint}[1]{\langle#1\rangle} 
\newcommand{\Eball}{B^{{\scriptscriptstyle \text{\rm E}}}} 
\newcommand{\fcmp}{\Subset_{\text{f}}}
\newcommand{\exterior}[1]{{\rm ext}(#1)}
\newcommand{\then}{\quad \Longrightarrow \quad} 
\newcommand{\CC}{\mathbb C}
\newcommand{\supp}{\mathrm{supp}}
\newcommand{\cinfty}{{\mathcal C}^\infty}
\newcommand{\vphi}{\varphi}
\newcommand{\comp}{\Subset}
\newcommand{\esm}{{\mathcal E}^s_M}
\newcommand{\Om}{\Omega}
 \newcommand{\indlim}{{\displaystyle\lim_{\longrightarrow}\, }}
  \providecommand{\corollaryname}{Corollary}
  \providecommand{\definitionname}{Definition}
  \providecommand{\examplename}{Example}
  \providecommand{\lemmaname}{Lemma}
  \providecommand{\propositionname}{Proposition}
  \providecommand{\remarkname}{Remark}
\providecommand{\theoremname}{Theorem}
\begin{document}

\title{A convenient notion of compact set for generalized functions}

\author{Paolo Giordano \and Michael Kunzinger}

\thanks{P.~Giordano has been supported by grant P25116 and P25311 of the
Austrian Science Fund FWF}

\address{\textsc{University of Vienna, Austria}}

\email{paolo.giordano@univie.ac.at }

\thanks{M.~Kunzinger has been supported by grants P23714 and P25326 of the
Austrian Science Fund FWF}

\address{\textsc{University of Vienna, Austria}}

\email{michael.kunzinger@univie.ac.at}

\subjclass[2010]{46F30,46A13,13J99}

\keywords{Functionally compact sets, Colombeau generalized functions, generalized
smooth functions, locally convex modules}
\begin{abstract}
We introduce the notion of functionally compact sets into the theory
of nonlinear generalized functions in the sense of Colombeau. The
motivation behind our construction is to transfer, as far as possible,
properties enjoyed by standard smooth functions on compact sets into
the framework of generalized functions. Based on this concept, we
introduce spaces of compactly supported generalized smooth functions
that are close analogues to the test function spaces of distribution
theory. We then develop the topological and functional analytic foundations
of these spaces. 
\end{abstract}

\maketitle

\section{Introduction}

A main advantage of nonlinear generalized functions in the sense of
Colombeau as compared to Schwartz distributions is the fact that they
can be viewed as set-theoretic maps on domains consisting of generalized
points. This change of perspective allows to develop several branches
of the theory in close analogy to classical analysis, and thereby
has become increasingly important in recent years (cf., e.g., \cite{OK,AJ,AFJ,Gar05,Gar05b,ObVe08,AFJ09,GV,Ar-Fe-Ju-Ob12,GKV}).
In particular, appropriate topologies on spaces of nonlinear generalized
functions, the so called sharp topologies (see below for the definition),
have been introduced in \cite{S,S0} and have since been studied by
many authors. Apart from their central position in the structure theory
of Colombeau algebras, they also supply the foundation for applications
in the theory of nonlinear partial differential equations (e.g., for
a suitable concept of well-posedness).

From the point of view of analysis, a key notion underlying many existence
results is that of compactness. It turns out, however, that sharply
compact subsets of generalized points display certain unwanted properties:
e.g., no infinite subset of $\R^{n}$ is sharply compact since the
trace of the sharp topology on subsets of $\R^{n}$ is discrete. In
fact, this is a necessary consequence of the set $\Rtil$ of generalized
numbers containing actual infinitesimals, hence seems unavoidable
also in alternative approaches, cf.~\cite[Prop. 2.1]{GK2} and \cite[Thm. 25]{GKV}.

The importance of a convenient notion of compactness for nonlinear
generalized functions has been recognized by several authors, most
recently in \cite{ACJ}. The approach we take in the present paper
is to introduce an appropriate concept of compactly supported generalized
function, and then to study spaces consisting of such functions, which,
in analogy to the test function space $\mathcal{D}(\Omega)$ in distribution
theory, we denote by $\GD(U)$. The domain $U$ here is a set of generalized
points. Based on Garetto's theory of locally convex $\Ctil$-modules
\cite{Gar05,Gar05b,Gar09} we then develop the topological and functional
analytic foundations of these spaces. We find that they are indeed
close analogues of the classical spaces of test functions in that
they are countable strict inductive limits of complete metric spaces
$\GD_{K}(U)$ (analogues of $\mathcal{D}_{K}(\Omega)$ in distribution
theory) satisfying properties paralleling those of the classical strict
(LF)-spaces $\mathcal{D}(\Omega)$.

The plan of the paper is as follows: In the remainder of this introduction
we fix some basic notions used throughout this work. Section \ref{sec2}
introduces what we call \emph{functionally compact sets}, based on
work by Oberguggenberger and Vernaeve in \cite{ObVe08}. Building
on this, in Section \ref{sec3} we define compactly supported generalized
smooth functions (GSF), as well as the corresponding spaces $\GD(U)$
and $\GD_{K}(U)$. We also show that every Colombeau generalized function
$f\in\gs(\Omega)$ (in particular, every Schwartz distribution) defines
a compactly supported GSF $\bar{f}:\Rtil\ra\Rtil$ that coincides
with $f$ on $\otilc$. In order to obtain appropriate topologies
on these spaces, we define so-called generalized norms in Section
\ref{sec4}. These are maps that share the basic properties of classical
norms, yet take values in $\Rtil$, thereby generalizing a standard
alternative description of the sharp topology on generalized numbers
(cf.~\cite{AJ,GK2}). In Sections \ref{sec5} and \ref{sec6} these
generalized norms are employed to endow the spaces $\GD_{K}(U)$ with
metric topologies. In particular, in Section \ref{sub:5.1} we study
connections between non-Archimedean properties and Hausdorff topological
vector spaces of generalized functions, proving an impossibility theorem:
there does not exist a Hausdorff topological vector subspace of the
Colombeau special algebra which contains the Dirac delta and even
a single trace of an open set of the sharp topology. The completeness
of the spaces $\GD_{K}(U)$ is established in Section \ref{sec7}.
In the final Section \ref{sec8} we derive the fundamental functional
analytic properties of the space $\GD(U)$.

\subsection{Basic notions}

Our main references for Colombeau's theory are \cite{C1,C2,MObook,GKOS}.
The special Colombeau algebra $\gs(\Omega)$ over an open subset $\Omega$
of $\R^{n}$ is defined as the quotient $\esm(\Omega)/\ns(\Omega)$,
where (setting $I:=(0,1]$ and noting that in the naturals $\N=\{0,1,2,3\ldots\}$
we include zero.) 
\[
\begin{split}\esm(\Om) & :=\{(u_{\eps})\in\cinfty(\Omega)^{I}\mid\forall K\comp\Om\,\forall\alpha\in\N^{n}\,\exists N\in\N:\sup_{x\in K}|\partial^{\alpha}u_{\eps}(x)|=O(\eps^{-N})\}\\
\ns(\Om) & :=\{(u_{\eps})\in\cinfty(\Omega)^{I}\mid\forall K\comp\Om\,\forall\alpha\in\N^{n}\,\forall m\in\N:\sup_{x\in K}|\partial^{\alpha}u_{\eps}(x)|=O(\eps^{m})\}.
\end{split}
\]
Elements of $\esm(\Omega)$ are called moderate, those of $\ns(\Omega)$
are called negligible. Nets in $\esm(\Omega)$ are written as $(u_{\eps})$,
and $u=[u_{\eps}]$ denotes the corresponding equivalence class in
$\gs(\Om)$. For $(u_{\eps})\in\ns(\Om)$ we also write $(u_{\eps})\sim0$.
We will abbreviate `Colombeau generalized function' by CGF. $\gs(-)$
is a fine sheaf of differential algebras and there exist sheaf embeddings
(based on smoothing via convolution) of the space of Schwartz distributions
$\D'$ into $\gs$ (cf.~\cite{GKOS}).

Given $\Omega\subseteq\R^{n}$ open, the space of generalized points
in $\Omega$ is $\widetilde{\Omega}=\Omega_{M}/\sim$, where $\Omega_{M}=\{(x_{\eps})\in\Omega^{I}\mid\exists N\in\N:|x_{\eps}|=O(\eps^{-N})\}$
is called the set of moderate nets and $(x_{\eps})\sim(y_{\eps})$
if $|x_{\eps}-y_{\eps}|=O(\eps^{m})$ for every $m\in\N$. In the
particular case $\Omega=\R$ we obtain the ring of Colombeau generalized
numbers (CGN) $\Rtil=\R_{M}/\sim$ (and analogously for $\Ctil$),
which can also be written as $\Rtil=\R_{M}/\ns$, where $\ns$ is
the set of all negligible nets of real numbers $(x_{\eps})\in\R^{I}$,
i.e. such that $(x_{\eps})\sim0$. $\Rtil$ is an ordered ring with
respect to its natural order relation: $x\le y$ iff there are representatives
$(x_{\eps})$ and $(y_{\eps})$ such that $x_{\eps}\le y_{\eps}$
for $\eps$ sufficiently small. We point out that, in the present
work, the notion $x>y$ does \emph{not} mean $x\ge y$ and $x\not=y$.
Rather, it is to be understood as $x-y\ge0$ and $x-y$ invertible.
By \cite[1.2.38]{GKOS} and \cite[Prop. 3.2]{M2} we have: 
\begin{lem}
\label{lem:mayer} Let $x\in\Rtil$. Then the following are equivalent: 
\begin{enumerate}
\item $x>0$. 
\item For each representative $(x_{\eps})$ of $x$ there exists some $\eps_{0}$
and some $m$ such that $x_{\eps}>\eps^{m}$ for all $\eps<\eps_{0}$. 
\item For each representative $(x_{\eps})$ of $x$ there exists some $\eps_{0}$
such that $x_{\eps}>0$ for all $\eps<\eps_{0}$. 
\end{enumerate}
\end{lem}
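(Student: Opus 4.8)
The plan is to establish the cycle of implications $(1)\Rightarrow(2)\Rightarrow(3)\Rightarrow(1)$. Throughout I would lean on the standard characterization of invertible generalized numbers (as in \cite{GKOS}): $x=[x_{\eps}]\in\Rtil$ is invertible if and only if, for one and hence every representative $(x_{\eps})$, there exist $m\in\N$ and $\eps_{0}$ with $|x_{\eps}|\geq\eps^{m}$ for all $\eps<\eps_{0}$. Recall that $x>0$ unwinds, by the convention fixed above, to the conjunction ``$x\geq0$ and $x$ invertible'', and that $x\geq0$ means that \emph{some} representative is eventually $\geq0$.

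For $(1)\Rightarrow(2)$ I would fix an arbitrary representative $(x_{\eps})$ of $x$. Invertibility (being representative-independent) yields $m$ and $\eps_{0}$ with $|x_{\eps}|\geq\eps^{m}$ for small $\eps$, while $x\geq0$ provides a representative $(y_{\eps})$ with $y_{\eps}\geq0$ eventually. Since $(x_{\eps})-(y_{\eps})$ is negligible, $|x_{\eps}-y_{\eps}|\leq\eps^{m+1}$ for small $\eps$, so $x_{\eps}\geq y_{\eps}-\eps^{m+1}\geq-\eps^{m+1}>-\eps^{m}$. This rules out the alternative $x_{\eps}\leq-\eps^{m}$ coming from $|x_{\eps}|\geq\eps^{m}$, leaving $x_{\eps}\geq\eps^{m}>\eps^{m+1}$; hence $(2)$ holds with exponent $m+1$. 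The implication $(2)\Rightarrow(3)$ is immediate, since $\eps^{m}>0$ forces $x_{\eps}>0$.

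The implication $(3)\Rightarrow(1)$ is where the real content lies, and I expect it to be the main obstacle. That $x\geq0$ is immediate: any representative is eventually $>0$ by $(3)$, hence eventually $\geq0$. The nontrivial task is to deduce invertibility, and here I would argue by contraposition. If $x$ were \emph{not} invertible, then by the characterization above, for every $k$ there are arbitrarily small $\eps$ with $|x_{\eps}|<\eps^{k}$; choosing a strictly decreasing sequence $\eps_{k}\downarrow0$ with $|x_{\eps_{k}}|<\eps_{k}^{k}$, I would define a modified net $(\bar{x}_{\eps})$ by setting $\bar{x}_{\eps_{k}}:=0$ and $\bar{x}_{\eps}:=x_{\eps}$ otherwise. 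The difference $(\bar{x}_{\eps})-(x_{\eps})$ vanishes off $\{\eps_{k}\}$ and satisfies $|\bar{x}_{\eps_{k}}-x_{\eps_{k}}|=|x_{\eps_{k}}|<\eps_{k}^{k}$ on it, so it is negligible and $(\bar{x}_{\eps})$ is again a representative of $x$. But $\bar{x}_{\eps_{k}}=0$ for a sequence tending to $0$, so $(\bar{x}_{\eps})$ is not eventually positive, contradicting $(3)$. Hence $x$ must be invertible, and together with $x\geq0$ this gives $x>0$.

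The delicate point throughout is the interplay between the two quantifier structures --- invertibility is a property of \emph{every} representative, whereas $x\geq0$ only guarantees \emph{one} eventually-nonnegative representative --- and in $(3)\Rightarrow(1)$ the crux is precisely the freedom to perturb a representative on a null sequence of indices without leaving the equivalence class, which is exactly what converts ``not invertible'' into a violation of the ``for each representative'' hypothesis in $(3)$.
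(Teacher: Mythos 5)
Your proof is correct: the cycle $(1)\Rightarrow(2)\Rightarrow(3)\Rightarrow(1)$ is complete, and the crucial step --- redefining a representative to be $0$ along a null sequence $(\eps_{k})$ with $|x_{\eps_{k}}|<\eps_{k}^{k}$, which is a negligible perturbation and so converts non-invertibility into a violation of the ``for each representative'' clause in $(3)$ --- is exactly right, as is the use of $x\geq0$ plus negligibility to exclude the branch $x_{\eps}\le-\eps^{m}$ in $(1)\Rightarrow(2)$. The paper itself offers no proof, simply citing \cite[1.2.38]{GKOS} and \cite[Prop.~3.2]{M2}; your argument is essentially the standard one from those references, faithfully reconstructed.
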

We shall use the notation $\diff{\eps^{m}}:=[\eps^{m}]\in\Rtil$ for
any $m\in\R$. Hence $x>0$ is equivalent to $x\ge\diff{\eps^{m}}$
for some $m>0$. If $\mathcal{P}(\eps)$ is a property of $\eps\in I$,
we will also sometimes use the notation $\forall^{0}\eps:\ \mathcal{P}(\eps)$
to denote $\exists\eps_{0}\in I\,\forall\eps\in(0,\eps_{0}]:\ \mathcal{P}(\eps)$.

The space of compactly supported generalized points $\otilc$ is defined
by $\Omega_{c}/\sim$, where $\Omega_{c}:=\{(x_{\eps})\in\Omega^{I}\mid\exists K\comp\Omega\,\forall^{0}\eps:\ x_{\eps}\in K\}$
and $\sim$ is the same equivalence relation as in the case of $\widetilde{\Omega}$.

Concerning intervals, we use the following notations: $[a,b]:=\{x\in\Rtil\mid a\le x\le b\}$,
$[a,b]_{\R}:=[a,b]\cap\R$. Also, for $x,y\in\Rtil^{n}$ we write
$x\approx y$ if $x-y$ is infinitesimal, i.e.\ if $|x-y|\le r$
for all $r\in\R_{>0}$.

As already indicated above, the natural topology for Colombeau-type
spaces is the so-called sharp topology (\cite{Biag,S0,S,AJ,AJOS,M,GV}).
This topology is generated by balls $B_{\rho}(x)=\{y\in\Rtil^{n}\mid|y-x|<\rho\}$,
where $|-|$ is the natural extension of the Euclidean norm to $\Rtil^{n}$,
$|[x_{\eps}]|:=[|x_{\eps}|]\in\Rtil$, and $\rho\in\Rtil_{>0}$ is
strictly positive (\cite{AFJ,AFJ09,GK2}). For Euclidean balls, we
will write $\Eball_{\rho}(x)=\{y\in\R^{n}\mid|y-x|<\rho\}$. On the
other hand, the so-called Fermat-topology on $\Rtil^{n}$ (see \cite{GK2,GKV})
is generated by the balls $B_{r}(x)$ for $x\in\Rtil^{n}$ and $r\in\R_{>0}$.
Originally, the sharp topology was introduced using an ultrametric
as follows: The map 
\begin{align*}
v & :\R_{M}\longrightarrow(-\infty,\infty]\\
v & ((u_{\eps})):=\sup\{b\in\R\mid|u_{\eps}|=O(\eps^{b})\}.
\end{align*}
gives a pseudovaluation on $\Rtil$. Then setting$|-|_{e}:\Rtil\to[0,\infty)$,
$|u|_{e}:=\exp(-v(u))$ provides a translation-invariant complete
ultrametric 
\begin{align*}
d & _{s}:\Rtil\times\Rtil\longrightarrow\R_{+}\\
d & _{s}(u,v):=|u-v|_{e}
\end{align*}
on $\Rtil$, which induces the sharp topology on $\Rtil$.

Garetto in \cite{Gar05,Gar05b} extended the above construction to
arbitrary locally convex spaces by functorially assigning a space
of CGF $\mathcal{G}_{E}$ to any given locally convex space $E$.
In this approach, the seminorms of $E$ are used to define pseudovaluations
which induce a generalized locally convex topology on the $\Ctil$-module
$\mathcal{G}_{E}$, again called sharp topology. In the present paper,
we will exclusively work with $\Rtil$-modules. We note, however,
that all our constructions trivially carry over to the $\Ctil$-case.

For any $S\subseteq I$, $e_{S}$ denotes the equivalence class in
$\Rtil$ of the characteristic function of $S$ (cf.~\cite{AJ,Ver10}).
Any $e_{S}$ is an idempotent, and $e_{S}+e_{S^{c}}=1$. Also, $e_{S}\not=0$
if and only if $0\in\overline{S}$. For any subset $A$ of $\Rtil^{n}$,
its interleaving (cf.~\cite{ObVe08}) is defined as 
\[
\text{interl}(A):=\left\{ \sum_{j=1}^{m}e_{S_{j}}a_{j}\mid m\in\N,\{S_{1},\dots,S_{m}\}\text{ a partition of }I,\ a_{j}\in A\right\} .
\]

If $(A_{\eps})$ is a net of subsets of $\R^{n}$ then the internal
set (\cite{ObVe08,Ver11}) generated by $(A_{\eps})$ is 
\[
[A_{\eps}]=\left\{ [x_{\eps}]\in\Rtil^{n}\mid x_{\eps}\in A_{\eps}\text{ for }\eps\text{ small}\right\} ,
\]
and the strongly internal set (\cite{GKV}) generated by $(A_{\eps})$
is 
\[
\langle A_{\eps}\rangle:=\left\{ [x_{\eps}]\in\Rtil^{n}\mid x_{\eps}\in_{\eps}A_{\eps}\right\} .
\]
Here, $x_{\eps}\in_{\eps}A_{\eps}$ means that $x_{\eps}\in A_{\eps}$
for $\eps$ small and that the same property holds for any representative
of $[x_{\eps}]$. The net $(A_{\eps})$ is called sharply bounded
if there exists some $N\in\R_{>0}$ such that for $\eps$ sufficiently
small we have $\sup_{x\in A_{\eps}}|x|\le\eps^{-N}$. Equivalently,
we have that $(A_{\eps})$ is sharply bounded if there exists $\rho\in\Rtil_{>0}$
such that $[A_{\eps}]\subseteq B_{\rho}(0)$.

Finally, given $X\subseteq\Rtil^{n}$ and $Y\subseteq\Rtil^{d}$,
then (see \cite{GKV}) 
\[
f:X\longrightarrow Y\text{ is a \emph{generalized smooth function} (GSF)}
\]
if there exists a net $u_{\eps}\in\cinfty(\Omega_{\eps},\R^{d})$
defining $f$ in the sense that $X\subseteq\langle\Omega_{\eps}\rangle$,
$f([x_{\eps}])=[u_{\eps}(x_{\eps})]\in Y$ and $(\partial^{\alpha}u_{\eps}(x_{\eps}))\in\R_{M}^{d}$
for all $x=[x_{\eps}]\in X$ and all $\alpha\in\N^{n}$. The space
of GSF from $X$ to $Y$ is denoted by $\Gcinf(X,Y)$ (in contrast
to \cite{GKV}, where the notation $\widetilde{\mathcal{G}}(X,Y)$
was used). GSF are a natural generalization of CGF to general domains.
In particular, for any $\Omega\subseteq\R^{n}$ open, $\Gcinf(\otilc)\simeq\gs(\Omega)$.
GSF on subsets of $\Rtil^{n}$, endowed with the sharp topology, form
a sub-category of the category of topological spaces. In particular,
they can be composed unrestrictedly.

\section{\label{sec2}A new notion of compact subset for nonlinear generalized
functions}

Even though the intervals $[a,b]\subseteq\Rtil$, $a$, $b\in\R$,
are neither compact in the sharp nor in the Fermat topology (see \cite[Thm. 25]{GKV}),
analogously to the case of smooth functions, a GSF satisfies an extreme
value theorem on such sets. In fact, we have: 
\begin{prop}
\label{prop:extremeValues}Let $f\in\Gcinf(X,\Rtil)$ be a generalized
smooth function defined on the subset $X$ of $\Rtil^{n}$. Let $\emptyset\ne K=[K_{\eps}]\subseteq X$
be an internal set generated by a sharply bounded net $(K_{\eps})$
of compact sets $K_{\eps}\comp\R^{n}$ , then 
\begin{equation}
\exists m,M\in K\,\forall x\in K:\ f(m)\le f(x)\le f(M).\label{eq:epsExtreme}
\end{equation}
\end{prop}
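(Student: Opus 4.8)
The plan is to lift the classical extreme value theorem to the generalized setting by applying it $\eps$-wise and then checking that the resulting nets of $\eps$-extremizers assemble into generalized points lying in $K$. Fix a net $(u_{\eps})$ with $u_{\eps}\in\cinfty(\Omega_{\eps},\R)$ defining $f$, so that $K\subseteq X\subseteq\langle\Omega_{\eps}\rangle$ and $f([x_{\eps}])=[u_{\eps}(x_{\eps})]$ for every $x=[x_{\eps}]\in X$.

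The first and, I expect, main step is to show that $K_{\eps}\subseteq\Omega_{\eps}$ for $\eps$ small, which is precisely what makes $u_{\eps}|_{K_{\eps}}$ available in the first place. I would argue by contradiction: if there were a sequence $\eps_{k}\downarrow 0$ and points $p_{k}\in K_{\eps_{k}}\setminus\Omega_{\eps_{k}}$, then, choosing any fixed $[q_{\eps}]\in K$ (which exists since $K\neq\emptyset$) and splicing $p_{k}$ in at the indices $\eps_{k}$, one obtains a net $(x_{\eps})$ that is moderate by sharp boundedness of $(K_{\eps})$ and satisfies $x_{\eps}\in K_{\eps}$ for $\eps$ small, hence $[x_{\eps}]\in[K_{\eps}]=K\subseteq\langle\Omega_{\eps}\rangle$. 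By the definition of the strongly internal set this forces $x_{\eps}\in\Omega_{\eps}$ for $\eps$ small, contradicting $x_{\eps_{k}}=p_{k}\notin\Omega_{\eps_{k}}$.

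With this in hand, note that $K_{\eps}\neq\emptyset$ for $\eps$ small (again because $K\neq\emptyset$), so for such $\eps$ the compactness of $K_{\eps}$ and the continuity of $u_{\eps}$ yield minimizers $m_{\eps}$ and maximizers $M_{\eps}$ in $K_{\eps}$ with $u_{\eps}(m_{\eps})\le u_{\eps}(x_{\eps})\le u_{\eps}(M_{\eps})$ for all $x_{\eps}\in K_{\eps}$; for the remaining $\eps$ I would simply fix arbitrary points of $\R^{n}$. Since $m_{\eps},M_{\eps}\in K_{\eps}$ and $(K_{\eps})$ is sharply bounded, the nets $(m_{\eps})$ and $(M_{\eps})$ are moderate, so $m:=[m_{\eps}]$ and $M:=[M_{\eps}]$ are well-defined elements of $\Rtil^{n}$, and $m,M\in[K_{\eps}]=K\subseteq X$ by construction.

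It then remains to transfer the $\eps$-wise inequalities to $\Rtil$. Because $m,M\in X$ and $(m_{\eps}),(M_{\eps})$ are representatives lying in $\Omega_{\eps}$ for $\eps$ small, the defining property of $f$ gives $f(m)=[u_{\eps}(m_{\eps})]$ and $f(M)=[u_{\eps}(M_{\eps})]$. For an arbitrary $x=[x_{\eps}]\in K$ we have $x_{\eps}\in K_{\eps}$ for $\eps$ small, so $u_{\eps}(m_{\eps})\le u_{\eps}(x_{\eps})\le u_{\eps}(M_{\eps})$ for $\eps$ small; by the characterization of the order on $\Rtil$ (existence of representatives with the pointwise inequality holding for small $\eps$) this yields $f(m)\le f(x)\le f(M)$, which is exactly \eqref{eq:epsExtreme}. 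The only genuinely delicate point is the domain containment of the second paragraph, together with the bookkeeping that the chosen extremizer nets are legitimate representatives; everything else is the classical extreme value theorem applied componentwise.
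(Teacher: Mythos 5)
Your proof is correct, and its overall strategy coincides with the paper's: apply the classical extreme value theorem $\eps$-wise on the compact sets $K_{\eps}$, use sharp boundedness of $(K_{\eps})$ to conclude that the nets of extremizers are moderate and hence define points $m,M\in K\subseteq X$, and transfer the $\eps$-wise inequalities via the characterization of $\le$ on $\Rtil$. The one genuine divergence is how the domain issue is handled. The paper dispatches it with a citation: by \cite[Lem.~28]{GKV} one may assume the defining net consists of globally defined functions $u_{\eps}\in\Coo(\R^{n},\R^{d})$, so $u_{\eps}|_{K_{\eps}}$ is available with no containment question arising. You instead retain a general defining net $u_{\eps}\in\cinfty(\Omega_{\eps},\R)$ and prove directly that $K_{\eps}\subseteq\Omega_{\eps}$ for $\eps$ small, by splicing putative exceptional points $p_{k}\in K_{\eps_{k}}\setminus\Omega_{\eps_{k}}$ into a representative of a point of $K\subseteq X\subseteq\sint{\Omega_{\eps}}$; this correctly exploits the clause in the definition of a strongly internal set that \emph{every} representative must eventually lie in $\Omega_{\eps}$, and moderateness of the spliced net does follow from sharp boundedness of $(K_{\eps})$, as you note. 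In effect you give a self-contained reproof of (the relevant part of) \cite[Thm.~11]{GKV}, which the paper itself invokes later for the analogous containment $K_{\eps}\subseteq U_{\eps}$. What your route buys is independence from the global-representative lemma, making the argument self-contained; what the paper's route buys is brevity, plus global definedness of $u_{\eps}$, which eliminates the containment step rather than proving it.
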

\begin{proof}
By \cite[Lem. 28]{GKV}, $f$ can be represented by a net $u_{\eps}\in\Coo(\R^{n},\R^{d})$.
Since $K\ne\emptyset$, for $\eps$ sufficiently small, say for $\eps\in(0,\eps_{0}]$,
$K_{\eps}$ is non-empty and, by assumption, it is also compact. For
all $\eps\in(0,\eps_{0}]$ we have 
\[
\exists m_{\eps},M_{\eps}\in K_{\eps}\,\forall x\in K_{\eps}:\ u_{\eps}(m_{\eps})\le u_{\eps}(x)\le u_{\eps}(M_{\eps}).
\]
Since the net $(K_{\eps})$ is sharply bounded, both the nets $(m_{\eps})$
and $(M_{\eps})$ are moderate. Therefore $m=[m_{\eps}]$, $M=[M_{\eps}]\in K\subseteq X$.
Take any $x\in[K_{\eps}]$, then there exists a representative $(x_{\eps})$
such that $x_{\eps}\in K_{\eps}$ for $\eps$ small. Therefore $f(m)=[u_{\eps}(m_{\eps})]\le[u_{\eps}(x{}_{\eps})]=f(x)\le f(M)$. 
\end{proof}
We shall use the assumptions on $K$ and $(K_{\eps})$ given in this
theorem to introduce a new notion of ``compact subset'' which behaves
better than the usual classical notion of compactness in the sharp
topology. 
\begin{defn}
\label{def:functCmpt} A subset $K$ of $\Rtil^{n}$ is called \emph{functionally
compact}, denoted by $K\fcmp\Rtil^{n}$, if there exists a net $(K_{\eps})$
such that 
\begin{enumerate}
\item \label{enu:defFunctCmpt-internal}$K=[K_{\eps}]\subseteq\Rtil^{n}$ 
\item \label{enu:defFunctCmpt-sharpBound}$(K_{\eps})$ is sharply bounded 
\item \label{enu:defFunctCmpt-cmpt}$\forall\eps\in I:\ K_{\eps}\Subset\R^{n}$ 
\end{enumerate}
If, in addition, $K\subseteq U\subseteq\Rtil^{n}$ then we write $K\fcmp U$.
Finally, we write $[K_{\eps}]\fcmp U$ if \ref{enu:defFunctCmpt-sharpBound},
\ref{enu:defFunctCmpt-cmpt} and $[K_{\eps}]\subseteq U$ hold. 
\end{defn}
We note that in \ref{enu:defFunctCmpt-cmpt} it suffices to ask that
$K_{\eps}$ be closed since it is bounded by \ref{enu:defFunctCmpt-sharpBound},
at least for $\eps$ small. In fact, we have: 
\begin{lem}
\noindent \label{lem:equdef}A subset $K$ of $\Rtil^{n}$ is functionally
compact if and only if it is internal and sharply bounded.\end{lem}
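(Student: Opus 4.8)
The plan is to prove both implications, with essentially all the work in the ``if'' direction. For the ``only if'' direction, suppose $K\fcmp\Rtil^{n}$ with a witnessing net $(K_{\eps})$. Then $K=[K_{\eps}]$ is internal by condition \ref{enu:defFunctCmpt-internal}, and condition \ref{enu:defFunctCmpt-sharpBound} together with the equivalent formulation of sharp boundedness recalled before the statement (namely that $(K_{\eps})$ being sharply bounded means $[K_{\eps}]\subseteq B_{\rho}(0)$ for some $\rho\in\Rtil_{>0}$) shows that $K$ itself is sharply bounded. So nothing beyond unwinding the definitions is needed here.

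For the converse, assume $K=[A_{\eps}]$ is internal and sharply bounded, say $K\subseteq B_{\rho}(0)$ with $\rho=[\rho_{\eps}]\in\Rtil_{>0}$ and, without loss of generality, $\rho_{\eps}>0$ for every $\eps$. The difficulty is that the representing net $(A_{\eps})$ is completely arbitrary: its members need be neither bounded nor closed, even though the set $K$ they generate is sharply bounded. The idea is therefore to manufacture a new, compact-valued representing net by first closing up each $A_{\eps}$ and then cutting it off with a Euclidean ball of moderate radius. Concretely, I would first record the standard fact that passing to closures does not change the internal set, $[\overline{A_{\eps}}]=[A_{\eps}]$ (if $x_{\eps}\in\overline{A_{\eps}}$, pick $y_{\eps}\in A_{\eps}$ with $|x_{\eps}-y_{\eps}|\le\eps$, so that $[x_{\eps}]=[y_{\eps}]\in[A_{\eps}]$), and then set $K_{\eps}:=\overline{A_{\eps}}\cap\overline{\Eball_{\rho_{\eps}}(0)}$. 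Each $K_{\eps}$ is closed and bounded, hence compact, so condition \ref{enu:defFunctCmpt-cmpt} holds for every $\eps$; and $\sup_{x\in K_{\eps}}|x|\le\rho_{\eps}$, which is moderate because $\rho\in\Rtil_{>0}$, so $(K_{\eps})$ is sharply bounded, giving \ref{enu:defFunctCmpt-sharpBound}.

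It remains to verify $[K_{\eps}]=K$. The inclusion $[K_{\eps}]\subseteq K$ is immediate from $K_{\eps}\subseteq\overline{A_{\eps}}$ and $[\overline{A_{\eps}}]=[A_{\eps}]=K$. For the reverse inclusion take $x=[x_{\eps}]\in K$; since $x\in[A_{\eps}]$ we may choose the representative so that $x_{\eps}\in A_{\eps}$ for $\eps$ small. The key point---and the only place where sharp boundedness is really used---is that $|x|<\rho$ forces $|x_{\eps}|<\rho_{\eps}$ for $\eps$ small: applying Lemma \ref{lem:mayer} to the strictly positive, invertible number $\rho-|x|$ with representative $(\rho_{\eps}-|x_{\eps}|)$ yields exactly this estimate. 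Hence $x_{\eps}\in\overline{A_{\eps}}\cap\overline{\Eball_{\rho_{\eps}}(0)}=K_{\eps}$ for $\eps$ small, so $x\in[K_{\eps}]$. I expect this last implication to be the main obstacle, since it is where one must translate the order-theoretic statement $|x|<\rho$ in $\Rtil$ into the $\eps$-wise estimate needed to land the chosen representative inside the cutoff ball; the remaining verifications (closedness, boundedness, and moderateness of $\rho_{\eps}$) are routine.
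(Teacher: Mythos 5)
Your proof is correct in substance but takes a genuinely different route from the paper: the paper disposes of the nontrivial direction in a single line by citing \cite[Lemma 2.4 and Cor.~2.2]{ObVe08}, which assert that every sharply bounded internal set has a sharply bounded representative consisting of closed (hence compact) subsets of $\R^{n}$. Your construction $K_{\eps}:=\overline{A_{\eps}}\cap\overline{\Eball_{\rho_{\eps}}(0)}$ is in effect a self-contained proof of that imported fact, and it has the merit of making explicit where the order structure of $\Rtil$ enters: the invertibility of $\rho-|x|$, fed into Lemma \ref{lem:mayer} applied to the representative $(\rho_{\eps}-|x_{\eps}|)$, is exactly what lets you place the chosen representative of each $x\in K$ inside the cutoff ball for small $\eps$. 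That step, which you correctly identified as the crux, is handled properly, as is the moderateness of $(\rho_{\eps})$ and the reduction to $\rho_{\eps}>0$. The one flaw --- easily repaired, but a genuine error as written --- is in your verification that $[\overline{A_{\eps}}]=[A_{\eps}]$: choosing $y_{\eps}\in A_{\eps}$ with $|x_{\eps}-y_{\eps}|\le\eps$ does \emph{not} yield $[x_{\eps}]=[y_{\eps}]$, because equality in $\Rtil^{n}$ requires $|x_{\eps}-y_{\eps}|=O(\eps^{m})$ for \emph{every} $m\in\N$; a distance of order $\eps$ is merely infinitesimal, not negligible, so $(y_{\eps})$ need not represent $x$. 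The fix is immediate: since $x_{\eps}$ lies in the closure of $A_{\eps}$, for each fixed $\eps$ you may pick $y_{\eps}\in A_{\eps}$ with $|x_{\eps}-y_{\eps}|\le\eps^{1/\eps}$ (or any prescribed negligible bound), and then $(y_{\eps})$ is a moderate representative of $x$ lying in $A_{\eps}$ for $\eps$ small. With that correction your argument is complete; compared with the paper's proof it is longer but avoids the external citation and exhibits concretely how a compact-valued representative is manufactured.
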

\begin{proof}
\noindent By \cite[Lemma 2.4 and Cor. 2.2]{ObVe08}, every sharply
bounded internal set $K$ has a sharply bounded representative $(K_{\eps})$
consisting of closed (hence compact) subsets of $\R^{n}$. 
\end{proof}
\noindent We motivate the name \emph{functionally compact subset}
by anticipating that on this type of subsets, GSF have properties
very similar to those that ordinary smooth functions have on standard
compact sets. 
\begin{rem}
\noindent \label{rem:defFunctCmpt}\ 
\begin{enumerate}
\item \label{enu:rem-defFunctCmpt-closed}By \cite[Prop. 2.3]{ObVe08},
any internal set $K=[K_{\eps}]$ is closed in the sharp topology.
In particular, the open interval $(0,1)\subseteq\Rtil$ is not functionally
compact since it is not closed. 
\item \label{enu:rem-defFunctCmpt-ordinaryCmpt}If $H\Subset\R^{n}$ is
a non-empty ordinary compact set, then $\widetilde{H}=[H]$ is functionally
compact. In particular, $[0,1]=\widetilde{[0,1]_{\R}}=\left[[0,1]_{\R}\right]$
is functionally compact. 
\item \label{enu:rem-defFunctCmpt-empty}The empty set $\emptyset=\widetilde{\emptyset}\fcmp\Rtil$. 
\item \label{enu:rem-defFunctCmpt-equivDef}By Lemma \ref{lem:equdef},
$\Rtil^{n}$ is not functionally compact since it is not sharply bounded. 
\item \label{enu:rem-defFunctCmpt-cmptlySuppPoints}The set of compactly
supported points $\Rtil_{c}$ is not functionally compact because
the GSF $f(x)=x$ does not satisfy the conclusion \eqref{eq:epsExtreme}
of Prop.~\ref{prop:extremeValues}. 
\end{enumerate}
\end{rem}
We start the study of functionally compact sets by proving suitable
generalizations of theorems from classical analysis. 
\begin{thm}
\label{thm:image}Let $K\subseteq X\subseteq\Rtil^{n}$, $f\in\Gcinf(X,\Rtil^{d})$.
Then $K\fcmp\Rtil^{n}$ implies $f(K)\fcmp\Rtil^{d}$.\end{thm}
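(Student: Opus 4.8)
The plan is to reduce the statement, via Lemma~\ref{lem:equdef}, to exhibiting a sharply bounded net of compact sets whose internal set is exactly $f(K)$. First I would invoke \cite[Lem. 28]{GKV} (as in the proof of Prop.~\ref{prop:extremeValues}) to represent $f$ by a globally defined net $u_\eps\in\Coo(\R^n,\R^d)$, and I would fix a sharply bounded net $(K_\eps)$ with each $K_\eps\Subset\R^n$ and $K=[K_\eps]$, available by Lemma~\ref{lem:equdef} and Definition~\ref{def:functCmpt}. The natural candidate is then $H_\eps:=u_\eps(K_\eps)$. Since $u_\eps$ is continuous and $K_\eps$ is compact, each $H_\eps\Subset\R^d$, so condition~\ref{enu:defFunctCmpt-cmpt} of Definition~\ref{def:functCmpt} holds automatically. (If $K=\emptyset$ the claim is trivial by Remark~\ref{rem:defFunctCmpt}\ref{enu:rem-defFunctCmpt-empty}, so I may assume $K\ne\emptyset$, hence $K_\eps\ne\emptyset$ for $\eps$ small.)

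Next I would check that $f(K)=[H_\eps]$, which gives internality. For the inclusion $f(K)\subseteq[H_\eps]$: given $x\in K$, choose a representative $(x_\eps)$ with $x_\eps\in K_\eps$ for $\eps$ small; then $f(x)=[u_\eps(x_\eps)]$ and $u_\eps(x_\eps)\in H_\eps$ for $\eps$ small, so $f(x)\in[H_\eps]$. For the reverse inclusion: given $y=[y_\eps]\in[H_\eps]$ with $y_\eps\in H_\eps$ for $\eps$ small, pick $x_\eps\in K_\eps$ with $u_\eps(x_\eps)=y_\eps$; sharp boundedness of $(K_\eps)$ forces $(x_\eps)$ to be moderate, so $x:=[x_\eps]\in[K_\eps]=K$ and $f(x)=[u_\eps(x_\eps)]=[y_\eps]=y$, whence $y\in f(K)$.

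The remaining and genuinely substantial point is sharp boundedness of $(H_\eps)$, i.e.\ a uniform estimate $\sup_{x\in K_\eps}|u_\eps(x)|=O(\eps^{-N})$. The definition of GSF only guarantees moderateness of $(u_\eps(x_\eps))$ at each individual generalized point $x\in X$, so I must upgrade this pointwise information to a net-level bound over all of $K_\eps$. The key observation --- exactly the mechanism behind Prop.~\ref{prop:extremeValues} --- is that compactness of $K_\eps$ makes the supremum attained: writing $|u_\eps(M_\eps)|=\sup_{x\in K_\eps}|u_\eps(x)|$ for some $M_\eps\in K_\eps$, sharp boundedness of $(K_\eps)$ makes $(M_\eps)$ moderate, so that $M:=[M_\eps]\in K\subseteq X$. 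Then $f(M)=[u_\eps(M_\eps)]\in\Rtil^d$ is well defined, whence $(u_\eps(M_\eps))\in\R_M^d$ and $\sup_{x\in K_\eps}|u_\eps(x)|=|u_\eps(M_\eps)|=O(\eps^{-N})$ for some $N$. This is precisely sharp boundedness of $(H_\eps)$. Combining the three verifications, $f(K)=[H_\eps]$ is an internal, sharply bounded set, hence functionally compact by Lemma~\ref{lem:equdef}.
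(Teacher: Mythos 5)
Your proposal is correct, and it coincides with the paper's proof in everything except the one substantial step. Like the paper, you take the candidate net $H_{\eps}:=u_{\eps}(K_{\eps})$ for a global defining net $u_{\eps}\in\Coo(\R^{n},\R^{d})$, you verify $f(K)=[H_{\eps}]$ by the identical two-inclusion argument (moderateness of the preimage points coming from sharp boundedness of $(K_{\eps})$), and you dispose of the empty case in the same way. The difference lies in how sharp boundedness of $(H_{\eps})$ is established. The paper argues by contradiction: it negates the bound, extracts for each $k$ a sequence $(\eps_{kn})_{n}\downarrow0$ with $|y_{kn}|>\eps_{kn}^{-k}$, diagonalizes to a single sequence $\eps_{k}:=\eps_{kn_{k}}$, and patches the corresponding points $x_{kn_{k}}\in K_{\eps_{k}}$ together with arbitrary fill-in points $\bar{x}_{\eps}\in K_{\eps}$ to manufacture one generalized point $x=[x_{\eps}]\in K\subseteq X$ at which $(u_{\eps}(x_{\eps}))$ fails to be moderate, contradicting the definition of GSF. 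You instead argue directly: continuity of $|u_{\eps}|$ on the compact set $K_{\eps}$ makes the supremum attained at some $M_{\eps}\in K_{\eps}$ (defined arbitrarily for the finitely-excluded large $\eps$, which is harmless since moderateness is asymptotic); sharp boundedness of $(K_{\eps})$ makes $(M_{\eps})$ moderate, so $M=[M_{\eps}]\in K\subseteq X$, and the GSF definition (the case $\alpha=0$ of the moderateness clause) gives $(u_{\eps}(M_{\eps}))\in\R_{M}^{d}$ at once. This is exactly the mechanism the paper itself uses in Prop.~\ref{prop:extremeValues}, transplanted to $|u_{\eps}|$ in place of the scalar $u_{\eps}$; it is shorter and avoids all diagonal bookkeeping, at the cost of genuinely needing attainment of the supremum (hence compactness of each $K_{\eps}$), whereas the paper's contradiction argument would survive even if the fiberwise suprema were finite but not attained. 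Since Def.~\ref{def:functCmpt} supplies compact $K_{\eps}$, that extra robustness is not needed here, and your route is a clean, valid alternative.
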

\begin{proof}
Let $(K_{\eps})$ be as in Def.~\ref{def:functCmpt} and let the
GSF $f$ be defined by the net $u_{\eps}\in\Coo(\R^{n},\R^{d})$.
Let us first prove that $f(K)=[u_{\eps}(K_{\eps})]$. In fact, $y\in f(K)=f([K_{\eps}])$
is equivalent to 
\begin{equation}
\exists(x_{\eps})\in\R_{M}^{n}\,\forall^{0}\eps:\ x_{\eps}\in K_{\eps}\text{ and }y=[u_{\eps}(x_{\eps})].\label{eq:1equiv-f(K)}
\end{equation}
This necessary entails $y\in[u_{\eps}(K_{\eps})]$. Vice versa, if
$y\in[u_{\eps}(K_{\eps})]$, then there exists $(y_{\eps})\in\R_{M}^{d}$
such that $y_{\eps}\in u_{\eps}(K_{\eps})$ for $\eps$ small. Hence,
for each of these $\eps$ there also exists $x_{\eps}\in K_{\eps}$
such that $y_{\eps}=u_{\eps}(x_{\eps})$, which implies $y=[u_{\eps}(x_{\eps})]$,
i.e.\ \eqref{eq:1equiv-f(K)} since $(K_{\eps})$ is sharply bounded.
Clearly, $u_{\eps}(K_{\eps})\Subset\R^{d}$, so it remains to prove
that the net $(u_{\eps}(K_{\eps}))$ is sharply bounded. If $\forall\eps_{0}\,\exists\eps\le\eps_{0}:\ K_{\eps}=\emptyset$,
then $[K_{\eps}]=K=\emptyset$, so $f(K)=\emptyset$ and the conclusion
is trivial. Otherwise, assume that $K_{\eps}\ne\emptyset$ for $\eps\le\eps_{0}$
and proceed by contradiction assuming that 
\begin{equation}
\forall k\in\N\,\exists(\eps_{kn})_{n}\downarrow0\,\forall n\,\exists y_{kn}\in u_{\eps_{kn}}(K_{\eps_{kn}}):\ |y_{kn}|>\eps_{kn}^{-k}.\label{eq:HpContr-image}
\end{equation}
We can write $y_{kn}=u_{\eps_{kn}}(x_{kn})$ for some $x_{kn}\in K_{\eps_{kn}}$.
Next, set $\eps_{0}:=\eps_{00}$ and for $k>0$ pick $n_{k}$ such
that $\eps_{kn_{k}}<\min\left(\frac{1}{k},\eps_{k-1}\right)$ and
set $\eps_{k}:=\eps_{kn_{k}}$. Take any $\bar{x}_{\eps}\in K_{\eps}$
for each $\eps\le\eps_{0}$, and set $x_{\eps}:=x_{kn_{k}}$ if $\eps=\eps_{k}$
and $x_{\eps}:=\bar{x}_{\eps}$ otherwise. Then $x_{\eps}\in K_{\eps}$
for $\eps\le\eps_{0}$, so $x=[x_{\eps}]\in K\subseteq X$ and $(u_{\eps}(x_{\eps}))\in\R_{M}^{d}$
by the definition of GSF, which contradicts \eqref{eq:HpContr-image}. 
\end{proof}
As a corollary of this theorem and Rem.\ \eqref{rem:defFunctCmpt}.\ref{enu:rem-defFunctCmpt-ordinaryCmpt}
we get 
\begin{cor}
\label{cor:intervalsFunctCmpt}If $a$, $b\in\Rtil$ and $a\le b$,
then $[a,b]\fcmp\Rtil$. 
\end{cor}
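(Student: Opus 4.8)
The plan is to exhibit $[a,b]$ as the image $f([0,1])$ of the functionally compact interval $[0,1]$ under an affine GSF $f$, and then to apply Theorem \ref{thm:image} together with Rem.\ \ref{rem:defFunctCmpt}.\ref{enu:rem-defFunctCmpt-ordinaryCmpt}, which guarantees $[0,1]=[[0,1]_{\R}]\fcmp\Rtil$. Concretely, I would first fix representatives $(a_\eps)$, $(b_\eps)$ of $a$, $b$ with $a_\eps\le b_\eps$ for all $\eps$; this is possible because $a\le b$ provides such representatives for $\eps$ small, and altering the nets at the remaining (large) $\eps$ leaves their classes unchanged.

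Next I would introduce the net $u_\eps(t):=a_\eps+t(b_\eps-a_\eps)\in\cinfty(\R,\R)$ and check that it defines a GSF $f\in\Gcinf(\Rtil,\Rtil)$: since $(a_\eps),(b_\eps)\in\R_{M}$, the value $u_\eps(t_\eps)$ is moderate for every $t=[t_\eps]\in\Rtil$, the first derivative $b_\eps-a_\eps$ is moderate, and all higher derivatives vanish, so the defining conditions for a GSF hold; well-definedness on classes is immediate since a negligible perturbation of $t_\eps$ yields a negligible perturbation of $u_\eps(t_\eps)$. By Theorem \ref{thm:image} it then follows that $f([0,1])\fcmp\Rtil$, and the whole statement reduces to the identity $f([0,1])=[a,b]$.

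To establish this identity I would reuse the image computation from the proof of Theorem \ref{thm:image}: $f([0,1])=[u_\eps([0,1]_{\R})]$, and since $u_\eps$ is affine with slope $b_\eps-a_\eps\ge 0$ it carries $[0,1]_{\R}$ onto $[a_\eps,b_\eps]_{\R}$, so that $f([0,1])=[[a_\eps,b_\eps]_{\R}]$. The inclusion $[[a_\eps,b_\eps]_{\R}]\subseteq[a,b]$ is immediate from the definition of the order on $\Rtil$. For the reverse inclusion, given $x\in[a,b]$ with representative $(x_\eps)$, I would clamp it by $\tilde x_\eps:=\min(\max(x_\eps,a_\eps),b_\eps)$, so that $\tilde x_\eps\in[a_\eps,b_\eps]$ and $|\tilde x_\eps-x_\eps|=\max(a_\eps-x_\eps,\,x_\eps-b_\eps,\,0)$; then $a\le x\le b$ together with the nonnegativity characterization behind Lemma \ref{lem:mayer} forces $a_\eps-x_\eps\le\eps^m$ and $x_\eps-b_\eps\le\eps^m$ for $\eps$ small (for each $m\in\N$), whence $(\tilde x_\eps)\sim(x_\eps)$ and $x\in[[a_\eps,b_\eps]_{\R}]$.

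The hard part is exactly this last identification of the image with $[a,b]$. Because $\Rtil$ is only a ring and $b-a$ need not be invertible, one cannot prove surjectivity of $f$ by solving $a+t(b-a)=x$ for $t\in[0,1]$; the algebraic inverse may simply fail to exist. The clamping construction replaces this inversion and shifts the difficulty onto the order-theoretic fact that $z\ge 0$ in $\Rtil$ forces every representative to satisfy $z_\eps\ge-\eps^m$ eventually, for each $m$, which is the essential mechanism making the interval functionally compact.
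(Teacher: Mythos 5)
Your proposal is correct and follows exactly the route the paper intends: the corollary is deduced from Theorem \ref{thm:image} applied to the affine GSF $t\mapsto a+t(b-a)$ together with Rem.~\ref{rem:defFunctCmpt}.\ref{enu:rem-defFunctCmpt-ordinaryCmpt}, which gives $[0,1]\fcmp\Rtil$. The paper states this without detail, and your clamping argument correctly supplies the one step it leaves implicit, namely the identification $f([0,1])=[[a_\eps,b_\eps]_{\R}]=[a,b]$, which indeed cannot be obtained by inverting $b-a$.
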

Let us note that $a$, $b\in\Rtil$ can also be infinite, e.g.~$a=[-\eps^{-N}]$,
$b=[\eps^{-M}]$ or $a=[\eps^{-N}]$, $b=[\eps^{-M}]$ with $M>N$. 
\begin{lem}
\label{thm:unionIntersection}Let $K$, $H\fcmp\Rtil^{n}$, then we
have: 
\begin{enumerate}
\item \label{enu:unionIntersection-interleaving}$K\cup H\subseteq\text{\emph{interl}}(K\cup H)\fcmp\Rtil^{n}$ 
\item \label{enu:unionIntersection-union}If $K\cup H$ is internal, then
it is functionally compact 
\item \label{enu:unionIntersection-intersection}If $K\cap H$ is internal,
then it is functionally compact. 
\end{enumerate}
\end{lem}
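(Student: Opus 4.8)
The entire statement reduces to part~\ref{enu:unionIntersection-interleaving}, and in fact to the single identity
\[
\text{interl}(K\cup H)=[K_\eps\cup H_\eps],
\]
where $(K_\eps)$ and $(H_\eps)$ are representatives as in Def.~\ref{def:functCmpt}, i.e.\ sharply bounded nets with $K_\eps,H_\eps\comp\R^n$. Granting this identity, part~\ref{enu:unionIntersection-interleaving} is immediate: the inclusion $K\cup H\subseteq\text{interl}(K\cup H)$ holds trivially via the one-block partition $m=1$, $S_1=I$ (so $e_I=1$); and the net $(K_\eps\cup H_\eps)$ consists of compact sets and is sharply bounded, since if $\sup_{x\in K_\eps}|x|\le\eps^{-N_1}$ and $\sup_{x\in H_\eps}|x|\le\eps^{-N_2}$ for small $\eps$, then the union is bounded by $\eps^{-\max(N_1,N_2)}$. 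Hence $[K_\eps\cup H_\eps]$ is functionally compact directly from Def.~\ref{def:functCmpt}.

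To prove the identity, the inclusion $\subseteq$ is the easy direction: given $x=\sum_{j=1}^m e_{S_j}a_j$ with each $a_j\in K\cup H$, I choose for each $j$ a representative $(a_{j,\eps})$ with $a_{j,\eps}\in K_\eps$ (if $a_j\in K$) or $a_{j,\eps}\in H_\eps$ (if $a_j\in H$) for $\eps$ small, and then the patched net $x_\eps:=a_{j,\eps}$ for $\eps\in S_j$ lies in $K_\eps\cup H_\eps$ for $\eps$ below the finite minimum of the $m$ thresholds, so $x\in[K_\eps\cup H_\eps]$. The reverse inclusion $\supseteq$ is where the real work lies. Starting from $x=[x_\eps]$ with $x_\eps\in K_\eps\cup H_\eps$ for $\eps\le\eps_0$, I split $I$ using $S:=\{\eps\le\eps_0\mid x_\eps\in K_\eps\}$, so that $x_\eps\in K_\eps$ on $S$ and $x_\eps\in H_\eps$ on $(0,\eps_0]\setminus S$. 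To exhibit $x$ as an interleaving of one point of $K$ and one point of $H$, I complete these partial nets to full ones inside $K_\eps$, resp.\ $H_\eps$, using fixed selections $p_\eps\in K_\eps$ and $q_\eps\in H_\eps$: setting $a_\eps:=x_\eps$ on $S$ and $a_\eps:=p_\eps$ elsewhere gives $a=[a_\eps]\in K$, symmetrically $b=[b_\eps]\in H$, and then $x=e_{S'}a+e_{S''}b$ for the two-block partition $S':=S\cup(\eps_0,1]$, $S'':=(0,\eps_0]\setminus S$ of $I$, which lies in $\text{interl}(K\cup H)$.

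The main technical obstacle is the legitimacy of the selections $p_\eps,q_\eps$, which presupposes $K_\eps,H_\eps\neq\emptyset$, and more generally the degenerate behaviour of empty representatives. I would handle this by first recording that for a sharply bounded net $[K_\eps]\neq\emptyset$ holds if and only if $K_\eps\neq\emptyset$ for $\eps$ small — the nontrivial direction being that any pointwise selection from a sharply bounded net is automatically moderate, hence defines an element of $[K_\eps]$. If $K=\emptyset$ one may take $K_\eps=\emptyset$, the identity collapses to $\text{interl}(H)=[H_\eps]=H$ (an internal set being closed under interleaving), and there is nothing to patch; otherwise $K_\eps,H_\eps\neq\emptyset$ for $\eps$ small after shrinking $\eps_0$, and the construction above applies verbatim.

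Finally, parts~\ref{enu:unionIntersection-union} and~\ref{enu:unionIntersection-intersection} follow quickly from Lemma~\ref{lem:equdef}. For~\ref{enu:unionIntersection-union}, part~\ref{enu:unionIntersection-interleaving} gives $K\cup H\subseteq\text{interl}(K\cup H)$, and since the right-hand side is sharply bounded so is $K\cup H$; being internal by hypothesis, it is functionally compact by Lemma~\ref{lem:equdef}. Part~\ref{enu:unionIntersection-intersection} is even simpler, as $K\cap H\subseteq K$ is sharply bounded because $K$ is, so an internal $K\cap H$ is again functionally compact by Lemma~\ref{lem:equdef}.
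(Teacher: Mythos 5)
Your proof is correct, but it takes a genuinely more self-contained route than the paper's, which is essentially a pointer to the literature: for part \ref{enu:unionIntersection-interleaving} the paper simply invokes \cite[Prop.~2.8]{ObVe08} for the identity $\text{interl}(K\cup H)=[K_{\eps}\cup H_{\eps}]$, and for part \ref{enu:unionIntersection-union} it invokes \cite[Lemma~2.7]{ObVe08}, which says that a union of internal sets, if internal, equals its interleaving, so that \ref{enu:unionIntersection-union} follows from \ref{enu:unionIntersection-interleaving}; part \ref{enu:unionIntersection-intersection} is deduced from Lemma~\ref{lem:equdef} exactly as you do. You instead reprove the interleaving identity from scratch by two patching arguments (representatives glued along the partition for $\subseteq$; the split $S=\{\eps\le\eps_{0}\mid x_{\eps}\in K_{\eps}\}$ completed by selections for $\supseteq$), including the genuinely necessary care about empty slices: your observation that for a sharply bounded net one has $[K_{\eps}]\ne\emptyset$ if and only if $K_{\eps}\ne\emptyset$ for $\eps$ small --- moderateness of any pointwise selection being automatic from sharp boundedness --- is precisely what legitimizes the selections $p_{\eps},q_{\eps}$ (and these need only exist for $\eps\le\eps_{0}$, as you note), while taking $K_{\eps}=\emptyset$ as representative when $K=\emptyset$ disposes of the degenerate case. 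Your derivation of \ref{enu:unionIntersection-union} is also genuinely different and arguably cleaner: rather than using the full strength of the equality $K\cup H=\text{interl}(K\cup H)$ from \cite[Lemma~2.7]{ObVe08}, you extract only the sharp boundedness of $K\cup H$ from the inclusion in \ref{enu:unionIntersection-interleaving} and then apply Lemma~\ref{lem:equdef}, exactly parallel to your treatment of \ref{enu:unionIntersection-intersection}. What the paper's route buys is brevity; what yours buys is self-containedness (it in effect reproves the needed special case of \cite[Prop.~2.8]{ObVe08}) together with a uniform reduction of both \ref{enu:unionIntersection-union} and \ref{enu:unionIntersection-intersection} to the characterization ``internal and sharply bounded'' of Lemma~\ref{lem:equdef}.
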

\begin{proof}
\noindent \ref{enu:unionIntersection-interleaving} follows from \cite[Prop. 2.8]{ObVe08}
which implies $K\cup H\subseteq\text{interl}(K\cup H)=[K_{\eps}\cup H_{\eps}]$,
where the nets $(K_{\eps})$ and $(H_{\eps})$ satisfy Def.~\ref{def:functCmpt}.
Property \ref{enu:unionIntersection-union} follows from \cite[Lemma 2.7]{ObVe08}
which implies that if the union of internal sets is internal, then
it is equal to its interleaving. Property \ref{enu:unionIntersection-intersection}
is a consequence of Lemma \ref{lem:equdef}. 
\end{proof}
\noindent If $H\subseteq K\fcmp\Rtil^{n}$, then also $H$ is sharply
bounded. So, another consequence of Lemma \ref{lem:equdef} is the
following: 
\begin{cor}
\noindent \label{cor:subsets}Let $H\subseteq K\fcmp\Rtil^{n}$, then
$H$ internal implies $H\fcmp\Rtil^{n}$. 
\end{cor}
\noindent Finally, in the following result we consider the product
of functionally compact sets: 
\begin{prop}
\noindent \label{prop:product}Let $K\fcmp\Rtil^{n}$ and $H\fcmp\Rtil^{d}$,
then $K\times H\fcmp\Rtil^{n+d}$. In particular, if $a_{i}\le b_{i}$
for $i=1,\ldots,n$, then $\prod_{i=1}^{n}[a_{i},b_{i}]\fcmp\Rtil^{n}$.\end{prop}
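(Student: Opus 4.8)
The plan is to realize the product set by the obvious product net and then verify the three defining conditions of Definition~\ref{def:functCmpt}. Concretely, let $(K_{\eps})$ and $(H_{\eps})$ be nets as in Definition~\ref{def:functCmpt} with $K=[K_{\eps}]$ and $H=[H_{\eps}]$, so that each $K_{\eps}\comp\R^{n}$, each $H_{\eps}\comp\R^{d}$, and both nets are sharply bounded. I would take $(K_{\eps}\times H_{\eps})$ as the candidate net for $K\times H$, using the canonical identification $\Rtil^{n}\times\Rtil^{d}\simeq\Rtil^{n+d}$.

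Two of the three conditions are immediate. Since a finite product of compact subsets of Euclidean space is compact, $K_{\eps}\times H_{\eps}\comp\R^{n+d}$ for every $\eps$. Sharp boundedness also follows quickly: if $\sup_{x\in K_{\eps}}|x|\le\eps^{-N}$ and $\sup_{y\in H_{\eps}}|y|\le\eps^{-M}$ for $\eps$ small, then for $(x,y)\in K_{\eps}\times H_{\eps}$ one has $|(x,y)|\le|x|+|y|\le 2\eps^{-\max(N,M)}$, which is $O(\eps^{-L})$ for a suitable $L$, so the product net is sharply bounded.

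The one point that requires genuine verification --- and the step I regard as the heart of the argument --- is the identity $K\times H=[K_{\eps}\times H_{\eps}]$, i.e.\ that the internal set generated by the product net coincides with the Cartesian product of $K$ and $H$. I would prove this by double inclusion directly from the definition of an internal set. For ``$\subseteq$'', take $(x,y)\in K\times H$; choosing representatives $(x_{\eps})$, $(y_{\eps})$ with $x_{\eps}\in K_{\eps}$ and $y_{\eps}\in H_{\eps}$ for $\eps$ small, the net $((x_{\eps},y_{\eps}))$ witnesses $(x,y)\in[K_{\eps}\times H_{\eps}]$. For ``$\supseteq$'', given $z=[z_{\eps}]\in[K_{\eps}\times H_{\eps}]$, a representative with $z_{\eps}=(x_{\eps},y_{\eps})\in K_{\eps}\times H_{\eps}$ for $\eps$ small splits into two moderate nets --- the coordinate projections of a moderate net are again moderate since $|x_{\eps}|,|y_{\eps}|\le|z_{\eps}|$ --- whence $[x_{\eps}]\in[K_{\eps}]=K$, $[y_{\eps}]\in[H_{\eps}]=H$, and thus $z\in K\times H$. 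Having established all three conditions, Definition~\ref{def:functCmpt} yields $K\times H\fcmp\Rtil^{n+d}$.

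Finally, for the ``in particular'' claim, each factor $[a_{i},b_{i}]\fcmp\Rtil$ by Corollary~\ref{cor:intervalsFunctCmpt}, and the product assertion applied inductively gives $\prod_{i=1}^{n}[a_{i},b_{i}]\fcmp\Rtil^{n}$. I do not anticipate a real obstacle here: the only places demanding care are ensuring that passing to coordinate projections preserves moderateness and that the ``for $\eps$ small'' clauses in the two inclusions are handled consistently, both of which are routine once the product net is chosen.
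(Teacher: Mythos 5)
Your proof is correct and takes essentially the same route as the paper: the paper likewise reduces everything to the identity $K\times H=[K_{\eps}\times H_{\eps}]$ together with sharp boundedness (concluding via Lemma \ref{lem:equdef}), the only difference being that it cites \cite[Prop.~2.13]{ObVe08} for the product identity where you verify it directly by double inclusion. Your inline verification, including the observation that coordinate projections of moderate nets are moderate, is sound, so the two arguments coincide in substance.
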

\begin{proof}
\noindent From \cite[Prop. 2.13]{ObVe08} if follows that $K\times H$
is internal, in fact for $K=[K_{\eps}]$, $H=[H_{\eps}]$, $K\times H=[K_{\eps}\times H_{\eps}]$.
From this representation it immediately follows that $H\times K$
is sharply bounded as well, so Lemma \ref{lem:equdef} gives the claim. 
\end{proof}

\section{\label{sec3}Compactly supported generalized smooth functions}

Our main goal in this section is to define and study an analogue within
GSF of the space $\D_{K}(\Omega)$ of smooth functions supported in
a fixed compact set $K\comp\Omega$. In order to define this space,
we first try to define the concept of support of a GSF. Clearly, if
$\phi\in\D_{[-a,a]_{\R}}(\R)$, one would expect that $\phi$ should
have compact support also if we think of $\phi$ as a GSF. In fact,
$\text{supp}(\phi)$ should be contained in $[-a,a]\fcmp\Rtil$. Already
this basic requirement implies that if $f\in\Gcinf(X,Y)$, the natural
definition 
\[
S(f):=X\setminus\bigcup\left\{ B_{\rho}(x)\cap X\mid x\in X,\ \rho\in\Rtil_{>0},\ f|_{{B_{\rho}(x)\cap X}}=0\right\} 
\]
doesn't fit with our intuition. Indeed, if we take the aforementioned
$\phi$ so that $\phi(0)=1$, and $S\subseteq(0,1]$ such that $0\in\bar{S}$
and $0\in\overline{S^{c}}$, then $\phi(e_{S}\diff{\eps}^{-1})=e_{S^{c}}\ne0$
and $e_{S}\diff{\eps}^{-1}\in S(\phi)\setminus[-a,a]$. This motivates
the following 
\begin{defn}
\label{def:support}Let $X\subseteq\Rtil^{n}$, $Y\subseteq\Rtil^{d}$
and $f\in\Gcinf(X,Y)$, then 
\[
\text{supp}(f):=\overline{\left\{ x\in X\mid|f(x)|>0\right\} },
\]
where here $\overline{(-)}$ denotes the relative closure in $X$
with respect to the sharp topology. 
\end{defn}
Using this concept, we have $\text{supp}(\phi)\subseteq[-a,a]$. In
fact, $|\phi(x)|=[|\phi(x_{\eps})|]>0$ implies $|\phi(x_{\eps})|>\eps^{q}$
for some $q\in\R_{>0}$ and for $\eps$ small, and hence $x_{\eps}\in[-a,a]_{\R}$. 
\begin{rem}
\noindent \label{rem:support}\ 
\begin{enumerate}
\item \label{enu:supportGSFandCGF}In the setting of Colombeau algebras,
one usually defines the support of some $f\in\gs(\Omega)$ as a subset
of $\Omega\subseteq\R^{n}$, i.e., as a set of classical points, namely
as $\supp_{\gs}(f):=\Omega\setminus\bigcup\left\{ B_{r}(x)\cap\Omega\mid x\in\Omega,\ r\in\R_{>0},\ f|_{B_{r}(x)\cap\Omega}=0\right\} $,
where the last equality has to be understood in $\gs(B_{r}(x)\cap\Omega)$.
Using $X=\widetilde{\Omega}_{c}$ as the natural domain of any $f\in\gs(\Omega)$
(cf.~\cite[Thm. 37]{GKV}), it is then immediate that $\supp(f)\cap\Omega\subseteq\supp_{\gs}(f)$. 
\item Let $u\in\D'(\Omega)$ be a Schwartz distribution and denote by $\iota:\D'(\Omega)\to\Gcinf(\otilc,\R)$
a standard embedding via convolution. Then $\supp(\iota(u))\cap\Omega\subseteq\supp(u)$,
as follows from \ref{enu:supportGSFandCGF} and the fact that $\iota$
is a sheaf-morphism. 
\item Assume that the embedding $\iota:\D'(\Omega)\to\Gcinf(\otilc,\R)$
has been defined by using a mollifier $\rho\in\mathcal{S}(\R^{n})$
which is identically equal to 1 in the ball $\Eball_{p}(0)$, $p\in\R_{>0}$.
Then $\delta(x)=\diff{\eps}^{-n}$ for each $x\in B_{p\cdot\text{d}\eps}(0)$
and hence $B_{p\cdot\text{d}\eps}(0)\subseteq\text{supp}(\iota(\delta))$,
whereas $\text{supp}(\iota(\delta))\cap\R^{n}=\{0\}$. 
\item \noindent In general, $\text{supp}(f)$ is not an internal set because
it is not generally closed by finite interleaving (see \cite[Lem.~2.7]{ObVe08}).
Consider e.g.~$X$ with only near standard points and $f\in\Gcinf(X,\Rtil)$
which is strictly positive on two disjoint intervals. However, if
$X$ itself is closed under finite interleaving then so is $\text{supp}(f)$.
\end{enumerate}
\end{rem}
If $(u_{\eps})$ defines $f\in\Gcinf(X,Y)$, the internal set $\left[\text{supp}(u_{\eps})\right]$
is not intrinsically defined since it depends on the defining net
$(u_{\eps})$. Consider, e.g., $u_{\eps}(x):=\phi(x)+\eps^{1/\eps}>0$
where $\phi\in\Coo(\R,\R_{\ge0})$.

In our further analysis we will repeatedly make use of the following
notion: 
\begin{defn}
For $A\subseteq\Rtil^{n}$ we call the set 
\[
\exterior{A}:=\{x\in\Rtil^{n}\mid\forall a\in A:\ |x-a|>0\}
\]

\noindent the \emph{strong exterior} of $A$.
\end{defn}
This set can also be described in the following way: 
\begin{lem}
\label{lem:charactExtWithe_S}If $A\subseteq\Rtil^{n}$, then $\exterior{A}=\{x\in\Rtil^{n}\mid\forall S\subseteq I:\ e_{S}\not=0\ \Rightarrow\ xe_{S}\not\in Ae_{S}\}$.\end{lem}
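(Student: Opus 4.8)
The plan is to reduce the membership condition $x\in\exterior{A}$ to a statement about representatives and then match it, set $S$ by set $S$, against the right-hand side. The first step is to set up a dictionary between the two formulations. Fix $a\in A$, choose representatives $(x_\eps)$, $(a_\eps)$, and write $r_\eps:=|x_\eps-a_\eps|\ge 0$, so that $|x-a|=[r_\eps]$. Since $e_S$ is a real idempotent with $e_S\ge 0$, we have $xe_S=ae_S$ in $\Rtil^n$ if and only if $(x-a)e_S=0$, if and only if $|x-a|\,e_S=0$, if and only if $r_\eps\,\chi_S(\eps)\sim 0$, i.e.\ $r_\eps\chi_S(\eps)=O(\eps^m)$ for every $m$. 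Recall also that $e_S\ne 0$ exactly when $0\in\overline S$, i.e.\ when $S$ accumulates at $0$.

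For the inclusion $\exterior{A}\subseteq\{\dots\}$ I would argue by contradiction. Suppose $x\in\exterior{A}$ but there is some $S$ with $e_S\ne 0$ and some $a\in A$ with $xe_S=ae_S$. By the dictionary, $r_\eps\chi_S(\eps)\sim 0$. On the other hand $x\in\exterior{A}$ gives $|x-a|>0$, so by Lemma~\ref{lem:mayer} there are $m\in\N$ and $\eps_0$ with $r_\eps>\eps^m$ for all $\eps<\eps_0$. Since $0\in\overline S$, there exist arbitrarily small $\eps\in S$ with $\eps<\eps_0$; for such $\eps$ we would have simultaneously $r_\eps>\eps^m$ and, from negligibility along $S$, $r_\eps=O(\eps^{m+1})$, which is impossible for $\eps$ small. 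Hence no such $S$ exists, which is exactly the right-hand condition.

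For the reverse inclusion I would prove the contrapositive. Assume $x\notin\exterior{A}$, so there is $a\in A$ with $|x-a|\not>0$. As $|x-a|\ge 0$ always, the negation of Lemma~\ref{lem:mayer} tells us that the chosen representative $(r_\eps)$ satisfies: for every $m\in\N$ and every $\eps_0$ there is $\eps<\eps_0$ with $r_\eps\le\eps^m$. This lets me extract a diagonal sequence $\eps_k\downarrow 0$ with $r_{\eps_k}\le\eps_k^{\,k}$, and I set $S:=\{\eps_k\mid k\in\N\}$. Then $0\in\overline S$, so $e_S\ne 0$, and for each fixed $j$ one has $r_\eps\le\eps^j$ whenever $\eps=\eps_k\in S$ with $k\ge j$; thus $r_\eps\chi_S(\eps)=O(\eps^j)$ for all $j$, giving $xe_S=ae_S\in Ae_S$ by the dictionary. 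Therefore $x$ fails the right-hand condition.

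The routine parts are the verification of the dictionary and the bookkeeping around Lemma~\ref{lem:mayer}; the only genuinely constructive step, and the one I expect to require the most care, is the diagonal extraction of $S$ in the second inclusion, where one must ensure simultaneously that $S$ accumulates at $0$ (so that $e_S\ne0$) and that $r_\eps$ is super-polynomially small along $S$ (so that $(x-a)e_S=0$).
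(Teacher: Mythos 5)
Your proposal is correct and follows essentially the same route as the paper's proof: for $\exterior{A}\subseteq\{\dots\}$ you derive, exactly as the paper does, a lower bound $|x_{\eps}-a_{\eps}|>\eps^{m}$ from Lemma \ref{lem:mayer} and contradict it with the negligibility of $|x_{\eps}-a_{\eps}|$ along $S$, and for the reverse inclusion you extract the same diagonal sequence $\eps_{k}\downarrow0$ with $|x_{\eps_{k}}-a_{\eps_{k}}|\le\eps_{k}^{k}$ and set $S:=\{\eps_{k}\mid k\in\N\}$. Your ``dictionary'' between $xe_{S}=ae_{S}$ and negligibility of $|x_{\eps}-a_{\eps}|\chi_{S}(\eps)$ just makes explicit what the paper uses implicitly.
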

\begin{proof}
$\subseteq$: Let $x=[x_{\eps}]\in\exterior{A}$ and suppose that
there exists some $S\subseteq I$ with $e_{S}\not=0$ and some $a=[a_{\eps}]\in A$
such that $xe_{S}=ae_{S}$. Then there exists some $q>0$ such that
$|x_{\eps}-a_{\eps}|>\eps^{q}$ for $\eps$ small. However, $xe_{S}=ae_{S}$
implies that $|x_{\eps}-a_{\eps}|=O(\eps^{q+1})$ for $\eps\to0$,
$\eps\in S$, a contradiction.

$\supseteq$: If there exists some $a=[a_{\eps}]\in A$ such that
$|x-a|\not>0$ then there is a sequence $\eps_{k}\downarrow0$ with
$|x_{\eps_{k}}-a_{\eps_{k}}|<\eps_{k}^{k}$ for all $k$. Letting
$S:=\{\eps_{k}\mid k\in\N\}$ implies $xe_{S}=ae_{S}$. 
\end{proof}
For non-trivial internal sets we have the following characterization
of the strong exterior: 
\begin{lem}
\label{lem:exteriorAndKc}Let $\emptyset\not=[K_{\eps}]=K\subseteq\Rtil^{n}$.
Then 
\[
\exterior{K}=\sint{K_{\eps}^{c}}.
\]
\end{lem}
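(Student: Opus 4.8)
The plan is to establish the two inclusions separately, in both cases arguing by contraposition and exploiting the freedom to modify a representative along a sequence $\eps_k\downarrow 0$, exactly as in the proof of Lemma~\ref{lem:charactExtWithe_S}.

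For the inclusion $\sint{K_\eps^c}\subseteq\exterior{K}$, I would take $x=[x_\eps]\in\sint{K_\eps^c}$ and suppose, for contradiction, that $x\notin\exterior{K}$, i.e.\ that there is $a=[a_\eps]\in K=[K_\eps]$ with $|x-a|\not>0$. As recorded in the proof of Lemma~\ref{lem:charactExtWithe_S}, this yields a sequence $\eps_k\downarrow 0$ with $|x_{\eps_k}-a_{\eps_k}|<\eps_k^k$ for all $k$. Defining $x'_\eps:=a_\eps$ for $\eps=\eps_k$ and $x'_\eps:=x_\eps$ otherwise produces a representative $(x'_\eps)$ of $x$, since the correction is $O(\eps^m)$ for every $m$. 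Because $a\in[K_\eps]$ gives $a_\eps\in K_\eps$ for $\eps$ small, we get $x'_{\eps_k}=a_{\eps_k}\in K_{\eps_k}$ for $k$ large, hence $x'_{\eps_k}\notin K_{\eps_k}^c$. This contradicts $x\in\sint{K_\eps^c}$, which by definition forces \emph{every} representative --- in particular $(x'_\eps)$ --- to satisfy $x'_\eps\in K_\eps^c$ for $\eps$ small.

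For the reverse inclusion $\exterior{K}\subseteq\sint{K_\eps^c}$, I would take $x=[x_\eps]\in\exterior{K}$ and again argue by contradiction: if $x\notin\sint{K_\eps^c}$, there is a representative $(x_\eps)$ of $x$ and a sequence $\eps_k\downarrow 0$ with $x_{\eps_k}\in K_{\eps_k}$. Here the nonemptiness hypothesis enters: fixing any $y=[y_\eps]\in K$ (so $y_\eps\in K_\eps$ for $\eps$ small), I set $a_\eps:=x_{\eps_k}$ for $\eps=\eps_k$ and $a_\eps:=y_\eps$ otherwise. Then $(a_\eps)$ is moderate, as both $(x_\eps)$ and $(y_\eps)$ are, and $a_\eps\in K_\eps$ for $\eps$ small, so $a=[a_\eps]\in K$. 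Since $x_{\eps_k}-a_{\eps_k}=0$ along the sequence, Lemma~\ref{lem:mayer} shows $|x-a|\not>0$, contradicting $x\in\exterior{K}$.

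The routine points are the verifications of moderateness and of the $O(\eps^m)$ estimate for the modified representatives; the one genuinely delicate ingredient is keeping the quantifier structure of the strongly internal set straight --- namely that membership in $\sint{K_\eps^c}$ is a statement about \emph{all} representatives, which is exactly what both contradictions exploit. The nonemptiness assumption $K\ne\emptyset$ is essential in the second inclusion, since it supplies the ``background'' values $y_\eps\in K_\eps$ needed to assemble an honest point $a\in K$ out of the sequence along which $x$ meets $K_\eps$.
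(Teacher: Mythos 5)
Your proof is correct and takes essentially the same route as the paper's: both directions interleave representatives along a sequence $\eps_{k}\downarrow0$, with the nonemptiness of $K$ supplying the background values $y_{\eps}\in K_{\eps}$ exactly as in the paper, the only cosmetic difference being that the paper channels the argument through the $e_{S}$-characterization of Lemma \ref{lem:charactExtWithe_S} (i.e.\ via $xe_{S}=ae_{S}$) while you negate the definition of $\exterior{K}$ directly using Lemma \ref{lem:mayer}. One small wording fix: since $a\in[K_{\eps}]$ only guarantees that \emph{some} representative satisfies $a_{\eps}\in K_{\eps}$ for small $\eps$, you should fix such a representative before extracting the sequence $(\eps_{k})$ (as the paper does explicitly); this is harmless because $|x-a|\not>0$ is independent of the choice of representatives.
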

\begin{proof}
Suppose first that $x\in\sint{K_{\eps}^{c}}$ and let $S\subseteq I$
with $e_{S}\not=0$. Suppose that there existed some $a\in K$ with
$xe_{S}=ae_{S}$. Since $a\in K$ there exists a representative $(a_{\eps})$
of $a$ with $a_{\eps}\in K_{\eps}$ for all $\eps$. Then $xe_{S}=ae_{S}$
implies that there exists a representative $(x_{\eps})$ of $x$ and
a sequence $\eps_{k}\downarrow0$ in $S$ with $x_{\eps_{k}}=a_{\eps_{k}}\in K_{\eps_{k}}$
for all $k$. But this contradicts the fact that $x\in\sint{K_{\eps}^{c}}$.

Conversely, if $x\not\in\sint{K_{\eps}^{c}}$ then there exists a
representative $(x_{\eps})$ of $x$ and a sequence $\eps_{m}\downarrow0$
with $x_{\eps_{m}}\in K_{\eps_{m}}$ for all $m$. Since $K\not=\emptyset$,
there exists some $w=[w_{\eps}]\in K$ with $w_{\eps}\in K_{\eps}$
for all $\eps$. Now let 
\[
a_{\eps}:=\begin{cases}
x_{\eps_{m}} & \text{ if }\eps=\eps_{m}\\
w_{\eps} & \text{ otherwise }
\end{cases}
\]
and set $S:=\{\eps_{m}\mid m\in\N\}$. Then $a=[a_{\eps}]\in[K_{\eps}]$
and $xe_{S}=ae_{S}$ by construction. Thus $xe_{S}\in Ke_{S}$, and
so $x\not\in\exterior{K}$. 
\end{proof}
As an immediate conclusion we obtain: 
\begin{cor}
\label{lem:exteriorDoesNotDepOnK_eps} Let $K=[K_{\eps}]=[L_{\eps}]\ne\emptyset$.
Then $\sint{K_{\eps}^{c}}=\sint{L_{\eps}^{c}}$. 
\end{cor}
The next result relates the support of a GSF to the exterior of certain
internal sets. To formulate it concisely, we introduce the following
notations: Denote by $\mathcal{K}_{f}$ the set of all internal $\emptyset\ne K\subseteq\Rtil^{n}$
with $\exterior{K}\not=\emptyset$ and such that there exists a net
$u_{\eps}\in\Coo(\R^{n},\R^{d})$ that defines $f$ and such that
$\left[u_{\eps}(x_{\eps})\right]=0$ for all $[x_{\eps}]\in\text{\exterior{K}}$.
Also, denote by $\mathcal{H}_{f}$ the set of all the internal sets
of the form $K=\left[\text{supp}(u_{\eps})\right]\subseteq\Rtil^{n}$
for some net $u_{\eps}\in\Coo(\R^{n},\R^{d})$ that defines $f$ and
such that both $K$ and $\exterior{K}$ are non empty.

\noindent Then we have: 
\begin{lem}
\label{lem:suppAndSupp-f_eps}Let $X\subseteq\Rtil^{n}$, $Y\subseteq\Rtil^{d}$
and $f\in\Gcinf(X,Y)$. Then 
\begin{equation}
\text{\emph{supp}}(f)\subseteq X\cap\bigcap_{K\in\mathcal{K}_{f}}K\subseteq X\cap\bigcap_{K\in\mathcal{H}_{f}}K.\label{eq:inclusionsSupp}
\end{equation}
\end{lem}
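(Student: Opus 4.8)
The plan is to treat the two inclusions separately, noting that the right-hand one is purely formal once we establish $\mathcal{H}_f\subseteq\mathcal{K}_f$. Indeed, if every member of $\mathcal{H}_f$ already belongs to $\mathcal{K}_f$, then intersecting over the larger index family $\mathcal{K}_f$ can only shrink the result, so $\bigcap_{K\in\mathcal{K}_f}K\subseteq\bigcap_{K\in\mathcal{H}_f}K$, whence the second inclusion. To prove $\mathcal{H}_f\subseteq\mathcal{K}_f$ I would take $K=[\mathrm{supp}(u_\eps)]\in\mathcal{H}_f$, which by definition is internal, nonempty, has $\exterior{K}\neq\emptyset$, and carries a net $u_\eps$ defining $f$. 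The only additional clause required for $K\in\mathcal{K}_f$ is that $[u_\eps(x_\eps)]=0$ on $\exterior{K}$, and this is immediate from Lemma \ref{lem:exteriorAndKc}: since $\exterior{K}=\sint{\mathrm{supp}(u_\eps)^c}$, any $[x_\eps]\in\exterior{K}$ has every representative satisfying $x_\eps\notin\mathrm{supp}(u_\eps)$ for small $\eps$, so $u_\eps(x_\eps)=0$ there and the same net $u_\eps$ witnesses $K\in\mathcal{K}_f$.

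For the left-hand inclusion, observe first that $\mathrm{supp}(f)\subseteq X$ holds by definition, so it suffices to show $\mathrm{supp}(f)\subseteq K$ for each fixed $K=[K_\eps]\in\mathcal{K}_f$. Since internal sets are closed in the sharp topology (\cite[Prop. 2.3]{ObVe08}, cf.\ Rem.\ \ref{rem:defFunctCmpt}) and $\mathrm{supp}(f)$ is by Definition \ref{def:support} the relative closure in $X$ of $A:=\{x\in X\mid |f(x)|>0\}$, it is enough to prove the non-closed containment $A\subseteq K$. Here I would first normalize the representative: replacing each $K_\eps$ by its closure changes neither $K$ nor, by Cor.\ \ref{lem:exteriorDoesNotDepOnK_eps}, the set $\exterior{K}$, so I may assume all $K_\eps$ closed; with this normalization $[x_\eps]\in K$ holds if and only if $\mathrm{dist}(x_\eps,K_\eps)$ is negligible.

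The heart of the argument is a contradiction. Fixing the defining net $u_\eps$ attached to $K$ and taking $x=[x_\eps]\in A$, Lemma \ref{lem:mayer} applied to $|f(x)|=[|u_\eps(x_\eps)|]>0$ yields $m$ and $\eps_0$ with $|u_\eps(x_\eps)|>\eps^m$ for $\eps<\eps_0$. Assume $x\notin K$; then by the distance criterion $\mathrm{dist}(x_\eps,K_\eps)$ is not negligible, so there are $q\in\N$ and a sequence $\eps_k\downarrow0$, $\eps_k<\eps_0$, with $\mathrm{dist}(x_{\eps_k},K_{\eps_k})>\eps_k^q$. Choosing any $w=[w_\eps]\in\exterior{K}$ (nonempty because $K\in\mathcal{K}_f$) and setting $S:=\{\eps_k\mid k\in\N\}$, I would form the glued net $y_\eps:=x_\eps$ for $\eps\in S$ and $y_\eps:=w_\eps$ otherwise. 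The decisive claim is that $y\in\exterior{K}=\sint{K_\eps^c}$: on $S$ the non-infinitesimal bound $\mathrm{dist}(x_\eps,K_\eps)>\eps^q$ survives any negligible change of representative, while on $S^c$ the net agrees with a representative of $w\in\sint{K_\eps^c}$, so every representative of $y$ avoids $K_\eps$ for small $\eps$. Then $K\in\mathcal{K}_f$ forces $(u_\eps(y_\eps))\sim0$, which is incompatible with $|u_{\eps_k}(y_{\eps_k})|=|u_{\eps_k}(x_{\eps_k})|>\eps_k^m$ along $S$. This contradiction yields $x\in K$, hence $A\subseteq K$ and the first inclusion.

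I expect the main obstacle to be exactly the claim that the glued point $y$ lies in the \emph{strong} exterior $\exterior{K}$, i.e.\ that it stays outside $K_\eps$ for \emph{every} representative rather than merely for the chosen one. This robustness is precisely what the closedness normalization buys: it upgrades the qualitative fact $x\notin K$ to the quantitative estimate $\mathrm{dist}(x_{\eps_k},K_{\eps_k})>\eps_k^q$, whereas the weaker information $x_{\eps_k}\notin K_{\eps_k}$ would be destroyed by an infinitesimal perturbation and would not place $y$ in $\sint{K_\eps^c}$. Everything else—the reduction to $A\subseteq K$ via closedness of internal sets, and the verification of $\mathcal{H}_f\subseteq\mathcal{K}_f$—is routine.
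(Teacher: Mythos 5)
Your proposal is correct and follows essentially the same route as the paper's own proof: you reduce the first inclusion to $\{x\in X\mid|f(x)|>0\}\subseteq K$ via sharp closedness of internal sets, extract from $x\notin K$ the quantitative bound $d(x_{\eps_k},K_{\eps_k})>\eps_k^{q}$, glue with a point of $\exterior{K}$ to obtain $y\in\sint{K_\eps^c}=\exterior{K}$ on which the net attached to $K\in\mathcal{K}_f$ must vanish, contradicting $|f(x)|>0$; and you settle the second inclusion by showing $\mathcal{H}_f\subseteq\mathcal{K}_f$ via Lemma \ref{lem:exteriorAndKc}, exactly as the paper does (phrased there contrapositively). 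The only cosmetic difference is how the quantitative bound is obtained: you normalize to closed representatives and invoke the distance characterization of membership in $[K_\eps]$, while the paper derives its estimate \eqref{eq:lemma2} directly by a diagonal gluing over $q$, with no normalization needed.
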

\begin{proof}
Since $X\cap\bigcap_{K\in\mathcal{K}_{f}}K$ is a sharply closed subset
of $X$, in order to show the first inclusion in \eqref{eq:inclusionsSupp},
it suffices to prove that 
\[
\left\{ x\in X\mid|f(x)|>0\right\} \subseteq X\cap\bigcap_{K\in\mathcal{K}_{f}}K.
\]
Let $x\in X$ be such that $|f(x)|>0$, so that 
\begin{equation}
\exists r\in\R_{>0}:\ |f(x)|>\diff{\eps}^{r}.\label{eq:abs_f_diffr}
\end{equation}
Let $K=[K_{\eps}]\in\mathcal{K}_{f}$, and assume, by contradiction,
that $x=[x_{\eps}]\notin K$. We first prove that 
\begin{equation}
\exists q\in\N_{>r}\,\exists(\eps_{k})_{k\in\N}\downarrow0\,\forall k\in\N:\ \Eball_{\eps_{k}^{q}}(x_{\eps_{k}})\subseteq K_{\eps_{k}}^{c},\label{eq:lemma2}
\end{equation}
where $r$ comes from \eqref{eq:abs_f_diffr}. In fact, suppose to
the contrary that 
\[
\forall q\in\N_{>r}\,\exists\eps_{q}\,\forall\eps\le\eps_{q}\,\exists y_{\eps}^{(q)}\in\Eball_{\eps^{q}}(x_{\eps}):\ y_{\eps}^{(q)}\in K_{\eps}.
\]
We may assume that $(\eps_{q})_{q\in\N}\downarrow0$. Setting $\tilde{y}_{\eps}:=y_{\eps}^{(q)}$
for $\eps\in(\eps_{q+1},\eps_{q}]$, we have $x=[\tilde{y}_{\eps}]\in K$,
which contradicts $x\notin K$.

By assumption $\exists z=[z_{\eps}]\in\exterior{K}$, and hence 
\[
\exists s\in\N_{>q}\,\forall^{0}\eps:\ d(z_{\eps},K_{\eps})>\eps^{s},
\]
where $q$ comes from \eqref{eq:lemma2}. Using $(\eps_{k})_{k\in\N}$
from \eqref{eq:lemma2}, we set $\tilde{x}_{\eps}:=x_{\eps_{k}}$
if $\eps=\eps_{k}$ and $\tilde{x}_{\eps}:=z_{\eps}$ otherwise. Then
$\tilde{x}:=[\tilde{x}_{\eps}]\in\exterior{K}$. But $K\in\mathcal{K}_{f}$,
so there exists a net $u_{\eps}\in\Coo(\R^{n},\R^{d})$ that defines
$f$ and such that $[u_{\eps}(\tilde{x}_{\eps})]=0$. In particular,
$|u_{\eps_{k}}(\tilde{x}_{\eps_{k}})|=|u_{\eps_{k}}(x_{\eps_{k}})|=O(\eps_{k}^{2s})$
as $k\to+\infty$, which contradicts $|f(x)|=[|u_{\eps}(x_{\eps})|]>\diff{\eps}^{r}>\diff{\eps}^{s}$.

Turning now to the second inclusion, assume that $x\in X\setminus\bigcap_{K\in\mathcal{H}_{f}}K$,
so that there exists a net $(u_{\eps})$ that defines $f$, with $\emptyset\ne K:=[\text{supp}(u_{\eps})]\subseteq\Rtil^{n}$
and $\exterior{K}\ne\emptyset$, but such that $x\notin K$. We want
to show that $x\notin\bigcap_{K'\in\mathcal{K}_{f}}K'$. But for all
$[y_{\eps}]\in\exterior{K}=\sint{\text{supp}(u_{\eps})^{c}}$, we
have $y_{\eps}\notin\text{supp}(u_{\eps})$ for $\eps$ small. Hence,
$u_{\eps}(y_{\eps})=0$ for these $\eps$, and this yields $[u_{\eps}(y_{\eps})]=0$.
This proves that $K\in\mathcal{K}_{f}$, but $x\notin K$.\end{proof}
\begin{rem}
\ 
\begin{enumerate}
\item Using methods from Nonstandard Analysis, one can prove the converse
of the first inclusion in \eqref{eq:inclusionsSupp} in the following
case: If $X=\Rtil^{n}$ and the sharp interior of $\left\{ x\in\Rtil^{n}\mid f(x)=0\right\} $
is non-empty, then 
\[
\text{{supp}}(f)=\bigcap_{K\in\mathcal{K}_{f}}K.
\]
To see this, let $x\in\bigcap_{K\in\mathcal{K}_{f}}K$ and choose some $z$
in the interior of $\left\{ y\in\Rtil^{n}\right.$ $\left. \mid f(y)=0\right\} $ and
some $q>0$ such that $B_{d\eps^{q}}(z)\subseteq\left\{ y\in\Rtil^{n}\mid f(y)=0\right\} $.
Given a representative $(v_{\eps})$ of $f$, let $(\chi_{\eps})$
be a moderate net of smooth functions such that $\chi_{\eps}$ vanishes
on $\Eball_{\eps^{q+2}}(z_{\eps})$ and is identically equal to $1$
on $\R^{n}\setminus\Eball_{\eps^{q+1}}(z_{\eps})$. Then setting $u_{\eps}:=\chi_{\eps}v_{\eps}$
gives a new representative of $f$ such that $[\{y\in\R^{n}\mid u_{\eps}(y)=0\}]$
has non-empty sharp interior. Let $m_{\eps}\in\N$, $m_{\eps}\to\infty$
as $\eps\to0$. Then set $K_{\eps}:=\{y\in\R^{n}\mid|u_{\eps}(y)|\ge\eps^{m_{\eps}}\}$
and $K:=[K_{\eps}]$. It follows that $\emptyset\not=\exterior{K}\subseteq\{y\in\R^{n}\mid f(y)=0\}$,
so $x\in K$. Fix any $k\in\N$. Now using nonstandard notation, letting
$\rho:=[\eps]$, $u:=[u_{\eps}]$, and fixing a representative of
$x$, which we temporarily simply denote by $x$, the above in particular
implies 
\[
^{*}\N_{\infty}\subseteq\{m\in{}^{*}\N\mid\exists y_{k}\in{}^{*}\R^{n}:|y_{k}-x|\le\rho^{k}\wedge|u(y_{k})|\ge\rho^{m}\}.
\]
By the underspill principle, therefore, there also exists some $m\in\N$
and some $y_{k}\in{}^{*}\R^{n}$ such that $|y_{k}-x|\le\rho^{k}$
and $|u(y_{k})|\ge\rho^{m}$. Taking equivalence classes of these
nets, it follows that there exist  $y_{k}\in\Rtil^{n}$ such
that $|y_{k}-x|\le\diff{\eps}^{k}$ and $|f(y_{k})|>0$. Since $y_{k}\to x$
in the sharp topology, this implies that $x\in\text{supp}(f)$. 
\item The assumption $\exterior{K}\not=\emptyset$ in the definition of
$\mathcal{K}_{f}$ is essential. To illustrate this, take $\vphi$
as defined before Def.\ \ref{def:support}, pick any sequence $\eps_{k}\downarrow0$
and set $K_{\eps}:=\{0\}$ for $\eps=\eps_{k}$, and $K_{\eps}=\R^{n}$
otherwise. Then $\exterior{K}=\emptyset$, and obviously $\mathrm{supp}(\vphi)\not\subseteq K$. 
\item The question of whether the reverse of the last inclusion in \eqref{eq:inclusionsSupp}
also holds remains open. 
\end{enumerate}
\end{rem}
\noindent In the following section we shall see that even though the
notion of support introduced above may not be entirely satisfactory,
there nevertheless is a very convenient notion of being compactly
supported for GSF.

\subsection{The spaces \texorpdfstring{$\mathcal{G}\D_{K}(U,Y)$}{GDK(U,Y)}
and \texorpdfstring{$\mathcal{G}\D(U,Y)$}{GD(U,Y)}}

A frequently used idea to solve problems like the previous ones comes
by considering a family of GSF having ``good representatives'',
i.e.~possessing a defining net $(u_{\eps})$ that conforms to our
intuition and includes the examples we have in mind. In the following,
we denote by $(u^{1},\ldots,u^{d})$ the components of a function
$u$ which takes values, e.g., in $\R^{d}$. 
\begin{defn}
\label{def:cmptlySuppGSF}Let $\emptyset\ne K\fcmp U\subseteq\Rtil^{n}$
and $Y\subseteq\Rtil^{d}$, then we say that $f$ is a \emph{GSF compactly
supported in }$K$, and we write 
\[
f\in\GD_{K}(U,Y)
\]
if $f\in\Gcinf(U,Y)$ and there exists a net $(u_{\eps})$ such that: 
\begin{enumerate}
\item \label{enu:def-cmptlySuppGSF-K_epsSharplyBound} $(u_{\eps})$ defines
$f$, where $u_{\eps}\in\Coo(\R^{n},\R^{d})$ for all $\eps$. 
\item \label{enu:def-cmptlySuppGSF-u_eps-K_eps-relations}$\forall\alpha\in\N^{n}\,\forall[x_{\eps}]\in\exterior{K}:\ \left[\partial^{\alpha}u_{\eps}(x_{\eps})\right]=0$. 
\end{enumerate}
\noindent Moreover, we set 
\[
\GD(U,Y):=\bigcup_{\emptyset\ne K\fcmp U}\GD_{K}(U,Y).
\]
We will simply use the symbols $\GD_{K}(U)$ and $\GD(U)$ if $Y=\Rtil$.\end{defn}
\begin{rem}
\label{rem:defCmptlySuppGSF}\ 
\begin{enumerate}
\item \label{enu:suppSubsetK}Lemma \ref{lem:suppAndSupp-f_eps} implies
that if $f\in\GD_{K}(U,Y)$, then $\text{supp}(f)\subseteq K$ because
$K\in\mathcal{K}_{f}$. The converse implication for an arbitrary
subset $U$ remains an open problem. For the case $U=\Rtil^{n}$,
and if $\text{supp}(f)$ is not empty, see Thm.~\ref{thm:globallyDefGSFandSupport}
below.
\item It is clear that, in general, another net defining $f:U\ra\Rtil^{d}$
will not necessarily satisfy \ref{enu:def-cmptlySuppGSF-u_eps-K_eps-relations}
of Def.~\ref{def:cmptlySuppGSF} because such a net is not bound
to have any particular behavior outside of $U$. 
\item \label{enu:counterexamples}Set $K_{\eps}:=\overline{\Eball_{1}(0)}$
and $L_{\eps}:=K_{\eps}\setminus\Eball_{e^{-1/\eps}}(0)$, then for
the Hausdorff distance of $K_{\eps}$ and $L_{\eps}$ we obtain $d_{\mathcal{H}}(K_{\eps},L_{\eps})=e^{-\frac{1}{\eps}}$.
By \cite[Cor. 2.10]{ObVe08}, it follows that $[K_{\eps}]=[L_{\eps}]$.
If we consider a net of smooth functions such that $u_{\eps}|_{\Eball_{1/2}(0)}=1$,
$u_{\eps}|_{\R^{2}\setminus K_{\eps}}=0$, $u_{\eps}\ge0$, then $\sup_{y\in\R^{2}\setminus K_{\eps}}|\partial^{\alpha}u_{\eps}(y)|=0$
but $\sup_{y\in\R^{2}\setminus L_{\eps}}|\partial^{\alpha}u_{\eps}(y)|=1$.
This motivates the use of the strongly internal set $\sint{K_{\eps}^{c}}$
in \ref{enu:def-cmptlySuppGSF-u_eps-K_eps-relations} instead of the
simpler $[K_{\eps}^{c}]$. Analogously, we can consider as $L_{\eps}$
an $e^{-\frac{1}{\eps}}$-mesh of points for $K_{\eps}=\overline{\Eball_{1}(0)}$.
$L_{\eps}$ may also contain points in $K_{\eps}^{c}$, but so that
$d(x,K_{\eps})=e^{-\frac{1}{\eps}}$. This example shows clearly that
we need to be sufficiently ``far'' from $\partial K_{\eps}$ to
be sure that $\left[\partial^{\alpha}u_{\eps}(x_{\eps})\right]=0$,
i.e.\ at points $x=[x_{\eps}]$ such that $\left[d(x_{\eps},K_{\eps})\right]>0$,
as stated in \ref{enu:def-cmptlySuppGSF-u_eps-K_eps-relations}. Since
$\exterior{K}$ is independent of the choice of representative of
$K$ by Cor.\ \ref{lem:exteriorDoesNotDepOnK_eps}, so is Def.~\ref{def:cmptlySuppGSF}.
This is essential to prove the completeness of the spaces $\GD_{K}(U)$
and $\GD(U)$. 
\end{enumerate}
\end{rem}
The following result will turn out to be useful when proving results
by contradiction in several instances below. It permits to restrict
the analysis to only two cases: points in $K$ or in $\exterior{K}$.
To state it more clearly, we say that a generalized point $[y_{\eps}]$
\emph{joins points of the sequence} $(y_{k})_{k\in\N}$ \emph{at }$(\eps_{k})_{k\in\N}$
if $\forall N\in\N\,\exists k\ge N:\ y_{\eps_{k}}=y_{k}$. 
\begin{lem}
\label{lem:twoCases}Let $K=[K_{\eps}]\fcmp\Rtil^{n}$, $(\eps_{k})_{k\in\N}$
a sequence in $(0,1]$ which strictly decreases to $0$, and $(y_{k})_{k\in\N}$
a sequence in $\R^{n}$. For each $k\in\N$, let $x_{k}\in K_{\eps_{k}}$
be such that $d(y_{k},x_{k})=d(y_{k},K_{\eps_{k}})$. Then either 
\begin{enumerate}
\item \label{enu:Icase}$\exists[y_{\eps}]\in\exterior{K}:\ [y_{\eps}]$
joins points of the sequence $(y_{k})_{k\in\N}$ at $(\eps_{k})_{k\in\N}$ 
\end{enumerate}
\noindent or 
\begin{enumerate}
\item \label{enu:IIcase}$\exists[\bar{y}_{\eps}],[\bar{x}_{\eps}]$ joining
points of the sequences $(y_{k})_{k\in\N}$ and $(x_{k})_{k\in\N}$
(respectively) at $(\eps_{k})_{k\in\N}$ such that $[\bar{y}_{\eps}]=[\bar{x}_{\eps}]$
and $\bar{x}_{\eps}\in K_{\eps}\ \forall\eps$. 
\end{enumerate}
\end{lem}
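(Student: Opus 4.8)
The plan is to split the argument according to how the distances $d_k:=d(y_k,x_k)=d(y_k,K_{\eps_k})$ compare with the infinitesimal scale fixed by $(\eps_k)$. Before that I record two auxiliary ``filler'' nets extracted from the functional compactness of $K$. Since $(K_\eps)$ is sharply bounded there is a fixed $N_0$ with $|x|\le\eps^{-N_0}$ for all $x\in K_\eps$ and $\eps$ small; hence $w_\eps:=(\eps^{-N_0}+1,0,\dots,0)$ satisfies $d(w_\eps,K_\eps)\ge 1$, so $[w_\eps]\in\sint{K_\eps^c}=\exterior{K}$ by Lemma~\ref{lem:exteriorAndKc}. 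Moreover, as $K\ne\emptyset$ (the points $x_k$ exist), I may fix a moderate net $(w'_\eps)$ with $w'_\eps\in K_\eps$ for all $\eps$. The dichotomy is then: either (A) $d_k$ is eventually negligible, i.e. for every $q\in\N$ one has $d_k\le\eps_k^q$ for all large $k$; or (B) its logical negation, i.e. there is some $q\in\N$ with $d_k>\eps_k^q$ for infinitely many $k$. These are exhaustive and mutually exclusive, and I claim (B) produces case~\ref{enu:Icase} and (A) produces case~\ref{enu:IIcase}.

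In the separated case (B), let $S=\{k:d_k>\eps_k^q\}$ be infinite and set $y_\eps:=y_k$ whenever $\eps=\eps_k$ with $k\in S$, and $y_\eps:=w_\eps$ otherwise. The net $(y_\eps)$ is moderate (from moderateness of $(y_k)$ and of $(w_\eps)$), so $[y_\eps]\in\Rtil^n$, and it joins $(y_k)$ at $(\eps_k)$ by construction. To see $[y_\eps]\in\exterior{K}$ I compute $d(y_\eps,K_\eps)>\eps^q$ at the indices $\eps=\eps_k$, $k\in S$, and $d(y_\eps,K_\eps)\ge 1$ at all other small $\eps$; thus $d(y_\eps,K_\eps)\ge\eps^q$ for $\eps$ small, so $[d(y_\eps,K_\eps)]>0$. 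Consequently no negligible perturbation of $(y_\eps)$ can meet $K_\eps$, which says precisely that $[y_\eps]\in\sint{K_\eps^c}=\exterior{K}$ via Lemma~\ref{lem:exteriorAndKc}.

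In the negligible case (A), define $\bar x_\eps:=x_k$ and $\bar y_\eps:=y_k$ whenever $\eps=\eps_k$, and $\bar x_\eps:=\bar y_\eps:=w'_\eps$ otherwise. Then $\bar x_\eps\in K_\eps$ for all $\eps$, and $(\bar x_\eps)$ is moderate by sharp boundedness, so $[\bar x_\eps]\in[K_\eps]=K$. At $\eps=\eps_k$ one has $|\bar y_\eps-\bar x_\eps|=d_k$, which by (A) is bounded by $\eps^q$ for every fixed $q$ once $\eps$ is small, while $\bar y_\eps-\bar x_\eps=0$ at all remaining $\eps$; hence $(\bar y_\eps-\bar x_\eps)$ is negligible and $[\bar y_\eps]=[\bar x_\eps]\in K$ (in particular $(\bar y_\eps)$ is moderate). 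Both nets join $(y_k)$, respectively $(x_k)$, at $(\eps_k)$, which is exactly case~\ref{enu:IIcase}.

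The interpolations themselves are routine; the one genuinely delicate point is the moderateness of the exterior point in case (B), where one must use that $(y_k)$ is moderate. This is automatic in every application of the lemma, since $(y_k)$ invariably arises as the restriction $y_{\eps_k}$ of a moderate net $(y_\eps)$; note that in case (A) moderateness of $(y_k)$ is instead \emph{free of charge}, because $y_k$ lies within the negligible distance $d_k$ of the sharply bounded $x_k\in K_{\eps_k}$. The remaining work is only the bookkeeping needed to certify, through Lemma~\ref{lem:exteriorAndKc} and the distance characterization of $\sint{K_\eps^c}$, that the two constructed objects land in $\exterior{K}$ and in $K$ respectively.
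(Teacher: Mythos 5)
Your proof is correct and takes essentially the same route as the paper's: both arguments split according to the size of $d(y_k,K_{\eps_k})$ against the powers $\eps_k^q$ and build the required generalized points by interleaving the given subsequence with a fixed exterior (resp.\ interior) filler net, concluding via Lemma \ref{lem:exteriorAndKc}; your dichotomy merely mirrors the paper's quantifier placement (``separated infinitely often'' giving the exterior alternative, ``eventually negligible'' giving the interleaved-equality alternative, where the paper uses ``eventually separated'' versus ``negligible infinitely often''), which leaves the constructions unchanged. Your explicit discussion of the moderateness of $(y_k)$ in the exterior branch, and of the nonemptiness of the $K_\eps$ needed for the interior filler, makes visible two points the paper's proof handles only implicitly (the latter by its ``without loss of generality''), so these are clarifications rather than deviations.
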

\begin{proof}
We can always pick a point $e_{\eps}\in K_{\eps}^{c}$ so that $e:=[e_{\eps}]\in\exterior{K}$;
in fact, since $(K_{\eps})$ is sharply bounded, we can find $e_{\eps}\in\R^{n}\setminus K_{\eps}$
so that $d(e_{\eps},K_{\eps})>1$ and $(e_{\eps})$ is moderate. We
can also take a point $i_{\eps}\in K_{\eps}$ for each $\eps$ because,
without loss of generality, we can assume that $K_{\eps}\ne\emptyset$
for all $\eps$.

The first alternative in the statement is realized if 
\begin{equation}
\exists b\in\R_{>0}\,\exists N\in\N\,\forall k\ge N:\ \left|y_{k}-x_{k}\right|>\eps_{k}^{b}.\label{eq:Icase}
\end{equation}
Set $y_{\eps}:=y_{k}$ if $\eps=\eps_{k}$ and $y_{\eps}:=e_{\eps}$
otherwise. Then, if $\eps=\eps_{k}$, by \eqref{eq:Icase} we have
$d(y_{\eps},K_{\eps})=\left|y_{k}-x_{k}\right|>\eps^{b}$; otherwise
$d(y_{\eps},K_{\eps})=d(e_{\eps},K_{\eps})>1\ge\eps^{b}$. Therefore,
$[y_{\eps}]\in\sint{K_{\eps}^{c}}=\exterior{K}$ and, of course, $[y_{\eps}]$
joins points of the sequence $(y_{k})_{k\in\N}$ at $(\eps_{k})_{k\in\N}$.

Vice versa, if 
\begin{equation}
\forall h\in\N\,\exists k_{h}>h:\ \left|y_{k_{h}}-x_{k_{h}}\right|\le\eps_{k_{h}}^{h},\label{eq:II case}
\end{equation}
then we can set $\bar{y}_{\eps}:=y_{k_{h}}$, $\bar{x}_{\eps}:=x_{k_{h}}$
if $\eps=\eps_{k_{h}}$ and $\bar{y}_{\eps}:=\bar{x}_{\eps}:=i_{\eps}$
otherwise. Then, if $\eps=\eps_{k_{h}}$, by \eqref{eq:II case} we
have $\left|\bar{y}_{\eps}-\bar{x}_{\eps}\right|=\left|y_{k_{h}}-x_{k_{h}}\right|\le\eps_{k_{h}}^{h}=\eps^{h}$;
otherwise $\left|\bar{y}_{\eps}-\bar{x}_{\eps}\right|=\left|i_{\eps}-i_{\eps}\right|=0$,
and \ref{enu:IIcase} follows. 
\end{proof}
We use the above result to guarantee that the maximum values of any
partial derivative $\partial^{\alpha}f$ are attained on $K$ and
not outside, as precisely stated in the following 
\begin{lem}
\label{lem:supK_eps-RsetminusK_eps} Let $(u_{\eps})$ and $K=[K_{\eps}]$
satisfy Def.~\ref{def:cmptlySuppGSF}, then 
\begin{equation}
\forall\alpha\in\N^{n}\,\forall i=0,\ldots,n:\ \left[\sup_{y\in\R^{n}}|\partial^{\alpha}u_{\eps}^{i}(y)|\right]=\left[\sup_{x\in K_{\eps}}|\partial^{\alpha}u_{\eps}^{i}(x)|\right].\label{eq:supK_eps-RsetminusK_eps}
\end{equation}
\end{lem}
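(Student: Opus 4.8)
The plan is to prove the stated identity by fixing $\alpha\in\N^n$ and a component index $i$, reducing the equality of generalized numbers to a negligibility statement, and then arguing by contradiction with the aid of Lem.~\ref{lem:twoCases}. Writing $A_\eps:=\sup_{y\in\R^n}|\partial^\alpha u_\eps^i(y)|$ and $B_\eps:=\sup_{x\in K_\eps}|\partial^\alpha u_\eps^i(x)|$, the inequality $B_\eps\le A_\eps$ is immediate, so the entire content is to show that $A_\eps-B_\eps$ is negligible (which simultaneously yields that the left-hand net is moderate). First I would settle moderateness of the right-hand side: since each $K_\eps$ is compact by Def.~\ref{def:cmptlySuppGSF} and $\partial^\alpha u_\eps^i$ is continuous, the supremum $B_\eps$ is attained at some $x_\eps^\ast\in K_\eps$; as $(K_\eps)$ is sharply bounded, $(x_\eps^\ast)$ is moderate and $[x_\eps^\ast]\in K\subseteq X$, so $B_\eps=|\partial^\alpha u_\eps^i(x_\eps^\ast)|$ is moderate by the defining moderateness of GSF derivatives at points of the domain.

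For the negligibility I would assume the contrary: there exist $m\in\N$ and a sequence $\eps_k\downarrow0$ with $A_{\eps_k}-B_{\eps_k}>\eps_k^m$. For each $k$ I pick $y_k\in\R^n$ with $|\partial^\alpha u_{\eps_k}^i(y_k)|>B_{\eps_k}+\tfrac12\eps_k^m$ (possible whether or not $A_{\eps_k}$ is finite). Since every $x\in K_{\eps_k}$ satisfies $|\partial^\alpha u_{\eps_k}^i(x)|\le B_{\eps_k}$, the point $y_k$ lies outside $K_{\eps_k}$; choosing a nearest point $x_k\in K_{\eps_k}$ with $d(y_k,x_k)=d(y_k,K_{\eps_k})$, I am exactly in the hypotheses of Lem.~\ref{lem:twoCases}, which I would apply to the sequences $(y_k)$ and $(x_k)$ at $(\eps_k)$.

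Finally I would dispatch the two alternatives of Lem.~\ref{lem:twoCases}. In the first case there is $[y_\eps]\in\exterior{K}$ joining the $y_k$ at the $\eps_k$; by condition~\ref{enu:def-cmptlySuppGSF-u_eps-K_eps-relations} of Def.~\ref{def:cmptlySuppGSF} the net $(\partial^\alpha u_\eps^i(y_\eps))$ is negligible, which along the infinitely many indices where $y_{\eps_k}=y_k$ contradicts $|\partial^\alpha u_{\eps_k}^i(y_k)|>\tfrac12\eps_k^m$. In the second case there are $[\bar y_\eps]=[\bar x_\eps]$ joining $(y_k)$ and $(x_k)$ with $\bar x_\eps\in K_\eps$ for all $\eps$; since $[\bar x_\eps]\in K\subseteq U$ and $\partial^\alpha f^i$ is a well-defined GSF, equal generalized points must have equal values, so $(\partial^\alpha u_\eps^i(\bar y_\eps)-\partial^\alpha u_\eps^i(\bar x_\eps))$ is negligible; along the joined indices this forces $\big||\partial^\alpha u_{\eps_k}^i(y_k)|-|\partial^\alpha u_{\eps_k}^i(x_k)|\big|$ to be negligible, contradicting the gap $|\partial^\alpha u_{\eps_k}^i(y_k)|-|\partial^\alpha u_{\eps_k}^i(x_k)|>\tfrac12\eps_k^m$ (recall $x_k\in K_{\eps_k}$). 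The main obstacle is precisely this dichotomy: the naive comparison of the global supremum with the supremum over $K_\eps$ fails because a maximizing net need neither stay uniformly far from $K_\eps$ (so that Def.~\ref{def:cmptlySuppGSF}\ref{enu:def-cmptlySuppGSF-u_eps-K_eps-relations} applies directly) nor collapse onto $K_\eps$; Lem.~\ref{lem:twoCases} is exactly the device that splits a maximizing sequence into these two manageable situations, and forcing each horn to yield a contradiction is the crux of the argument.
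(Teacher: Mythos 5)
Your proof is correct and takes essentially the same route as the paper's: argue by contradiction, extract a sequence $(y_k)$ realizing the gap together with nearest points $x_k\in K_{\eps_k}$, invoke Lem.~\ref{lem:twoCases}, and kill the first horn via Def.~\ref{def:cmptlySuppGSF}.\ref{enu:def-cmptlySuppGSF-u_eps-K_eps-relations} on $\exterior{K}$ and the second via well-definedness of GSF values at the point $[\bar x_\eps]\in K\subseteq U$. Your preliminary reduction (one-sidedness $B_\eps\le A_\eps$, moderateness of $B_\eps$, and handling possibly infinite $A_{\eps_k}$) is a small tidying of the paper's formulation, not a different argument.
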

\begin{proof}
By contradiction, assume that 
\begin{equation}
\left[\sup_{y\in\R^{n}}|\partial^{\alpha}u_{\eps}^{i}(y)|\right]\ne\left[\sup_{x\in K_{\eps}}|\partial^{\alpha}u_{\eps}^{i}(x)|\right].\label{eq:HpAbs}
\end{equation}
For simplicity of notation, set $v_{\eps}:=\partial^{\alpha}u_{\eps}^{i}$.
Inequality \eqref{eq:HpAbs} means 
\[
\exists a\in\R_{>0}\,\exists\eps_{k}\searrow0\,\forall k\in\N:\ \left|\sup_{y\in\R^{n}}|v_{\eps_{k}}(y)|-\sup_{x\in K_{\eps}}|v_{\eps_{k}}(x)|\right|>\eps_{k}^{a}.
\]
Thus 
\begin{equation}
\forall k\in\N\,\exists y_{k}\in\R^{n}:\ \eps_{k}^{a}+\sup_{x\in K_{\eps_{k}}}\left|v_{\eps_{k}}(x)\right|<\left|v_{\eps_{k}}(y_{k})\right|.\label{eq:y_kFromPropFondSup}
\end{equation}
We can hence apply Lemma \ref{lem:twoCases}. In the first case \ref{enu:Icase}
we have $y:=[y_{\eps}]\in\exterior{K}$ so that $[v_{\eps}(y_{\eps})]=0$
by Def.~\ref{def:cmptlySuppGSF} \ref{enu:def-cmptlySuppGSF-u_eps-K_eps-relations}.
Since $[y_{\eps}]$ joins points of $(y_{k})_{k\in\N}$ at $(\eps_{k})_{k\in\N}$,
from \eqref{eq:y_kFromPropFondSup} and $[v_{\eps}(y_{\eps})]=0$
we get 
\[
\eps_{k}^{a}\le\eps_{k}^{a}+\sup_{x\in K_{\eps_{k}}}\left|v_{\eps_{k}}(x)\right|<\left|v_{\eps_{k}}(y_{k})\right|=\left|v_{\eps_{k}}(y_{\eps_{k}})\right|<\eps_{k}^{a+1},
\]
for $k$ sufficiently big, which gives a contradiction in the first
case. In the second case, i.e., \ref{enu:IIcase} of Lemma \ref{lem:twoCases},
we have $\bar{y}:=[\bar{y}_{\eps}]=\bar{x}:=[\bar{x}_{\eps}]\in K\subseteq U$.
Therefore $[v_{\eps}(\bar{y}_{\eps})]=[v_{\eps}(\bar{x}_{\eps})]$
by the definition of GSF. Since $[\bar{y}_{\eps}]$ and $[\bar{x}_{\eps}]$
join points of $(y_{k})_{k\in\N}$ and $(x_{k})_{k\in\N}$, respectively,
at $(\eps_{k})_{k\in\N}$, for $k$ sufficiently big we have 
\begin{align*}
\eps_{k}^{a}+\sup_{x\in K_{\eps_{k}}}\left|v_{\eps_{k}}(x)\right| & <\left|v_{\eps_{k}}(y_{k})\right|\le\left|v_{\eps_{k}}(y_{k})-v_{\eps_{k}}(x_{k})\right|+\left|v_{\eps_{k}}(x_{k})\right|\\
 & \le\eps_{k}^{a+1}+\sup_{x\in K_{\eps_{k}}}\left|v_{\eps_{k}}(x)\right|,
\end{align*}
leading to a contradiction also in the second case. 
\end{proof}
The previous result will be essential to prove that any compactly
supported GSF can be extended to the whole of $\Rtil^{n}$, and to
define an $\Rtil$-valued norm of $f$ that does not depend on $K$.

\noindent Using Lemma \ref{lem:supK_eps-RsetminusK_eps}, we can prove
that any derivative of a compactly supported GSF is globally bounded
in an appropriate sense: 
\begin{lem}
\label{lem:cmptlySuppGSFareBounded}Let the net $(u_{\eps})$ satisfies
Def.~\ref{def:cmptlySuppGSF}, then 
\[
\forall\alpha\in\N^{n}\,\exists C\in\Rtil\,\forall\beta\in\N^{n}:\ |\beta|\le|\alpha|\Rightarrow\left[\sup_{y\in\R^{n}}|\partial^{\beta}u_{\eps}(y)|\right]\le C.
\]
\end{lem}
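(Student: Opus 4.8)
The plan is to reduce the global suprema over $\R^{n}$ to suprema over the compact sets $K_{\eps}$, where one can exploit both the sharp boundedness of $(K_{\eps})$ and the moderateness built into the definition of a GSF. Working componentwise and using $|\partial^{\beta}u_{\eps}(y)|\le\sum_{i=1}^{d}|\partial^{\beta}u_{\eps}^{i}(y)|$, it suffices to bound each $\left[\sup_{y\in\R^{n}}|\partial^{\beta}u_{\eps}^{i}(y)|\right]$ for $|\beta|\le|\alpha|$ and $i=1,\dots,d$. By Lemma~\ref{lem:supK_eps-RsetminusK_eps}, applied to each component $u_{\eps}^{i}$ with the given $\beta$ in place of $\alpha$, every one of these equals $\left[\sup_{x\in K_{\eps}}|\partial^{\beta}u_{\eps}^{i}(x)|\right]$, so the problem is transferred entirely to the interior of the functionally compact set.

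Next I would show that each net $\bigl(\sup_{x\in K_{\eps}}|\partial^{\beta}u_{\eps}^{i}(x)|\bigr)_{\eps}$ is moderate, i.e.\ defines an element of $\Rtil$. This is precisely the extreme value argument underlying Prop.~\ref{prop:extremeValues}: since $K_{\eps}\comp\R^{n}$ and $\partial^{\beta}u_{\eps}^{i}$ is continuous, the supremum is attained at some $M_{\eps}^{\beta,i}\in K_{\eps}$ (for $\eps$ small, where $K_{\eps}\ne\emptyset$). Because $(K_{\eps})$ is sharply bounded, the net $(M_{\eps}^{\beta,i})$ is moderate, whence $M^{\beta,i}:=[M_{\eps}^{\beta,i}]\in K\subseteq U$. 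The defining property of a GSF then forces $(\partial^{\beta}u_{\eps}(M_{\eps}^{\beta,i}))\in\R_{M}^{d}$, so its $i$-th component $\sup_{x\in K_{\eps}}|\partial^{\beta}u_{\eps}^{i}(x)|$ is moderate and $\left[\sup_{x\in K_{\eps}}|\partial^{\beta}u_{\eps}^{i}(x)|\right]\in\Rtil$.

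Finally, I would set
\[
C:=\sum_{|\beta|\le|\alpha|}\;\sum_{i=1}^{d}\left[\sup_{x\in K_{\eps}}|\partial^{\beta}u_{\eps}^{i}(x)|\right],
\]
a finite sum of nonnegative moderate generalized numbers, so that $C\in\Rtil$. Since all summands are nonnegative, for any fixed $\beta$ with $|\beta|\le|\alpha|$ one obtains $\left[\sup_{y\in\R^{n}}|\partial^{\beta}u_{\eps}(y)|\right]\le\sum_{i=1}^{d}\left[\sup_{x\in K_{\eps}}|\partial^{\beta}u_{\eps}^{i}(x)|\right]\le C$, which is the claim. The only genuine obstacle is controlling the global suprema at all: a priori the $u_{\eps}$ are unconstrained outside $K_{\eps}$ near $\partial K_{\eps}$, so there is no reason the nets $\bigl(\sup_{y\in\R^{n}}|\partial^{\beta}u_{\eps}^{i}(y)|\bigr)$ should even be moderate. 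This is exactly what Lemma~\ref{lem:supK_eps-RsetminusK_eps} resolves; once the suprema are localized to $K_{\eps}$, their moderateness follows from attainment together with sharp boundedness, and the finiteness of the index set $\{\beta\in\N^{n}:|\beta|\le|\alpha|\}$ makes the passage to a single bound $C$ routine.
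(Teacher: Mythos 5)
Your proof is correct and takes essentially the same route as the paper's: both reduce the global suprema to suprema over $K_{\eps}$ via Lemma \ref{lem:supK_eps-RsetminusK_eps}, obtain moderateness of the localized suprema from the extreme value argument of Prop.~\ref{prop:extremeValues}, and combine the finitely many componentwise bounds into a single constant. The only cosmetic difference is your choice of $C$ as the $\ell^{1}$-type sum of the componentwise bounds, where the paper uses $C=1+\sqrt{n}\max_{i,\beta}C_{i\beta}$; both are equally valid.
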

\begin{proof}
\noindent By the extreme value property Prop.~\ref{prop:extremeValues}
we can set $C_{i\beta}:=\left[\sup_{y\in K_{\eps}}|\partial^{\beta}u_{\eps}^{i}(y)|\right]$,
where $\beta\in\N^{n}$, $|\beta|\le|\alpha|$, and $i=1,\ldots,n$.
Set $C:=1+\sqrt{n}\max_{\substack{|\beta|\le|\alpha|\\
1\le i\le n
}
}C_{i\beta}$. Then $C>C_{i\beta}$ and property \eqref{eq:supK_eps-RsetminusK_eps}
yields $\left[\sup_{y\in\R^{n}}|\partial^{\beta}u_{\eps}^{i}(y)|\right]$
$=\left[\sup_{x\in K_{\eps}}|\partial^{\beta}u_{\eps}^{i}(x)|\right]=C_{i\beta}$
and hence $\left[\sup_{y\in\R^{n}}|\partial^{\beta}u_{\eps}(y)|\right]<C$. 
\end{proof}

Moreover, compactly supported GSF can be extended in a unique way
to the entire $\Rtil^{n}$: 
\begin{thm}
\label{thm:extensionCmptlySupportedGSF}Let $\emptyset\ne K\fcmp U\subseteq\Rtil^{n}$
and $f\in\GD_{K}(U,\Rtil^{d})$ be defined by $(u_{\eps})$ which
satisfies Def.~\ref{def:cmptlySuppGSF}. Then $(u_{\eps})$ defines
a GSF of the type $\Rtil^{n}\ra\Rtil^{d}$ and there exists one and
only one $\bar{f}\in\GD_{K}(\Rtil^{n},\Rtil^{d})$ such that: 
\begin{enumerate}
\item $\bar{f}|_{K}=f|_{K}$ 
\item $\bar{f}|_{\text{\emph{ext}}(K)}=0.$ 
\end{enumerate}
\noindent Moreover, this $\bar{f}$ satisfies 
\begin{enumerate}[resume]
\item \label{enu:extension-partialDer}If U is sharply open and $\alpha\in\N^{n}$,
then $\partial^{\alpha}\bar{f}|_{U}=\partial^{\alpha}f$ \end{enumerate}
\begin{enumerate}
\item \label{enu:extension-CGF}$\bar{f}|_{\Rtil_{c}^{n}}$ can be identified
with a Colombeau generalized function. 
\end{enumerate}
\end{thm}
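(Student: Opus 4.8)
The plan is to take as candidate extension the GSF $\bar f$ \emph{defined by the very same net} $(u_\eps)$, now read as a map on all of $\Rtil^{n}$, and to extract every assertion from the global moderate bounds furnished by Lemma~\ref{lem:cmptlySuppGSFareBounded}. First I would show that $(u_\eps)$ indeed defines a GSF $\Rtil^{n}\to\Rtil^{d}$. Since here $\Omega_\eps=\R^{n}$, the inclusion $\Rtil^{n}\subseteq\sint{\R^n}$ is trivial. For any $x=[x_\eps]\in\Rtil^{n}$ and any $\alpha\in\N^{n}$, Lemma~\ref{lem:cmptlySuppGSFareBounded} provides a moderate $C\in\Rtil$ with $\left[\sup_{y}|\partial^{\alpha}u_\eps(y)|\right]\le C$, whence $\left(\partial^{\alpha}u_\eps(x_\eps)\right)\in\R_{M}^{d}$ and in particular $[u_\eps(x_\eps)]\in\Rtil^{d}$; independence of the representative of $x$ is the usual mean value estimate (moderate first derivatives times negligible increment). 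This establishes the first assertion and defines $\bar f$. Because $\bar f$ is defined by $(u_\eps)$, which already satisfies condition~\ref{enu:def-cmptlySuppGSF-u_eps-K_eps-relations} of Def.~\ref{def:cmptlySuppGSF} relative to $K$ (that condition only probes points of $\exterior{K}$, computed identically in $\Rtil^{n}$), we obtain $\bar f\in\GD_{K}(\Rtil^{n},\Rtil^{d})$. Properties~(1) and~(2) are then immediate: for $x\in K\subseteq U$ one has $\bar f(x)=[u_\eps(x_\eps)]=f(x)$, and for $x\in\exterior{K}$ condition~\ref{enu:def-cmptlySuppGSF-u_eps-K_eps-relations} with $\alpha=0$ gives $\bar f(x)=0$.

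The crux, and what I expect to be the main obstacle, is \emph{uniqueness}, precisely because $K\cup\exterior{K}$ does not exhaust $\Rtil^{n}$: ``mixed'' points, lying in $K_\eps$ along some values of $\eps$ and outside it along others, belong to neither set, so the two prescribed conditions do not determine $\bar f$ by mere restriction. Lemma~\ref{lem:twoCases} is exactly the device that circumvents this. Suppose $\bar g\in\GD_{K}(\Rtil^{n},\Rtil^{d})$ also satisfies~(1) and~(2), and let $(w_\eps)$ be a net defining it; then $h:=\bar f-\bar g$, defined by $(v_\eps):=(u_\eps-w_\eps)$, vanishes on $K$ and on $\exterior{K}$. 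To prove $h\equiv0$, I would fix $x=[x_\eps]$ and argue by contradiction: if $h(x)\ne0$ there are $a>0$ and $\eps_{k}\downarrow0$ with $|v_{\eps_{k}}(x_{\eps_{k}})|>\eps_{k}^{a}$. Apply Lemma~\ref{lem:twoCases} to $y_{k}:=x_{\eps_{k}}$, choosing nearest points $x_{k}\in K_{\eps_{k}}$. In case~\ref{enu:Icase} a point $[y_\eps]\in\exterior{K}$ joins the $y_{k}$ at $(\eps_{k})$; since $h|_{\exterior{K}}=0$, the values $v_{\eps_{k}}(y_{\eps_{k}})=v_{\eps_{k}}(x_{\eps_{k}})$ are negligible along a subsequence, contradicting the lower bound. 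In case~\ref{enu:IIcase} a point $[\bar y_\eps]=[\bar x_\eps]\in K$ joins the $y_{k}$ (with $\bar x_\eps\in K_\eps$), and $h|_{K}=0$ yields the same contradiction. Hence $h\equiv0$ and $\bar g=\bar f$.

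It remains to verify~\ref{enu:extension-partialDer} and~\ref{enu:extension-CGF}. For~\ref{enu:extension-partialDer}: by construction $\bar f|_{U}=f$, and derivatives of GSF on a sharply open domain are computed through any defining net, so $\partial^{\alpha}\bar f|_{U}=[\partial^{\alpha}u_\eps(x_\eps)]=\partial^{\alpha}f$ on $U$. For~\ref{enu:extension-CGF}: Lemma~\ref{lem:cmptlySuppGSFareBounded} shows that for every $\alpha$ the net $\sup_{y}|\partial^{\alpha}u_\eps(y)|$ is moderate, whence $(u_\eps)\in\esm(\R^{n})^{d}$ and $(u_\eps)$ determines a class in $\gs(\R^{n})^{d}$. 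Under the canonical isomorphism $\Gcinf(\otilc)\cong\gs(\Omega)$ with $\Omega=\R^{n}$, this class is identified with $\bar f|_{\Rtil_{c}^{n}}$, which gives the desired identification.
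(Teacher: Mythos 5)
Your proposal is correct and takes essentially the same route as the paper's own proof: existence of the global GSF from the moderate global bounds of Lemma~\ref{lem:cmptlySuppGSFareBounded}, uniqueness by contradiction via the dichotomy of Lemma~\ref{lem:twoCases} applied along a sequence $\eps_k\downarrow 0$ where the two candidates differ, property~(iii) from the fact that derivatives on sharply open sets are computed through any defining net, and property~(iv) from the isomorphism $\Gcinf(\Rtil_c^n,\Rtil^d)\simeq\gs(\R^n)^d$. Passing to the difference $h=\bar f-\bar g$ instead of comparing the values of the two defining nets directly, as the paper does, is only a cosmetic variation.
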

\begin{proof}
The existence part follows by showing that for all $\alpha\in\N^{n}$
and all $[x_{\eps}]\in\Rtil^{n}$ we have $\left(\partial^{\alpha}u_{\eps}(x_{\eps})\right)\in\R_{M}^{d}$.
This follows since Lemma \ref{lem:cmptlySuppGSFareBounded} yields
that for any $\alpha\in\N^{n}$ there exists some $(C_{\alpha\eps})\in\R_{M}^{n}$
such that $|\sup_{x\in\R^{n}}\partial^{\alpha}u_{\eps}(x)|\le C_{\alpha\eps}$
for all $\eps$ small. To prove uniqueness, let $g\in\GD_{K}(\Rtil^{n},\Rtil^{d})$
be such that $g|_{K}=\bar{f}|_{K}=f$ and $g|_{\exterior{K}}=\bar{f}|_{\exterior{K}}=0$.
By contradiction, assume that $\bar{f}(y)=[u_{\eps}(y_{\eps})]\ne g(y)=[v_{\eps}(y_{\eps})]$,
for some $y=[y_{\eps}]\in\Rtil^{n}$, where $(v_{\eps})$ defines
$g$. Thus 
\begin{equation}
\exists a\in\R_{>0}\,\exists\eps_{k}\searrow0\,\forall k\in\N:\ \left|u_{\eps_{k}}(y_{\eps_{k}})-v_{\eps_{k}}(y_{\eps_{k}})\right|>\eps_{k}^{a}.\label{eq:fbarNe-gAty}
\end{equation}
By Lemma \ref{lem:twoCases}, this leaves two possibilities. In the
first one, there exists a point $z=[z_{\eps}]\in\exterior{K}$ which
joins points of the sequence $(y_{\eps_{k}})_{k\in\N}$ at $(\eps_{k})_{k\in\N}$.
Therefore $g(z)=[v_{\eps}(z_{\eps})]=\bar{f}(z)=[u_{\eps}(z_{\eps})]=0$,
which gives a contradiction at $\eps=\eps_{k}$ when compared with
\eqref{eq:fbarNe-gAty}. In the second one, there exists a point $\bar{z}=[\bar{z}_{\eps}]\in K$
joining points of the sequence $(y_{\eps_{k}})_{k\in\N}$ at $(\eps_{k})_{k\in\N}$.
Once again, we have $g(\bar{z})=[v_{\eps}(\bar{z}_{\eps})]=\bar{f}(\bar{z})=[u_{\eps}(\bar{z}_{\eps})]=f(z)$,
in contradiction to \eqref{eq:fbarNe-gAty}.

Furthermore, \cite[Thm. 31]{GKV} implies claim \ref{enu:extension-partialDer}.
Finally, \ref{enu:extension-CGF} follows from the isomorphism $\Gcinf(\Rtil_{c},\Rtil^{d})\simeq\gs(\R)^{d}$. 
\end{proof}
We also have this simple but useful result: 
\begin{lem}
\label{lem:derivativeOfCmptlySuppGSF}Let $\emptyset\ne K\fcmp U\subseteq\Rtil^{n}$,
$U$ be a sharply open set, $f\in\GD_{K}(U,\Rtil^{d})$ and $\alpha\in\N^{n}$.
Then $\partial^{\alpha}f\in\GD_{K}(U,\Rtil^{d})$. 
\end{lem}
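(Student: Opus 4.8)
The plan is to verify directly that the net $(\partial^{\alpha}u_{\eps})$, where $(u_{\eps})$ is a net satisfying Def.~\ref{def:cmptlySuppGSF} for $f$, witnesses membership of $\partial^{\alpha}f$ in $\GD_{K}(U,\Rtil^{d})$. First I would recall that since $U$ is sharply open and $f\in\Gcinf(U,\Rtil^{d})$, the partial derivative $\partial^{\alpha}f$ is again a GSF on $U$; this is the standard fact that GSF are closed under differentiation (the same statement invoked via \cite[Thm. 31]{GKV} in the proof of Thm.~\ref{thm:extensionCmptlySupportedGSF}\ref{enu:extension-partialDer}). Concretely, if $(u_{\eps})$ defines $f$ on $U$, then $(\partial^{\alpha}u_{\eps})$ defines $\partial^{\alpha}f$, because for every $x=[x_{\eps}]\in U$ and every $\beta\in\N^{n}$ we have $\partial^{\beta}(\partial^{\alpha}u_{\eps})(x_{\eps})=\partial^{\alpha+\beta}u_{\eps}(x_{\eps})$, and these nets are moderate by the definition of GSF applied to $f$.

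Next I would check the two defining conditions of $\GD_{K}(U,\Rtil^{d})$ for the net $(\partial^{\alpha}u_{\eps})$. Condition \ref{enu:def-cmptlySuppGSF-K_epsSharplyBound} holds since $\partial^{\alpha}u_{\eps}\in\Coo(\R^{n},\R^{d})$ and, by the previous paragraph, $(\partial^{\alpha}u_{\eps})$ defines $\partial^{\alpha}f$. For condition \ref{enu:def-cmptlySuppGSF-u_eps-K_eps-relations}, I must show that for every $\gamma\in\N^{n}$ and every $[x_{\eps}]\in\exterior{K}$ one has $\bigl[\partial^{\gamma}(\partial^{\alpha}u_{\eps})(x_{\eps})\bigr]=0$. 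But $\partial^{\gamma}(\partial^{\alpha}u_{\eps})=\partial^{\alpha+\gamma}u_{\eps}$, so this is exactly condition \ref{enu:def-cmptlySuppGSF-u_eps-K_eps-relations} for the original net $(u_{\eps})$ applied to the multi-index $\alpha+\gamma$; since that condition holds for \emph{all} multi-indices by hypothesis, the required vanishing follows immediately.

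The one point deserving care is that both $f$ and $\partial^{\alpha}f$ are required to be compactly supported in the \emph{same} functionally compact set $K$. This causes no difficulty precisely because Def.~\ref{def:cmptlySuppGSF}\ref{enu:def-cmptlySuppGSF-u_eps-K_eps-relations} quantifies over all $\alpha\in\N^{n}$: differentiating merely re-indexes the family of conditions, and the set $\exterior{K}$ on which the derivatives are required to vanish is unchanged (it is intrinsic to $K$ by Cor.~\ref{lem:exteriorDoesNotDepOnK_eps}). Hence no enlargement of the support set is needed. Assembling these observations, $(\partial^{\alpha}u_{\eps})$ satisfies Def.~\ref{def:cmptlySuppGSF} with the same $K$, which yields $\partial^{\alpha}f\in\GD_{K}(U,\Rtil^{d})$ and completes the proof. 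There is no substantial obstacle here; the lemma is essentially a bookkeeping consequence of the ``for all $\alpha$'' form of the defining condition, which is exactly why the authors flagged it as ``simple but useful.''
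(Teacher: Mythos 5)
Your proof is correct: the paper in fact states this lemma \emph{without} proof (flagging it as ``simple but useful''), and your verification---that $(\partial^{\alpha}u_{\eps})$ defines $\partial^{\alpha}f$ on the sharply open set $U$ by \cite[Thm. 31]{GKV}, while condition \ref{enu:def-cmptlySuppGSF-u_eps-K_eps-relations} of Def.~\ref{def:cmptlySuppGSF} for $(\partial^{\alpha}u_{\eps})$ at a multi-index $\gamma$ is just that same condition for $(u_{\eps})$ at $\alpha+\gamma$---is exactly the intended bookkeeping argument. You also correctly note the only delicate point, namely that the \emph{same} $K$ works because $\exterior{K}$ is unchanged and the defining condition quantifies over all multi-indices, so no enlargement of the support set is required.
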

Thm.~\ref{thm:extensionCmptlySupportedGSF} opens the possibility
to restrict our attention to compactly supported GSF whose domain
is the entire $\Rtil^{n}$, as stated in the following 
\begin{thm}
\label{thm:globallyDefGDK}For $\emptyset\ne K\fcmp\Rtil^{n}$ and
$Y\subseteq\Rtil^{d}$ set 
\begin{equation}
\GD^{\text{\emph{g}}}(K,Y):=\left\{ f\in\Gcinf(\Rtil^{n},Y)\mid f|_{\exterior{K}}=0\right\} ,\label{eq:GDKGlobal}
\end{equation}
where the $\text{\emph{g}}$ superscript means \emph{globally defined.}
Let $K\subseteq U\subseteq\Rtil^{n}$, then 
\begin{enumerate}
\item \label{enu:globalSubGDK}If $f\in\GD^{\text{g}}(K,Y)$, then $f|_{U}\in\GD_{K}(U,Y)$. 
\item \label{enu:GDKSubGlobal}If $f\in\GD_{K}(U,\Rtil^{d})$, then $\exists!\bar{f}\in\GD^{\text{\emph{g}}}(K,\Rtil^{d}):\ \bar{f}|_{K}=f|_{K}$. 
\item \label{enu:globalEqualsGDK}$\GD^{\text{\emph{g}}}(K,Y)=\GD_{K}(\Rtil^{n},Y)$.
\end{enumerate}
\end{thm}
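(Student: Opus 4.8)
The three claims are tightly linked, and I would organise the argument so that \ref{enu:globalSubGDK} carries all of the analytic content, with \ref{enu:globalEqualsGDK} and \ref{enu:GDKSubGlobal} then following by unwinding the definitions and invoking Thm.~\ref{thm:extensionCmptlySupportedGSF}. Indeed, once \ref{enu:globalSubGDK} is available, the inclusion $\GD^{\text{g}}(K,Y)\subseteq\GD_{K}(\Rtil^{n},Y)$ of \ref{enu:globalEqualsGDK} is exactly \ref{enu:globalSubGDK} with $U=\Rtil^{n}$, while the reverse inclusion comes from specialising condition \ref{enu:def-cmptlySuppGSF-u_eps-K_eps-relations} of Def.~\ref{def:cmptlySuppGSF} to $\alpha=0$: if $f\in\GD_{K}(\Rtil^{n},Y)$ is defined by $(u_{\eps})$, then $[u_{\eps}(x_{\eps})]=0$ for all $[x_{\eps}]\in\exterior{K}$, that is $f|_{\exterior{K}}=0$, so $f\in\GD^{\text{g}}(K,Y)$.

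For the core statement \ref{enu:globalSubGDK}, I would take $f\in\GD^{\text{g}}(K,Y)$ and fix a defining net $(u_{\eps})$ with $u_{\eps}\in\Coo(\R^{n},\R^{d})$ (available by \cite[Lem.~28]{GKV}). The restriction $f|_{U}$ is again a GSF into $Y$, so the only point to check is condition \ref{enu:def-cmptlySuppGSF-u_eps-K_eps-relations} of Def.~\ref{def:cmptlySuppGSF}, namely $[\partial^{\alpha}u_{\eps}(x_{\eps})]=0$ for every $\alpha\in\N^{n}$ and every $[x_{\eps}]\in\exterior{K}$. The hypothesis supplies only the case $\alpha=0$ (the vanishing of $f$ on $\exterior{K}$), so the genuine work is to \emph{upgrade} the vanishing of $f$ to the vanishing of all its derivatives. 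The plan is to use that $\exterior{K}$ is a sharply open set: by Lemma~\ref{lem:exteriorAndKc} we have $\exterior{K}=\sint{K_{\eps}^{c}}$, and $[x_{\eps}]$ lies in this strongly internal set exactly when $[d(x_{\eps},K_{\eps})]>0$; if $d(x_{\eps},K_{\eps})>\eps^{q}$ for $\eps$ small, then every $[y_{\eps}]\in B_{[\eps^{q+1}]}([x_{\eps}])$ satisfies $[d(y_{\eps},K_{\eps})]>0$ as well, so the whole sharp ball lies in $\exterior{K}$.

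With $\exterior{K}$ recognised as sharply open and $f\equiv0$ there, I would finish through the locality of GSF differentiation. Since $(\partial^{\alpha}u_{\eps})$ defines $\partial^{\alpha}f$ and the latter is computed representative-wise on the sharply open set $\exterior{K}$, \cite[Thm.~31]{GKV} gives $(\partial^{\alpha}f)|_{\exterior{K}}=\partial^{\alpha}\big(f|_{\exterior{K}}\big)=0$. Hence $[\partial^{\alpha}u_{\eps}(x_{\eps})]=\partial^{\alpha}f([x_{\eps}])=0$ for all $[x_{\eps}]\in\exterior{K}$, which is precisely \ref{enu:def-cmptlySuppGSF-u_eps-K_eps-relations}, so $f|_{U}\in\GD_{K}(U,Y)$. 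I expect this upgrade --- from $f|_{\exterior{K}}=0$ to the vanishing of every derivative --- to be the only real obstacle, resting on the sharp openness of $\exterior{K}$ and on the representative-wise computation of derivatives over sharply open sets.

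For \ref{enu:GDKSubGlobal}, given $f\in\GD_{K}(U,\Rtil^{d})$, Thm.~\ref{thm:extensionCmptlySupportedGSF} furnishes $\bar f\in\GD_{K}(\Rtil^{n},\Rtil^{d})$ with $\bar f|_{K}=f|_{K}$ and $\bar f|_{\exterior{K}}=0$; by \ref{enu:globalEqualsGDK} this $\bar f$ already belongs to $\GD^{\text{g}}(K,\Rtil^{d})$, which settles existence. For uniqueness, suppose $g\in\GD^{\text{g}}(K,\Rtil^{d})$ with $g|_{K}=f|_{K}$. Then $g\in\GD_{K}(\Rtil^{n},\Rtil^{d})$ by \ref{enu:globalEqualsGDK}, and $g|_{\exterior{K}}=0$ directly from the definition of $\GD^{\text{g}}$; thus $g$ fulfils both normalising conditions of the extension in Thm.~\ref{thm:extensionCmptlySupportedGSF}, and the uniqueness asserted there forces $g=\bar f$.
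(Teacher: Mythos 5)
Your proof is correct and follows essentially the same route as the paper: part (i) via the sharp openness of $\exterior{K}=\sint{K_{\eps}^{c}}$ (Lemma \ref{lem:exteriorAndKc}) combined with the locality of GSF differentiation (\cite[Thm.~31]{GKV}), part (iii) by specializing (i) to $U=\Rtil^{n}$ and reading off Def.~\ref{def:cmptlySuppGSF} with $\alpha=0$, and part (ii) from Thm.~\ref{thm:extensionCmptlySupportedGSF}. Your only deviation --- establishing (iii) before (ii) and using it to check that a competing extension $g\in\GD^{\text{g}}(K,\Rtil^{d})$ meets both hypotheses of the uniqueness clause of Thm.~\ref{thm:extensionCmptlySupportedGSF} --- merely spells out what the paper's terse citation ``this is exactly Thm.~\ref{thm:extensionCmptlySupportedGSF}'' leaves implicit.
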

\begin{proof}
\ref{enu:globalSubGDK}: Let $(u_{\eps})$ be any net that defines
$f$, so that $u_{\eps}\in\Coo(\R^{n},\R^{d})$. Clearly $f|_{U}\in\Gcinf(U,Y)$,
so for all $\alpha\in\N^{n}$ and $x=[x_{\eps}]\in\exterior{K}$ it
remains to show that $[\partial^{\alpha}u_{\eps}(x_{\eps})]=0$. By
Lem.~\ref{lem:exteriorAndKc}, we get $\exterior{K}=\sint{K_{\eps}^{c}}$,
which is a sharply open set. So $x\in\exterior{K}$ yields $B_{r}(x)\subseteq\exterior{K}$
for some $r\in\Rtil_{>0}$, and hence $f|_{B_{r}(x)}=0$. Thereby
$\partial^{\alpha}f(x)=[\partial^{\alpha}u_{\eps}(x_{\eps})]=0$.

\ref{enu:GDKSubGlobal}: This is exactly Thm.~\ref{thm:extensionCmptlySupportedGSF}.

\ref{enu:globalEqualsGDK}: This follows directly from \ref{enu:globalSubGDK}
by setting $U=\Rtil^{n}$, and by Def.~\ref{def:cmptlySuppGSF}
which yields $f|_{\exterior{K}}=0$. 
\end{proof}
For the extension of property \ref{enu:GDKSubGlobal} to arbitrary
codomains $Y\subseteq\Rtil^{d}$ (provided that $U$ is strongly internal)
see Thm.~\ref{thm:extensionAndCodomains} below.

Note explicitly that in \eqref{eq:GDKGlobal}, instead of the more
technical properties of Def.~\ref{def:cmptlySuppGSF}, we have a
concise and simpler pointwise condition. Notwithstanding this and several
other positive aspects of definition \eqref{eq:GDKGlobal} (see the
second part of Lem.~\ref{lem:suppAndSupp-f_eps} and the following
Thm.~\ref{thm:globallyDefGSFandSupport}), in the present work, we
prefer not to change Def.~\ref{def:cmptlySuppGSF} in favor of \eqref{eq:GDKGlobal}:
On the one hand, Def.~\ref{def:cmptlySuppGSF} is nearer to the classical
definition of $\D_{K}(\Omega)$, where the domain is $\Omega\subseteq\R^{n}$;
on the other hand, for applications of these notions to geometry,
locally defined functions are a more natural setting. We can also
summarize these results by saying that compactly supported
GSF have a good notion of being compactly supported, and globally defined
compactly supported GSF are in addition well-behaved with respect to the concept
of support introduced above. This is also clearly confirmed by the following 
\begin{thm}
\label{thm:globallyDefGSFandSupport}Let $Y\subseteq\Rtil^{d}$, and
$f\in\Gcinf(\Rtil^{n},Y)$ such that $\text{\emph{supp}}(f)\ne\emptyset$.
Then $f\in\GD_{K}(\Rtil^{n},Y)$ if and only if $\text{\emph{supp}}(f)\subseteq K$.\end{thm}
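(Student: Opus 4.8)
The statement is an equivalence whose two directions have very unequal weight. The \emph{only if} direction is essentially already in hand: if $f\in\GD_K(\Rtil^n,Y)$ then, since $K\fcmp\Rtil^n$ is in particular sharply bounded, we have $\exterior{K}\ne\emptyset$ and $K\in\mathcal{K}_f$, so Rem.~\ref{rem:defCmptlySuppGSF}.\ref{enu:suppSubsetK} (which is Lem.~\ref{lem:suppAndSupp-f_eps}) directly yields $\text{supp}(f)\subseteq K$. Thus the entire content of the theorem lies in the converse, and the assumption $\text{supp}(f)\ne\emptyset$ will be used only there.

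For the \emph{if} direction, the first move I would make is to convert the conclusion into a pointwise condition. By Thm.~\ref{thm:globallyDefGDK}.\ref{enu:globalEqualsGDK} we have $\GD_K(\Rtil^n,Y)=\GD^{\text{g}}(K,Y)=\{g\in\Gcinf(\Rtil^n,Y)\mid g|_{\exterior{K}}=0\}$, so it suffices to prove that $\text{supp}(f)\subseteq K$ forces $f|_{\exterior{K}}=0$. I would represent $f$ by a global net $u_\eps\in\Coo(\R^n,\R^d)$ (possible by \cite[Lem.~28]{GKV}) and recall from Lem.~\ref{lem:exteriorAndKc} that $\exterior{K}=\sint{K_\eps^c}$, which means precisely that $x\in\exterior{K}$ iff $d(x_\eps,K_\eps)>\eps^s$ for $\eps$ small and some $s\in\N$.

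The core of the argument is by contradiction. Suppose $f(x)=[u_\eps(x_\eps)]\ne0$ for some $x\in\exterior{K}$. Then $(u_\eps(x_\eps))$ is not negligible, so there are $q\in\N$ and a sequence $\eps_k\downarrow0$ with $|u_{\eps_k}(x_{\eps_k})|>\eps_k^q$; set $S:=\{\eps_k\mid k\in\N\}$, noting $e_S\ne0$ and $e_{S^c}\ne0$. Here the hypothesis $\text{supp}(f)\ne\emptyset$ enters decisively: since $\text{supp}(f)=\overline{\{|f|>0\}}$ the set $\{|f|>0\}$ is non-empty, so I may fix $x_0\in\{|f|>0\}\subseteq\text{supp}(f)\subseteq K$. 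As $|f(x_0)|>0$, by Lem.~\ref{lem:mayer} there is $m_0\in\N$ with $|u_\eps((x_0)_\eps)|\ge\eps^{m_0}$ for \emph{all} small $\eps$, and since $x_0\in K$ I may choose the representative so that $(x_0)_\eps\in K_\eps$. The key construction is to \emph{interleave} these two points: put $z_\eps:=x_\eps$ for $\eps\in S$ and $z_\eps:=(x_0)_\eps$ for $\eps\notin S$, i.e.\ $z=x\,e_S+x_0\,e_{S^c}$, which is moderate and lies in the domain $\Rtil^n$. On $S$ one has $|u_\eps(z_\eps)|>\eps^q$ and on $S^c$ one has $|u_\eps(z_\eps)|\ge\eps^{m_0}$, so $|u_\eps(z_\eps)|\ge\eps^{\max(q,m_0)}$ for all small $\eps$; hence $|f(z)|\ge\diff{\eps}^{\max(q,m_0)}>0$ and therefore $z\in\{|f|>0\}\subseteq K$.

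It remains to extract the contradiction from $z\in K$ together with $x\in\exterior{K}$. Since $z\in[K_\eps]$, some representative $(z'_\eps)$ satisfies $z'_\eps\in K_\eps$ for $\eps$ small, with $z'_\eps-z_\eps$ negligible; restricting to $\eps=\eps_k\in S$, where $z_{\eps_k}=x_{\eps_k}$, gives $d(x_{\eps_k},K_{\eps_k})\le|x_{\eps_k}-z'_{\eps_k}|=O(\eps_k^m)$ for every $m$. This is incompatible with $x\in\exterior{K}$, which forces $d(x_{\eps_k},K_{\eps_k})>\eps_k^s$ for $k$ large. The contradiction shows $f(x)=0$ for all $x\in\exterior{K}$, i.e.\ $f\in\GD_K(\Rtil^n,Y)$. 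I expect the main obstacle to be exactly the idempotent/zero-divisor subtlety: the hypothesis $f(x)\ne0$ is by itself too weak, since it may hold only on a ``thin'' set $S$ while $|f(x)|\not>0$ (indeed without the nonempty-support hypothesis the implication fails). The role of $\text{supp}(f)\ne\emptyset$ is precisely to furnish the ``positive for all $\eps$'' point $x_0\in K$ that, through the interleaving $z=x\,e_S+x_0\,e_{S^c}$, upgrades the thin nonvanishing at $x$ into a genuinely strictly positive value $|f(z)|>0$.
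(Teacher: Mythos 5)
Your proof is correct and follows essentially the same route as the paper: the ``only if'' part via Rem.~\ref{rem:defCmptlySuppGSF}.\ref{enu:suppSubsetK}, and the converse by reducing through Thm.~\ref{thm:globallyDefGDK}.\ref{enu:globalEqualsGDK} to $f|_{\exterior{K}}=0$ and then interleaving $z=xe_{S}+x_{0}e_{S^{c}}$ with a strictly positive point $x_{0}$ supplied by $\text{supp}(f)\ne\emptyset$, so that $|f(z)|>0$ forces $z\in K$ and contradicts $x\in\exterior{K}$. The only cosmetic difference is that you construct $S$ explicitly from a non-negligibility sequence and re-derive by hand (via distance estimates on representatives) the step the paper obtains by citing Lem.~\ref{lem:charactExtWithe_S}.
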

\begin{proof} One implication is immediate from  
Remark \ref{rem:defCmptlySuppGSF}.\ref{enu:suppSubsetK}. 
Conversely, if $\text{{supp}}(f)\subseteq K$, 
by Thm.~\ref{thm:globallyDefGDK}.\ref{enu:globalEqualsGDK} we
have to show that $f|_{\exterior{K}}=0$. Let $x\in\exterior{K}$
but assume that $f(x)\ne0$. Pick any $y\in\left\{ y'\in\Rtil^{n}\mid|f(y')|>0\right\} $,
which is non-empty because $\text{supp}(f)\ne\emptyset$. Let $(u_{\eps})$
be a net that defines $f$ and let $y=[y_{\eps}]$.
Since $f(x)\ne0$,
there exists $S\subseteq I$ such that $e_S\not=0$ and $|f(x)e_S|>0$. 
Then setting $z:=xe_{S}+ye_{S^{c}}$, we have $|f(z)|>0$
and hence $z\in\text{supp}(f)\subseteq K$. But then $xe_{S}=ze_{S}\in Ke_{S}$
which, by Lem.~\ref{lem:charactExtWithe_S}, yields $x\notin\exterior{K}$,
a contradiction. 
\end{proof}

\subsection{Examples of compactly supported GSF}

A first class of examples comes by considering each $\phi\in\D_{K}(\Omega)$,
$K\Subset\Omega\subseteq\R^{n}$. Indeed, it suffices to set $K_{\eps}:=K$
and $u_{\eps}(x):=\phi(x)$ if $x\in\Omega$ and $u_{\eps}(x):=0$
otherwise to have that $\phi\in\GD_{\widetilde{K}}(\otilc,\Rtil)$,
where we recall that $\widetilde{K}=[K]$. Therefore $\D_{K}(\Omega)\subseteq\GD_{\widetilde{K}}(\otilc,\Rtil)$.

Moreover, since any given CGF can be defined by a net $(u_{\eps})$
of maps with sharply bounded compact supports, we have the following
result: 
\begin{thm}
\label{thm:CGFasCompctlySuppGSF}Let $\Omega$ be an open subset of
$\R^{n}$ and $J=[J_{\eps}]\in\Rtil$ be a CGN such that $\lim_{\eps\to0^{+}}J_{\eps}=+\infty$.
Set $K_{\eps}:=\{x\in\Omega\mid|x|\le J_{\eps}\}$ and $K:=[K_{\eps}]$.
Then for all $f\in\Gcinf(\otilc,\Rtil^{d})$ there exists $\bar{f}\in\GD_{K}(\Rtil^{n},\Rtil^{d})$
such that $\bar{f}|_{\otilc}=f$.\end{thm}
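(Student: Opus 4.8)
The plan is to realize $\bar{f}$ by a single net $\ue$ obtained by cutting off a representative of $f$ so that its support sits inside $K_\eps$, and then to read off the conclusion from Definition~\ref{def:cmptlySuppGSF}. First I would use the isomorphism $\Gcinf(\otilc)\simeq\gs(\Omega)$ to fix a representative $\ve$, $v_\eps\in\cinfty(\Omega,\R^d)$, with $f([x_\eps])=[v_\eps(x_\eps)]$ for every $[x_\eps]\in\otilc$. Since $J_\eps\to+\infty$, every compact $H\comp\Omega$ satisfies $H\subseteq K_\eps$ for $\eps$ small, so $\otilc\subseteq K=[K_\eps]$, and $K\fcmp\Rtil^n$ by Lemma~\ref{lem:equdef}. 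I would then pick an exhaustion $\Omega=\bigcup_m A_m$ by compact sets with $A_m\subseteq\mathrm{int}(A_{m+1})$ and cut-off functions $\zeta_m\in\Coo(\R^n)$, $0\le\zeta_m\le1$, with $\zeta_m=1$ on $A_m$ and $\supp\zeta_m\subseteq\mathrm{int}(A_{m+1})\comp\Omega$.

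The heart of the argument is a diagonal choice of the cut-off level. I would select $m(\eps)\nearrow\infty$ as $\eps\to0^+$ slowly enough that, writing $u_\eps:=\zeta_{m(\eps)}v_\eps$ (extended by $0$ to $\R^n$, which is legitimate since $\supp\zeta_{m(\eps)}\comp\Omega$), two properties hold at once: (i) $\supp u_\eps\subseteq A_{m(\eps)+1}\subseteq K_\eps$, which is possible because each $A_m$ is bounded while $J_\eps\to+\infty$; and (ii) $\ue$ defines a generalized smooth function on all of $\Rtil^n$, i.e.\ $(\partial^\alpha u_\eps(x_\eps))\in\R_M^d$ for every $[x_\eps]\in\Rtil^n$ and every $\alpha$. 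Achieving (ii) while still forcing $m(\eps)\to\infty$ is precisely where one invokes that any CGF admits a defining net of maps with sharply bounded compact supports: the product rule bounds $\partial^\alpha u_\eps$ on $A_{m(\eps)+1}$ through the level-dependent constants $\sup|\partial^\beta\zeta_{m(\eps)}|$ and the moderate bounds of $\ve$ there, and one tunes the speed of $m(\eps)\to\infty$ against these bounds and the growth of $J_\eps$. I expect this balancing --- manufacturing a net that is simultaneously supported inside $K_\eps$ and globally moderate in the GSF sense --- to be the main obstacle; everything afterwards is formal.

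Granting a net $\ue$ with (i)--(ii), I would conclude as follows. For the support condition in Definition~\ref{def:cmptlySuppGSF}, let $[x_\eps]\in\exterior{K}=\sint{K_\eps^c}$ by Lemma~\ref{lem:exteriorAndKc}; then $d(x_\eps,K_\eps)>\eps^{q}$ for some $q$, so $x_\eps\notin\supp u_\eps\subseteq K_\eps$ for $\eps$ small and hence $[\partial^\alpha u_\eps(x_\eps)]=0$ for all $\alpha$. Thus $\ue$ satisfies Definition~\ref{def:cmptlySuppGSF} relative to $K$, and the GSF $\bar{f}$ it defines lies in $\GD_K(\Rtil^n,\Rtil^d)$. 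For the restriction, take $[x_\eps]\in\otilc$; then $x_\eps\in A_m$ for some fixed $m$ and $\eps$ small, and since $m(\eps)\to\infty$ we have $\zeta_{m(\eps)}(x_\eps)=1$, so $u_\eps(x_\eps)=v_\eps(x_\eps)$ for $\eps$ small and therefore $\bar{f}([x_\eps])=[v_\eps(x_\eps)]=f([x_\eps])$. This gives $\bar{f}|_{\otilc}=f$ and finishes the proof; alternatively, one may first place the cut-off net into $\GD_K(U,\Rtil^d)$ on a suitable strongly internal $U\supseteq\otilc$ and then apply Theorem~\ref{thm:extensionCmptlySupportedGSF} to pass to the global extension on $\Rtil^n$.
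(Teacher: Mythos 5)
Your overall route is the one the paper takes: fix a global representative $(v_\eps)$ of $f$, multiply by cut-offs equal to $1$ on sets exhausting $\Omega$ and supported inside $K_\eps$, verify Definition~\ref{def:cmptlySuppGSF} through Lemma~\ref{lem:exteriorAndKc}, and recover $\bar{f}|_{\otilc}=f$ because any compactly supported point eventually lies where the cut-off equals $1$. The only structural difference is where you cut: the paper uses a single net $\chi_\eps$ with $\chi_\eps=1$ on $\{x\in\Omega\mid |x|<J_\eps/2\}$ and $\supp\chi_\eps\subseteq K_\eps$, while you cut along a fixed compact exhaustion $A_{m(\eps)}$ with a diagonally chosen index $m(\eps)\to\infty$. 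That difference is cosmetic; your support and restriction verifications are correct and match the paper's. What is not cosmetic is your step (ii), which you rightly single out as the heart of the matter.

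Step (ii) is a genuine gap, and no tuning of $m(\eps)$ can close it. Since $\zeta_{m(\eps)}=1$ on $A_{m(\eps)}$, you always have $\sup_{\R^n}|u_\eps|\ge\sup_{A_{m(\eps)}}|v_\eps|$, and the moderateness exponent of $\sup_{A_m}|v_\eps|$ can grow unboundedly in $m$: take $\Omega=\R$, $\rho\in\D\left(\left(-\frac14,\frac14\right)\right)$ with $\rho(0)=1$, and $v_\eps(x):=\sum_{m\ge1}\rho(x-m)\,\eps^{-m}$. Then $(v_\eps)\in\esm(\R)$ (on $[-R,R]_\R$ all sups are $O(\eps^{-R})$), yet $\sup_{A_{m(\eps)}}|v_\eps|\ge\eps^{-m(\eps)+1}$ is super-polynomial for \emph{every} $m(\eps)\to\infty$, however slow; so your net $(u_\eps)$ is never moderate at the moderate points sitting on the bump of index $m(\eps)$. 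Worse, the failure is intrinsic to the statement, not to your construction: if some $\bar{f}\in\GD_{K}(\Rtil,\Rtil)$ (any $K\fcmp\Rtil$) satisfied $\bar{f}|_{\Rtil_c}=f$ for this $f$, then by Prop.~\ref{prop:genNormForDifferentNets} and Lemma~\ref{lem:cmptlySuppGSFareBounded} we would have $\Vert\bar{f}\Vert_0\le\diff{\eps}^{-N}$ for some fixed $N$, while $|\bar{f}(m)|=|f(m)|=\diff{\eps}^{-m}\le\Vert\bar{f}\Vert_0$ for every constant $m\in\N$ --- impossible for $m>N$. (Your fallback through Thm.~\ref{thm:extensionCmptlySupportedGSF} fails for the same reason: the bad points already lie in $K\subseteq U$.) You should know that the paper's own proof is silent on exactly this verification: it asserts $\bar{f}:=[u_\eps(-)]\in\GD_K(\Rtil^n,\Rtil^d)$ without checking moderateness of $(\partial^\alpha u_\eps(x_\eps))$ at moderate points that are not compactly supported in $\Omega$ (e.g.\ $x_\eps\approx J_\eps/2$ in the example above), so the obstacle you honestly flagged but could not overcome is a real one. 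Your diagonal argument does go through, and the theorem holds, under the additional hypothesis that $f$ admits a representative whose moderateness exponents are uniform over an exhaustion of $\Omega$ --- as is the case, e.g., for $\iota(\delta)$ and for embeddings of tempered-type nets --- and that restricted scope is what your proof actually establishes.
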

\begin{proof}
Set $U_{\eps}:=\{x\in\Omega\mid|x|<\frac{1}{2}J_{\eps}\}$ so that
$U_{\eps}\subseteq K_{\eps}$ for $\eps$ small. Let $\chi_{\eps}\in\Coo(\Omega,\R)$
be such that $\chi|_{U_{\eps}}=1$ and $\text{supp}(\chi_{\eps})\subseteq K_{\eps}$.
Let $f\in\Gcinf(\otilc,\Rtil^{d})$ be represented by $(v_{\eps})$,
with $v_{\eps}\in\Coo(\R^{n},\R^{d})$, and set $u_{\eps}:=\chi_{\eps}\cdot v_{\eps}$.
Then each $u_{\eps}$ is compactly supported and any $x=[x_{\eps}]\in\otilc$
satisfies $x_{\eps}\in U_{\eps}$ for $\eps$ small because $\lim_{\eps\to0^{+}}J_{\eps}=+\infty$.
Therefore $\bar{f}:=[u_{\eps}(-)]\in\GD_{K}(\Rtil^{n},\Rtil^{d})$,
and if $x_{\eps}\in U_{\eps}$ then $u_{\eps}(x_{\eps})=v_{\eps}(x_{\eps})$,
so $\bar{f}|_{\otilc}=f$. 
\end{proof}
This theorem gives an infinity of non-trivial examples of compactly
supported GSF. Moreover, even though $\bar{f}$ depends on the fixed
infinite number $J\in\Rtil$, every such $\bar{f}$ contains all the
information of the original CGF $f$ because $\bar{f}|_{\otilc}=f$.

Finally, the constant function $f(x)=1$ for all $x\in\Rtil$ is not
compactly supported. In fact, by contradiction, assume that $f$ admits
$(u_{\eps})$ and $(K_{\eps})$ such that Def.~\ref{def:cmptlySuppGSF}
holds. Then choosing $r$ large enough that $\diff{\eps}^{-r}\in\Rtil\setminus K$
we arrive at $f(\diff{\eps}^{-r})=[u_{\eps}(\eps^{-r})]=0$.

\section{\label{sec4}Generalized norms on \texorpdfstring{$\mathcal{G}\D_{K}$}{GDK}
and \texorpdfstring{$\GD$}{GD}}

As a first step to topologizing the spaces $\mathcal{G}\D_{K}$ and
$\GD$ we prove: 
\begin{thm}
\label{thm:GD_K-Rtil-Module}Let $\emptyset\ne K\fcmp U\subseteq\Rtil^{n}$,
then 
\begin{enumerate}
\item \label{enu:GD-RtilModule-module}$\GD_{K}(U,\Rtil^{d})$ is an $\Rtil$-module 
\item \label{enu:GD-RtilModule-increasing}For all non-empty $H\fcmp U$,
the inclusion $K\subseteq H$ implies $\GD_{K}(U,\Rtil^{d})\subseteq\GD_{H}(U,\Rtil^{d})$. 
\end{enumerate}
\end{thm}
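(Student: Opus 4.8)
The overall strategy is to verify directly that the defining property of $\GD_K$ from Def.~\ref{def:cmptlySuppGSF} is preserved under the relevant operations, reducing everything to the pointwise condition on $\exterior{K}$. For part \ref{enu:GD-RtilModule-module}, I first recall that $\Gcinf(U,\Rtil^d)$ is already an $\Rtil$-module (GSF can be added and multiplied by generalized scalars, since the defining nets can be combined at the level of representatives). So it suffices to check that $\GD_K(U,\Rtil^d)$ is an $\Rtil$-submodule. Given $f,g\in\GD_K(U,\Rtil^d)$ defined by nets $(u_\eps)$ and $(v_\eps)$ satisfying Def.~\ref{def:cmptlySuppGSF}, and $\lambda=[\lambda_\eps]\in\Rtil$, the plan is to show that $(\lambda_\eps u_\eps + v_\eps)$ is a net defining $\lambda f + g$ that again satisfies condition \ref{enu:def-cmptlySuppGSF-u_eps-K_eps-relations}. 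The key point is that for every $[x_\eps]\in\exterior{K}$ and every $\alpha\in\N^n$ we have $[\partial^\alpha u_\eps(x_\eps)]=0$ and $[\partial^\alpha v_\eps(x_\eps)]=0$ by hypothesis; applying the Leibniz rule to $\partial^\alpha(\lambda_\eps u_\eps)$ yields a finite sum of terms of the form (moderate scalar)$\cdot[\partial^\beta u_\eps(x_\eps)]$, each of which vanishes, and moderateness of $(\lambda_\eps)$ guarantees the products stay negligible. Hence $[\partial^\alpha(\lambda_\eps u_\eps+v_\eps)(x_\eps)]=0$, which is exactly what is required.

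Part \ref{enu:GD-RtilModule-increasing} is where one must be slightly careful, and I expect it to be the main (if modest) obstacle. The subtlety is that enlarging $K$ to $H$ \emph{shrinks} the exterior: from $K\subseteq H$ one gets $\exterior{H}\subseteq\exterior{K}$, by the very definition of the strong exterior (if $|x-h|>0$ for all $h\in H$, then in particular for all $h\in K\subseteq H$). So if $f\in\GD_K(U,\Rtil^d)$ is defined by $(u_\eps)$ with $[\partial^\alpha u_\eps(x_\eps)]=0$ for all $[x_\eps]\in\exterior{K}$, then a fortiori this vanishing holds on the smaller set $\exterior{H}$, which is precisely condition \ref{enu:def-cmptlySuppGSF-u_eps-K_eps-relations} for membership in $\GD_H(U,\Rtil^d)$. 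The same net $(u_\eps)$ therefore witnesses $f\in\GD_H(U,\Rtil^d)$, giving the desired inclusion.

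For the containment $\exterior{H}\subseteq\exterior{K}$ I would either argue directly from the definition of $\exterior{\cdot}$ as above, or invoke Lem.~\ref{lem:charactExtWithe_S}: the condition ``$xe_S\notin Ae_S$ for all $S$ with $e_S\neq0$'' becomes more restrictive as $A$ grows, so $A\subseteq B$ forces $\exterior{B}\subseteq\exterior{A}$. Everything else is routine: no new analytic estimates are needed beyond the moderateness bookkeeping in part \ref{enu:GD-RtilModule-module}, and the only genuinely conceptual step is recognizing the order-reversing behavior of $\exterior{\cdot}$ that makes the monotonicity in \ref{enu:GD-RtilModule-increasing} work in the correct direction.
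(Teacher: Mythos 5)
Your proposal is correct, and both parts go through; the only difference from the paper lies in how you establish the key inclusion in part (ii). The paper's proof and yours share the same skeleton: reuse the defining net of $f$ and reduce the inclusion $\GD_{K}(U,\Rtil^{d})\subseteq\GD_{H}(U,\Rtil^{d})$ to $\exterior{H}\subseteq\exterior{K}$. But you get that inclusion in one line, purely set-theoretically, from the pointwise definition $\exterior{A}=\{x\in\Rtil^{n}\mid\forall a\in A:\ |x-a|>0\}$ (or from Lem.~\ref{lem:charactExtWithe_S}), since shrinking $A$ weakens the condition; the paper instead invokes \cite[Prop.~2.8]{ObVe08} to choose nested representatives $K_{\eps}\subseteq H_{\eps}$ and then applies the identification $\exterior{K}=\sint{K_{\eps}^{c}}$ of Lem.~\ref{lem:exteriorAndKc}. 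Your observation that $\exterior{\cdot}$ is order-reversing directly from its definition makes the representative machinery unnecessary for this theorem; the paper's route has the side benefit of exhibiting the two exteriors concretely as strongly internal sets with compatible representatives (the description used repeatedly elsewhere), but it buys nothing extra here. For part (i), which the paper dismisses as immediate from Def.~\ref{def:cmptlySuppGSF}, your expanded check is exactly the intended argument; one cosmetic remark: since $\lambda=[\lambda_{\eps}]\in\Rtil$ is constant in $x$, one has $\partial^{\alpha}(\lambda_{\eps}u_{\eps})=\lambda_{\eps}\,\partial^{\alpha}u_{\eps}$ outright, so no Leibniz expansion is needed, only that a moderate scalar times a negligible net is negligible, together with the fact (Cor.~\ref{lem:exteriorDoesNotDepOnK_eps}) that $\exterior{K}$ depends only on $K$, so both summands' defining nets vanish on the same set.
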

\begin{proof}
\ref{enu:GD-RtilModule-module} is immediate from Def.~\ref{def:cmptlySuppGSF}.

\ref{enu:GD-RtilModule-increasing}: Take $f\in\GD_{K}(U,\Rtil^{d})$.
Since $K\subseteq H$, by \cite[Prop. 2.8]{ObVe08} for each representative
$(K_{\eps})$ of $K$ we get the existence of a representative $(H_{\eps})$
of $H$ such that $K_{\eps}\subseteq H_{\eps}$ for all $\eps$. Therefore,
$\text{ext}(K)=\sint{K_{\eps}^{c}}\supseteq\sint{H_{\eps}^{c}}=\text{ext}(H)$
and hence the conclusion follows. 
\end{proof}
From the extreme value property, Prop.~\ref{prop:extremeValues},
it is natural to expect that the following CGN could serve as generalized
$\Rtil$-valued norms. 
\begin{defn}
\label{def:generalizedNormsGD_K}Let $\emptyset\ne K\fcmp U\subseteq\Rtil^{n}$,
where $U$ is a sharply open set. Let $m\in\N$ and $f\in\GD_{K}(U,\Rtil^{d})$.
Then 
\[
\Vert f\Vert_{m,K}:=\max_{\substack{|\alpha|\le m\\
1\le i\le d
}
}\max\left(\left|\partial^{\alpha}f^{i}(M_{\alpha i})\right|,\left|\partial^{\alpha}f^{i}(m_{\alpha i})\right|\right)\in\Rtil,
\]
where $m_{\alpha i}$, $M_{\alpha i}\in K$ satisfy 
\[
\forall x\in K:\ \partial^{\alpha}f^{i}(m_{\alpha i})\le\partial^{\alpha}f^{i}(x)\le\partial^{\alpha}f^{i}(M_{\alpha i}).
\]

\end{defn}
The following result permits to calculate the (generalized) norm $\Vert f\Vert_{m,K}$
using any net $(v_{\eps})$ that defines $f$. In case the net $(v_{\eps})$
satisfies Def.~\ref{def:cmptlySuppGSF}, it also permits to prove
that this norm does not depend on $K$, as is the case for any ordinary
compactly supported smooth function.

\noindent Even though $\Vert f\Vert_{m,K}\in\Rtil$, using an innocuous
abuse of language, in the following we will simply call $\Vert f\Vert_{m,K}$
a norm. 
\begin{prop}
\noindent \label{prop:genNormForDifferentNets} Under the assumptions
of Def.~\ref{def:generalizedNormsGD_K}, let the set $K=[K_{\eps}]\fcmp\Rtil^{n}$.
Then we have: 
\begin{enumerate}
\item \label{enu:genNormGenNet}If the net $(v_{\eps})$ defines $f$, then
$\Vert f\Vert_{m,K}=\left[\max_{\substack{|\alpha|\le m\\
1\le i\le d
}
}\sup_{x\in K_{\eps}}\left|\partial^{\alpha}v_{\eps}^{i}(x)\right|\right]$ 
\item \label{enu:genNormNetCmptlySupp}If $(u_{\eps})$ defines $f$ and
$(u_{\eps})$ satisfies Def.~\ref{def:cmptlySuppGSF}, then 
\begin{equation}
\Vert f\Vert_{m,K}=\left[\max_{\substack{|\alpha|\le m\\
1\le i\le d
}
}\sup_{x\in\R^{n}}\left|\partial^{\alpha}u_{\eps}^{i}(x)\right|\right].\label{eq:normForGoodNet}
\end{equation}

\end{enumerate}
\end{prop}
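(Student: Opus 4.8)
The plan is to prove both parts by relating the abstract extreme-value definition of $\Vert f\Vert_{m,K}$ to suprema of the defining net's derivatives over the compact sets $K_\eps$.

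For part \ref{enu:genNormGenNet}, I would fix a multi-index $\alpha$ with $|\alpha|\le m$ and a component index $i$, and show that $\max\left(\left|\partial^{\alpha}f^{i}(M_{\alpha i})\right|,\left|\partial^{\alpha}f^{i}(m_{\alpha i})\right|\right)=\left[\sup_{x\in K_\eps}\left|\partial^{\alpha}v_\eps^{i}(x)\right|\right]$. The key observation is that since $\partial^\alpha f^i$ attains its minimum at $m_{\alpha i}$ and its maximum at $M_{\alpha i}$ on $K$ (by the extreme value property, Prop.~\ref{prop:extremeValues}, applied to the GSF $\partial^\alpha f^i$), the absolute value $\left|\partial^\alpha f^i\right|$ attains its maximum over $K$ at one of these two points. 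Writing $f$ via the net $(v_\eps)$, for each $x=[x_\eps]\in K$ with $x_\eps\in K_\eps$ we have $\partial^\alpha f^i(x)=[\partial^\alpha v_\eps^i(x_\eps)]$, so the pointwise bound $\left|\partial^\alpha f^i(x)\right|\le\left[\sup_{x\in K_\eps}\left|\partial^\alpha v_\eps^i(x)\right|\right]$ holds for all $x\in K$. For the reverse inequality, I would pick $\eps$-wise points where the $\eps$-level supremum is (nearly) attained: since $K_\eps\Subset\R^n$ the continuous function $\left|\partial^\alpha v_\eps^i\right|$ attains its supremum at some $x_\eps^{\alpha i}\in K_\eps$, and sharp boundedness of $(K_\eps)$ makes $(x_\eps^{\alpha i})$ moderate, so $x^{\alpha i}:=[x_\eps^{\alpha i}]\in K$ realizes the net-supremum as the value $\left|\partial^\alpha f^i(x^{\alpha i})\right|$, which is therefore $\le\max\left(\left|\partial^\alpha f^i(M_{\alpha i})\right|,\left|\partial^\alpha f^i(m_{\alpha i})\right|\right)$. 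Finally I would observe that the outer maximum over finitely many $(\alpha,i)$ commutes with taking equivalence classes: $\max_{\alpha,i}[a_\eps^{\alpha i}]=[\max_{\alpha,i}a_\eps^{\alpha i}]$ for moderate nets, which follows from the definitions (a finite maximum of moderate nets is moderate, and the order on $\Rtil$ respects it).

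For part \ref{enu:genNormNetCmptlySupp}, the point is simply that once $(u_\eps)$ satisfies Def.~\ref{def:cmptlySuppGSF}, Lemma \ref{lem:supK_eps-RsetminusK_eps} gives $\left[\sup_{y\in\R^n}\left|\partial^\alpha u_\eps^i(y)\right|\right]=\left[\sup_{x\in K_\eps}\left|\partial^\alpha u_\eps^i(x)\right|\right]$ for each $\alpha$ and $i$. Substituting this into the formula from part \ref{enu:genNormGenNet} (applied with $v_\eps=u_\eps$) and again commuting the finite maximum with the equivalence-class bracket yields \eqref{eq:normForGoodNet}. This also delivers the promised consequence that the norm is independent of $K$, since the right-hand side refers only to global suprema of $(u_\eps)$ and no longer to $K$ itself.

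The main obstacle I anticipate is the reverse inequality in part \ref{enu:genNormGenNet}: one must verify carefully that the $\eps$-wise maximizers $x_\eps^{\alpha i}$ assemble into a genuine generalized point of $K$, i.e.\ that the selected net is moderate (which uses sharp boundedness of $(K_\eps)$ crucially) and that its class indeed lies in $K=[K_\eps]$. A subtlety is that $K_\eps$ might be empty for some $\eps$; but since $K\ne\emptyset$ this can only happen off a set of $\eps$ not approaching $0$, and on the cofinal set where $K_\eps\ne\emptyset$ the compactness guarantees the maximizer exists, after which the values at the remaining $\eps$ are irrelevant for the equivalence class. The representation-independence claim in part \ref{enu:genNormGenNet}, i.e.\ that the right-hand side is the same for any defining net $(v_\eps)$, is then automatic because it equals the intrinsically defined left-hand side $\Vert f\Vert_{m,K}$.
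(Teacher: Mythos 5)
Your proposal is correct and follows essentially the same route as the paper's proof: $\eps$-wise extremal point selection on the compact sets $K_\eps$, moderateness of the selected net via sharp boundedness of $(K_\eps)$ so that its class lies in $K$, commutation of the finite maximum over $(\alpha,i)$ with the equivalence-class bracket, and Lemma \ref{lem:supK_eps-RsetminusK_eps} for part (ii). The only cosmetic difference is that you select maximizers of $\left|\partial^{\alpha}v_{\eps}^{i}\right|$ and then use the order inequality $a\le x\le b\Rightarrow|x|\le\max(|a|,|b|)$ in $\Rtil$, whereas the paper selects $\eps$-wise minima and maxima of $\partial^{\alpha}v_{\eps}^{i}$ itself and compares absolute values at the level of representatives.
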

\begin{proof}
In proving \ref{enu:genNormGenNet} we will also prove that the norm
$\Vert f\Vert_{m,K}$ is well-defined, i.e.\ it does not depend on
the particular choice of points $m_{\alpha i}$, $M_{\alpha i}$ as
in Def.~\ref{def:generalizedNormsGD_K}. As in the proof of Prop.~\ref{prop:extremeValues},
we get the existence of $\bar{m}_{\alpha i\eps}$, $\bar{M}_{\alpha i\eps}\in K_{\eps}$
such that 
\[
\forall x\in K_{\eps}:\ \partial^{\alpha}v_{\eps}^{i}(\bar{m}_{\alpha i\eps})\le\partial^{\alpha}v_{\eps}^{i}(x)\le\partial^{\alpha}v_{\eps}^{i}(\bar{M}_{\alpha i\eps}).
\]
Hence $\left|\partial^{\alpha}v_{\eps}^{i}(x)\right|\le\max\left(\left|\partial^{\alpha}v_{\eps}^{i}(\bar{m}_{\alpha i\eps})\right|,\left|\partial^{\alpha}v_{\eps}^{i}(\bar{M}_{\alpha i\eps})\right|\right)$.
Thus 
\[
\max_{\substack{|\alpha|\le m\\
1\le i\le d
}
}\sup_{x\in K_{\eps}}\left|\partial^{\alpha}v_{\eps}^{i}(x)\right|\le\max_{\substack{|\alpha|\le m\\
1\le i\le d
}
}\max\left(\left|\partial^{\alpha}v_{\eps}^{i}(\bar{m}_{\alpha i\eps})\right|,\left|\partial^{\alpha}v_{\eps}^{i}(\bar{M}_{\alpha i\eps})\right|\right).
\]
But $\bar{m}_{\alpha i\eps}$, $\bar{M}_{\alpha i\eps}\in K_{\eps}$,
so 
\begin{align*}
\left[\max_{\substack{|\alpha|\le m\\
1\le i\le d
}
}\sup_{x\in K_{\eps}}\left|\partial^{\alpha}v_{\eps}^{i}(x)\right|\right] & =\left[\max_{\substack{|\alpha|\le m\\
1\le i\le d
}
}\max\left(\left|\partial^{\alpha}v_{\eps}^{i}(\bar{m}_{\alpha i\eps})\right|,\left|\partial^{\alpha}v_{\eps}^{i}(\bar{M}_{\alpha i\eps})\right|\right)\right]=\\
 & =\max_{\substack{|\alpha|\le m\\
1\le i\le d
}
}\max\left(\left|\partial^{\alpha}f^{i}(\bar{M}_{\alpha i})\right|,\left|\partial^{\alpha}f^{i}(\bar{m}_{\alpha i})\right|\right).
\end{align*}
From this, both the fact that the norm $\Vert f\Vert_{m,K}$ is well-defined
and claim \ref{enu:genNormGenNet} follow.

\ref{enu:genNormNetCmptlySupp}: By Lemma \ref{lem:supK_eps-RsetminusK_eps},
we have that 
\begin{align*}
\left[\max_{\substack{|\alpha|\le m\\
1\le i\le d
}
}\sup_{x\in K_{\eps}}\left|\partial^{\alpha}u_{\eps}^{i}(x)\right|\right] & =\max_{\substack{|\alpha|\le m\\
1\le i\le d
}
}\left[\sup_{x\in K_{\eps}}\left|\partial^{\alpha}u_{\eps}^{i}(x)\right|\right]=\\
 & =\max_{\substack{|\alpha|\le m\\
1\le i\le d
}
}\left[\sup_{x\in\R^{n}}\left|\partial^{\alpha}u_{\eps}^{i}(x)\right|\right]=\left[\max_{\substack{|\alpha|\le m\\
1\le i\le d
}
}\sup_{x\in\R^{n}}\left|\partial^{\alpha}u_{\eps}^{i}(x)\right|\right].
\end{align*}
\end{proof}
\begin{cor}
\label{cor:normNotDependsOnK}Let $\emptyset\ne K\fcmp U\subseteq\Rtil^{n}$,
where $U$ is a sharply open set. Let $\emptyset\ne H\fcmp U$ and
$m\in\N$. If $f\in\GD_{K}(U,\Rtil^{d})\cap\GD_{H}(U,\Rtil^{d})$,
then $\Vert f\Vert_{m,K}=\Vert f\Vert_{m,H}=:\Vert f\Vert_{m}$.\end{cor}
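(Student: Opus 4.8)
The plan is to reduce everything to the two computational formulas of Proposition~\ref{prop:genNormForDifferentNets}, exploiting the decisive feature of \eqref{eq:normForGoodNet}: its right-hand side is a supremum over all of $\R^{n}$, and hence carries no reference whatsoever to the compact set used to define the norm. First I would invoke the hypothesis $f\in\GD_{K}(U,\Rtil^{d})$ to fix a net $(u_{\eps})$ that defines $f$ and satisfies Def.~\ref{def:cmptlySuppGSF} with respect to $K$. Part~\ref{enu:genNormNetCmptlySupp} of Proposition~\ref{prop:genNormForDifferentNets} then gives
\[
\Vert f\Vert_{m,K}=\left[\max_{\substack{|\alpha|\le m\\1\le i\le d}}\sup_{x\in\R^{n}}\left|\partial^{\alpha}u_{\eps}^{i}(x)\right|\right].
\]
Since $(u_{\eps})$ is in particular a net defining $f$, part~\ref{enu:genNormGenNet} of the same proposition, applied now to the functionally compact set $H=[H_{\eps}]$ and to this very net, yields
\[
\Vert f\Vert_{m,H}=\left[\max_{\substack{|\alpha|\le m\\1\le i\le d}}\sup_{x\in H_{\eps}}\left|\partial^{\alpha}u_{\eps}^{i}(x)\right|\right].
\]
As $H_{\eps}\subseteq\R^{n}$ forces $\sup_{x\in H_{\eps}}|\partial^{\alpha}u_{\eps}^{i}(x)|\le\sup_{x\in\R^{n}}|\partial^{\alpha}u_{\eps}^{i}(x)|$ at the level of representatives (for every $\eps$, and stably under the finite maxima over $\alpha$ and $i$), passing to classes in $\Rtil$ gives $\Vert f\Vert_{m,H}\le\Vert f\Vert_{m,K}$.

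For the reverse inequality I would simply interchange the roles of $K$ and $H$. Because $f\in\GD_{H}(U,\Rtil^{d})$ as well, there is a net $(w_{\eps})$ defining $f$ and satisfying Def.~\ref{def:cmptlySuppGSF} with respect to $H$; part~\ref{enu:genNormNetCmptlySupp} then expresses $\Vert f\Vert_{m,H}$ as the supremum of $|\partial^{\alpha}w_{\eps}^{i}|$ over all of $\R^{n}$, while part~\ref{enu:genNormGenNet} applied to $K$ and the defining net $(w_{\eps})$ expresses $\Vert f\Vert_{m,K}$ as the corresponding supremum over $K_{\eps}$. Once more $K_{\eps}\subseteq\R^{n}$ gives $\Vert f\Vert_{m,K}\le\Vert f\Vert_{m,H}$, and combining the two inequalities yields the asserted equality $\Vert f\Vert_{m,K}=\Vert f\Vert_{m,H}=:\Vert f\Vert_{m}$.

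The only genuinely delicate point is the choice made in the first step: it is essential that the net used in part~\ref{enu:genNormNetCmptlySupp} satisfies Def.~\ref{def:cmptlySuppGSF} \emph{for the very set} whose norm is being enlarged, so that the extension of the supremum from $K_{\eps}$ (resp.\ $H_{\eps}$) to all of $\R^{n}$ — which is exactly Lemma~\ref{lem:supK_eps-RsetminusK_eps} encapsulated in part~\ref{enu:genNormNetCmptlySupp} — is legitimate. Everything else is just monotonicity of the supremum under $H_{\eps}\subseteq\R^{n}$ (resp.\ $K_{\eps}\subseteq\R^{n}$), together with the fact that part~\ref{enu:genNormGenNet} holds for an \emph{arbitrary} defining net and an \emph{arbitrary} functionally compact set; this last point is what permits me to evaluate $\Vert f\Vert_{m,H}$ using the net $(u_{\eps})$ that was tailored to $K$, and symmetrically.
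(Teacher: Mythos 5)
Your proof is correct and takes essentially the same route as the paper, which disposes of the corollary in a single line by observing that the right-hand side of \eqref{eq:normForGoodNet} is a supremum over all of $\R^{n}$ and hence carries no reference to the compact set. Your two-sided argument --- evaluating each norm with the net adapted to the other set via Prop.~\ref{prop:genNormForDifferentNets}.\ref{enu:genNormGenNet} and then using monotonicity of the supremum --- is precisely the careful expansion of that remark, and correctly identifies that part \ref{enu:genNormNetCmptlySupp} may only be applied with a net satisfying Def.~\ref{def:cmptlySuppGSF} for the set in question.
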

\begin{proof}
The right hand side of \eqref{eq:normForGoodNet} does not depend
on $K$. 
\end{proof}
Another consequence of Prop.~\ref{prop:genNormForDifferentNets}
is the following: 
\begin{cor}
\label{cor:normOfExtension}Let $U\subseteq\Rtil^{n}$, $f\in\GD(U,\Rtil^{d})$
and $\bar{f}\in\GD(\Rtil^{n},\Rtil^{d})$ be the extension of $f$
defined in Thm.~\ref{thm:extensionCmptlySupportedGSF}. Then for
all $m\in\N$, $\Vert f\Vert_{m}=\Vert\bar{f}\Vert_{m}$. 
\end{cor}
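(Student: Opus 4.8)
The plan is to reduce the whole statement to a single defining net and then read off both norms from the representation in Prop.~\ref{prop:genNormForDifferentNets}.\ref{enu:genNormNetCmptlySupp}. Since $f\in\GD(U,\Rtil^{d})$, by definition there is some $\emptyset\ne K\fcmp U$ with $f\in\GD_{K}(U,\Rtil^{d})$, and hence a net $(u_{\eps})$, $u_{\eps}\in\Coo(\R^{n},\R^{d})$, that defines $f$ and satisfies Def.~\ref{def:cmptlySuppGSF}. The crucial observation is that, by Thm.~\ref{thm:extensionCmptlySupportedGSF}, this \emph{same} net $(u_{\eps})$ defines the extension $\bar{f}$ as a GSF on all of $\Rtil^{n}$, and in fact $\bar{f}\in\GD_{K}(\Rtil^{n},\Rtil^{d})$ with $(u_{\eps})$ still satisfying Def.~\ref{def:cmptlySuppGSF} for the same $K$. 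The only point requiring a moment's care here is to confirm that $(u_{\eps})$ genuinely remains a defining net for $\bar{f}$ satisfying that definition on the enlarged domain; but this is exactly what the construction in Thm.~\ref{thm:extensionCmptlySupportedGSF} yields, since condition \ref{enu:def-cmptlySuppGSF-u_eps-K_eps-relations} is a pointwise requirement on $\exterior{K}$ that is unaffected by passing from $U$ to $\Rtil^{n}$.

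With this in place, I would apply Prop.~\ref{prop:genNormForDifferentNets}.\ref{enu:genNormNetCmptlySupp} twice, once to $f$ and once to $\bar{f}$, in both cases using the single net $(u_{\eps})$. This produces
\[
\Vert f\Vert_{m,K}=\left[\max_{\substack{|\alpha|\le m\\1\le i\le d}}\sup_{x\in\R^{n}}\left|\partial^{\alpha}u_{\eps}^{i}(x)\right|\right]=\Vert\bar{f}\Vert_{m,K},
\]
the two outer expressions being literally identical because the right-hand side of \eqref{eq:normForGoodNet} depends only on the defining net and not on the domain or on $K$. By Cor.~\ref{cor:normNotDependsOnK} the left member equals $\Vert f\Vert_{m}$ and the right member equals $\Vert\bar{f}\Vert_{m}$ (both being independent of the chosen functionally compact set), whence the asserted equality $\Vert f\Vert_{m}=\Vert\bar{f}\Vert_{m}$ follows at once.

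There is essentially no genuine obstacle in this argument; the entire content lies in the structural fact, already secured by Lemma~\ref{lem:supK_eps-RsetminusK_eps} and encoded in Prop.~\ref{prop:genNormForDifferentNets}.\ref{enu:genNormNetCmptlySupp}, that for a net satisfying Def.~\ref{def:cmptlySuppGSF} the norm is computed by a supremum over all of $\R^{n}$ rather than merely over $K_{\eps}$. Because $f$ and $\bar{f}$ are built from the very same such net, they are forced to share this value. The mild care needed is only bookkeeping: one must keep track of which domain each of $f$ and $\bar{f}$ lives on and verify that the hypotheses of Prop.~\ref{prop:genNormForDifferentNets} (in particular the compatibility of $(u_{\eps})$ with Def.~\ref{def:cmptlySuppGSF}) hold in both cases, which Thm.~\ref{thm:extensionCmptlySupportedGSF} guarantees.
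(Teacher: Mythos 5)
Your proposal is correct and takes essentially the same route as the paper, which gives no separate argument but presents the corollary precisely as a consequence of Prop.~\ref{prop:genNormForDifferentNets}.\ref{enu:genNormNetCmptlySupp}: the same net $(u_{\eps})$ satisfying Def.~\ref{def:cmptlySuppGSF} defines both $f$ and $\bar{f}$, and the right-hand side of \eqref{eq:normForGoodNet} depends only on that net. Your additional bookkeeping (that condition \ref{enu:def-cmptlySuppGSF-u_eps-K_eps-relations} is a pointwise requirement on $\exterior{K}$ unaffected by enlarging the domain, and that $K$-independence comes from Cor.~\ref{cor:normNotDependsOnK}) correctly spells out the details the paper leaves implicit.
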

Our use of the term ``norm'' is justified by the following 
\begin{prop}
\label{prop:normProp}Let $\emptyset\ne K\fcmp U\subseteq\Rtil^{n}$,
where $U$ is a sharply open set. Let $f$, $g\in\GD_{K}(U,\Rtil^{d})$
and $m\in\N$. Then 
\begin{enumerate}
\item \label{enu:normProp-pos}$\Vert f\Vert_{m}\ge0$ 
\item \label{enu:normProp-zero}$\Vert f\Vert_{m}=0$ if and only if $f=0$ 
\item \label{enu:normProp-scalar}$\forall c\in\Rtil:\ \Vert c\cdot f\Vert_{m}=|c|\cdot\Vert f\Vert_{m}$ 
\item \label{enu:normProp-triangle}$\Vert f+g\Vert_{m}\le\Vert f\Vert_{m}+\Vert g\Vert_{m}$. 
\item \label{enu:normProp-prod}$\Vert f\cdot g\Vert_{m}\le c_{m}\cdot\Vert f\Vert_{m}\cdot\Vert g\Vert_{m}$
for some $c_{m}\in\R_{>0}$. 
\end{enumerate}
\end{prop}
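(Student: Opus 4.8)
The plan is to reduce every assertion to an elementary fact about the classical sup-norms $\sup_{x\in K_\eps}|\partial^\alpha v_\eps^i(x)|$ at the level of representatives, exploiting the explicit formula of Prop.~\ref{prop:genNormForDifferentNets}\ref{enu:genNormGenNet}: for \emph{any} net $(v_\eps)$ defining $f$ one has $\Vert f\Vert_m=\bigl[\max_{|\alpha|\le m,\,1\le i\le d}\sup_{x\in K_\eps}|\partial^\alpha v_\eps^i(x)|\bigr]$. Since, by Cor.~\ref{cor:normNotDependsOnK}, the norm is independent of the functionally compact set used, I may work throughout with a single $K$ containing the supports of both $f$ and $g$ and with conveniently chosen defining nets. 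Property \ref{enu:normProp-pos} is then immediate, since the representing net consists of non-negative reals.

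For \ref{enu:normProp-zero} one direction is trivial: if $f=0$ the zero net defines $f$ and the formula gives $\Vert f\Vert_m=0$. For the converse I would pick a net $(u_\eps)$ satisfying Def.~\ref{def:cmptlySuppGSF} and invoke Prop.~\ref{prop:genNormForDifferentNets}\ref{enu:genNormNetCmptlySupp}, so that $\Vert f\Vert_m=\bigl[\max_{|\alpha|\le m,\,i}\sup_{x\in\R^n}|\partial^\alpha u_\eps^i(x)|\bigr]=0$. Reading off the term $\alpha=0$ shows that $(\sup_{x\in\R^n}|u_\eps^i(x)|)$ is negligible for each $i$; hence for every $x=[x_\eps]\in U$ we get $|f^i(x)|=[|u_\eps^i(x_\eps)|]\le[\sup_y|u_\eps^i(y)|]=0$, i.e.\ $f=0$.

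Properties \ref{enu:normProp-scalar} and \ref{enu:normProp-triangle} follow by transporting the corresponding classical identities through the formula. For \ref{enu:normProp-scalar}, writing $c=[c_\eps]$ and noting that $cf$ is defined by $(c_\eps v_\eps)$, the scalar $c_\eps$ factors out of each derivative and each supremum, so $\sup_{x\in K_\eps}|\partial^\alpha(c_\eps v_\eps^i)(x)|=|c_\eps|\sup_{x\in K_\eps}|\partial^\alpha v_\eps^i(x)|$; taking the maximum and then the class yields $\Vert cf\Vert_m=|c|\,\Vert f\Vert_m$. For \ref{enu:normProp-triangle}, representing $f$, $g$ by $(v_\eps)$, $(w_\eps)$ so that $f+g$ is defined by $(v_\eps+w_\eps)$, I combine the triangle inequality for each sup-norm with subadditivity of the maximum, $\max_{\alpha,i}(a_{\alpha i}+b_{\alpha i})\le\max_{\alpha,i}a_{\alpha i}+\max_{\alpha,i}b_{\alpha i}$, to obtain the inequality on representatives and hence in $\Rtil$.

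The only genuinely non-routine part is \ref{enu:normProp-prod}, where the Leibniz rule enters and supplies the constant $c_m$. With $fg$ defined by the componentwise product net $(v_\eps w_\eps)$, I would first verify that $fg\in\GD_K(U,\Rtil^d)$, so that the norm is defined: on $\exterior{K}$ each Leibniz term $\binom{\alpha}{\beta}\partial^\beta v_\eps(x_\eps)\,\partial^{\alpha-\beta}w_\eps(x_\eps)$ has one factor that is negligible by Def.~\ref{def:cmptlySuppGSF}\ref{enu:def-cmptlySuppGSF-u_eps-K_eps-relations} and one that is moderate by the definition of GSF, so the product vanishes in $\Rtil$. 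Then from $|\partial^\alpha(v_\eps^i w_\eps^i)(x)|\le\sum_{\beta\le\alpha}\binom{\alpha}{\beta}|\partial^\beta v_\eps^i(x)|\,|\partial^{\alpha-\beta}w_\eps^i(x)|$ together with $\sum_{\beta\le\alpha}\binom{\alpha}{\beta}=2^{|\alpha|}\le 2^m$, I bound the left-hand side by $2^m\bigl(\max_{|\beta|\le m,i}\sup_{K_\eps}|\partial^\beta v_\eps^i|\bigr)\bigl(\max_{|\beta|\le m,i}\sup_{K_\eps}|\partial^\beta w_\eps^i|\bigr)$; taking the maximum over $|\alpha|\le m$, $i$ and passing to classes gives the claim with $c_m:=2^m$. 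The main point to watch is the index bookkeeping and the interpretation of the product in the vector-valued case, but the admissible constant is produced exactly by the total mass $2^{|\alpha|}$ of the binomial coefficients.
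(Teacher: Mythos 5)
Your proposal is correct and takes essentially the same route as the paper's proof, which likewise derives \ref{enu:normProp-pos}, \ref{enu:normProp-scalar} and \ref{enu:normProp-triangle} directly from the representative formula of Prop.~\ref{prop:genNormForDifferentNets}, obtains the nontrivial direction of \ref{enu:normProp-zero} from the global-sup formula \eqref{eq:normForGoodNet}, and gets \ref{enu:normProp-prod} from the Leibniz rule (your explicit constant $c_m=2^m$ just makes this quantitative). One cosmetic slip: at a point $[x_\eps]\in\exterior{K}$, which need not belong to $U$, moderateness of the second Leibniz factor does not follow from the definition of GSF, but Def.~\ref{def:cmptlySuppGSF}.\ref{enu:def-cmptlySuppGSF-u_eps-K_eps-relations} applied to the defining net of $g$ makes that factor outright negligible, so your argument goes through unchanged.
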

\begin{proof}
\ref{enu:normProp-pos}, \ref{enu:normProp-scalar} and \ref{enu:normProp-triangle}
follow directly from Prop.~\ref{prop:genNormForDifferentNets}, as
does \ref{enu:normProp-prod}, using the Leibniz rule. The `if'-part
of property \ref{enu:normProp-zero} follows from \eqref{eq:normForGoodNet}. 
\end{proof}
We now prove that also the space $\GD(U,\Rtil^{d})$ is an $\Rtil$-module,
at least for certain $U$: 
\begin{prop}
\label{prop:GD-Rtil-Module}Let $U\subseteq\Rtil^{n}$ be a non empty
sharply open set. Assume that 
\begin{equation}
\forall K,H\fcmp U:\ \text{\emph{interl}}(H\cup K)\subseteq U.\label{eq:suffCondGDRtilModule}
\end{equation}
Then $\GD(U,\Rtil^{d})$ is an $\Rtil$-module.\end{prop}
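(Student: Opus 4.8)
The plan is to show that $\GD(U,\Rtil^{d})$ is closed under addition and under scalar multiplication by elements of $\Rtil$, since these are the only module axioms that are not automatically inherited from the ambient $\Rtil$-module $\Gcinf(U,\Rtil^{d})$ of all GSF. Scalar multiplication is the easy part: if $f\in\GD_{K}(U,\Rtil^{d})$ and $c\in\Rtil$, then I would take a net $(u_{\eps})$ defining $f$ as in Def.~\ref{def:cmptlySuppGSF} and a representative $(c_{\eps})$ of $c$; the net $(c_{\eps}u_{\eps})$ then defines $cf$, and condition \ref{enu:def-cmptlySuppGSF-u_eps-K_eps-relations} is preserved since $[\partial^{\alpha}(c_{\eps}u_{\eps})(x_{\eps})]=c\cdot[\partial^{\alpha}u_{\eps}(x_{\eps})]=0$ for all $[x_{\eps}]\in\exterior{K}$ by the Leibniz rule. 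Hence $cf\in\GD_{K}(U,\Rtil^{d})\subseteq\GD(U,\Rtil^{d})$, and the same compactly supporting set $K$ works.

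**The main work is addition.** Suppose $f\in\GD_{K}(U,\Rtil^{d})$ and $g\in\GD_{H}(U,\Rtil^{d})$ with $K,H\fcmp U$; I must produce a single functionally compact set supporting $f+g$. The natural candidate is $L:=\text{interl}(K\cup H)$, which by Lemma~\ref{thm:unionIntersection}.\ref{enu:unionIntersection-interleaving} satisfies $K\cup H\subseteq L\fcmp\Rtil^{n}$, and which lies inside $U$ precisely by the standing hypothesis \eqref{eq:suffCondGDRtilModule}; thus $L\fcmp U$. The key monotonicity step is Thm.~\ref{thm:GD_K-Rtil-Module}.\ref{enu:GD-RtilModule-increasing}: from $K\subseteq L$ and $H\subseteq L$ it follows that $\GD_{K}(U,\Rtil^{d})\subseteq\GD_{L}(U,\Rtil^{d})$ and $\GD_{H}(U,\Rtil^{d})\subseteq\GD_{L}(U,\Rtil^{d})$. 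Since $\GD_{L}(U,\Rtil^{d})$ is an $\Rtil$-module by Thm.~\ref{thm:GD_K-Rtil-Module}.\ref{enu:GD-RtilModule-module}, both $f$ and $g$ lie in it and therefore $f+g\in\GD_{L}(U,\Rtil^{d})\subseteq\GD(U,\Rtil^{d})$.

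**The subtle point to verify carefully** is that the inclusion in Thm.~\ref{thm:GD_K-Rtil-Module}.\ref{enu:GD-RtilModule-increasing} genuinely applies here, i.e.\ that $K\subseteq L$ entails the stated containment of spaces; this rests on the relation $\exterior{L}=\sint{L_{\eps}^{c}}\subseteq\sint{K_{\eps}^{c}}=\exterior{K}$ obtained from choosing compatible representatives with $K_{\eps}\subseteq L_{\eps}$ (via \cite[Prop. 2.8]{ObVe08}), so that condition \ref{enu:def-cmptlySuppGSF-u_eps-K_eps-relations} for the smaller exterior $\exterior{L}$ is a weaker requirement than the one $f$ already satisfies on $\exterior{K}$. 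Concretely, a net defining $f$ that vanishes with all derivatives on $\exterior{K}$ a fortiori vanishes on the subset $\exterior{L}$, so $f\in\GD_{L}(U,\Rtil^{d})$, and likewise for $g$. Once both summands are realized as elements of the single module $\GD_{L}(U,\Rtil^{d})$, closure under addition is immediate and no independent verification of Def.~\ref{def:cmptlySuppGSF} for $f+g$ is needed. The only place where the hypothesis \eqref{eq:suffCondGDRtilModule} is essential is in guaranteeing $L\subseteq U$, so that $L\fcmp U$ rather than merely $L\fcmp\Rtil^{n}$; without it the interleaving could leave $U$ and the argument would break down.
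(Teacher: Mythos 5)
Your proposal is correct and takes essentially the same route as the paper: both arguments hinge on $L:=\text{interl}(K\cup H)\fcmp\Rtil^{n}$ (Lemma \ref{thm:unionIntersection}) together with hypothesis \eqref{eq:suffCondGDRtilModule} to conclude $\emptyset\ne L\fcmp U$, and on the fact that vanishing of all derivatives on $\exterior{K}$ (resp.\ $\exterior{H}$) persists on the smaller exterior of $L$. The only difference is organizational: the paper verifies Def.~\ref{def:cmptlySuppGSF} for the sum net $(u_{\eps}+v_{\eps})$ directly via the identity $\exterior{\text{interl}(H\cup K)}=\sint{H_{\eps}^{c}\cap K_{\eps}^{c}}=\exterior{H}\cap\exterior{K}$, whereas you embed both summands into the single module $\GD_{L}(U,\Rtil^{d})$ via the monotonicity statement Thm.~\ref{thm:GD_K-Rtil-Module}.\ref{enu:GD-RtilModule-increasing} --- whose proof is precisely that exterior inclusion obtained from compatible representatives --- and then use closure under addition there, which is a sound (and slightly more modular) repackaging of the same idea.
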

\begin{proof}
Since in Thm.~\ref{thm:GD_K-Rtil-Module} we already proved that
$\GD_{K}(U,\Rtil^{d})$ is closed with respect to products by scalars,
we only need to prove that $\GD(U,\Rtil^{d})$ is closed with respect
to sum. Let $f\in\GD_{K}(U,\Rtil^{d})$, $g\in\GD_{H}(U,\Rtil^{d})$
and let $(u_{\eps})$, $(v_{\eps})$ satisfy Def.~\ref{def:cmptlySuppGSF}
for $f$ and $g$, respectively. Lemma \ref{thm:unionIntersection}
and \cite[Prop. 2.8]{ObVe08} imply that $\text{interl}(H\cup K)=[H_{\eps}\cup K_{\eps}]\fcmp\Rtil^{n}$.
But $\exterior{H\cup K}=\sint{H_{\eps}^{c}\cap K_{\eps}^{c}}=\sint{H_{\eps}^{c}}\cap\sint{K_{\eps}^{c}}=\exterior{H}\cap\exterior{K}$.
Therefore, $\partial^{\alpha}(u_{\eps}+v_{\eps})$ is zero on $\exterior{H\cup K}$.
By our assumption \eqref{eq:suffCondGDRtilModule} $\emptyset\ne\text{interl}(H\cup K)\fcmp U$,
so that $f+g\in\GD(U,\Rtil^{d})$. 
\end{proof}
In the following result, we give two general sufficient conditions
for \eqref{eq:suffCondGDRtilModule} to hold. 
\begin{prop}
\label{prop:suffCondsInterlUnionIn-U}Let $U\subseteq\Rtil^{n}$ be
a non-empty sharply open set. If $U$ is $\Rtil$-convex or $U$ is
a strongly internal set, then \eqref{eq:suffCondGDRtilModule} holds.\end{prop}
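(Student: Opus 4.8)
The plan is to verify \eqref{eq:suffCondGDRtilModule} by fixing $K,H\fcmp U$ and an arbitrary element
\[
x=\sum_{j=1}^{m}e_{S_{j}}a_{j}\in\text{interl}(H\cup K),
\]
where $\{S_{1},\dots,S_{m}\}$ is a partition of $I$ and each $a_{j}\in H\cup K\subseteq U$ (the inclusion holding since $K,H\fcmp U$), and then showing $x\in U$ in each of the two cases. Throughout I use the elementary idempotent identities $e_{S}e_{T}=e_{S\cap T}$, $e_{S^{c}}=1-e_{S}$ and $0\le e_{S}\le1$.

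Suppose first that $U$ is $\Rtil$-convex, i.e.\ $\lambda p+(1-\lambda)q\in U$ whenever $p,q\in U$ and $0\le\lambda\le1$ in $\Rtil$. I would show by induction on $m$ that any such interleaving of points of $U$ lies in $U$. The case $m=1$ is trivial since $e_{I}=1$. For the inductive step, set
\[
c:=e_{S_{1}\cup S_{2}}a_{2}+\sum_{j=3}^{m}e_{S_{j}}a_{j},
\]
an interleaving of the $m-1$ points $a_{2},a_{3},\dots,a_{m}\in U$ along the coarser partition $\{S_{1}\cup S_{2},S_{3},\dots,S_{m}\}$, so $c\in U$ by the induction hypothesis. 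Using $e_{S_{1}^{c}}e_{S_{1}\cup S_{2}}=e_{S_{2}}$ and $e_{S_{1}^{c}}e_{S_{j}}=e_{S_{j}}$ for $j\ge3$, a direct computation gives $e_{S_{1}^{c}}c=\sum_{j=2}^{m}e_{S_{j}}a_{j}$, whence
\[
x=e_{S_{1}}a_{1}+e_{S_{1}^{c}}c=e_{S_{1}}a_{1}+(1-e_{S_{1}})c.
\]
This exhibits $x$ as a binary convex combination of $a_{1},c\in U$ with coefficient $\lambda=e_{S_{1}}\in[0,1]$, so $\Rtil$-convexity yields $x\in U$.

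Now suppose $U=\sint{A_{\eps}}$ is strongly internal. Fixing representatives $(a_{j,\eps})$ of the $a_{j}$, the point $x$ has the representative $x_{\eps}:=a_{j,\eps}$ for $\eps\in S_{j}$. To conclude $x\in\sint{A_{\eps}}$ I must show that an arbitrary representative $(x_{\eps}')$ of $x$ satisfies $x_{\eps}'\in A_{\eps}$ for $\eps$ small. The key step is a gluing argument: for each fixed $j$, define $\tilde a_{j,\eps}:=x_{\eps}'$ for $\eps\in S_{j}$ and $\tilde a_{j,\eps}:=a_{j,\eps}$ otherwise. Since $(x_{\eps}'-x_{\eps})\sim0$ and $x_{\eps}=a_{j,\eps}$ on $S_{j}$, the net $(\tilde a_{j,\eps})$ differs from $(a_{j,\eps})$ by a negligible net (it vanishes off $S_{j}$ and equals the restriction of $(x_{\eps}'-x_{\eps})$ on $S_{j}$), so $(\tilde a_{j,\eps})$ is again a representative of $a_{j}\in\sint{A_{\eps}}$. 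Hence $\tilde a_{j,\eps}\in A_{\eps}$ for $\eps$ small, and in particular $x_{\eps}'=\tilde a_{j,\eps}\in A_{\eps}$ for small $\eps\in S_{j}$. Ranging over the finitely many blocks $j=1,\dots,m$ and taking the minimum of the corresponding thresholds shows $x_{\eps}'\in A_{\eps}$ for all small $\eps$; as $(x_{\eps}')$ was arbitrary, $x\in\sint{A_{\eps}}=U$.

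The main obstacle is the strongly internal case, where membership in $\sint{A_{\eps}}$ must hold for \emph{every} representative (the relation $\in_\eps$), so one cannot simply argue block by block with the single representative $x_\eps$. The gluing construction above is precisely what lets the finite-partition structure of the interleaving interact correctly with this ``for all representatives'' quantifier. In the $\Rtil$-convex case the only delicate point is organizing the induction so that a multi-block interleaving is reduced to an honest binary convex combination, which the idempotent identity $x=e_{S_1}a_1+(1-e_{S_1})c$ achieves.
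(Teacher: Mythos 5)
Your proof is correct, and in both halves it deviates from the paper's argument in ways worth noting. For the $\Rtil$-convex case, the paper does not induct: it writes an arbitrary element of $\text{interl}(H\cup K)$ directly in the binary form $y=e_{S}h+e_{S^{c}}k=e_{S}h+(1-e_{S})k$ with $h\in H$, $k\in K$, and concludes immediately from $e_{S}\in[0,1]$. Strictly, reducing a general $m$-term interleaving to that binary form requires grouping the blocks by whether $a_{j}\in H$ or $a_{j}\in K$ and then realizing each grouped piece as $e_{S}h$ for a \emph{single} $h\in H$ (resp.\ $k\in K$), which uses that internal sets are closed under finite interleaving (\cite[Lem.~2.7]{ObVe08}) --- yet the paper remarks parenthetically that $H,K$ need not be functionally compact. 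Your induction on the number of blocks, via the coarsened partition $\{S_{1}\cup S_{2},S_{3},\dots,S_{m}\}$ and the identity $x=e_{S_{1}}a_{1}+(1-e_{S_{1}})c$, proves the claim for arbitrary subsets $H,K\subseteq U$ without that tacit step, so it is slightly more careful and actually substantiates the paper's parenthetical. For the strongly internal case the routes genuinely differ: the paper invokes the characterization of $\sint{U_{\eps}}$ from \cite[Thm.~8]{GKV}, namely $h=[h_{\eps}]\in\sint{U_{\eps}}$ iff $d(h_{\eps},U_{\eps}^{c})>\eps^{q}$ for some $q>0$ and $\eps$ small; since the interleaved representative agrees blockwise with the $h_{\eps}$, $k_{\eps}$, the distance bound transfers verbatim, and the ``for all representatives'' quantifier in $\in_{\eps}$ is absorbed automatically because the distance criterion is itself representative-independent. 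You instead work straight from the definition of $\in_{\eps}$ and handle that quantifier by your gluing construction: splicing an arbitrary representative $(x_{\eps}')$ of $x$ into a new representative $(\tilde a_{j,\eps})$ of $a_{j}$ along $S_{j}$ (legitimate, since the difference is dominated by the negligible net $x_{\eps}'-x_{\eps}$), deducing $x_{\eps}'\in A_{\eps}$ for small $\eps\in S_{j}$, and finishing by taking the minimum over the finitely many blocks of the partition. Both arguments are sound; the paper's buys brevity and a quantitative $\eps^{q}$-bound by citing the GKV characterization, while yours is self-contained, uses only the definition of strongly internal sets and negligibility, and again treats $m$-term interleavings directly rather than reducing to the binary case.
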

\begin{proof}
Assume that $U$ is $\Rtil$-convex, i.e.\ $xh+(1-x)k\in U$ for
all $h$, $k\in U$ and all $x\in[0,1]$. Then for all $H$, $K\subseteq U$
(even if we do not assume them to be functionally compact), and all
$y\in\text{interl}(H\cup K)$, we can write $y=e_{S}\cdot h+e_{S^{c}}\cdot k$
for some $S\subseteq I$ and $h\in H$, $k\in K$. Thus $y=e_{S}\cdot h+(1-e_{S})\cdot k\in U$
since $e_{S}\in[0,1]$ and $h$, $k\in U$.

Now, assume that $U$ is a strongly internal set, i.e.\ for some
net $(U_{\eps})$ of subsets of $\R^{n}$, we have $U=\sint{U_{\eps}}$.
We continue to use the notations for $y$ as above. Since $h$, $k\in U$,
\cite[Thm. 8]{GKV} entails that $d(h_{\eps},U_{\eps}^{c})$, $d(k_{\eps},U_{\eps}^{c})>\eps^{q}$
for some $q\in\R_{>0}$ and $\eps$ small, where $h=[h_{\eps}]$,
$k=[k_{\eps}]$. But $y=e_{S}\cdot h+e_{S^{c}}\cdot k$, so for all
$\eps$ small, if $\eps\in S$ then $y_{\eps}=h_{\eps}$ and if $\eps\notin S$
then $y_{\eps}=k_{\eps}$. In any case, $d(y_{\eps},U_{\eps}^{c})>\eps^{q}$,
hence $y\in\sint{U_{\eps}}=U$.\end{proof}
\begin{example}
\label{exa:UDoesNotContainInterl}If $U=(-1,1)\cup(2,4)\subseteq\Rtil$,
then $U$ is a sharply open set, but it does not satisfy condition
\eqref{eq:suffCondGDRtilModule} of Prop.~\eqref{prop:GD-Rtil-Module}:
let $H:=\left[\left[-\frac{1}{2},\frac{1}{2}\right]_{\R}\right]$,
$K:=\left[\left[\frac{5}{2},\frac{7}{2}\right]_{\R}\right]$ and $x_{\eps}:=0$
if $\eps\in\left[\left.\frac{1}{n},\frac{1}{n+1}\right)\right.$ if
$n\in\N_{>0}$ is even, and $x_{\eps}:=3$ otherwise. Then $x:=[x_{\eps}]\in\text{interl}(H\cup K)$
but $x\notin U$. Moreover, let $\phi\in\D_{\left[-\frac{1}{2},\frac{1}{2}\right]_{\R}}\left(\R\right)\subseteq\GD_{H}(\Rtil_{c})$,
$\psi\in\D_{\left[\frac{5}{2},\frac{7}{2}\right]_{\R}}(\R)\subseteq\GD_{K}(\Rtil_{c})$
be positive non-trivial functions. Then, as we showed in the proof
of Prop.~\ref{prop:GD-Rtil-Module}, the GSF $\phi+\psi\in\Gcinf(U,\Rtil)$
is compactly supported in $\left[\left[-\frac{1}{2},\frac{1}{2}\right]_{\R}\cup\left[\frac{5}{2},\frac{7}{2}\right]_{\R}\right]=\text{interl}(H\cup K)\not\subseteq U$.
Finally, let $f:=\phi|_{U}$ and $g:=\psi|_{U}$, so that $f\in\GD_{H}(U,\Rtil)$
and $g\in\GD_{K}(U,\Rtil)$. Rem.~\ref{rem:defCmptlySuppGSF}.\ref{enu:suppSubsetK}
yields that $f+g\notin\GD_{J}(U,\Rtil)$ for all $J\fcmp U$: Otherwise,
taking suitable sub-intervals $L$ of $\left(-\frac{1}{2},\frac{1}{2}\right)_{\R}$
and $M$ of $\left(\frac{5}{2},\frac{7}{2}\right)_{\R}$ where $f$
resp.~$g$ do not vanish, we would have $[L\cup M]\subseteq\text{supp}(f+g)\subseteq J\subseteq U$
(here $\text{supp}(f+g)$ is the support as in Def.~\ref{def:support}).
But the inclusion $[L\cup M]\subseteq U$ is impossible --- a counterexample
can be constructed similar to the above $x$.

\noindent This example shows that an assumption like \eqref{eq:suffCondGDRtilModule}
is necessary to have the closure of the space $\GD(U,\Rtil)$ with
respect to sum. 
\end{example}

\section{\label{sec5}Topological structure on \texorpdfstring{$\GD_{K}$}{GDK}}

Using our $\Rtil$-valued norms, it is now natural to define 
\begin{defn}
\label{def:ballsTopology}Let $\emptyset\ne K\fcmp U\subseteq\Rtil^{n}$,
where $U$ is a sharply open set. Let $f\in\GD_{K}(U,\Rtil^{d})$,
$m\in\N$, $\rho\in\Rtil_{>0}$, then 
\begin{enumerate}
\item \label{enu:defBall} $B_{\rho}^{m}(f):=\left\{ g\in\GD_{K}(U,\Rtil^{d})\mid\Vert f-g\Vert_{m}<\rho\right\} $.
In case any confusion could arise, we will use the more precise symbol
$B_{\rho}^{m}(f,K):=B_{\rho}^{m}(f)$. 
\item \label{enu:sharpOpenInGD_K}If $V\subseteq\GD_{K}(U,\Rtil^{d})$,
then we say that $V$ is a \emph{sharply open set} if 
\[
\forall v\in V\,\exists m\in\N\,\exists\rho\in\Rtil_{>0}:\ B_{\rho}^{m}(v)\subseteq V.
\]
Moreover, we say that $V$ is \emph{Fermat open} if 
\[
\forall v\in V\,\exists m\in\N\,\exists r\in\R_{>0}:\ B_{r}^{m}(v)\subseteq V.
\]

\end{enumerate}
\end{defn}
As in \cite[Thm. 2]{GKV} it follows that sharply open sets as well
as Fermat open sets form topologies on $\GD_{K}(U,\Rtil^{d})$.

\noindent On the other hand, it is also natural to view the space
$\GD_{K}(U,\Rtil^{d})$ inside Garetto's theory \cite{Gar05,Gar05b}
of $\Rtil$-locally convex algebras. In this section, we will realize
this comparison, proving that the space $\GD_{K}(U,\Rtil)$ is a Fréchet
$\Rtil$-module and a topological algebra. For this purpose, we will
only consider the sharp topology. Indeed, as we will see below, the
Fermat topology is less interesting in this context since it doesn't
permit to prove the continuity of the product by scalars $(r,f)\in\Rtil\times\GD_{K}(U,\Rtil^{d})\mapsto r\cdot f\in\GD_{K}(U,\Rtil^{d})$.

\noindent In the following, we will always assume that $\emptyset\ne K\fcmp U\subseteq\Rtil^{n}$,
where $U$ is a non-empty sharply open set. The main problem in performing
this comparison, which doesn't permit to view our space $\GD_{K}(U,\Rtil)$
as a particular case of the theory developed in \cite{Gar05,Gar05b},
is that the domain $U$ contains generalized points.

Using the valuation $v$ on $\Rtil$, it is natural to introduce the
following notions: 
\begin{defn}
\label{def:valuationPseudoUltraNorm}Let $m\in\N$ and $f\in\GD(U,\Rtil)$,
then: 
\begin{enumerate}
\item \label{enu:valuation-GD}$v_{m}(f):=v(\Vert f\Vert_{m})\in\R$ 
\item \label{enu:pseudoUltraNorm}$\mathcal{P}_{m}(f):=e^{-v_{m}(f)}.$ 
\end{enumerate}
\end{defn}
From the properties of the valuation $v$ and of the e-norm $|-|_{e}=e^{-v(-)}$
on $\Rtil$ (see \cite{AJ}), the following result directly follows. 
\begin{prop}
\label{prop:propValuationsUPnorm}For each $m\in\N$, we have: 
\begin{enumerate}
\item \label{enu:valuationGD}$v_{m}:\GD(U)\ra\R\cup\{+\infty\}$ is a valuation,
i.e.\ for all $f$, $g\in\GD(U)$: 
\begin{itemize}
\item $v_{m}(0)=+\infty$ 
\item $v_{m}(\lambda\cdot f)\ge v(\lambda)+v_{m}(f)\quad\forall\lambda\in\Rtil$ 
\item $v_{m}(\diff{\eps}^{a}\cdot f)=v(\diff{\eps}^{a})+v_{m}(f)=a+v_{m}(f)\quad\forall a\in\R$ 
\item $v_{m}(f+g)\ge\min\left[v_{m}(f),v_{m}(g)\right]$. 
\end{itemize}
\item \label{enu:UPnormGD}$\mathcal{P}_{m}:\GD(U)\ra\R$ is an ultra-pseudo-norm,
i.e.\ for all $f$, $g\in\GD(U)$:
\begin{itemize}
\item $\mathcal{P}_{m}(f)=0$ if and only if $f=0$ 
\item $\mathcal{P}_{m}(\lambda\cdot f)\le|\lambda|_{e}\cdot\mathcal{P}_{m}(f)\quad\forall\lambda\in\Rtil$ 
\item $\mathcal{P}_{m}(\diff{\eps}^{a}\cdot f)=|\diff{\eps}^{a}|_{e}\cdot\mathcal{P}_{m}(f)=e^{-a}\cdot\mathcal{P}_{m}(f)\quad\forall a\in\R$ 
\item $\mathcal{P}_{m}(f+g)\le\max\left[\mathcal{P}_{m}(f),\mathcal{P}_{m}(g)\right]$. 
\end{itemize}
\end{enumerate}
\end{prop}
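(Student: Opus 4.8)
The plan is to treat the two parts as one statement viewed through the strictly decreasing bijection $t\mapsto e^{-t}$ (with the convention $e^{-\infty}=0$). Since $v_{m}=v\circ\Vert\cdot\Vert_{m}$ and $\mathcal{P}_{m}=e^{-v_{m}}$, once part \ref{enu:valuationGD} is established, part \ref{enu:UPnormGD} follows mechanically: applying $e^{-(\cdot)}$ turns each inequality $v_{m}(\cdot)\ge\ldots$ into the reversed inequality $\mathcal{P}_{m}(\cdot)\le\ldots$, sends $\min$ to $\max$, and converts the additive constants $v(\lambda)$ and $a=v(\diff{\eps}^{a})$ into the multiplicative factors $|\lambda|_{e}=e^{-v(\lambda)}$ and $|\diff{\eps}^{a}|_{e}=e^{-a}$. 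So the real work is part \ref{enu:valuationGD}, and for this I would feed the norm properties of Prop.~\ref{prop:normProp} into the elementary facts about the pseudovaluation $v$ on $\Rtil$: submultiplicativity $v(u\cdot w)\ge v(u)+v(w)$, the ultrametric inequality $v(u+w)\ge\min(v(u),v(w))$, and antitonicity on non-negatives, i.e.\ $0\le u\le w\Rightarrow v(u)\ge v(w)$, together with $v(0)=+\infty$, $v(|u|)=v(u)$, and the exact shift $v(\diff{\eps}^{a}u)=a+v(u)$.

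Concretely, for the first bullet of \ref{enu:valuationGD} I would use $\Vert 0\Vert_{m}=0$ from Prop.~\ref{prop:normProp}.\ref{enu:normProp-zero} and $v(0)=+\infty$. For the second, Prop.~\ref{prop:normProp}.\ref{enu:normProp-scalar} gives $\Vert\lambda f\Vert_{m}=|\lambda|\,\Vert f\Vert_{m}$, whence $v_{m}(\lambda f)=v(|\lambda|\,\Vert f\Vert_{m})\ge v(|\lambda|)+v(\Vert f\Vert_{m})=v(\lambda)+v_{m}(f)$, using submultiplicativity of $v$ and $v(|\lambda|)=v(\lambda)$. The third bullet is the same computation with $\lambda=\diff{\eps}^{a}$, but now I replace the inequality by the exact identity $v(\diff{\eps}^{a}\cdot u)=a+v(u)$, valid because $\diff{\eps}^{a}$ is an invertible monomial whose multiplication shifts the valuation by exactly $a$. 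For the fourth, the triangle inequality Prop.~\ref{prop:normProp}.\ref{enu:normProp-triangle} and Prop.~\ref{prop:normProp}.\ref{enu:normProp-pos} give $0\le\Vert f+g\Vert_{m}\le\Vert f\Vert_{m}+\Vert g\Vert_{m}$; antitonicity of $v$ then yields $v_{m}(f+g)\ge v(\Vert f\Vert_{m}+\Vert g\Vert_{m})\ge\min(v_{m}(f),v_{m}(g))$ by the ultrametric inequality for $v$.

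The step that requires the most care---and the only place where more than a one-line estimate is needed---is the exact equality in the third bullet (equivalently, the exact homogeneity $\mathcal{P}_{m}(\diff{\eps}^{a}f)=e^{-a}\mathcal{P}_{m}(f)$): here I must invoke both directions, $v(\diff{\eps}^{a}u)\ge a+v(u)$ and $v(\diff{\eps}^{a}u)\le a+v(u)$, the latter using invertibility of $\diff{\eps}^{a}$ so that $v(u)=v(\diff{\eps}^{-a}(\diff{\eps}^{a}u))\ge -a+v(\diff{\eps}^{a}u)$. A secondary point worth stating explicitly is the antitonicity of $v$ used to transport the triangle inequality from $\Rtil$ to $\R\cup\{+\infty\}$; this is immediate from the definition of $v$, since $|u_{\eps}|\le|w_{\eps}|$ forces $\{b:|w_{\eps}|=O(\eps^{b})\}\subseteq\{b:|u_{\eps}|=O(\eps^{b})\}$. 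Everything else is a direct transcription, matching the remark preceding the statement that the result follows from the properties of $v$ and of $|-|_{e}$.
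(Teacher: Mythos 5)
Your proposal is correct and takes essentially the same route as the paper: the paper's entire proof is the one-line assertion that the statement ``directly follows'' from the properties of the valuation $v$ and the e-norm $|-|_{e}=e^{-v(-)}$ on $\Rtil$, combined with the norm properties of Prop.~\ref{prop:normProp}, which is exactly the derivation you spell out. Your added care on the two points the paper leaves implicit --- the exact two-sided shift $v(\diff{\eps}^{a}u)=a+v(u)$ via invertibility of $\diff{\eps}^{a}$, and the antitonicity of $v$ on non-negative elements of $\Rtil$ used to transport the triangle inequality --- is sound and fills in the only details worth checking.
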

The following result states that to define the sharp topology, instead
of the above ultra-pseudo-norms we can equivalently use the countable
family of generalized norms $\left(\Vert f\Vert_{m}\right)_{m\in\N}$. 
\begin{thm}
\label{thm:sharpTop}\ 
\begin{enumerate}
\item \label{enu:RtilAlgebraSharp}Sum and product in $\GD_{K}(U)$ are
continuous in the sharp topology. Therefore, $\GD_{K}(U)$ is a topological
$\Rtil$-algebra. 
\item \label{enu:productAndFermat}The product in $\GD_{K}(U)$ is continuous
in the Fermat topology only on the subspace $\{(f,g)\in\GD_{K}(U)\times\GD_{K}(U)\mid\forall\,m\in\N:\ \Vert f\Vert_{m},\ \Vert g\Vert_{m}<\infty\}$. 
\item \label{enu:ballsWithUPnorms-GenNorm}For $f\in\GD_{K}(U)$, set 
\[
C_{r}^{m}(f):=\left\{ g\in\GD_{K}(U)\mid\mathcal{P}_{m}(f-g)<r\right\} \quad(r\in\R_{>0},\ m\in\N).
\]
Then for each $q$, $s\in\R_{>0}$ we have:

\begin{enumerate}
\item If $q\le-\log r$, then $C_{r}^{m}(f)\subseteq B_{\text{\emph{d}}\eps^{q}}^{m}(f)$ 
\item If $q\ge-\log s$ and $s<r$, then $B_{\text{\emph{d}}\eps^{q}}^{m}(f)\subseteq C_{r}^{m}(f)$. 
\end{enumerate}
\item \label{enu:SharpCoarsest-P_m}The sharp topology on $\GD_{K}(U)$
is the coarsest topology such that each $\mathcal{P}_{m}$ is continuous. 
\item \label{enu:LCTSep}$\GD_{K}(U)$ is a separated locally convex topological
$\Rtil$-module. 
\end{enumerate}
\end{thm}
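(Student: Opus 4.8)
The plan is to deduce the whole statement from the theory of locally convex modules developed by Garetto by trading the $\Rtil$-valued norms $\Vert\cdot\Vert_m$ for the $\R$-valued ultra-pseudo-norms $\mathcal{P}_m=e^{-v(\Vert\cdot\Vert_m)}$ of Def.~\ref{def:valuationPseudoUltraNorm}, whose formal properties are recorded in Prop.~\ref{prop:propValuationsUPnorm}. The bridge between the two worlds is assertion \ref{enu:ballsWithUPnorms-GenNorm}, which I would prove first. It reduces to the order/valuation dictionary for $x\in\Rtil$ with $x\ge 0$: if $v(x)>q$ then $x<\diff{\eps}^{q}$, while if $x<\diff{\eps}^{q}$ then $v(x)\ge q$. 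Both are immediate from $v(x)=\sup\{b\mid|x_\eps|=O(\eps^{b})\}$ together with Lemma~\ref{lem:mayer}: in the first case $|x_\eps|\le C\eps^{b}$ with $b>q$ forces $\eps^{q}-x_\eps\ge\frac{1}{2}\eps^{q}>0$ for small $\eps$; in the second, $x<\diff{\eps}^{q}$ gives $0\le x_\eps<\eps^{q}$ eventually, hence $|x_\eps|=O(\eps^{q})$. Applying this to $x=\Vert f-g\Vert_m\ge 0$ (Prop.~\ref{prop:normProp}.\ref{enu:normProp-pos}) and rewriting $\mathcal{P}_m(f-g)<r$ as $v(\Vert f-g\Vert_m)>-\log r$ yields \ref{enu:ballsWithUPnorms-GenNorm}(a) via the first implication and \ref{enu:ballsWithUPnorms-GenNorm}(b) via the second.

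Next I would establish \ref{enu:SharpCoarsest-P_m}. Since every $\rho\in\Rtil_{>0}$ satisfies $\rho\ge\diff{\eps}^{q}$ for some $q>0$ (the remark following Lemma~\ref{lem:mayer}), the balls $B_{\diff{\eps}^{q}}^{m}(f)$ are cofinal among all sharp balls and hence form a neighbourhood basis for the sharp topology. By part \ref{enu:ballsWithUPnorms-GenNorm}, each $B_{\diff{\eps}^{q}}^{m}(f)$ contains some $C_{r}^{m}(f)$ (take $r=e^{-q}$) and each $C_{r}^{m}(f)$ contains some $B_{\diff{\eps}^{q}}^{m}(f)$ (take $s<r$ and $q\ge-\log s$). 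Thus the two ball systems generate the same topology, which is exactly the locally convex topology attached to the family $(\mathcal{P}_m)_m$, i.e.\ by definition the coarsest one rendering every $\mathcal{P}_m$ continuous.

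With \ref{enu:SharpCoarsest-P_m} in hand, parts \ref{enu:RtilAlgebraSharp} and \ref{enu:LCTSep} become routine seminorm estimates, now over $\R$. From Prop.~\ref{prop:normProp}.\ref{enu:normProp-prod} and $v(c_m)=0$ (as $c_m\in\R_{>0}$) I obtain submultiplicativity $\mathcal{P}_m(fg)\le\mathcal{P}_m(f)\mathcal{P}_m(g)$, using that $v$ reverses order on $\Rtil_{\ge 0}$ and satisfies $v(xy)\ge v(x)+v(y)$. Continuity of the sum follows from the ultrametric inequality $\mathcal{P}_m(f+g-f_0-g_0)\le\max(\mathcal{P}_m(f-f_0),\mathcal{P}_m(g-g_0))$; for the product I expand $fg-f_0g_0=(f-f_0)(g-g_0)+(f-f_0)g_0+f_0(g-g_0)$, bound each $\mathcal{P}_m$-term by a product of $\mathcal{P}_m$'s, and close the estimate with a real $\delta$ exactly as in the Archimedean case, and the analogous splitting handles scalar multiplication via $\mathcal{P}_m(\lambda f)\le|\lambda|_e\,\mathcal{P}_m(f)$. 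For \ref{enu:LCTSep}, each $\mathcal{P}_m$ is an ultra-pseudo-norm (Prop.~\ref{prop:propValuationsUPnorm}.\ref{enu:UPnormGD}) whose balls $C_{r}^{m}(0)$ are absolutely convex, so $\GD_K(U)$ is a locally convex $\Rtil$-module in the sense of \cite{Gar05,Gar05b}; separatedness is free because a single $\mathcal{P}_m$ already separates points ($\mathcal{P}_m(f)=0\Leftrightarrow f=0$).

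The remaining part \ref{enu:productAndFermat} is where the real work lies, and I expect it to be the main obstacle. On the subspace where all $\Vert f\Vert_m,\Vert g\Vert_m$ are finite (bounded by a standard real $A$), the same expansion of $fg-f_0g_0$ together with Prop.~\ref{prop:normProp}.\ref{enu:normProp-prod} gives $\Vert fg-f_0g_0\Vert_m\le c_m(\delta^{2}+2A\delta)$ whenever $\Vert f-f_0\Vert_m,\Vert g-g_0\Vert_m<\delta$; choosing a \emph{real} $\delta$ with $c_m(\delta^{2}+2A\delta)<r$ shows $fg\in B_{r}^{m}(f_0g_0)$, i.e.\ Fermat continuity on this subspace. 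This estimate cannot close with a real radius once a norm is infinite, and indeed the product then fails to be Fermat continuous: the mechanism is that the Fermat topology is not Hausdorff, so any $h$ with all $\Vert h\Vert_m$ infinitesimal lies in every Fermat neighbourhood of $0$. Concretely, take $\phi\in\D_{[-1,1]_{\R}}(\R)$ with $\max\phi=1$, set $f_0:=\diff{\eps}^{-1}\phi$ (so $\Vert f_0\Vert_0=\diff{\eps}^{-1}$ is infinite) and $h:=\diff{\eps}\,\phi$; then $h$ is Fermat-indistinguishable from $0$ while $f_0h=\phi^{2}$ has $\Vert\phi^{2}\Vert_0=1$, whence $f_0h\notin B_{1/2}^{0}(0)$. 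Thus no Fermat neighbourhoods $V_1\ni f_0$, $V_2\ni 0$ satisfy $V_1V_2\subseteq B_{1/2}^{0}(f_0\cdot 0)$, and the product is discontinuous at $(f_0,0)$. The general claim that continuity forces finiteness of all norms follows by the same amplification, perturbing $g_0$ by an infinitesimal multiple $\lambda\psi$ of a suitable bump so that $\lambda\Vert f_0\Vert_{m_0}$ stays non-infinitesimal; making this choice uniformly for an arbitrary infinite norm (not merely at order $0$) is the delicate point, handled by aligning $\psi$ with the unbounded derivative and taking $\lambda$ to be the reciprocal of the offending net.
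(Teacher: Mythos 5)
Your treatment of parts \ref{enu:RtilAlgebraSharp}, \ref{enu:ballsWithUPnorms-GenNorm}, \ref{enu:SharpCoarsest-P_m} and \ref{enu:LCTSep} is correct, and it is essentially the paper's argument reorganized. The paper proves \ref{enu:ballsWithUPnorms-GenNorm} by the same two estimates, written at the level of representatives ($\max_{|\alpha|\le m}\sup_{x\in K_\eps}|\partial^\alpha u^i_\eps(x)-\partial^\alpha v^i_\eps(x)|=O(\eps^b)$ with $b>-\log r$, etc.) rather than through your order/valuation dictionary, and then derives \ref{enu:SharpCoarsest-P_m} exactly as you do; for \ref{enu:LCTSep} it likewise combines $\mathcal{P}_m(f)=0\Leftrightarrow f=0$ (Prop.~\ref{prop:normProp}.\ref{enu:normProp-zero}) with \cite[Prop.~1.11]{Gar05}. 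The one genuine difference is in \ref{enu:RtilAlgebraSharp}: the paper estimates $\Vert fg-f_0g_0\Vert_m$ directly with the $\Rtil$-valued norms and closes the estimate with an invertible infinitesimal radius (possible since $\Vert f_0\Vert_m,\Vert g_0\Vert_m$ are moderate even when infinite), whereas you pass to the ultra-pseudo-norms, where $\mathcal{P}_m(f_0)=e^{-v_m(f_0)}$ is always a finite real, and close with a real $\delta$; both work, and yours makes the algebra look verbatim Archimedean once \ref{enu:ballsWithUPnorms-GenNorm}/\ref{enu:SharpCoarsest-P_m} are in place. One caveat you share with the paper: ``coarsest topology such that each $\mathcal{P}_m$ is continuous'' must be read as the topology induced by the family of ultra-pseudo-norms (equivalently generated by the translated balls $C^m_r(f)$); the literal initial topology of the maps $\mathcal{P}_m$ is strictly coarser (it cannot separate points with equal $\mathcal{P}_m$-values), but the paper commits the same gloss.

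On part \ref{enu:productAndFermat}, your positive half (Fermat continuity where all norms are finite, closing with a real $\delta$) is exactly the paper's. Your explicit counterexample $f_0=\diff{\eps}^{-1}\phi$, $h=\diff{\eps}\,\phi$ is correct and in fact \emph{more} than the paper gives: the paper's proof of \ref{enu:productAndFermat} consists solely of the displayed product estimate, with no constructed witness of discontinuity. However, your final, fully general claim --- discontinuity at \emph{every} point where some $\Vert f_0\Vert_{m_0}$ is infinite, via an infinitesimal multiple $\lambda\psi$ of a bump ``aligned with the unbounded derivative'' --- has a genuine gap at exactly the point you flag as delicate. The perturbation must itself lie in $\GD_K(U)$, i.e.\ by Def.~\ref{def:cmptlySuppGSF} all derivatives of its defining net must be negligible at every point of $\exterior{K}$. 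A bump centered at the maximizing points $x_\eps\in K_\eps$ must have width bounded below by some power $\eps^R$ (else its derivatives are not moderate at moderate points), and a bump of such polynomial width will in general protrude into $\exterior{K}$ whenever the $x_\eps$ approach the boundary region of $K_\eps$ (e.g.\ for thin $K_\eps$); moreover the $x_\eps$ may escape every fixed compact set, so the bump must move with $\eps$, which is precisely what makes membership in $\GD_K(U)$ nontrivial. Your implicit fallback of perturbing by $\lambda f_0$ itself also fails in general: for $u_\eps(x)=\eps\sin(x/\eps^{2})\chi(x)$ one has $\Vert f_0\Vert_1$ infinite while killing all norms of $\lambda f_0$ forces $\lambda$ negligible, so no discontinuity is detected. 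That said, since the paper itself only establishes the weak reading of ``only'' (the product is not Fermat continuous on all of $\GD_K(U)^{2}$), your single counterexample already suffices at the level of rigor of the published proof; only your stronger pointwise claim remains unproven.
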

\begin{proof}
\ref{enu:RtilAlgebraSharp}, \ref{enu:productAndFermat}: Continuity
of the sum in the sharp (and in the Fermat) topology follows directly
from the triangle inequality Prop.~\ref{prop:normProp}.\ref{enu:normProp-triangle}.
The continuity of the product at $(f_{0},g_{0})$ and property \ref{enu:productAndFermat}
follow from Prop.~\ref{prop:normProp} \ref{enu:normProp-prod} via
\begin{align*}
\Vert f\cdot g-f_{0}\cdot g_{0}\Vert_{m} & \le c_{m}(\Vert f-f_{0}\Vert_{m}\cdot\Vert g-g_{0}\Vert_{m}+\Vert f-f_{0}\Vert_{m}\cdot\Vert g_{0}\Vert_{m}+\\
 & \phantom{\le}+\Vert f_{0}\Vert_{m}\cdot\Vert g-g_{0}\Vert_{m}).
\end{align*}

\ref{enu:ballsWithUPnorms-GenNorm}: Let us first assume $q\le-\log r$
and $g\in C_{r}^{m}(f)$, so that $\mathcal{P}_{m}(f-g)<r$ and $v_{m}(f-g)>-\log r$.
This implies 
\begin{equation}
\exists b>-\log r:\ \max_{\substack{|\alpha|\le m\\
i\le d
}
}\sup_{x\in K_{\eps}}\left|\partial^{\alpha}u_{\eps}^{i}(x)-\partial^{\alpha}v_{\eps}^{i}(x)\right|=O(\eps^{b}),\label{eq:maxSupBigO}
\end{equation}
where $(u_{\eps})$ and $(v_{\eps})$ define $f$ and $g$, respectively.
Property \eqref{eq:maxSupBigO} yields the existence of some $M>0$
such that for $\eps$ sufficiently small we obtain 
\[
\ \max_{\substack{|\alpha|\le m\\
i\le d
}
}\sup_{x\in K_{\eps}}\left|\partial^{\alpha}u_{\eps}^{i}(x)-\partial^{\alpha}v_{\eps}^{i}(x)\right|+\eps^{b}\le(M+1)\cdot\eps^{b}<\eps^{-\log r}\le\eps^{q}.
\]
Therefore $\Vert f-g\Vert_{m}+\diff{\eps^{b}}\le\diff{\eps}^{q}$,
so $\Vert f-g\Vert_{m}<\diff{\eps}^{q}$.

Now, let us assume $q\ge-\log s$, $s<r$, and $g\in B_{\text{d}\eps^{q}}^{m}(f)$,
so that $\Vert f-g\Vert_{m}<\diff{\eps}^{q}$. Therefore, 
\[
\forall^{0}\eps:\ \max_{\substack{|\alpha|\le m\\
i\le d
}
}\sup_{x\in K_{\eps}}\left|\partial^{\alpha}u_{\eps}^{i}(x)-\partial^{\alpha}v_{\eps}^{i}(x)\right|<\eps^{q}\le\eps^{-\log s}.
\]
Taking the $|-|_{e}$-norm we get 
\[
\left|\max_{\substack{|\alpha|\le m\\
i\le d
}
}\sup_{x\in K_{\eps}}\left|\partial^{\alpha}u_{\eps}^{i}(x)-\partial^{\alpha}v_{\eps}^{i}(x)\right|\right|_{e}\le e^{\log s}=s<r,
\]
that is $\mathcal{P}_{m}(f-g)<r$ as claimed.

\ref{enu:SharpCoarsest-P_m} follows directly from \ref{enu:ballsWithUPnorms-GenNorm}
and Prop.~\ref{prop:propValuationsUPnorm}.\ref{enu:UPnormGD}.

\ref{enu:LCTSep} follows from Prop.~\ref{prop:normProp}.\ref{enu:normProp-zero}
and \cite[Prop. 1.11]{Gar05}. 
\end{proof}

\subsection{\label{sub:5.1}Generalized functions and non-Archimedean properties}

In this section, we want to clarify some relationships between the
classical notion of convexity, the notion of $\Rtil$-convexity of
\cite{Gar05} and the use of $\Rtil$-valued norms.

We have seen that balls $B_{\rho}^{m}(0)$, $\rho\in\Rtil_{>0}$,
define a neighborhood system of $0$ for $\GD_{K}(U)$; they are convex
in the usual sense, i.e.\ if $f$, $g\in B_{\rho}^{m}(0)$, $t\in[0,1]$
(in particular if $t\in[0,1]_{\R})$, then 
\[
\Vert tf+(1-t)g\Vert_{m}\le t\Vert f\Vert_{m}+(1-t)\Vert g\Vert_{m}<t\rho+(1-t)\rho=\rho.
\]
Moreover, each ball $B_{\rho}^{m}(0)$ is also balanced: if $t\in\Rtil$,
$|t|\le1$, then $t\cdot B_{\rho}^{m}(0)\subseteq B_{\rho}^{m}(0)$.
However, this space is not a classical locally convex topological
vector space over the field $\R$ because of two reasons: (i) the
product by scalars is not continuous with respect to the Euclidean
topology on $\R$, (ii) Lemma \ref{lem:cmptlySuppGSFareBounded} implies
that the property 
\[
\forall f\in\GD_{K}(U)\,\exists t\in\Rtil:\ f\in t\cdot B_{\rho}^{m}(0)
\]
holds for $t\in\Rtil$ but it cannot be extended to $t\in\R$. As
we have seen in the proof of Thm.~\ref{thm:sharpTop} \ref{enu:productAndFermat},
this is a necessary consequence of the existence of generalized functions
with infinite $\Rtil$-valued $\Vert-\Vert_{m}$-norm.

On the other hand, even though the sets $C_{r}^{m}(0)$ are defined
using $\R$ only, i.e.\ without mentioning any non-Archimedean property,
they satisfy 
\begin{equation}
\forall f\in C_{r}^{m}(0)\,\forall\lambda\in\R:\ \lambda\cdot f\in C_{r}^{m}(0),\label{eq:homog}
\end{equation}
and this is possible only because they are infinitesimal sets. In
fact, we have seen in Thm.~\ref{thm:sharpTop} that $C_{r}^{m}(0)\subseteq B_{\text{d}\eps^{q}}^{m}(0)$
for $q\le-\log r$.

More generally, a set $A\subseteq\GD_{K}(U)$ can be $\Rtil$-balanced
(see \cite{Gar05}), i.e. 
\[
\lambda A\subseteq A\quad\forall\lambda\in\Rtil:\ |\lambda|_{e}\le1,
\]
and at the same time can be thought of as ``small'' only in case
$A$ consists infinitesimal points. For example, the ball $B_{\text{d}\eps^{b}}^{m}(0)$
is $\Rtil$-balanced, but $B_{1}^{m}(0)$ is not. In fact, we have 
\begin{lem}
\label{lem:infinitesimalA} Suppose that $A\subseteq\GD_{K}(U)$ and
$m\in\N$ are such that 
\begin{equation}
A+A\subseteq A\label{eq:APlusAInA}
\end{equation}
\[
\exists r\in\R_{>0}:\ A\subseteq B_{r}^{m}(0).
\]
Then every element $u\in A$ has infinitesimal norm: $\Vert u\Vert_{m}\approx0$.\end{lem}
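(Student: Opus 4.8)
The plan is to run the non-Archimedean analogue of the classical argument that an additively closed, bounded set can only contain null vectors. The two hypotheses feed into each other: additive closure \eqref{eq:APlusAInA} manufactures arbitrarily large integer multiples of a fixed element, while membership in the \emph{real} ball $B_{r}^{m}(0)$ keeps all of these multiples uniformly bounded, and this is only reconcilable if the element itself has infinitesimal norm. Concretely, fix $u\in A$. First I would show by induction on $n$, using only \eqref{eq:APlusAInA}, that $nu=u+\cdots+u\in A$ for every $n\in\N_{>0}$: the case $n=1$ is trivial, and the inductive step is $nu=(n-1)u+u\in A+A\subseteq A$.

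Since $A\subseteq B_{r}^{m}(0)$, and $\Vert-g\Vert_{m}=\Vert g\Vert_{m}$ by homogeneity, this gives $\Vert nu\Vert_{m}<r$ for all $n\in\N_{>0}$. Next I invoke scalar homogeneity of the generalized norm, Prop.~\ref{prop:normProp}.\ref{enu:normProp-scalar}, taking as scalar the constant generalized number $[n]\in\Rtil$, whose absolute value is $n$. Since $[n]\cdot u=nu$, this rewrites the bound as $n\Vert u\Vert_{m}<r$ for every $n\in\N_{>0}$.

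It remains to deduce $\Vert u\Vert_{m}\approx0$ from the countable family of inequalities $n\Vert u\Vert_{m}<r$. Write $\nu:=\Vert u\Vert_{m}$, which is nonnegative by Prop.~\ref{prop:normProp}.\ref{enu:normProp-pos}, and fix an arbitrary real $s>0$. Choosing a natural number $n>r/s$, the relation $n\nu<r$ means $r-n\nu>0$, so by Lemma~\ref{lem:mayer} any representative $(\nu_{\eps})$ of $\nu$ satisfies $r-n\nu_{\eps}>0$, hence $\nu_{\eps}<r/n<s$, for all sufficiently small $\eps$. This yields $\nu\le s$ in $\Rtil$; as $s>0$ was an arbitrary real number and $\nu\ge0$, we conclude $\nu\approx0$, which is exactly the claim.

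The step demanding the most care is the last one, because $\Rtil$ is not totally ordered and the symbol $>$ carries the strong (invertibility) meaning fixed in the introduction, so one cannot simply ``divide by $n$'' or pass to a limit within $\Rtil$. The correct route is to descend to representatives and use the equivalence between $x>0$ and eventual strict positivity of every representative provided by Lemma~\ref{lem:mayer}: this converts each of the countably many generalized inequalities $n\nu<r$ into a genuine numerical inequality $\nu_{\eps}<r/n$, which can then be compared against the arbitrary real threshold $s$. I would stress that no single $\eps_{0}$ works for all $n$ at once, but this is harmless, since for the conclusion $\nu\approx0$ it suffices to treat one real $s$ at a time, choosing $n$ (and hence the range of admissible $\eps$) after $s$ is fixed.
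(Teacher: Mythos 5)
Your proof is correct and takes essentially the same approach as the paper's: both use additive closure to get $n\cdot u\in A\subseteq B_{r}^{m}(0)$ for all $n\in\N_{>0}$ and homogeneity to conclude $\Vert u\Vert_{m}<\frac{r}{n}$ for every $n$, which is infinitesimality. Your caution about ``dividing by $n$'' is unnecessary --- since $n$ is a positive real constant, hence invertible in $\Rtil$ with multiplication by $\frac{1}{n}$ preserving the strict order of Lemma~\ref{lem:mayer}, the inequality $n\Vert u\Vert_{m}<r$ passes directly to $\Vert u\Vert_{m}<\frac{r}{n}$ (which is exactly how the paper concludes), though your detour through representatives is of course also valid.
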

\begin{proof}
In fact, \eqref{eq:APlusAInA} implies $n\cdot u\in A\subseteq B_{r}^{m}(0)$
for all $n\in\N_{\ne0}$. Therefore, $\Vert u\Vert_{m}<\frac{r}{n}$
for all $n\in\N_{\ne0}$, which proves our claim. 
\end{proof}
Let us note that condition \eqref{eq:APlusAInA} holds both for $A$
which is $\Rtil$-balanced or $\Rtil$-convex.

\noindent These remarks permit to show that in dealing with generalized
functions, we are naturally induced to consider a topology on $\Rtil$
which contains infinitesimal neighborhoods (hence inducing the discrete
topology on $\R$, see \cite{GK2}). This is due to the coexistence
of a continuous product by scalars and of an infinite element in $\GD_{K}(U)$,
as stated in the following general result. In a possible interpretation
of its statement, we can think of $R$ as $\R$ with a topology $\tau$,
$\widetilde{R}$ as $\Rtil$ with the sharp topology $\tilde{\tau}$,
and $<_{{\scriptscriptstyle \widetilde{R}}}$ as the strict order
relation $<$ of Lemma \ref{lem:mayer}. 
\begin{thm}
\noindent \label{thm:discreteTopOnR-General}Let $(R,+_{{\scriptscriptstyle R}},\cdot_{{\scriptscriptstyle R}},<_{{\scriptscriptstyle R}},\tau)$
and $(\widetilde{R},+_{{\scriptscriptstyle \widetilde{R}}},\cdot_{{\scriptscriptstyle \widetilde{R}}},<_{{\scriptscriptstyle \widetilde{R}}},\tilde{\tau})$
be Hausdorff topological ordered rings such that $(\R,+,\cdot,<)$
is a substructure of $(R,+_{{\scriptscriptstyle R}},\cdot_{{\scriptscriptstyle R}},<_{{\scriptscriptstyle R}})$,
which in turn is a substructure of $(\widetilde{R},+_{{\scriptscriptstyle \widetilde{R}}},\cdot_{{\scriptscriptstyle \widetilde{R}}},<_{{\scriptscriptstyle \widetilde{R}}})$
and such that 
\begin{equation}
\forall r\in R\ \forall s\in\R:\ r<_{\widetilde{{\scriptscriptstyle R}}}s\then r<_{{\scriptscriptstyle R}}s.\label{eq:strictPresLess}
\end{equation}
Let $(G,+_{{\scriptscriptstyle G}},\cdot{\scriptscriptstyle G},\sigma)$
be a Hausdorff topological $R$-module, and $|-|_{{\scriptscriptstyle G}}:G\ra\widetilde{R}$,
$|-|_{{\scriptscriptstyle R}}:R\ra R$ be maps such that $\left|r\cdot_{{\scriptscriptstyle G}}g\right|_{{\scriptscriptstyle G}}=|r|_{{\scriptscriptstyle R}}\cdot_{{\scriptscriptstyle \widetilde{R}}}|g|_{{\scriptscriptstyle G}}$
for all $r\in R$ and all $g\in G$. Assume that any $\tau$-neighborhood
of $0\in R$ contains a ball $B_{\eta}^{R}(0):=\left\{ s\in R\mid|s|_{{\scriptscriptstyle R}}<_{{\scriptscriptstyle R}}\eta\right\} $
for some $\eta\in R$, $\eta>_{{\scriptscriptstyle R}}0$, and that
there exists some $\rho\in\widetilde{R}$ with $\rho>_{{\scriptscriptstyle \widetilde{R}}}0$
such that the ball $B_{\rho}^{G}(0):=\left\{ g\in G\mid|g|_{{\scriptscriptstyle G}}<_{{\scriptscriptstyle \widetilde{R}}}\rho\right\} $
is $\sigma$-open. Finally, assume that 
\begin{equation}
\exists g\in G:\ |g|_{{\scriptscriptstyle G}}\text{ is invertible}\ ,\ \forall M\in\R_{>0}:\ |g|_{{\scriptscriptstyle G}}>_{{\scriptscriptstyle \widetilde{R}}}\rho\cdot_{{\scriptscriptstyle \widetilde{R}}}M.\label{eq:invertibleElementG}
\end{equation}
Then the induced topology $\tau\cap\R$ is discrete.\end{thm}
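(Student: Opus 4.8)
The plan is to produce a single $\tau$-open neighborhood $W$ of $0\in R$ whose trace on $\R$ is exactly $\{0\}$. Since $R$ is a topological ring, each translation is a $\tau$-homeomorphism mapping the subfield $\R$ onto itself, so once $\{0\}$ is shown to be open in $\tau\cap\R$ it follows that every singleton of $\R$ is $\tau\cap\R$-open, i.e.\ $\tau\cap\R$ is discrete. Thus the whole problem reduces to isolating $0$ inside $\R$.

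First I would invoke the continuity of the module action. Fix the element $g\in G$ provided by \eqref{eq:invertibleElementG}. The partial map $R\to G$, $r\mapsto r\cdot_{\scriptscriptstyle G}g$, is $(\tau,\sigma)$-continuous because $\sigma$ makes $G$ a topological $R$-module, and it carries $0$ to $0\in B^G_\rho(0)$. As $B^G_\rho(0)$ is $\sigma$-open, its preimage is a $\tau$-open neighborhood of $0$, which by the homogeneity relation $\left|r\cdot_{\scriptscriptstyle G}g\right|_{\scriptscriptstyle G}=|r|_{\scriptscriptstyle R}\cdot_{\scriptscriptstyle\widetilde{R}}|g|_{\scriptscriptstyle G}$ can be written explicitly as $W:=\{r\in R\mid |r|_{\scriptscriptstyle R}\cdot_{\scriptscriptstyle\widetilde{R}}|g|_{\scriptscriptstyle G}<_{\scriptscriptstyle\widetilde{R}}\rho\}$. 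By the assumed local base property, $W$ then contains a ball $B^R_\eta(0)$ for some $\eta>_{\scriptscriptstyle R}0$.

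The crux — and the step I expect to be the main obstacle — is to show that every $r\in W$ has an infinitesimally small absolute value, in the sense that no $s_0\in\R_{>0}$ satisfies $|r|_{\scriptscriptstyle R}\ge_{\scriptscriptstyle\widetilde{R}}s_0$. This is precisely where the infiniteness of $|g|_{\scriptscriptstyle G}$ from \eqref{eq:invertibleElementG} enters. Suppose such an $s_0$ existed. Applying the hypothesis $|g|_{\scriptscriptstyle G}>_{\scriptscriptstyle\widetilde{R}}\rho\cdot_{\scriptscriptstyle\widetilde{R}}M$ with $M:=s_0^{-1}\in\R_{>0}$, multiplying by $s_0>_{\scriptscriptstyle R}0$, and using $|r|_{\scriptscriptstyle R}\ge_{\scriptscriptstyle\widetilde{R}}s_0$ together with $|g|_{\scriptscriptstyle G}\ge_{\scriptscriptstyle\widetilde{R}}0$ (products of nonnegatives are nonnegative in the ordered ring $\widetilde{R}$), one obtains
\[
|r|_{\scriptscriptstyle R}\cdot_{\scriptscriptstyle\widetilde{R}}|g|_{\scriptscriptstyle G}\ \ge_{\scriptscriptstyle\widetilde{R}}\ s_0\cdot_{\scriptscriptstyle\widetilde{R}}|g|_{\scriptscriptstyle G}\ >_{\scriptscriptstyle\widetilde{R}}\ s_0\cdot_{\scriptscriptstyle\widetilde{R}}\rho\cdot_{\scriptscriptstyle\widetilde{R}}M\ =\ \rho,
\]
contradicting $r\in W$. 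To keep the argument robust against the fact that $\widetilde{R}$ is only partially ordered, I would record the equivalent rearranged form $|r|_{\scriptscriptstyle R}\cdot_{\scriptscriptstyle\widetilde{R}}M<_{\scriptscriptstyle\widetilde{R}}1$ for every $M\in\R_{>0}$; since $|r|_{\scriptscriptstyle R}\cdot_{\scriptscriptstyle\widetilde{R}}M\in R$ and $1\in\R$, the compatibility condition \eqref{eq:strictPresLess} transports this to $|r|_{\scriptscriptstyle R}<_{\scriptscriptstyle R}M^{-1}$ inside $R$, so that $|r|_{\scriptscriptstyle R}$ is genuinely smaller than every positive real, measured in $R$ itself.

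Finally I would intersect with $\R$. Take any $s\in\R\cap B^R_\eta(0)\subseteq\R\cap W$. Here I use that $|-|_{\scriptscriptstyle R}$ restricts on the subfield $\R$ to the ordinary absolute value (the content of the intended interpretation $R=\R$ recorded before the statement), so that $|s|_{\scriptscriptstyle R}=|s|$ is, for $s\ne0$, a \emph{fixed} element of $\R_{>0}$. Such an element is certainly not smaller than every positive real, contradicting the infinitesimality established above; hence $s=0$. Therefore $B^R_\eta(0)\cap\R=\{0\}$, so $\{0\}$ is open in $\tau\cap\R$, and translation-invariance of the ring topology upgrades this to discreteness of $\tau\cap\R$, as claimed.
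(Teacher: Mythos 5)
Your proposal is correct, and its analytic core coincides with the paper's proof: both use continuity of the scalar action at $(0,g)$ to produce a ball $B_{\eta}^{R}(0)$ (inside your preimage $W$) on which $|r|_{R}\cdot_{\widetilde{R}}|g|_{G}<_{\widetilde{R}}\rho$, and both then combine \eqref{eq:invertibleElementG} with the invertibility of $|g|_{G}$ and the compatibility condition \eqref{eq:strictPresLess} to conclude $|r|_{R}<_{R}s$ for every $s\in\R_{>0}$; your contradiction detour via $s_{0}$ and the rearranged form $|r|_{R}\cdot M<_{\widetilde{R}}1$ is the paper's division by $|g|_{G}$ written multiplicatively, and your care about the partial order (not inferring ``$<s_{0}$'' from ``not $\ge s_{0}$'') is well placed. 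The two arguments diverge only at the very end. The paper concludes abstractly: it deduces that every ball is contained in the monad of its center and then invokes \cite[Prop.~2.1]{GK2} to get discreteness of $\tau\cap\R$. You finish by hand: you isolate $0$ in $\R\cap B_{\eta}^{R}(0)$ and propagate by translation, which is more elementary and self-contained, avoiding the external citation. The price is that your last step explicitly uses that $|-|_{R}$ restricts to the ordinary absolute value on $\R$ (so that $|s|_{R}$ is a fixed positive real for $0\ne s\in\R$), a property not among the stated hypotheses --- you correctly flag it as belonging to the ``intended interpretation''. Note, however, that the paper's proof tacitly relies on the same property: its passage from ``$|r|_{R}$ lies below every positive real'' to ``$r$ is infinitesimal, i.e.\ lies in the monad of $0$'' likewise requires $|-|_{R}$ to reflect the size of $r$. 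So neither proof covers a completely arbitrary map $|-|_{R}$; at the level of generality the theorem is actually intended, your argument is a faithful equivalent that trades the citation of \cite[Prop.~2.1]{GK2} for a direct two-line conclusion, and it has the merit of making the hidden assumption visible.
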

\begin{proof}
\noindent Since $G$ is a topological $R$-module, the product by
scalars is $\tau\times\sigma$-continuous, and 
\begin{equation}
\lim_{\substack{r\to0\\
r\in R
}
}r\cdot_{{\scriptscriptstyle G}}g=0,\label{eq:limit-r-g}
\end{equation}
where $g\in G$ comes from assumption \eqref{eq:invertibleElementG}.
By hypothesis, the ball $B_{\rho}^{G}(0)\in\sigma$ and every $\tau$-neighborhood
of $r=0\in R$ contains some ball $B_{\eta}^{R}(0)$. Therefore \eqref{eq:limit-r-g}
entails that there exists some $\eta>_{{\scriptscriptstyle R}}0$
such that 
\begin{equation}
\forall r\in R:\ |r|_{{\scriptscriptstyle R}}<_{{\scriptscriptstyle R}}\eta\then\rho>_{{\scriptscriptstyle \widetilde{R}}}|r\cdot_{{\scriptscriptstyle G}}g|_{{\scriptscriptstyle G}}=|r|_{{\scriptscriptstyle R}}\cdot_{{\scriptscriptstyle \widetilde{R}}}|g|_{{\scriptscriptstyle G}}.\label{eq:limitExplicit}
\end{equation}
For each $s\in\R_{>0}$ take $M\in\R_{>0}$ such that $\frac{1}{M}<s$,
so that $|g|_{{\scriptscriptstyle G}}>_{{\scriptscriptstyle \widetilde{R}}}\rho\cdot_{{\scriptscriptstyle \widetilde{R}}}M>_{{\scriptscriptstyle \widetilde{R}}}0$
from \eqref{eq:invertibleElementG}. For all $r\in R$ such that $|r|_{{\scriptscriptstyle R}}<_{{\scriptscriptstyle R}}\eta$,
we have 
\[
|r|_{{\scriptscriptstyle R}}<_{{\scriptscriptstyle \widetilde{R}}}\frac{\rho}{|g|_{{\scriptscriptstyle G}}}<_{{\scriptscriptstyle \widetilde{R}}}\frac{1}{M}<s
\]
because $|g|_{{\scriptscriptstyle G}}$ is invertible in $\widetilde{R}$.
Therefore $|r|_{{\scriptscriptstyle R}}<_{{\scriptscriptstyle \widetilde{R}}}s$
and hence $|r|_{{\scriptscriptstyle R}}<_{{\scriptscriptstyle R}}s$
by \eqref{eq:strictPresLess}. This means that $r$ is infinitesimal
in the ring $R$, i.e.\ the ball $B_{\rho}^{R}(0)$ is contained
in the monad of $0$ (see e.g.~\cite{GK2} for the notion of monad)
and so also every ball $B_{\eta}^{R}(\bar{r})$ is contained in the
monad of $\bar{r}\in R$. Therefore, \cite[Prop. 2.1]{GK2} implies
the conclusion. 
\end{proof}
\noindent We can therefore say that if we want to find a space $G$
of generalized functions which is an ordinary Hausdorff topological
vector space on $\R$, then we cannot define the topologies $\tau$
and $\sigma$ using seminorms valued in a non-Archimedean (see \eqref{eq:invertibleElementG})
extension of $\R$. This results confirms Rem.\ 43 of \cite{DeHaPiVa07}.

\noindent As a consequence, we have the following impossibility result:
\begin{cor}
\noindent \label{cor:impLCTVS}There does not exist any real Hausdorff
topological vector space $(G,+_{{\scriptscriptstyle G}},\cdot{\scriptscriptstyle G},\sigma)$
such that: 
\begin{enumerate}
\item \label{enu:G-SubsetGD_K}$(G,+_{{\scriptscriptstyle G}},\cdot{\scriptscriptstyle G})$
is a linear subspace of $\GD_{K}(U)$ for some $\emptyset\ne K\fcmp U\subseteq\Rtil^{n}$ 
\item \label{enu:deltaIn-G}$G$ contains the Dirac delta $\delta\in G$ 
\item \label{enu:ballInSigma}$\exists m\in\N\,\exists\rho\in\Rtil_{>0}:\rho<1\ ,\ B_{\rho}^{m}(0)\cap G\in\sigma$ 
\end{enumerate}
\noindent In particular, the Colombeau algebra $\gs(\Omega)$ does
not contain any real Hausdorff topological vector subspace $G$ such
that some $B_{\rho}^{m}(0)\cap G$ is open and $\delta\in G$.\end{cor}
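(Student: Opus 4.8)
The plan is to argue by contradiction and apply the abstract impossibility theorem, Thm~\ref{thm:discreteTopOnR-General}, to the present situation. So I would suppose a real Hausdorff topological vector space $(G,+_G,\cdot_G,\sigma)$ satisfying \ref{enu:G-SubsetGD_K}--\ref{enu:ballInSigma} exists, fix the $m\in\N$ and $\rho\in\Rtil_{>0}$ with $\rho<1$ provided by \ref{enu:ballInSigma}, and instantiate the theorem with $R:=\R$ carrying its Euclidean topology $\tau$ and usual order, $\widetilde{R}:=\Rtil$ carrying the sharp topology and the strict order $<$ of Lemma~\ref{lem:mayer}, the module $G$ with its topology $\sigma$, and the maps $|-|_{G}:=\Vert-\Vert_{m}:G\to\Rtil$ and $|-|_{R}:=|-|:\R\to\R$ the ordinary absolute value. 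The witness in \eqref{eq:invertibleElementG} will be $g:=\delta$, which lies in $G$ by \ref{enu:deltaIn-G}.

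Then I would verify the hypotheses of Thm~\ref{thm:discreteTopOnR-General} one by one. The chain of substructures $\R\subseteq\R\subseteq\Rtil$ is clear, and \eqref{eq:strictPresLess} holds because for real $r,s$ the relation $r<s$ in $\Rtil$ (i.e.\ $s-r\ge0$ and invertible) forces $s-r>0$ as a real number, hence $r<s$ in $\R$. The compatibility $\Vert c\cdot f\Vert_{m}=|c|\cdot\Vert f\Vert_{m}$ for $c\in\R\subseteq\Rtil$ is exactly Prop.~\ref{prop:normProp}.\ref{enu:normProp-scalar}; that every Euclidean neighbourhood of $0\in\R$ contains a ball $\{s\mid|s|<\eta\}$ is standard; and that $B_{\rho}^{G}(0)=B_{\rho}^{m}(0)\cap G$ is $\sigma$-open is precisely \ref{enu:ballInSigma}. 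The only substantial point is \eqref{eq:invertibleElementG}, and this is where the specific nature of the Dirac delta enters: choosing the mollifier representative $\delta_{\eps}(x)=\eps^{-n}\psi(x/\eps)$, one computes $\sup_{x}|\partial^{\alpha}\delta_{\eps}(x)|=\eps^{-n-|\alpha|}\sup|\partial^{\alpha}\psi|$, whence $\Vert\delta\Vert_{m}$ is strictly positive (so invertible) and infinite, i.e.\ $\Vert\delta\Vert_{m}>N$ for every $N\in\R_{>0}$ by Lemma~\ref{lem:mayer}. Since $\rho<1$ gives $\rho\cdot M<M$ for each $M\in\R_{>0}$, we obtain $\Vert\delta\Vert_{m}>M>\rho\cdot M$, which is exactly \eqref{eq:invertibleElementG}.

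With all hypotheses verified, Thm~\ref{thm:discreteTopOnR-General} yields that the induced topology $\tau\cap\R$ is discrete; but $\tau$ is the Euclidean topology on $\R$, which is not discrete, a contradiction. This settles the first assertion. For the ``in particular'' claim I would realize $\gs(\Omega)$ as a linear subspace of $\GD_{K}(\Rtil^{n})$: by Thm.~\ref{thm:CGFasCompctlySuppGSF}, fixing an infinite $J=[J_\eps]$ and $K_{\eps}:=\{x\in\Omega\mid|x|\le J_{\eps}\}$, the extension $f\mapsto\bar f$ embeds $\gs(\Omega)\simeq\Gcinf(\otilc)$ linearly and injectively into $\GD_{K}(\Rtil^{n})$, sending $\delta$ to an extension with $\Vert\delta\Vert_{m}$ still infinite and, via Prop.~\ref{prop:genNormForDifferentNets}, transporting the ball condition \ref{enu:ballInSigma} unchanged. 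Hence a subspace $G$ as in the statement becomes one satisfying \ref{enu:G-SubsetGD_K}--\ref{enu:ballInSigma}, and the first part applies.

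I expect the main obstacle to be the explicit verification of \eqref{eq:invertibleElementG}, namely establishing that $\Vert\delta\Vert_{m}$ is simultaneously invertible and dominates every real multiple of $\rho$; this is the step that genuinely uses both the infinitely large growth of the mollified delta and the hypothesis $\rho<1$. (As a sanity check one can also bypass the abstract theorem entirely: continuity of scalar multiplication gives $\lambda\delta\to0$ in $\sigma$ as $\lambda\to0$ in $\R$, so $\lambda\delta\in B_{\rho}^{m}(0)$ for some real $\lambda\ne0$, forcing $|\lambda|\cdot\Vert\delta\Vert_{m}<\rho<1$ and contradicting that $\Vert\delta\Vert_{m}$ is infinite.)
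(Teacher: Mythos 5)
Your proposal is correct and takes essentially the same route as the paper: it applies Thm.~\ref{thm:discreteTopOnR-General} with $R=\R$ (Euclidean topology), $\widetilde{R}=\Rtil$ (sharp topology), $|-|_{{\scriptscriptstyle G}}=\Vert-\Vert_{m}$, $|-|_{{\scriptscriptstyle R}}=|-|$, and witness $g=\delta$, whose norm $\Vert\delta\Vert_{m}$ is infinite and invertible, then settles the ``in particular'' claim via Thm.~\ref{thm:CGFasCompctlySuppGSF}, exactly as the paper does (your version merely spells out the hypothesis checks in more detail). Your parenthetical direct argument --- continuity of scalar multiplication forces $|\lambda|\cdot\Vert\delta\Vert_{m}<\rho<1$ for some real $\lambda\ne0$, contradicting that $\Vert\delta\Vert_{m}$ is infinite --- is a valid elementary shortcut that the paper does not record.
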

\begin{proof}
\noindent By contradiction, in Thm.~\ref{thm:discreteTopOnR-General},
set $R:=\R$ with the usual Euclidean topology $\tau$, $\widetilde{R}:=\Rtil$
with the sharp topology; set $|g|_{{\scriptscriptstyle G}}:=\Vert g\Vert_{m}$,
where $m\in\N$ comes from \ref{enu:ballInSigma} and we used the
inclusion \ref{enu:G-SubsetGD_K}; set $|r|_{{\scriptscriptstyle R}}:=|r|$
the usual absolute value in $\R$. Note also that $B_{\rho}^{m}(0)\cap G=\{g\in G\mid\Vert g\Vert_{m}<\rho\}=\{g\in G\mid|g|_{G}<_{{\scriptscriptstyle \widetilde{R}}}\rho\}$.
If $\delta\in G$, then $|\delta|_{{\scriptscriptstyle G}}=\Vert\delta\Vert_{m}$
is infinite and invertible in $\Rtil$, so that Thm.~\ref{thm:discreteTopOnR-General}
implies that the Euclidean topology would be discrete. The second
part of the claim follows from Thm.~\ref{thm:CGFasCompctlySuppGSF}. 
\end{proof}
We can summarize Cor.~\ref{cor:impLCTVS} by saying that a real Hausdorff
topological vector space structure $G$ for a space of generalized
functions cannot contain even a single trace $B_{\rho}^{m}(0)\cap G$,
$\rho<1$, of a sharply open ball. This result does not contradict
\cite[Prop. 4]{ACJ}, where it is stated that the sharp topology induces
on \emph{bounded} sets of the real locally convex space $\mathcal{G}_{a}(\Omega)\subseteq\gs(\Omega)$
a topology which is \emph{finer} than the topology $\sigma_{a}$ on
$\mathcal{G}_{a}(\Omega)$. On the other hand, Cor.~\ref{cor:impLCTVS}
implies that $B_{\rho}^{m}(0)\cap\mathcal{G}_{a}(\Omega)\notin\sigma_{a}$
for all $\rho\in\Rtil_{>0}$, $\rho<1$.

\section{\label{sec6}Metric structure on \texorpdfstring{$\GD_{K}$}{GDK}}

In this section, we want to use \cite[Thm. 1.14]{Gar05} to prove
metrizability of $\GD_{K}(U)$. However, we will apply this result
using an explicit and simple countable base of neighborhoods of the
origin which consists of $\Rtil$-absorbent and absolutely $\Rtil$-convex
sets. In this way, we will arrive at a simpler metric.

The idea is to consider only points of balls $f\in B_{\rho}^{m}(0)$
whose norm $\Vert f\Vert_{m}$ is infinitely smaller than $\rho$.
To formally express this idea, we introduce the following 
\begin{defn}
\label{def:U-rho-m}Let $\rho\in\Rtil_{>0}$, $m\in\N$ and $g\in\GD_{K}(U)$.
Then 
\[
U_{\rho}^{m}(g):=\left\{ f\in\GD_{K}(U)\mid\frac{1}{\rho}\cdot\Vert f-g\Vert_{m}\approx0\right\} .
\]
In case any confusion might arise, we will use the more precise symbol
$U_{\rho}^{m}(g,K):=U_{\rho}^{m}(g)$.\end{defn}
\begin{prop}
\label{prop:U-rho-m-topology}\ 
\begin{enumerate}
\item \label{enu:U-rho-m-abs}$U_{\rho}^{m}(0)$ is $\Rtil$-absorbent and
absolutely $\Rtil$-convex. 
\item \label{enu:U-rho-m-sharpTop} Both the system 
\[
\left\{ U_{\rho}^{m}(v)\mid v\in\GD_{K}(U),\ m\in\N,\ \rho\in\Rtil_{>0},\ \rho\approx0\right\} ,
\]
and the system 
\[
\left\{ U_{\text{\emph{d}}\eps^{n}}^{n}(v)\mid v\in\GD_{K}(U),\ n\in\N_{>0}\right\} 
\]
generate the sharp topology on $\GD_{K}(U)$. 
\end{enumerate}
\end{prop}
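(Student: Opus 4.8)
The plan is to treat the two parts separately; each reduces to a short computation with the generalized norms $\Vert\cdot\Vert_m$ together with the fact that the infinitesimals of $\Rtil$ form an order ideal. For \ref{enu:U-rho-m-abs} I would verify directly the two defining conditions of \cite{Gar05}. For absolute $\Rtil$-convexity, take $f,g\in U_\rho^m(0)$ and $\lambda,\mu\in\Rtil$ with $|\lambda|+|\mu|\le 1$. Homogeneity and the triangle inequality (Prop.~\ref{prop:normProp}.\ref{enu:normProp-scalar},\ref{enu:normProp-triangle}) give
\[
\frac{1}{\rho}\Vert\lambda f+\mu g\Vert_m\le|\lambda|\cdot\frac{\Vert f\Vert_m}{\rho}+|\mu|\cdot\frac{\Vert g\Vert_m}{\rho}.
\]
Since $\frac{\Vert f\Vert_m}{\rho}\approx 0$ and $0\le|\lambda|\le 1$, the first summand is squeezed between $0$ and the infinitesimal $\frac{\Vert f\Vert_m}{\rho}$, hence is itself infinitesimal; likewise the second, and a sum of infinitesimals is infinitesimal. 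As the left-hand side is nonnegative, it is $\approx 0$, i.e.\ $\lambda f+\mu g\in U_\rho^m(0)$. For $\Rtil$-absorbency, note that $\frac{\Vert f\Vert_m}{\rho}$ is a fixed moderate number, so $\frac{\Vert f\Vert_m}{\rho}\le\diff{\eps}^{-N}$ for some $N\in\N$; taking $\lambda:=\diff{\eps}^{-N-1}$ (invertible) yields $\frac{1}{\rho}\Vert\lambda^{-1}f\Vert_m=\frac{1}{|\lambda|}\cdot\frac{\Vert f\Vert_m}{\rho}\le\diff{\eps}\approx 0$, so $f\in\lambda\,U_\rho^m(0)$, and replacing $\lambda$ by any larger scalar only shrinks the left-hand side. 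The one point to watch is that the convexity step uses the \emph{order} bound $|\lambda|\le 1$ and not the ultra-pseudo-norm bound $|\lambda|_e\le 1$: the infinitesimals are stable under multiplication by elements of order-modulus $\le 1$, but not under all $\lambda$ with $|\lambda|_e\le 1$ (e.g.\ $\lambda=[\,|\ln\eps|\,]$ has $|\lambda|_e=1$ yet carries an infinitesimal out of $U_\rho^m(0)$).

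For \ref{enu:U-rho-m-sharpTop} the key is the two-sided comparison with the sharp balls of Def.~\ref{def:ballsTopology}. Translating to $g=0$, I would prove
\[
B_{\rho\cdot\diff{\eps}}^m(0)\ \subseteq\ U_\rho^m(0)\ \subseteq\ B_\rho^m(0)\qquad(\rho\in\Rtil_{>0}),
\]
the right inclusion because $\frac{\Vert f\Vert_m}{\rho}\approx 0$ forces $\frac{\Vert f\Vert_m}{\rho}<1$, the left one because $\Vert f\Vert_m<\rho\cdot\diff{\eps}$ gives $\frac{\Vert f\Vert_m}{\rho}<\diff{\eps}\approx 0$. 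Recalling that the infinitesimal-radius balls already form a neighborhood base of the sharp topology, this double inclusion yields mutual refinement: each $U_\rho^m(v)$ is a sharp neighborhood of $v$ (it contains $B_{\rho\diff{\eps}}^m(v)$), while each basic sharp neighborhood $B_\sigma^k(v)$ with $\sigma\approx 0$ contains the $U$-set $U_\sigma^k(v)$. Hence the first system generates exactly the sharp topology.

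For the countable system I would first record that $m\mapsto\Vert\cdot\Vert_m$ is nondecreasing, so $B_\sigma^{m'}(v)\subseteq B_\sigma^m(v)$ for $m'\ge m$, and that every $\rho\in\Rtil_{>0}$ satisfies $\rho\ge\diff{\eps}^n$ for $n$ large (Lemma~\ref{lem:mayer}); consequently $\{B_{\diff{\eps}^n}^n(v)\}_{n\in\N_{>0}}$ is a countable base of the sharp topology. The sandwich above then specializes to
\[
B_{\diff{\eps}^{\,n+1}}^{\,n+1}(v)\ \subseteq\ U_{\diff{\eps}^n}^{\,n}(v)\ \subseteq\ B_{\diff{\eps}^n}^{\,n}(v),
\]
where the left step combines $B^{\,n+1}\subseteq B^{\,n}$ with the radius comparison, so $\{U_{\diff{\eps}^n}^n(v)\}_n$ is squeezed between two cofinal nested families of basic sharp neighborhoods and is therefore itself a countable base. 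I expect the only real friction to be this non-Archimedean bookkeeping: using $|\lambda|\le 1$ rather than $|\lambda|_e\le 1$ in the convexity step, and making sure the chosen countable subfamily is cofinal simultaneously in the radius $\rho$ and in the seminorm index $m$, so that it is a genuine base and not merely a comparable family.
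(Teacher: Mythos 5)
Your proposal is correct and, in outline, runs along the same lines as the paper's proof: the absorbency computation is identical (bound $\tfrac{1}{\rho}\Vert f\Vert_{m}$ by $\diff{\eps}^{-N}$ using $\rho\ge\diff{\eps}^{p}$ and moderateness of $\Vert f\Vert_{m}$, then scale by a fixed negative power of $\diff{\eps}$), and for part (ii) the paper likewise sandwiches the $U$-sets between sharp balls, showing $U_{\rho}^{m}(v)\subseteq B_{\rho}^{m}(v)$ and $B_{\rho}^{m}(v)\subseteq U_{\sqrt{\rho}}^{m}(v)$ for $\rho\approx0$, and obtains the countable system by picking $n:=\max(m,q)$ with $\diff{\eps}^{q}\le\rho$ --- exactly your ``cofinal simultaneously in radius and seminorm index'' step. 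Your inner radius $\rho\,\diff{\eps}$ is a fine substitute for the paper's $\sqrt{\rho}$; one point you should state explicitly is that ``generate'' requires the $U$-sets to be neighborhoods of \emph{all} their points, not just of the center $v$. The paper proves this (sharp openness of $U_{\rho}^{m}(v)$) via $B_{\rho^{2}}^{m}(u)\subseteq U_{\rho}^{m}(v)$, which needs $\rho\approx0$; your own inclusion upgrades for free by the triangle inequality, since $u\in U_{\rho}^{m}(v)$ implies $B_{\rho\,\diff{\eps}}^{m}(u)\subseteq U_{\rho}^{m}(v)$, and this works for every $\rho\in\Rtil_{>0}$.

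The genuine divergence is the balancedness step, and there you are right --- indeed you have caught a flaw in the paper's own proof. The paper argues $\Rtil$-balancedness in the sense of \cite{Gar05} (i.e.\ $\lambda A\subseteq A$ for all $|\lambda|_{e}\le1$) from the inference ``$v(\lambda)\ge0$, so $|\lambda|\le c$ for some $c\in\R_{>0}$'', which is false: your example $\lambda=[\,|\ln\eps|\,]$ has $v(\lambda)=0$, hence $|\lambda|_{e}=1$, yet $|\lambda|$ exceeds every real constant. Moreover the defect is not merely in the proof: taking a bump $\phi\in\GD_{K}(U)$ with $\Vert\phi\Vert_{m}=c_{m}\in\R_{>0}$ and $u:=[\,|\ln\eps|^{-1}\,]\,\rho\,c_{m}^{-1}\phi$, one has $u\in U_{\rho}^{m}(0)$ but $\lambda u\notin U_{\rho}^{m}(0)$, so $U_{\rho}^{m}(0)$ is \emph{not} balanced at $|\lambda|_{e}=1$, and statement (i), read literally with the definition of \cite{Gar05}, fails there. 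Your order-theoretic repair is proved correctly, and in fact your computation uses only \emph{boundedness} of $\lambda,\mu$, not $|\lambda|+|\mu|\le1$; so the same two lines also give Garetto's convexity $A+A\subseteq A$ (take $\lambda=\mu=1$) and balancedness for every $\lambda$ with $v(\lambda)>0$, equivalently $|\lambda|_{e}<1$. These weaker properties are all that the sequel actually uses --- the gauge computation in Prop.~\ref{thm:GD_K-metrizable} only multiplies by scalars $\diff{\eps}^{b}$ --- so the metric and the metrizability of $\GD_{K}(U)$ are unaffected; but your caveat deserves to be recorded as a correction to the published argument.
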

\begin{proof}
To see that $U_{\rho}^{m}(0)$ is $\Rtil$-absorbent, by \cite[Def. 1.1]{Gar05}
we have to show that 
\begin{equation}
\forall u\in\GD_{K}(U)\,\exists a\in\R\,\forall b\in\R_{\le a}:\ u\in\diff{\eps}^{b}\cdot U_{\rho}^{m}(0),\label{eq:U-rho-m-Absorbent}
\end{equation}
i.e.\ $\frac{u}{\text{d}\eps^{b}}\in U_{\rho}^{m}(0)$, which is
equivalent to $\left\Vert \frac{u}{\text{d}\eps^{b}}\right\Vert _{m}\cdot\frac{1}{\rho}=\frac{\Vert u\Vert_{m}}{\text{d}\eps^{b}\cdot\rho}\approx0$.
But $\rho$ is strictly positive, so $\rho\ge\diff{\eps}^{p}$ for
some $p\in\R$. Moreover, $\Vert u\Vert_{m}\in\Rtil$, so $\Vert u\Vert_{m}\le\diff{\eps}^{q}$
for some $q\in\R$. Therefore 
\[
\frac{\Vert u\Vert_{m}}{\diff{\eps}^{b}\cdot\rho}\le\diff{\eps}^{q-b-p},
\]
and we have $\diff{\eps}^{q-b-p}\approx0$ if and only if $b<q-p$.
This proves \eqref{eq:U-rho-m-Absorbent}.

To prove that $U_{\rho}^{m}(0)$ is balanced, assume $\lambda\in\Rtil$
with $|\lambda|_{e}\le1$. Then $v(\lambda)\ge0$, so $|\lambda|\le c$
for some $c\in\R_{>0}$. Therefore, if $u\in U_{\rho}^{m}(0)$ then
\[
\frac{\Vert\lambda u\Vert_{m}}{\rho}=|\lambda|\frac{\Vert u\Vert_{m}}{\rho}\approx0,
\]
so $\lambda u\in U_{\rho}^{m}(0)$.

Finally, we show $\Rtil$-convexity: for all $a$, $b\in\R_{\ge0}$
and all $u$, $v\in U_{\rho}^{m}(0)$, we have 
\[
\frac{1}{\rho}\cdot\Vert\diff{\eps}^{a}\cdot u+\diff{\eps}^{b}\cdot v\Vert_{m}\le\diff{\eps}^{a}\cdot\frac{\Vert u\Vert_{m}}{\rho}+\diff{\eps}^{b}\cdot\frac{\Vert v\Vert_{m}}{\rho}\approx0.
\]

In order to prove \ref{enu:U-rho-m-sharpTop}, we note that $U_{\rho}^{m}(v)\subseteq B_{\rho}^{m}(v)$
because $\frac{\Vert u-v\Vert_{m}}{\rho}\approx0$ implies $\frac{\Vert u-v\Vert_{m}}{\rho}<1$.
Also, if $\rho\approx0$, then $B_{\rho}^{m}(v)\subseteq U_{\sqrt{\rho}}^{m}(v)$
because $\Vert u-v\Vert_{m}<\rho$ implies $\frac{1}{\sqrt{\rho}}\cdot\Vert u-v\Vert_{m}\le\sqrt{\rho}\approx0$.
Finally, if $\rho\approx0$, every $U_{\rho}^{m}(v)$ is sharply open:
if $u\in U_{\rho}^{m}(v)$ and $w\in B_{\rho^{2}}^{m}(u)$, then $\frac{1}{\rho}\cdot\Vert w-v\Vert_{m}\le\frac{1}{\rho}\cdot\Vert w-u\Vert_{m}+\frac{1}{\rho}\cdot\Vert u-v\Vert_{m}\approx0$.
The proof for the second system in \ref{enu:U-rho-m-sharpTop} follows
by observing that given $\rho>0$, $\rho\approx0$, there exists $q\in\N$
such that $\rho\ge\text{d}\eps^{q}$, and setting $n:=\max(m,q)$
we have $U_{\text{d}\eps^{n}}^{n}(v)\subseteq U_{\rho}^{m}(v)$. 
\end{proof}
From \cite[Thm. 1.14]{Gar05}, we have that $\GD_{K}(U)$ is metrizable
with metric 
\begin{equation}
d_{2}(u,v)=\sum_{n=1}^{+\infty}2^{-n}\cdot\min\left\{ \mathcal{P}_{U_{\text{d}\eps^{n}}^{n}(0)}(u-v),1\right\} .\label{eq:d2Metric}
\end{equation}
Concerning \eqref{eq:d2Metric} we recall (see \cite{Gar05}) that
if $A\subseteq\GD_{K}(U)$ is $\Rtil$-absorbent, then, for all $u\in\GD_{K}(U)$,
we define 
\[
V_{A}(u):=\sup\left\{ b\in\R\mid u\in\diff{\eps}^{b}\cdot A\right\} 
\]
\begin{equation}
\mathcal{P}_{A}(u):=e^{-V_{A}(u)}.\label{eq:defGaugeP_A}
\end{equation}
The following result gives a metric which is equivalent to \eqref{eq:d2Metric}
but is defined by a simpler formula. 
\begin{prop}
\label{thm:GD_K-metrizable} Set $A_{n}:=U_{\text{\emph{d}}\eps^{n}}^{n}(0)$
for $n\in\N_{>0}$, and let $u\in\GD_{K}(U)$. Then 
\begin{enumerate}
\item \label{enu:V_Amn}$V_{A_{n}}(u)=v_{n}(u)-n$ 
\item \label{enu:d_e-metric} The map 
\[
d_{e}(u,v)=\sum_{n=1}^{+\infty}e^{\min\left[n-v_{n}(u-v),0\right]-n}.
\]
is a metric on $\GD_{K}(U)$ that is equivalent to $d_{2}$. 
\end{enumerate}
\end{prop}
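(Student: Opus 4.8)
The plan is to establish \ref{enu:V_Amn} by directly computing the gauge $V_{A_n}$, and then to read off \ref{enu:d_e-metric} by substituting this computation into the Garetto metric \eqref{eq:d2Metric}.

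For \ref{enu:V_Amn}: by the norm property \ref{prop:normProp}.\ref{enu:normProp-scalar}, the condition $u\in\diff{\eps}^{b}\cdot A_{n}$ is equivalent to $\frac{1}{\diff{\eps}^{n}}\bigl\Vert \diff{\eps}^{-b}u\bigr\Vert_{n}=\frac{\Vert u\Vert_{n}}{\diff{\eps}^{n+b}}\approx0$, so that $V_{A_n}(u)=\sup\{b\in\R\mid \Vert u\Vert_{n}\cdot\diff{\eps}^{-(n+b)}\approx0\}$. Writing $z:=\Vert u\Vert_{n}\in\Rtil_{\ge0}$ and $c:=n+b$, it then suffices to show $\sup\{c\in\R\mid z\cdot\diff{\eps}^{-c}\approx0\}=v(z)=v_{n}(u)$, since $\sup b=\sup c-n$. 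I would prove this by noting that, for a representative $(z_\eps)$ with $z_\eps\ge 0$, one has $z\cdot\diff{\eps}^{-c}\approx 0$ iff $z_\eps\eps^{-c}\to 0$ as $\eps\to0$, together with the fact that $S:=\{b\in\R\mid z_\eps=O(\eps^b)\}$ is a down-set with $\sup S=v(z)$. For $c<v(z)$ one picks $b\in S$ with $b>c$, whence $z_\eps\eps^{-c}=O(\eps^{b-c})\to0$; for $c>v(z)$, infinitesimality of $z\cdot\diff{\eps}^{-c}$ would force $z_\eps\le\eps^{c}$ eventually, i.e.\ $c\in S$ and $c\le v(z)$, a contradiction. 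Hence the supremum equals $v(z)$ exactly (its attainment at $c=v(z)$ being irrelevant), giving $V_{A_n}(u)=v_n(u)-n$.

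For \ref{enu:d_e-metric}: from \ref{enu:V_Amn} and \eqref{eq:defGaugeP_A} we obtain $\mathcal{P}_{A_n}(u)=e^{-V_{A_n}(u)}=e^{n}\mathcal{P}_{n}(u)=e^{n-v_{n}(u)}$. Since $e^{n-v_n(u-v)}\le 1$ exactly when $v_n(u-v)\ge n$, we have $e^{\min[n-v_n(u-v),0]-n}=e^{-n}\min\{\mathcal{P}_{A_n}(u-v),1\}$, so $d_{e}(u,v)=\sum_{n\ge1}e^{-n}\min\{\mathcal{P}_{A_n}(u-v),1\}$, whereas \eqref{eq:d2Metric} reads $d_{2}(u,v)=\sum_{n\ge1}2^{-n}\min\{\mathcal{P}_{A_n}(u-v),1\}$. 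Thus $d_e$ and $d_2$ differ only in the summable positive weights $e^{-n}$ versus $2^{-n}$. To see that $d_e$ is a metric I would argue that each $\rho_n(u,v):=\min\{\mathcal{P}_{A_n}(u-v),1\}$ is a translation-invariant pseudometric: $\mathcal{P}_{A_n}=e^{n}\mathcal{P}_n$ is an ultra-pseudo-norm by \ref{prop:propValuationsUPnorm}.\ref{enu:UPnormGD} (the positive factor $e^n$ preserves all defining properties), so $\mathcal{P}_{A_n}(u-w)\le\max(\mathcal{P}_{A_n}(u-v),\mathcal{P}_{A_n}(v-w))$, and truncation by $1$ preserves the triangle inequality via $\min\{a+b,1\}\le\min\{a,1\}+\min\{b,1\}$. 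Symmetry follows from $\Vert u-v\Vert_n=\Vert v-u\Vert_n$, while $d_e(u,v)=0$ forces $v_1(u-v)=+\infty$, i.e.\ $\Vert u-v\Vert_1=0$, hence $u=v$ by \ref{prop:normProp}.\ref{enu:normProp-zero}. A positively weighted (summable, since $\rho_n\le1$) sum of pseudometrics is then a metric.

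Finally, for the equivalence I would compare the two metrics topologically rather than by a single constant. Since $e^{-n}<2^{-n}$ for all $n\ge1$ and $0\le\rho_n\le1$, we immediately get $d_e\le d_2$, so every $d_2$-convergent sequence is $d_e$-convergent. For the converse, $d_e(u_k,u)\to0$ forces $e^{-n}\rho_n(u_k,u)\to0$, hence $\rho_n(u_k,u)\to0$ for each fixed $n$; splitting $d_2$ into a finite head (controlled by these coordinatewise limits) and a tail $\sum_{n>N}2^{-n}<\delta$ then yields $d_2(u_k,u)\to0$. Thus $d_e$ and $d_2$ have the same convergent sequences and, being metrics, induce the same topology, namely the sharp topology of \ref{thm:sharpTop}.\ref{enu:SharpCoarsest-P_m}. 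I expect the main obstacle to be \ref{enu:V_Amn}: the delicate point is that $z\cdot\diff{\eps}^{-c}\approx0$ is strictly stronger than $v(z\cdot\diff{\eps}^{-c})\ge0$, so one must argue with the supremum directly as above rather than via a naive valuation identity for quotients.
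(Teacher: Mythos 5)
Your proposal is correct, and it splits into two parts of different character relative to the paper. For part (i) you follow essentially the paper's route: the paper likewise reduces $u\in\diff{\eps}^{b}\cdot A_{n}$ to $\Vert u\Vert_{n}\cdot\diff{\eps}^{-b-n}\approx0$, deduces $b\le v_{n}(u)-n$ from $\approx0\Rightarrow v(\cdot)\ge0$, and gets the converse by picking $b$ strictly below $v_{n}(u)-n$, so that the valuation of the quotient is strictly positive and hence the quotient infinitesimal; your representative-level argument with the down-set $S$ is the same computation made explicit, and your closing remark that $z\cdot\diff{\eps}^{-c}\approx0$ is strictly stronger than $v(z\cdot\diff{\eps}^{-c})\ge0$ is exactly the subtlety that forces the paper's two-sided argument with strict inequality. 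For part (ii) your route genuinely differs, and is in fact sounder than the paper's. The paper makes the same rewriting $d_{2}(u,v)=\sum_{n\ge1}2^{-n}e^{\min[n-v_{n}(u-v),0]}$ and obtains $d_{e}\le d_{2}$ from $e^{-n}\le2^{-n}$, but then asserts $e^{-n}\ge2^{-n-1}$ for all $n$ to conclude $d_{e}\ge\frac{1}{2}d_{2}$; this inequality is equivalent to $(e/2)^{n}\le2$ and so fails for every $n\ge3$, and since $2^{-n}/e^{-n}=(e/2)^{n}\to\infty$ no uniform constant can repair it (one can even exhibit $w$ with $d_{2}(w,0)/d_{e}(w,0)$ unbounded, so the two metrics are not Lipschitz equivalent). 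Your decision to ``compare the two metrics topologically rather than by a single constant'' is therefore precisely the right fix: $d_{e}\le d_{2}$ gives one direction, and extracting coordinatewise convergence of the truncated pseudometrics $\rho_{n}$ from $d_{e}\to0$ and controlling the $d_{2}$-tail by $\sum_{n>N}2^{-n}$ gives the other, which is all that equivalence of metrics (same topology, here the sharp topology) requires. Your explicit verification that $d_{e}$ is a metric (scaling an ultra-pseudo-norm by $e^{n}$, truncation by $1$, and definiteness via $v_{1}(u-v)=+\infty\Rightarrow\Vert u-v\Vert_{1}=0$ together with Prop.~\ref{prop:normProp}.\ref{enu:normProp-zero}) also fills in what the paper dismisses as easily checked.
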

\begin{proof}
\noindent Concerning \ref{enu:V_Amn}, we note that 
\[
\begin{split}u\in\diff{\eps}^{b}\cdot A_{n} & \Rightarrow\frac{u}{\text{d}\eps^{b}}\in U_{\text{d}\eps{}^{n}}^{n}(0)\Rightarrow\frac{\Vert u\Vert_{n}}{\text{d}\eps{}^{b+n}}\approx0\Rightarrow v\left(\diff{\eps}^{-b-n}\cdot\Vert u\Vert_{n}\right)\ge0\\
 & \Rightarrow b\le v_{n}(u)-n,
\end{split}
\]
so $V_{A_{n}}(u)\le v_{n}(u)-n$. Conversely, if we had $V_{A_{n}}(u)<v_{n}(u)-n$,
we could pick $V_{A_{n}}(u)<b<v_{n}(u)-n$. Then as above it would
follow that $\frac{\Vert u\Vert_{n}}{\text{d}\eps{}^{b+n}}\approx0$,
contradicting the definition of $V_{A_{n}}(u)$. This proves \ref{enu:V_Amn}.

In order to prove \ref{enu:d_e-metric}, we use \ref{enu:V_Amn} in
\eqref{eq:d2Metric}: $\mathcal{P}_{A_{n}}(u-v)=e^{-v_{n}(u-v)+n}$
and 
\begin{align*}
d_{2}(u,v) & =\sum_{n=1}^{+\infty}2^{-n}\cdot\min\left\{ e^{-v_{n}(u-v)+n},1\right\} =\\
 & =\sum_{n=1}^{+\infty}2^{-n}\cdot e^{\min\left[n-v_{n}(u-v),0\right]}\ge\\
 & \ge\sum_{n=1}^{+\infty}e^{\min\left[n-v_{n}(u-v),0\right]-n}=d_{e}(u,v).
\end{align*}
But we also have $e^{-n}\ge2^{-n-1}$ for all $n$, so that $d_{e}(u,v)\ge\frac{1}{2}d_{2}(u,v)$.
Using Prop.~\ref{prop:propValuationsUPnorm}, it is easily checked
that $d_{e}$ is a metric, which we have just proved to be equivalent
to $d_{2}$. 
\end{proof}

\section{\label{sec7}Completeness of \texorpdfstring{$\GD_{K}$}{GDK}}

In order to prove the completeness of $\GD_{K}(U)$, we generalize
the proof of \cite[Prop. 3.4]{Gar05} (based in turn on \cite{S})
to the present context. 
\begin{thm}
\label{thm:GD_Kcomplete}The space $\GD_{K}(U)$ with the sharp topology
is complete.\end{thm}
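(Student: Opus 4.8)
The plan is to exploit the metric $d_e$ from Prop.~\ref{thm:GD_K-metrizable}, so that it suffices to show every $d_e$-Cauchy sequence converges. Since $d_e$ is assembled from the valuations $v_n=v(\Vert\cdot\Vert_n)$, and each term of the defining series is dominated by $e^{-n}$, a sequence $(f_j)_j$ is $d_e$-Cauchy precisely when $v_m(f_j-f_k)\to+\infty$ as $j,k\to\infty$ for every fixed $m\in\N$; because $\Vert\cdot\Vert_n$ is non-decreasing in $n$, a single subsequence will control all these valuations at once. The decisive idea is a diagonal gluing of defining nets over shrinking $\eps$-intervals, exactly as in the Colombeau setting; the new feature here is that the glued net must additionally respect the compact-support condition Def.~\ref{def:cmptlySuppGSF}.\ref{enu:def-cmptlySuppGSF-u_eps-K_eps-relations}.

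\textbf{Construction.} First I would pass to a subsequence $(g_k)_k:=(f_{j_k})_k$ with $v_k(g_{k+1}-g_k)>k$, so that $\Vert g_{k+1}-g_k\Vert_k\le\diff{\eps}^k$. For each $k$ choose a net $(w_{k,\eps})$ satisfying Def.~\ref{def:cmptlySuppGSF} for $g_k$; then $(w_{k+1,\eps}-w_{k,\eps})$ satisfies it for $g_{k+1}-g_k$, and Prop.~\ref{prop:genNormForDifferentNets}.\ref{enu:genNormNetCmptlySupp} converts the norm bound into a uniform estimate
\[
\max_{|\alpha|\le k}\ \sup_{x\in\R^n}\big|\partial^\alpha(w_{k+1,\eps}-w_{k,\eps})(x)\big|\le\eps^{k-1}\qquad(\eps\le\eps_k),
\]
for a suitable strictly decreasing sequence $\eps_k\downarrow0$. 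I then glue, setting $u_\eps:=w_{k,\eps}$ for $\eps\in(\eps_{k+1},\eps_k]$ (and $u_\eps:=w_{1,\eps}$ for $\eps>\eps_1$).

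\textbf{Verification.} Using the telescoping identity $u_\eps=w_{1,\eps}+\sum_{l=1}^{k-1}(w_{l+1,\eps}-w_{l,\eps})$ on $(\eps_{k+1},\eps_k]$, moderateness of $(\partial^\alpha u_\eps(x_\eps))$ at any $x=[x_\eps]\in\Rtil^n$ follows by splitting off the finitely many terms with $l<|\alpha|$ (each moderate, being a difference of GSF-defining nets) and bounding the tail $l\ge|\alpha|$ by the convergent geometric series $\sum_l\eps^{l-1}$ via the displayed estimate. The genuinely delicate point, and the main obstacle, is the support condition: for $x=[x_\eps]\in\exterior{K}=\sint{K_\eps^c}$ one must show $[\partial^\alpha u_\eps(x_\eps)]=0$, even though $(u_\eps)$ is assembled from infinitely many distinct nets. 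Here, fixing $m\in\N$, I would split the telescoping sum at $l_0:=\max(|\alpha|,m+1)$: the finite head is $O(\eps^m)$ because each individual net $(w_{l,\eps})$ satisfies Def.~\ref{def:cmptlySuppGSF}.\ref{enu:def-cmptlySuppGSF-u_eps-K_eps-relations} for $g_l$ (so $\partial^\alpha w_{l,\eps}(x_\eps)=O(\eps^m)$), whereas the tail is $\le\sum_{l\ge l_0}\eps^{l-1}\le2\eps^m$ by applying the uniform sup-estimate at the point $x_\eps$. As $m$ is arbitrary, $[\partial^\alpha u_\eps(x_\eps)]=0$, so $(u_\eps)$ satisfies Def.~\ref{def:cmptlySuppGSF} and $f:=[u_\eps(-)]|_U\in\GD_K(U)$.

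\textbf{Conclusion.} The same telescoping estimate, now applied to $u_\eps-w_{k,\eps}=\sum_{l=k}^{j-1}(w_{l+1,\eps}-w_{l,\eps})$ on $(\eps_{j+1},\eps_j]$ with $j\ge k\ge m$, yields $\Vert f-g_k\Vert_m\le\diff{\eps}^{k-1}$, hence $v_m(f-g_k)\to+\infty$ for every $m$, i.e.\ $g_k\to f$ in the sharp topology. Since $(f_j)$ is Cauchy and possesses the convergent subsequence $(g_k)$, the full sequence converges to $f$, which proves completeness.
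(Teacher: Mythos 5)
Your proof is correct and takes essentially the same route as the paper's: both reduce to a $d_e$-Cauchy sequence via metrizability, pass to a subsequence with $\Vert g_{k+1}-g_k\Vert_k\le\diff{\eps}^{k}$, choose representatives satisfying Def.~\ref{def:cmptlySuppGSF} so that Prop.~\ref{prop:genNormForDifferentNets} converts the norm bounds into global sup-estimates valid for $\eps\le\eps_k$, and then glue along $\eps_k\downarrow0$ --- your interval-pasting $u_\eps:=w_{k,\eps}$ on $(\eps_{k+1},\eps_k]$ yields the very same net as the paper's truncated telescoping sum $u_{n_0\eps}+\sum_k h_{k\eps}$. Your head/tail splitting for moderateness, for the support condition on $\exterior{K}$, and for the convergence estimate $\Vert f-g_k\Vert_m\le\diff{\eps}^{k-1}$ is exactly the content of the paper's estimate \eqref{thisest} and its consequences, so the two arguments coincide in substance.
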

\begin{proof}
By Prop.~\ref{thm:GD_K-metrizable} $\GD_{K}(U)$ with the sharp
topology is metrizable. Hence, it suffices to consider a Cauchy sequence
$(u_{n})_{n\in\N}$ in this topology, i.e., 
\[
\forall q\in\R_{>0}\,\forall i\in\N\,\exists N\in\N\,\forall m,n\ge N:\ \Vert u_{n}-u_{m}\Vert_{i}<\diff{\eps}^{q}.
\]
Setting $i=q=k\in\N_{>0}$, this implies the existence of a strictly
increasing sequence $(n_{k})_{k\in\N}$ in $\N$ such that $\Vert u_{n_{k+1}}-u_{n_{k}}\Vert_{k}<\diff{\eps}^{k}$.
Hence picking any representatives $(u_{n\eps})$ of $u_{n}$ as in
Def.~\ref{def:cmptlySuppGSF} we have 
\[
\left[\max_{|\alpha|\le k}\sup_{x\in\R^{n}}\left|\partial^{\alpha}u_{n_{k+1},\eps}(x)-\partial^{\alpha}u_{n_{k},\eps}(x)\right|\right]<\left[\eps^{k}\right]\quad\forall k\in\N_{>0}.
\]
By Lemma \ref{lem:mayer} this yields that for each $k\in\N_{>0}$
there exists an $\eps_{k}$ such that $\eps_{k}\searrow0$ and 
\begin{equation}
\forall\eps\in(0,\eps_{k}):\ \max_{|\alpha|\le k}\sup_{x\in\R^{n}}\left|\partial^{\alpha}u_{n_{k+1},\eps}(x)-\partial^{\alpha}u_{n_{k},\eps}(x)\right|<\eps^{k}.\label{eq:maxSup-z_keps}
\end{equation}
Now set 
\begin{equation}
h_{k\eps}:=\begin{cases}
u_{n_{k+1},\eps}-u_{n_{k},\eps}\in\Coo(\R^{n},\R) & \text{ if }\eps\in(0,\eps_{k})\\
0\in\Coo(\R^{n},\R) & \text{ if }\eps\in[\eps_{k},1)
\end{cases}\label{eq:def-h_keps}
\end{equation}
\[
u_{\eps}:=u_{n_{0}\eps}+\sum_{k=0}^{\infty}h_{k\eps}\quad\forall\eps\in I.
\]
Since $\eps_{k}\searrow0$, for all $\eps\in I$ we have $\eps\notin(0,\eps_{k})$
for all $k\ge\bar{k}$, with $\bar{k}$ sufficiently big. Therefore,
$u_{\eps}=u_{n_{\bar{k}+1},\eps}\in\Coo(\R^{n},\R)$. In order to
prove that $(u_{\eps})$ defines a GSF of the type $U\to\Rtil$, take
$[x_{\eps}]\in U$ and $\alpha\in\N$. We claim that $\left(\partial^{\alpha}u_{\eps}(x_{\eps})\right)\in\R_{M}$.
Now if $p\in\N$ satisfies $|\alpha|\le p$, then for any $x\in\R^{n}$
we have 
\[
\left|\partial^{\alpha}u_{\eps}(x)\right|\le\left|\partial^{\alpha}u_{n_{p+1},\eps}(x)\right|+\sum_{k=p+1}^{\infty}\left|\partial^{\alpha}h_{k\eps}(x)\right|.
\]
From \eqref{eq:maxSup-z_keps} and \eqref{eq:def-h_keps} we get that
$\left|\partial^{\alpha}h_{k\eps}(x)\right|\le\eps^{k}$ for all $k\ge p+1$,
$x\in\R^{n}$ and all $\eps\in(0,1]$. Hence for $\eps\in(0,1]$,
$|\alpha|\le p$ and all $x\in\R^{n}$ we obtain 
\begin{equation}
\left|\partial^{\alpha}u_{\eps}(x)\right|\le\left|\partial^{\alpha}u_{n_{p+1},\eps}(x)\right|+\frac{\eps^{p+1}}{1-\eps}.\label{thisest}
\end{equation}
Inserting $x=x_{\eps}$ and noting that $(\partial^{\alpha}u_{n_{p+1},\eps}(x_{\eps}))\in\R_{M}$
proves our claim. Moreover, since all $(u_{n\eps})$ satisfy Def.~\ref{def:cmptlySuppGSF},
we also conclude from \eqref{thisest} that for any $\alpha$ and
any $[x_{\eps}]\in\exterior{K}$ we have $[\partial^{\alpha}u_{\eps}(x_{\eps})]=0$,
and hence the
GSF $\left[u_{\eps}(-)\right]|_{U}\in\GD_{K}(U)$. 

Finally, $\Vert u-u_{n_{p}}\Vert_{i}<\diff{\eps}^{p-1}$ for all $p\in\N_{>1}$
and all $i\le p$. This yields that $(u_{n_{k}})_{k}$ tends to $u$
in the sharp topology, and hence so does $(u_{n})$. 
\end{proof}

\section{\label{sec8}The space \texorpdfstring{$\GD$}{GD} as inductive
limit of \texorpdfstring{$\GD_{K}$}{GDK}}

In this section, we always assume that $U\subseteq\Rtil^{n}$ is a
non-empty strongly internal set. By Prop.~\ref{prop:GD-Rtil-Module}
and Prop.~\ref{prop:suffCondsInterlUnionIn-U}, this entails that
$\GD(U)$ is an $\Rtil$-module.

In order to define a natural topology on $\GD(U)$ we will employ
\cite[Thm. 1.18]{Gar05}, which we restate here for the reader's convenience: 
\begin{thm}
\label{thm:1.18Gar}Let $\mathcal{G}$ be an $\Rtil$-module, $(\mathcal{G}_{\gamma})_{\gamma\in\Gamma}$
be a family of locally convex topological $\Rtil$-modules and, for
each $\gamma\in\Gamma$, let $i_{\gamma}:\mathcal{G}_{\gamma}\ra\mathcal{G}$
be an $\Rtil$-linear map. Assume that 
\[
\mathcal{G}=\text{\emph{span}}\left(\bigcup_{\gamma\in\Gamma}i_{\gamma}\left(\mathcal{G}_{\gamma}\right)\right)
\]
and let $V\in\mathcal{V}$ if and only if $V\subseteq\mathcal{G}$,
$V$ is absolutely $\Rtil$-convex and $i_{\gamma}^{-1}(V)$ is a
neighborhood of $0$ in $\mathcal{G}_{\gamma}$ for all $\gamma\in\Gamma$.
Then each $V\in\mathcal{V}$ is $\Rtil$-absorbent and the topology
$\tau$ induced by the gauges $\left\{ \mathcal{P}_{V}\right\} _{V\in\mathcal{V}}$
(see \cite{Gar05} and Def.~\eqref{eq:defGaugeP_A}) is the finest
$\Rtil$-locally convex topology on $\mathcal{G}$ such that $i_{\gamma}$
is continuous for all $\gamma\in\Gamma$. Endowed with this topology,
$\mathcal{G}$ is called the inductive limit (colimit) of the spaces
$(\mathcal{G}_{\gamma})_{\gamma\in\Gamma}$ and we write $\mathcal{G}=\indlim\mathcal{G}_{\gamma}$. 
\end{thm}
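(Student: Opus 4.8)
The statement is the $\Rtil$-module analogue of the classical construction of the finest locally convex topology (the inductive limit), so the plan is to follow the standard four-step argument, adapting each step to the ultra-pseudo-seminorm framework of \cite{Gar05}. The only structural differences from the scalar-field case are that absolute convexity is replaced by absolute $\Rtil$-convexity and that $\Rtil$ is a ring with a non-Archimedean valuation, so ``span'' and ``absorbent'' must be read in the graded sense of \cite[Def.~1.1]{Gar05}.

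First I would establish that every $V\in\mathcal{V}$ is $\Rtil$-absorbent. Given $x\in\mathcal{G}$, the spanning hypothesis lets me write $x=\sum_{j=1}^{k}\lambda_j\,i_{\gamma_j}(y_j)$ with $\lambda_j\in\Rtil$ and $y_j\in\mathcal{G}_{\gamma_j}$. Each $i_{\gamma_j}^{-1}(V)$ is by assumption a neighborhood of $0$ in the locally convex $\Rtil$-module $\mathcal{G}_{\gamma_j}$, hence itself $\Rtil$-absorbent by \cite[Def.~1.1 and Thm.~1.14]{Gar05}; thus there is some $a_j\in\R$ with $\diff{\eps}^{-b}\,i_{\gamma_j}(y_j)\in V$ for all $b\le a_j$. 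Absorbing the scalar factors $\lambda_j$ into the bound and combining the finitely many terms by the absolute $\Rtil$-convexity and $\Rtil$-balancedness of $V$ then yields a single $a\in\R$ with $\diff{\eps}^{-b}x\in V$ for all $b\le a$, which is exactly the required absorbency. This combination step --- choosing the common bound $a$ and checking that the absolutely $\Rtil$-convex combination of absorbed elements stays in $V$ --- is the only place where the gradation by $\diff{\eps}^{b}$ must be tracked carefully, and is where I expect the main (though routine) technical work to lie.

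Once absorbency is in hand, $\mathcal{V}$ is a filter base of absolutely $\Rtil$-convex, $\Rtil$-absorbent sets, so the associated gauges $\mathcal{P}_V$ from \eqref{eq:defGaugeP_A} are ultra-pseudo-seminorms and, by the gauge machinery of \cite[Thm.~1.14]{Gar05}, induce a locally convex $\Rtil$-module topology $\tau$ for which the members of $\mathcal{V}$, suitably scaled by powers $\diff{\eps}^{b}$, form a neighborhood base at $0$. Continuity of each $i_\gamma$ is then immediate: by definition of $\mathcal{V}$, $i_\gamma^{-1}(V)$ is a neighborhood of $0$ in $\mathcal{G}_\gamma$ for every $V\in\mathcal{V}$, so $i_\gamma$ is continuous at $0$, and $\Rtil$-linearity upgrades this to global continuity.

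It remains to prove maximality. Let $\tau'$ be any $\Rtil$-locally convex topology on $\mathcal{G}$ rendering all $i_\gamma$ continuous, and let $W$ be a $\tau'$-neighborhood of $0$; since $\tau'$ is locally convex I may assume $W$ is absolutely $\Rtil$-convex. Continuity of each $i_\gamma$ forces $i_\gamma^{-1}(W)$ to be a neighborhood of $0$ in $\mathcal{G}_\gamma$ for every $\gamma$, whence $W\in\mathcal{V}$ and so $W$ is a $\tau$-neighborhood of $0$. Thus $\tau'\subseteq\tau$, and together with the continuity just established this shows $\tau$ is the finest $\Rtil$-locally convex topology making every $i_\gamma$ continuous, identifying $\mathcal{G}$ with the colimit $\indlim\mathcal{G}_\gamma$. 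As the entire argument reproduces \cite[Thm.~1.18]{Gar05}, one may alternatively invoke that reference directly.
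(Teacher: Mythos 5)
Your proposal is correct, but note that the paper itself offers no proof of this statement: it is imported verbatim from Garetto (``we restate here for the reader's convenience'' \cite[Thm.~1.18]{Gar05}), so the paper's ``proof'' is precisely the citation you mention in your closing sentence. Your sketch is a sound reconstruction of the standard argument behind that reference --- absorbency via the spanning hypothesis plus balancedness and $A+A\subseteq A$, gauges yielding the topology, and maximality from pulling back absolutely $\Rtil$-convex neighborhoods --- with the only quibble being the citation of \cite[Thm.~1.14]{Gar05} (the metrizability result, used elsewhere in the paper for $\GD_{K}(U)$) where the general gauge/ultra-pseudo-seminorm machinery of \cite{Gar05} is what is actually needed.
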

Since $\GD(U)=\bigcup_{\emptyset\ne K\fcmp U}\GD_{K}(U)$, we may
therefore equip it with the inductive limit topology with respect
to the inclusions $\iota_{K}:\GD_{K}(U)\hookrightarrow\GD(U)$. We
call the resulting $\Rtil$-locally convex topology the \emph{sharp
topology} on $\GD(U)$. Hence 
\[
\GD(U)=\indlim\GD_{K}(U)\qquad(\emptyset\not=K\fcmp U).
\]
Henceforth we will denote the sharp topology on $\GD_{K}(U)$ by $\sigma_{K}(U)$
(or, for short, by $\sigma_{K}$). Also, the inductive limit topology
on $\GD(U)$ will be denoted by $\sigma(U)$ (or by $\sigma$). Setting
\[
U_{\text{d}\eps^{n}}^{n}(0):=\left\{ f\in\GD(U)\mid\frac{\Vert f\Vert_{n}}{\diff{\eps}^{n}}\approx0\right\} ,
\]
where $n\in\N_{>0}$, we obtain an $\Rtil$-absorbent and absolutely
$\Rtil$-convex subset of $\GD(U)$ such that $i_{K}^{-1}\left(U_{\text{d}\eps^{n}}^{n}(0)\right)=U_{\text{d}\eps^{n}}^{n}(0)\cap\GD_{K}(U)=U_{\text{d}\eps^{n}}^{n}(0,K)\in\sigma_{K}$.
Therefore, these sets generate a coarser topology than the sharp topology
$\sigma$. The proof is identical to that of Prop.~\ref{prop:U-rho-m-topology}
\ref{enu:U-rho-m-abs}.

From the (co-)universal property of inductive limits (\cite[Prop. 1.19]{Gar05})
we immediately conclude: 
\begin{prop}
\label{prop:GDasColimitGD_K}Let $\mathcal{H}$ be a locally convex
topological $\Rtil$-module. For each non-empty $K\fcmp U$, let $T_{K}:\GD_{K}(U)\ra\mathcal{H}$
be an $\Rtil$-linear and continuous map. Assume that $T_{K}(f)=T_{H}(f)$
if $f\in\GD_{K}(U)\cap\GD_{H}(U)$. Then there exists one and only
one map $T:\GD(U)\ra\mathcal{H}$ which is $\Rtil$-linear and continuous
and such that $T\circ\iota_{K}=T_{K}$ for all non-empty $K\fcmp U$. 
\end{prop}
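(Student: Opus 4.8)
The plan is to recognize this statement as nothing more than the (co-)universal property of the inductive limit $\GD(U)=\indlim\GD_{K}(U)$, which was set up immediately above via Thm.~\ref{thm:1.18Gar}, and hence to invoke \cite[Prop. 1.19]{Gar05} directly. For the sake of transparency I would nevertheless spell out the explicit construction, since the verifications make clear where the standing assumption of this section --- that $U$ be strongly internal --- actually enters.

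First I would \emph{define} $T$. Given $f\in\GD(U)=\bigcup_{\emptyset\ne K\fcmp U}\GD_{K}(U)$, I choose some non-empty $K\fcmp U$ with $f\in\GD_{K}(U)$ and set $T(f):=T_{K}(f)$. This is well defined, for if also $f\in\GD_{H}(U)$ then the compatibility hypothesis $T_{K}(f)=T_{H}(f)$ yields the same value. The factorization $T\circ\iota_{K}=T_{K}$ is then immediate, and uniqueness is equally direct: any $\Rtil$-linear $T'$ with $T'\circ\iota_{K}=T_{K}$ for all $K$ must satisfy $T'(f)=T'(\iota_{K}(f))=T_{K}(f)=T(f)$ for every $f\in\GD_{K}(U)$, hence $T'=T$ on all of $\GD(U)$.

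The substantive point is $\Rtil$-linearity, where I would use that the family $\{\GD_{K}(U)\mid\emptyset\ne K\fcmp U\}$ is directed under inclusion. Indeed, for $f\in\GD_{K}(U)$ and $g\in\GD_{H}(U)$, Lemma \ref{thm:unionIntersection} together with \cite[Prop. 2.8]{ObVe08} gives $J:=\text{interl}(K\cup H)\fcmp\Rtil^{n}$, and since $U$ is strongly internal, Prop.~\ref{prop:suffCondsInterlUnionIn-U} yields $J\subseteq U$, so $\emptyset\ne J\fcmp U$. By Thm.~\ref{thm:GD_K-Rtil-Module}.\ref{enu:GD-RtilModule-increasing} both $f$ and $g$ --- and therefore also $f+g$ --- lie in the single space $\GD_{J}(U)$. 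Consequently $T(f+g)=T_{J}(f+g)=T_{J}(f)+T_{J}(g)=T(f)+T(g)$ by the $\Rtil$-linearity of $T_{J}$ and well-definedness of $T$; compatibility with scalars is simpler, since $\lambda f\in\GD_{K}(U)$ already gives $T(\lambda f)=T_{K}(\lambda f)=\lambda\,T_{K}(f)=\lambda\,T(f)$.

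Finally, continuity of $T$ for the inductive limit topology $\sigma(U)$ is exactly the universal property: by Thm.~\ref{thm:1.18Gar}, $\sigma(U)$ is the finest locally convex $\Rtil$-module topology making every $\iota_{K}$ continuous, so an $\Rtil$-linear map out of $\GD(U)$ is continuous precisely when each composite $T\circ\iota_{K}=T_{K}$ is continuous, which holds by hypothesis. I expect the only genuine obstacle to be the linearity step, namely placing $f$ and $g$ coming from different functionally compact sets $K$, $H$ into a common $\GD_{J}(U)\subseteq\GD(U)$; this is precisely what the directedness furnished by the interleaving construction and the strong internality of $U$ guarantees, and it is the same mechanism that already underlies Prop.~\ref{prop:GD-Rtil-Module}.
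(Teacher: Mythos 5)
Your proposal is correct and follows exactly the paper's route: the paper proves this proposition in one line by invoking the (co-)universal property of inductive limits, i.e.\ \cite[Prop.~1.19]{Gar05}, which is precisely the citation you lead with. Your additional unfolding of the construction (well-definedness of $T$ via the compatibility hypothesis, directedness of $\{\GD_{K}(U)\}$ via $\text{interl}(K\cup H)$ and Prop.~\ref{prop:suffCondsInterlUnionIn-U}, continuity from the finest-topology characterization in Thm.~\ref{thm:1.18Gar}) is a sound and accurate elaboration of what that citation encapsulates, not a different argument.
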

The work \cite{Gar05} includes a detailed analysis of countable inductive
limits $\mathcal{G}=\indlim\mathcal{G}_{n}\ (n\in\N)$, where $(\mathcal{G}_{n})_{n\in\N}$
is increasing, $\mathcal{G}=\bigcup_{n\in\N}\mathcal{G}_{n}$, and
where the topology on $\mathcal{G}_{n}$ is that induced by $\mathcal{G}_{n+1}$.
Such inductive limits are called \emph{strict}. As in the case of
classical function spaces like $\mathcal{D}(\Omega)$ for $\Omega$
open in $\R^{n}$, the importance of strict inductive limits in the
theory of $\Rtil$-locally convex models ultimately stems from the
possibility of covering every open set $\Omega\subseteq\R^{n}$ by
a countable increasing family of compact sets. The key point in the
structure theory of strict inductive limits as above is that a countable
family $\left(\mathcal{G}_{n}\right)_{n\in\N}$ permits to define
recursively a family of neighborhoods of $0$. Using the latter, one
can prove that the topology on $\mathcal{G}$ induces on each $\mathcal{G}_{n}$
its given topology. We will show below that similar properties hold
for $\GD(U)$. We shall see that the assumption of $U$ being strongly
internal and sharply open are essential for this task. To begin with,
we prove a strengthening of Thm.~\ref{thm:GD_K-Rtil-Module}: 
\begin{prop}
\label{thm:GD_K-inducedTop}Let $H$, $K\fcmp U$ be non-empty sets,
with $H\subseteq K$. Then $\GD_{H}(U)$ is a topological subspace
of $\GD_{K}(U)$, i.e., 
\[
\sigma_{K}(U)|_{\GD_{H}(U)}=\sigma_{H}(U).
\]
\end{prop}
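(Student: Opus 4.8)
The plan is to exploit the fact, established in Cor.~\ref{cor:normNotDependsOnK}, that the generalized norm $\Vert\cdot\Vert_{m}$ is independent of the functionally compact set used to compute it. Since $H\subseteq K$, Thm.~\ref{thm:GD_K-Rtil-Module}.\ref{enu:GD-RtilModule-increasing} gives $\GD_{H}(U)\subseteq\GD_{K}(U)$, so that both $\sigma_{H}$ and the trace $\sigma_{K}(U)|_{\GD_{H}(U)}$ are topologies on $\GD_{H}(U)$ and the comparison makes sense. The entire statement will reduce to comparing the basic balls generating the two topologies, so the proof is essentially bookkeeping once norm-independence is invoked.

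First I would check that for every $f\in\GD_{H}(U)$, every $m\in\N$ and every $\rho\in\Rtil_{>0}$,
\begin{equation}
B_{\rho}^{m}(f,K)\cap\GD_{H}(U)=B_{\rho}^{m}(f,H).\label{eq:traceBall}
\end{equation}
Indeed, for $g\in\GD_{H}(U)$ the difference $f-g$ again lies in $\GD_{H}(U)$, since $\GD_{H}(U)$ is an $\Rtil$-module by Thm.~\ref{thm:GD_K-Rtil-Module}.\ref{enu:GD-RtilModule-module}; in particular $f-g\in\GD_{H}(U)\cap\GD_{K}(U)$, so Cor.~\ref{cor:normNotDependsOnK} yields $\Vert f-g\Vert_{m,K}=\Vert f-g\Vert_{m,H}$. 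Hence the condition $\Vert f-g\Vert_{m,K}<\rho$ defining membership of $g$ in the left-hand side of \eqref{eq:traceBall} is literally the same as the condition $\Vert f-g\Vert_{m,H}<\rho$ defining the right-hand side, which proves \eqref{eq:traceBall}.

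From \eqref{eq:traceBall} the coincidence of the topologies is immediate. By Def.~\ref{def:ballsTopology}, the balls $B_{\rho}^{m}(f,K)$ centered at a point $f$ form a neighborhood basis of $f$ in $\sigma_{K}$; restricting to centers $f\in\GD_{H}(U)$, their traces on $\GD_{H}(U)$ therefore form a neighborhood basis of $f$ in the subspace topology $\sigma_{K}(U)|_{\GD_{H}(U)}$. By \eqref{eq:traceBall} these traces are exactly the basic $\sigma_{H}$-balls $B_{\rho}^{m}(f,H)$, which in turn constitute a neighborhood basis of $f$ in $\sigma_{H}(U)$. Thus the two topologies possess identical neighborhood filters at every point of $\GD_{H}(U)$, and therefore they coincide.

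The only point requiring a little care is this last reduction to balls centered \emph{inside} the subspace, so that the trace formula \eqref{eq:traceBall} applies with $f\in\GD_{H}(U)$ rather than an arbitrary $f\in\GD_{K}(U)$; this is harmless because centered balls already form a neighborhood basis in the sharp topology. The genuine conceptual input is Cor.~\ref{cor:normNotDependsOnK}, which itself rests on the representation \eqref{eq:normForGoodNet} of $\Vert\cdot\Vert_{m}$ by a compactly supported defining net. No properties of $U$ beyond the standing assumption that it be a sharply open (strongly internal) set are needed.
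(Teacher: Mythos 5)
Your proof is correct and follows essentially the same route as the paper's: both arguments hinge on Cor.~\ref{cor:normNotDependsOnK} (independence of $\Vert-\Vert_{m}$ from the functionally compact set) to compare basic balls, and your trace identity $B_{\rho}^{m}(f,K)\cap\GD_{H}(U)=B_{\rho}^{m}(f,H)$ simply packages in one formula the two inclusions that the paper verifies separately (including its explicit construction of an open $V\in\sigma_{K}$ for the converse direction). Your reduction to balls centered inside $\GD_{H}(U)$ is sound, since balls are sharply open by the triangle inequality and hence form a neighborhood basis at their centers, which is exactly the fact the paper also uses implicitly.
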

\begin{proof}
In this proof we will use the more precise notation $B_{\rho}^{m}(u,K)$
for balls (see Def.~\ref{def:ballsTopology}).

Let $V\in\sigma_{K}$. We claim that $V\cap\GD_{H}(U)\in\sigma_{H}$.
For each $u\in V\cap\GD_{H}(U)$ there exist $m\in\N$ and $\rho\in\Rtil_{>0}$
such that $B_{\rho}^{m}(u,K)\subseteq V$. But $B_{\rho}^{m}(u,H)\subseteq B_{\rho}^{m}(u,K)$
because $\GD_{H}(U)\subseteq\GD_{K}(U)$ and because the norm $\Vert-\Vert_{m}$
doesn't depend on $H,$ $K$. Therefore, $B_{\rho}^{m}(u,H)\subseteq V\cap\GD_{H}(U)$,
which proves our claim.

Conversely, if $W\in\sigma_{H}$, then we set 
\begin{equation}
V:=\bigcup\left\{ B_{\rho}^{m}(u,K)\mid u\in W\ ,\ m\in\N\ ,\ \rho\in\Rtil_{>0}\ ,\ B_{\rho}^{m}(u,H)\subseteq W\right\} \in\sigma_{K},\label{eq:defV}
\end{equation}
and we claim that $W=V\cap\GD_{H}(U)$. In fact, since $W\in\sigma_{H}$,
for all $u\in W$ we have $B_{\rho}^{m}(u,H)\subseteq W$ for some
$\rho$ and $m$. By \eqref{eq:defV} $B_{\rho}^{m}(u,K)\subseteq V$,
and so $u\in V\cap\GD_{H}(U)$. Vice versa, if $u\in V\cap\GD_{H}(U)$,
then $u\in B_{\rho}^{m}(v,K)$ for some $v\in W$, $m$, $\rho$,
such that $B_{\rho}^{m}(v,H)\subseteq W$. So $\Vert u-v\Vert_{m}<\rho$
and hence $u\in B_{\rho}^{m}(v,H)\subseteq W$. 
\end{proof}
We now show that the space $\GD(U)$ can be seen as a strict inductive
limit of a countable increasing family of subspaces $\GD_{K}(U)$.
Indeed, since $U$ is strongly internal, we can write $U=\sint{U_{\eps}}$
for some net $(U_{\eps})$ of subsets of $\R^{n}$. Since $U$ is
non-empty, by \cite[Thm. 8]{GKV}, fixing any $x=[x_{\eps}]\in U$
we obtain: 
\begin{equation}
\exists N\in\N\,\forall^{0}\eps:\ d(x_{\eps},U_{\eps}^{c})>\eps^{N}\ ,\ |x_{\eps}|\le\eps^{-N}.\label{eq:stronglyInternalModer}
\end{equation}
With $N$ as in \eqref{eq:stronglyInternalModer}, we define 
\begin{align*}
K_{j\eps}: & =\left\{ x\in\R^{n}\mid d(x,U_{\eps}^{c})\ge\eps^{j}\ ,\ |x|\le\eps^{-j}\right\} \\
K_{j}: & =[K_{j\eps}]\quad\forall j\in\N_{\ge N}.
\end{align*}
Using this notation, we have: 
\begin{thm}
\label{thm:GDasCountableColimit}If $N\in\N$ satisfies \eqref{eq:stronglyInternalModer}
for some $x\in U$, then 
\begin{enumerate}
\item \label{enu:GDasCountColimit-K_j-fcmp}$\emptyset\ne K_{j}\fcmp U$
for all $j\in\N_{\ge N}$ 
\item \label{enu:GDasCountColimit-K_j-increasing}$K_{j}\subseteq K_{j+1}$
for all $j\in\N_{\ge N}$ 
\item \label{enu:GDasCountColimit-unionK_j}$U=\bigcup_{j\ge N}K_{j}$ 
\item \label{enu:four} For every $\emptyset\ne K\fcmp U=\sint{U_{\eps}}$
there exists some $j\ge N$ such that $K\subseteq K_{j}$. 
\item \label{enu:GDasCountColimit-GDasCountColimit} $\GD(U)$ is the strict
inductive limit of the family $\GD_{K_{j}}(U)$, $j\ge N$: 
\[
\GD(U)=\indlim\GD_{K_{j}}(U)\quad(j\ge N).
\]

\end{enumerate}
\end{thm}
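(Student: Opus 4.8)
The plan is to prove the five assertions in the order given, using throughout the characterization of strongly internal sets from \cite[Thm. 8]{GKV}: a moderate net $[y_{\eps}]$ lies in $\sint{U_{\eps}}$ if and only if there is some $q\in\R_{>0}$ with $d(y_{\eps},U_{\eps}^{c})>\eps^{q}$ for $\eps$ small. For claim \ref{enu:GDasCountColimit-K_j-fcmp}, I first observe that each $K_{j\eps}$ is the intersection of the closed set $\{x\mid d(x,U_{\eps}^{c})\ge\eps^{j}\}$ with the closed ball $\{|x|\le\eps^{-j}\}$, hence compact, and that the bound $|x|\le\eps^{-j}$ exhibits $(K_{j\eps})$ as sharply bounded; thus $K_{j}=[K_{j\eps}]\fcmp\Rtil^{n}$ by Lemma~\ref{lem:equdef}. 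Non-emptiness follows since the point $x=[x_{\eps}]$ from \eqref{eq:stronglyInternalModer} satisfies $d(x_{\eps},U_{\eps}^{c})>\eps^{N}\ge\eps^{j}$ and $|x_{\eps}|\le\eps^{-N}\le\eps^{-j}$ for $j\ge N$, so $x_{\eps}\in K_{j\eps}$. Finally $K_{j}\subseteq U$ because any $y_{\eps}\in K_{j\eps}$ obeys $d(y_{\eps},U_{\eps}^{c})\ge\eps^{j}>\eps^{j+1}$, which by \cite[Thm. 8]{GKV} places $y=[y_{\eps}]$ in $\sint{U_{\eps}}=U$.

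Claim~\ref{enu:GDasCountColimit-K_j-increasing} is immediate from $\eps^{j+1}\le\eps^{j}$ and $\eps^{-j}\le\eps^{-(j+1)}$, which give $K_{j\eps}\subseteq K_{(j+1)\eps}$, hence $K_{j}\subseteq K_{j+1}$. For claim~\ref{enu:GDasCountColimit-unionK_j}, the inclusion $\supseteq$ is part~\ref{enu:GDasCountColimit-K_j-fcmp}; for $\subseteq$, given $y=[y_{\eps}]\in U$ the characterization yields $M$ with $d(y_{\eps},U_{\eps}^{c})>\eps^{M}$, while moderateness of $y$ yields $M'$ with $|y_{\eps}|\le\eps^{-M'}$, both for $\eps$ small. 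Choosing $j\ge\max(N,M,M')$ places $y_{\eps}\in K_{j\eps}$, so $y\in K_{j}$.

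Claim~\ref{enu:four} is the crux. Let $\emptyset\ne K=[K_{\eps}]\fcmp U$ with $(K_{\eps})$ sharply bounded and each $K_{\eps}\comp\R^{n}$; without loss of generality $K_{\eps}\ne\emptyset$ for all $\eps$, and sharp boundedness gives $M_{1}$ with $|x|\le\eps^{-M_{1}}$ on $K_{\eps}$. The decisive point is to extract a single exponent $M_{2}$ with
\[
\forall^{0}\eps\,\forall x\in K_{\eps}:\ d(x,U_{\eps}^{c})>\eps^{M_{2}}.
\]
I argue by contradiction: if no such $M_{2}$ exists, then for every $k$ there is some $\eps_{k}\downarrow0$ and, by compactness of $K_{\eps_{k}}$, a minimizer $x_{k}\in K_{\eps_{k}}$ with $d(x_{k},U_{\eps_{k}}^{c})\le\eps_{k}^{k}$. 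Splicing these into a net, $x_{\eps}:=x_{k}$ at $\eps=\eps_{k}$ and $x_{\eps}$ an arbitrary point of $K_{\eps}$ otherwise, produces $x=[x_{\eps}]\in[K_{\eps}]=K\subseteq U$; but $d(x_{\eps_{k}},U_{\eps_{k}}^{c})\le\eps_{k}^{k}$ for all $k$ contradicts the exponent $q$ furnished by \cite[Thm. 8]{GKV} for $x\in\sint{U_{\eps}}$ (take $k>q$). With $M_{2}$ secured, set $j:=\max(N,M_{1},M_{2})$; then $K_{\eps}\subseteq K_{j\eps}$ for $\eps$ small, so $K\subseteq K_{j}$.

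Finally, for claim~\ref{enu:GDasCountColimit-GDasCountColimit}, parts~\ref{enu:GDasCountColimit-K_j-increasing} and Theorem~\ref{thm:GD_K-Rtil-Module}.\ref{enu:GD-RtilModule-increasing} show the $\GD_{K_{j}}(U)$ form an increasing chain, and Prop.~\ref{thm:GD_K-inducedTop} shows each carries the topology induced by the next, so the system is strict. By parts~\ref{enu:GDasCountColimit-unionK_j}--\ref{enu:four} every $\GD_{K}(U)$ embeds as a topological subspace into some $\GD_{K_{j}}(U)$, so $\{K_{j}\}_{j\ge N}$ is cofinal among the functionally compact subsets of $U$ and $\GD(U)=\bigcup_{j\ge N}\GD_{K_{j}}(U)$. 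Cofinality then forces a locally convex topology to make every $\iota_{K}$ continuous precisely when it makes every $\iota_{K_{j}}$ continuous, so the finest such topology (the inductive-limit topology $\sigma(U)$ built via Thm.~\ref{thm:1.18Gar}) equals $\indlim_{j}\GD_{K_{j}}(U)$. The main obstacle is exactly the uniform distance estimate in claim~\ref{enu:four}: upgrading the pointwise membership $K\subseteq U$ to one exponent controlling $d(\cdot,U_{\eps}^{c})$ uniformly over $K_{\eps}$, for which the splicing-into-a-net contradiction (in the spirit of Lemma~\ref{lem:twoCases}) is the essential device.
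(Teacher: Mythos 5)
Your proof is correct, and on parts \ref{enu:GDasCountColimit-K_j-fcmp}--\ref{enu:GDasCountColimit-unionK_j} and \ref{enu:GDasCountColimit-GDasCountColimit} it matches the paper's argument (same use of Lemma~\ref{lem:equdef}, of \cite[Thm.~8]{GKV}, and the same cofinality reasoning via Thm.~\ref{thm:1.18Gar} together with the factorization $(\GD_{K}(U),\sigma_{K})\hookrightarrow(\GD_{K_{j}}(U),\sigma_{K_{j}})\hookrightarrow(\GD(U),\sigma')$ through Prop.~\ref{thm:GD_K-inducedTop}). The genuine difference is in the crux \ref{enu:four}. The paper isolates Lemma~\ref{lem:funCmptAndStronglyInternal}, which is a \emph{per-representative} statement: there is one $j$ such that for every representative $(x_{\eps})$ with $x_{\eps}\in K_{\eps}$ eventually, one has $\forall^{0}\eps:\ d(x_{\eps},U_{\eps}^{c})\ge\eps^{j}$, where the threshold $\eps_{0}$ may depend on the representative; negating over representatives forces the double-indexed diagonal construction (nets $(x_{j\eps})$, sequences $(\eps_{jk})_{k}$, a subsequence $(k_{j})_{j}$, and a careful interlacing $\eps_{j}>\eps_{jk_{j}}>\eps_{j+1}$). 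You instead prove the strictly stronger \emph{uniform} estimate $\exists M_{2}\,\forall^{0}\eps\,\forall x\in K_{\eps}:\ d(x,U_{\eps}^{c})>\eps^{M_{2}}$, whose negation directly supplies, for each $k$, a single bad point $x_{k}\in K_{\eps_{k}}$ with $d(x_{k},U_{\eps_{k}}^{c})\le\eps_{k}^{k}$; the splicing is then a clean single-index diagonal, and the contradiction with the exponent $q$ from \cite[Thm.~8]{GKV} at $\eps=\eps_{k}$, $k>q$, is immediate. (Your appeal to compactness to take a minimizer is harmless but unnecessary --- any witness point works --- and moderateness of the spliced net comes from sharp boundedness of $(K_{\eps})$ exactly as in the paper.) What your variant buys is a shorter extraction and a sharper conclusion: you obtain $K_{\eps}\subseteq K_{j\eps}$ for $\eps$ small at the level of representatives, from which $K\subseteq K_{j}$ follows at once, whereas the paper's lemma only yields the pointwise inclusion; conversely, your uniform statement implies the paper's Lemma~\ref{lem:funCmptAndStronglyInternal} trivially, so nothing is lost.
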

\begin{proof}
\ref{enu:GDasCountColimit-K_j-fcmp}, \ref{enu:GDasCountColimit-K_j-increasing}:
It follows immediately from the definition that each $(K_{j\eps})$
is compact and that $(K_{j\eps})_{\eps}$ is sharply bounded, so $K_{j}\fcmp U$.
Moreover, $K_{N}\ne\emptyset$ by \eqref{eq:stronglyInternalModer},
hence $K_{j}\ne\emptyset$ follows by \ref{enu:GDasCountColimit-K_j-increasing},
which again is immediate from the definition.

\ref{enu:GDasCountColimit-unionK_j}: If $x=[x_{\eps}]\in U=\sint{U_{\eps}}$,
then $d(x_{\eps},U_{\eps}^{c})>\eps^{j_{1}}$ for $\eps$ small and
for some $j_{1}\in\N_{>0}$. Since $(x_{\eps})$ is moderate, $|x_{\eps}|\le\eps^{-j_{2}}$
for $\eps$ small and some $j_{2}\in\N$. Setting $j:=\max(j_{1},j_{2},N)$
we hence have that $x\in K_{j}$.

In order to prove \ref{enu:four}, we need the following strengthening
of \cite[Thm. 11]{GKV}: 
\begin{lem}
\label{lem:funCmptAndStronglyInternal}Let $H=[H_{\eps}]\fcmp V=\sint{V_{\eps}}$,
then 
\begin{equation}
\exists j\in\N\,\forall[x_{\eps}]\in[H_{\eps}]\,\forall^{0}\eps:\ d(x_{\eps},V_{\eps}^{c})\ge\eps^{j}.\label{eq:conclLemfunCmptSint}
\end{equation}
\end{lem}
\begin{proof}[Proof of Lemma \ref{lem:funCmptAndStronglyInternal}]
Equation \eqref{eq:conclLemfunCmptSint} expresses that for all representatives
$(x_{\eps})\in\R_{M}^{n}$, if $x_{\eps}\in H_{\eps}$ for $\eps$
small, then $\forall^{0}\eps:\ d(x_{\eps},V_{\eps}^{c})\ge\eps^{j}$.

By contradiction, assume that 
\[
\forall j\in\N\,\exists(x_{j\eps})\in\R_{M}^{n}:\ \left(\forall^{0}\eps:\ x_{j\eps}\in H_{\eps}\right)\ ,\ \exists(\eps_{jk})_{k}\downarrow0\,\forall k:\ d(x_{j\eps_{jk}},V_{\eps_{jk}}^{c})<\eps_{jk}^{j}.
\]
By recursively applying this condition, we get that for all $j\in\N$
there exists a moderate $(x_{j\eps})$ and some $\eps_{j}\in(0,1]$
such that $x_{j\eps}\in H_{\eps}$ for $\eps\le\eps_{j}$ and 
\begin{equation}
d(x_{j\eps_{jk}},V_{\eps_{jk}}^{c})<\eps_{jk}^{j},\label{eq:dist-ejk}
\end{equation}
where $(\eps_{jk})_{k}\downarrow0$. Since $(\eps_{jk})_{k}\downarrow0$,
without loss of generality we can assume to have defined recursively
$(\eps_{j})_{j}$ so that $(\eps_{j})_{j}\downarrow0$ and $\eps_{j}>\eps_{jk_{j}}>\eps_{j+1}$
for some subsequence $(k_{j})_{j}\uparrow+\infty$. Set $x_{\eps}:=x_{j\eps}\in H_{\eps}$
for $\eps\in(\eps_{j+1},\eps_{j}]$, so that $x_{\eps_{jk_{j}}}=x_{j\eps_{jk_{j}}}$
for all $j$. Then $(x_{\eps})\in\R_{M}^{n}$ since $H$ is sharply
bounded and $x:=[x_{\eps}]\in H\subseteq\sint{V_{\eps}}$, which entails
\begin{equation}
\exists q\in\R_{>0}\,\forall^{0}\eps:\ d(x_{\eps},V_{\eps}^{c})>\eps^{q}.\label{eq:xInSint-V_eps}
\end{equation}
Therefore, for $j\in\N$ sufficiently big, \eqref{eq:xInSint-V_eps}
holds at $\eps=\eps_{jk_{j}}<1$ and $j>q$. Thus $d(x_{j\eps_{jk_{j}}},V_{\eps_{jk_{j}}}^{c})=d(x_{\eps_{jk_{j}}},V_{\eps_{jk_{j}}}^{c})>\eps_{jk_{j}}^{q}>\eps_{jk_{j}}^{j}$,
which contradicts \eqref{eq:dist-ejk}. 
\end{proof}
Continuing the proof of \ref{enu:four}, if $K\fcmp U$ is non-empty,
by applying Lemma \ref{lem:funCmptAndStronglyInternal} we obtain
\[
\exists j_{1}\in\N\,\forall[x_{\eps}]\in K\,\forall^{0}\eps:\ d(x_{\eps},U_{\eps}^{c})\ge\eps^{j_{1}}.
\]
On the other hand, sharp boundedness of $(K_{\eps})$, where $K=[K_{\eps}]$,
implies 
\[
\exists j_{2}\in\N\,\forall[x_{\eps}]\in K\,\forall^{0}\eps:\ |x_{\eps}|\le\eps^{-j_{2}}.
\]
Therefore, for $j:=\max(j_{1},j_{2},N)$, we get $K\subseteq K_{j}$,
hence \ref{enu:four}, and thereby also $\GD_{K}(U)\subseteq\GD_{K_{j}}(U)$
(using Thm.\ \ref{thm:GD_K-Rtil-Module}). It follows that $\GD(U)\subseteq\bigcup_{j\ge N}\GD_{K_{j}}(U)$.
The converse inclusion follows directly from \ref{enu:GDasCountColimit-K_j-fcmp}.

It remains to prove that the topology $\sigma$ on $\GD(U)$ coincides
with the inductive $\Rtil$-locally convex topology generated by $\left(\GD_{K_{j}}(U)\right)_{j\ge N}$.
Let us denote the latter topology by $\sigma'$. We have $\sigma\subseteq\sigma'$
by definition of the inductive topology and by Thm.~\ref{thm:GDasCountableColimit}.\ref{enu:GDasCountColimit-K_j-fcmp}.
To see that, conversely, $\sigma'\subseteq\sigma$ we show that for
every $K\fcmp U$ the inclusion $(\GD_{K}(U),\sigma_{K})\hookrightarrow(\GD(U),\sigma')$
is continuous (see Thm.~\ref{thm:1.18Gar}). Now given any $K\fcmp U$,
by what we have proved above there exists some $j\ge N$ with $K\subseteq K_{j}$.
But then Prop.~\ref{thm:GD_K-inducedTop} implies the continuity
of 
\[
(\GD_{K}(U),\sigma_{K})\hookrightarrow(\GD_{K_{j}}(U),\sigma_{K_{j}})\hookrightarrow(\GD(U),\sigma')
\]
and thereby our claim. 
\end{proof}
As a consequence of this result, we can now prove a series of corollaries
by applying the general theorems of \cite{Gar05} concerning countable
strict inductive limits. 
\begin{cor}
\label{cor:GD-inducesSigma_K}If $\emptyset\ne K\fcmp U=\sint{U_{\eps}}$,
then $\GD_{K}(U)$ is a topological subspace of $\GD(U)$, i.e., $\sigma(U)|_{\GD_{K}(U)}=\sigma_{K}(U)$.\end{cor}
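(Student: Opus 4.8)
The plan is to reduce the statement, via the strict inductive limit structure of Theorem~\ref{thm:GDasCountableColimit}, to two facts established just above: that each $\GD_{K_j}(U)$ is a topological subspace of the colimit, and that $\GD_K(U)$ is a topological subspace of $\GD_{K_j}(U)$ whenever $K\subseteq K_j$ (Proposition~\ref{thm:GD_K-inducedTop}); since the relation ``carries the subspace topology'' is transitive, chaining the two will give the claim. One inclusion is automatic: $\sigma(U)$ is by construction the finest $\Rtil$-locally convex topology making every inclusion $\iota_K\colon\GD_K(U)\hookrightarrow\GD(U)$ continuous, whence $\sigma(U)|_{\GD_K(U)}\subseteq\sigma_K(U)$, so the real content lies in the reverse inclusion. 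Concretely, I would first fix a non-empty $K\fcmp U=\sint{U_\eps}$ and invoke Theorem~\ref{thm:GDasCountableColimit}.\ref{enu:four} to obtain some $j\ge N$ with $K\subseteq K_j$, thereby reducing everything to the nested inclusions $\GD_K(U)\subseteq\GD_{K_j}(U)\subseteq\GD(U)$.

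The key step is to show that $\GD_{K_j}(U)$ is a topological subspace of $\GD(U)$, i.e.\ $\sigma(U)|_{\GD_{K_j}(U)}=\sigma_{K_j}(U)$. By Theorem~\ref{thm:GDasCountableColimit}.\ref{enu:GDasCountColimit-GDasCountColimit}, $\GD(U)=\indlim\GD_{K_j}(U)$ is a \emph{countable strict} inductive limit: the family $(\GD_{K_j}(U))_{j\ge N}$ is increasing by Theorem~\ref{thm:GDasCountableColimit}.\ref{enu:GDasCountColimit-K_j-increasing}, and by Proposition~\ref{thm:GD_K-inducedTop} each $\GD_{K_j}(U)$ carries the topology induced by $\GD_{K_{j+1}}(U)$. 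The general theorem on countable strict inductive limits of locally convex $\Rtil$-modules from \cite{Gar05} then yields that each term $\GD_{K_j}(U)$ is a topological subspace of the limit. I expect this to be the main obstacle: exactly as in the classical theory of strict $(\mathrm{LF})$-spaces, its proof rests on the recursive construction of an absolutely $\Rtil$-convex, $\Rtil$-absorbent neighborhood $V$ of $0$ in $\GD(U)$ whose trace $V\cap\GD_{K_j}(U)$ is contained in a prescribed neighborhood of $0$ in $\GD_{K_j}(U)$. This construction, together with the separation guaranteed by Proposition~\ref{prop:normProp}.\ref{enu:normProp-zero}, is precisely what Garetto's framework supplies in the $\Rtil$-setting.

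Finally I would conclude by transitivity of restriction of topologies. Combining the subspace relation of Proposition~\ref{thm:GD_K-inducedTop} applied to $K\subseteq K_j$ with the key step gives
\[
\sigma(U)|_{\GD_K(U)}=\bigl(\sigma(U)|_{\GD_{K_j}(U)}\bigr)\big|_{\GD_K(U)}=\sigma_{K_j}(U)\big|_{\GD_K(U)}=\sigma_K(U),
\]
which is the asserted equality. No estimates beyond those already encapsulated in the cited results are required.
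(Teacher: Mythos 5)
Your proposal is correct and follows essentially the same route as the paper: pick $j\ge N$ with $K\subseteq K_{j}$ via Thm.~\ref{thm:GDasCountableColimit}.\ref{enu:four}, invoke Garetto's result on countable strict inductive limits (the paper cites \cite[Prop.~1.21]{Gar05}) to see that $\GD_{K_{j}}(U)$ carries the trace topology of $\GD(U)$, and conclude by Prop.~\ref{thm:GD_K-inducedTop} and transitivity of subspace topologies. Your added sketch of why the strict-limit step works (the recursive construction of absolutely $\Rtil$-convex neighborhoods) is accurate but is exactly what the cited result of \cite{Gar05} encapsulates, so there is no substantive difference.
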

\begin{proof}
According to Thm.~\ref{thm:GDasCountableColimit} \ref{enu:four}
we may pick $j\ge N$ such that $K\subseteq K_{j}$. By \cite[Prop. 1.21]{Gar05},
$\GD_{K_{j}}(U)$ carries the trace topology of $\GD(U)$. Since,
in turn, $\GD_{K}(U)$ is a topological subspace of $\GD_{K_{j}}(U)$
by Prop.~\ref{thm:GD_K-inducedTop}, the claim follows.\end{proof}
\begin{cor}
\label{cor:GD-separated} $\GD(U)$ is separated.\end{cor}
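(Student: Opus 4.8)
The plan is to deduce separatedness from the representation of $\GD(U)$ as the strict inductive limit of the countable increasing family $\bigl(\GD_{K_j}(U)\bigr)_{j\ge N}$ obtained in Thm.~\ref{thm:GDasCountableColimit}, exploiting that each step space is separated and embeds as a topological subspace of the limit. Recall that $\GD(U)$ is a locally convex topological $\Rtil$-module, its topology $\sigma$ being defined via Thm.~\ref{thm:1.18Gar}; in particular addition is continuous, so $\sigma$ is translation invariant. Consequently it suffices to show that $\bigcap\{V\mid V\text{ a }\sigma\text{-neighborhood of }0\}=\{0\}$, that is, that for every $f\in\GD(U)$ with $f\ne0$ there is a $\sigma$-open neighborhood of $0$ not containing $f$.

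So fix $f\in\GD(U)$ with $f\ne 0$. Since $\GD(U)=\bigcup_{j\ge N}\GD_{K_j}(U)$ by Thm.~\ref{thm:GDasCountableColimit}, we may choose $j\ge N$ with $f\in\GD_{K_j}(U)$. By Thm.~\ref{thm:sharpTop}.\ref{enu:LCTSep} the space $(\GD_{K_j}(U),\sigma_{K_j})$ is separated, so there is a $\sigma_{K_j}$-open neighborhood $W$ of $0$ with $f\notin W$. By Cor.~\ref{cor:GD-inducesSigma_K} we have $\sigma(U)|_{\GD_{K_j}(U)}=\sigma_{K_j}(U)$, hence $W=V\cap\GD_{K_j}(U)$ for some $V\in\sigma(U)$. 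Then $0\in W\subseteq V$, while $f\in\GD_{K_j}(U)$ together with $f\notin W$ forces $f\notin V$. Thus $V$ is the desired neighborhood, and $\GD(U)$ is separated.

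The only place where the specific structure enters is the lifting step via Cor.~\ref{cor:GD-inducesSigma_K}, which guarantees that every neighborhood of $0$ in a step space is the trace of a $\sigma$-open set of the whole limit; this is exactly the subspace property of the strict inductive limit, whose proof was the substantial earlier work (Prop.~\ref{thm:GD_K-inducedTop} together with Lem.~\ref{lem:funCmptAndStronglyInternal} and Thm.~\ref{thm:GDasCountableColimit}.\ref{enu:four}), so no new estimates are required here. Conceptually this is the module-theoretic analogue of the classical fact that a strict inductive limit of Hausdorff locally convex spaces is Hausdorff; the mild subtlety is merely to recall that for a topological $\Rtil$-module the separation of points from $0$ (i.e.\ $T_1$ at the origin) already yields the Hausdorff property, by translation invariance.
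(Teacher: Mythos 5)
Your proof is correct and follows essentially the same route as the paper: the paper deduces the corollary from Thm.~\ref{thm:sharpTop}.\ref{enu:LCTSep} together with the abstract result \cite[Cor.~1.24]{Gar05} that a countable strict inductive limit of separated locally convex $\Rtil$-modules is separated, and your argument simply inlines the proof of that citation, using the trace-topology property Cor.~\ref{cor:GD-inducesSigma_K} (available without circularity, since it rests only on Prop.~\ref{thm:GD_K-inducedTop}, Thm.~\ref{thm:GDasCountableColimit} and \cite[Prop.~1.21]{Gar05}) plus the standard reduction of Hausdorffness to separation at the origin in a topological module.
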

\begin{proof}
This follows from Thm.~\ref{thm:sharpTop} \ref{enu:LCTSep} and
\cite[Cor. 1.24]{Gar05}.\end{proof}
\begin{lem}
\label{lem:GD_H-closed-GD_K}If $\emptyset\ne H$, $K\fcmp U$ and
$H\subseteq K$, then $\GD_{H}(U)$ is closed in $\GD_{K}(U)$.\end{lem}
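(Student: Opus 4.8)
The plan is to exploit metrizability of $\GD_K(U)$ together with the passage to globally defined extensions, so that membership in $\GD_H(U)$ gets reduced to the single \emph{pointwise} condition of vanishing on $\exterior{H}$, which in turn is governed only by the $0$-norm.

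First I would note that $\GD_H(U)\subseteq\GD_K(U)$ by Thm.~\ref{thm:GD_K-Rtil-Module}.\ref{enu:GD-RtilModule-increasing}, and that $\GD_K(U)$ is metrizable by Prop.~\ref{thm:GD_K-metrizable}; hence it suffices to prove that $\GD_H(U)$ is sequentially closed. So let $(f_n)_{n\in\N}$ be a sequence in $\GD_H(U)$ converging to some $f\in\GD_K(U)$, which by Thm.~\ref{thm:sharpTop} means $\Vert f-f_n\Vert_m\to0$ in the sharp topology of $\Rtil$ for every $m\in\N$; in particular $\Vert f-f_n\Vert_0\to0$. The goal is to show $f\in\GD_H(U)$.

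Next I would pass to global extensions. By Thm.~\ref{thm:extensionCmptlySupportedGSF} and Thm.~\ref{thm:globallyDefGDK} each $f_n$ and $f$ extends uniquely to $\bar f_n,\bar f\in\Gcinf(\Rtil^n,\Rtil)$ with $\bar f_n|_U=f_n$, $\bar f|_U=f$, where $\bar f_n\in\GD^{\mathrm g}(H,\Rtil)$ and $\bar f\in\GD^{\mathrm g}(K,\Rtil)$; moreover $\Vert\bar f-\bar f_n\Vert_0=\Vert f-f_n\Vert_0$ by Cor.~\ref{cor:normOfExtension}. By the definition \eqref{eq:GDKGlobal} of $\GD^{\mathrm g}(H,\Rtil)$ we have $\bar f_n|_{\exterior{H}}=0$. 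Now fix any $y=[y_\eps]\in\exterior{H}$, choose a defining net $(u_\eps)$ for $\bar f$ satisfying Def.~\ref{def:cmptlySuppGSF} with respect to $K$ and a defining net $(v_{n\eps})$ for $\bar f_n$ satisfying it with respect to $H$ (hence also with respect to $K$, since $\exterior{K}\subseteq\exterior{H}$). Then $(u_\eps-v_{n\eps})$ is a good net for $\bar f-\bar f_n$, so Prop.~\ref{prop:genNormForDifferentNets}.\ref{enu:genNormNetCmptlySupp} yields
\[
|\bar f(y)|=|\bar f(y)-\bar f_n(y)|=\left[|u_\eps(y_\eps)-v_{n\eps}(y_\eps)|\right]\le\left[\sup_{x\in\R^n}|u_\eps(x)-v_{n\eps}(x)|\right]=\Vert f-f_n\Vert_0 .
\]

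The concluding step is a valuation argument. Writing $a:=|\bar f(y)|\ge0$ and $b_n:=\Vert f-f_n\Vert_0$, I have $0\le a\le b_n$ for every $n$, while $b_n\to0$ sharply, i.e.\ $v(b_n)\to+\infty$. Since $0\le a_\eps\le b_{n\eps}$ for suitable representatives and $\eps$ small, the order of vanishing satisfies $v(a)\ge v(b_n)$ for each $n$, whence $v(a)=+\infty$ and therefore $a=0$, i.e.\ $\bar f(y)=0$. As $y\in\exterior{H}$ was arbitrary, $\bar f|_{\exterior{H}}=0$, so $\bar f\in\GD^{\mathrm g}(H,\Rtil)=\GD_H(\Rtil^n,\Rtil)$ by Thm.~\ref{thm:globallyDefGDK}.\ref{enu:globalEqualsGDK}, and finally Thm.~\ref{thm:globallyDefGDK}.\ref{enu:globalSubGDK} applied to $H\subseteq U$ gives $f=\bar f|_U\in\GD_H(U)$. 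The main obstacle is precisely this last step: one must be certain that $a\le b_n$ with $b_n\to0$ forces $a=0$ in the non-Archimedean ring $\Rtil$, which is where the order-compatibility of the valuation $v$ (underlying Lem.~\ref{lem:mayer}) is essential; the rest works smoothly only because $\GD^{\mathrm g}(H,\Rtil)$ is cut out by a purely pointwise condition, so that no derivative estimates and only the $0$-norm enter the limit.
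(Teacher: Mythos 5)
Your proof is correct, but it takes a genuinely different route from the paper. The paper disposes of this lemma in one line: by Prop.~\ref{thm:GD_K-inducedTop} the trace of $\sigma_K$ on $\GD_H(U)$ is exactly $\sigma_H$, and by Thm.~\ref{thm:GD_Kcomplete} the space $(\GD_H(U),\sigma_H)$ is complete; since everything is metrizable, a complete topological subspace of a metric space is closed. You instead argue sequentially and concretely: using metrizability of $\GD_K(U)$ to reduce to sequences, you pass to the global extensions of Thm.~\ref{thm:extensionCmptlySupportedGSF} and Thm.~\ref{thm:globallyDefGDK}, so that membership in $\GD_H$ becomes the single pointwise condition $\bar f|_{\exterior{H}}=0$, and then you kill $|\bar f(y)|$ for $y\in\exterior{H}$ by the estimate $|\bar f(y)|\le\Vert f-f_n\Vert_0$ (valid since $\exterior{K}\subseteq\exterior{H}$ makes $(u_\eps-v_{n\eps})$ a net satisfying Def.~\ref{def:cmptlySuppGSF} for $\bar f-\bar f_n$ with respect to $K$, so Prop.~\ref{prop:genNormForDifferentNets} applies) together with the order-compatibility of the valuation, $0\le a\le b\Rightarrow v(a)\ge v(b)$, which is indeed valid in $\Rtil$ and is the correct justification for the final limit step you flag. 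What each approach buys: the paper's argument is essentially free once its structure theory is in place and would survive in any setting where the subspace-topology and completeness statements hold; yours avoids invoking both Prop.~\ref{thm:GD_K-inducedTop} and the (nontrivial, diagonal-construction) completeness theorem, uses only the $0$-norm in the limit, and proves the slightly sharper fact that any sharp limit in $\GD_K(U)$ of functions in $\GD_H(U)$ already satisfies the pointwise vanishing condition on $\exterior{H}$ characterizing $\GD^{\text{g}}(H,\Rtil)$. One small streamlining: your appeal to Cor.~\ref{cor:normOfExtension} is redundant, since Prop.~\ref{prop:genNormForDifferentNets}.\ref{enu:genNormNetCmptlySupp} applied to the net $(u_\eps-v_{n\eps})$ already identifies $\Vert f-f_n\Vert_0$ with $\left[\sup_{x\in\R^n}|u_\eps(x)-v_{n\eps}(x)|\right]$ directly (and if you do want to cite the corollary, you should note that $\bar f-\bar f_n$ is the extension of $f-f_n$ by the uniqueness part of Thm.~\ref{thm:extensionCmptlySupportedGSF}).
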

\begin{proof}
This is immediate from Prop.~\ref{thm:GD_K-inducedTop} and Thm.~\ref{thm:GD_Kcomplete}.\end{proof}
\begin{cor}
\label{cor:GD_K-closed-GD}If $\emptyset\ne K\fcmp U$, then $\GD_{K}(U)$
is closed in $\GD(U)$.\end{cor}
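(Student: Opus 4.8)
The plan is to deduce closedness from completeness. By Theorem~\ref{thm:GD_Kcomplete} together with Proposition~\ref{thm:GD_K-metrizable}, the space $\GD_K(U)$ endowed with its sharp topology $\sigma_K$ is a complete metrizable topological $\Rtil$-module. Since throughout this section $U=\sint{U_\eps}$ is assumed strongly internal, Corollary~\ref{cor:GD-inducesSigma_K} applies and tells us that $\sigma_K$ coincides with the trace $\sigma(U)|_{\GD_K(U)}$; hence $\GD_K(U)$ is a topological subspace of $\GD(U)$ and its completeness is completeness with respect to the uniformity induced from $\GD(U)$. Finally, by Corollary~\ref{cor:GD-separated} the ambient module $\GD(U)$ is separated. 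Thus the statement reduces to the general fact that a complete subspace of a Hausdorff topological $\Rtil$-module is closed.

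To justify that last fact, I would argue as follows. Let $u$ lie in the $\sigma(U)$-closure of $\GD_K(U)$. The neighbourhood filter of $u$, traced on $\GD_K(U)$, is a Cauchy filter on $\GD_K(U)$; here one uses that $\GD(U)$, being a topological module, carries the canonical translation-invariant uniformity, whose restriction to $\GD_K(U)$ is exactly the uniformity of $\sigma_K$. By completeness of $\GD_K(U)$ this filter converges to some $v\in\GD_K(U)$, but it also converges to $u$ in $\GD(U)$, so separatedness of $\GD(U)$ forces $u=v\in\GD_K(U)$. Hence $\GD_K(U)$ is closed. Because $\GD_K(U)$ is metrizable, one may equivalently run this argument sequentially, extracting from the closure a convergent Cauchy sequence in $\GD_K(U)$ and invoking Theorem~\ref{thm:GD_Kcomplete} directly.

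An alternative route, more in the spirit of the preceding lemmas, proceeds through the strict inductive limit structure of Theorem~\ref{thm:GDasCountableColimit}. By part~\ref{enu:four} of that theorem one picks $j\ge N$ with $K\subseteq K_j$; then $\GD_K(U)$ is closed in $\GD_{K_j}(U)$ by Lemma~\ref{lem:GD_H-closed-GD_K}. If one knows in addition that each step $\GD_{K_j}(U)$ of the countable strict inductive limit is closed in $\GD(U)$ --- the analogue, in Garetto's $\Rtil$-locally convex framework, of the classical fact that the steps of an (LF)-space are closed --- then closedness of $\GD_K(U)$ in $\GD(U)$ follows by transitivity, since a closed subset of a closed subspace is closed in the whole space. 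The main obstacle in this second approach is precisely establishing that the steps $\GD_{K_j}(U)$ are closed in the limit: this is the genuinely non-trivial structural input for strict inductive limits, and it is exactly what the completeness argument of the first approach circumvents, which is why I would present the completeness-based proof as the primary one.
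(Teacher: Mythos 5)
Your proposal is correct and takes essentially the same route as the paper, whose proof consists precisely of citing Thm.~\ref{thm:GD_Kcomplete}, Cor.~\ref{cor:GD-separated} and Cor.~\ref{cor:GD-inducesSigma_K} --- that is, the completeness-plus-Hausdorff-plus-topological-subspace argument you spell out in detail, with the filter argument supplying the standard general fact left implicit there. One minor caution: since $\GD(U)$ need not be first countable, the sequential variant should extract a Cauchy sequence from the trace filter (using metrizability of $\GD_{K}(U)$) rather than from the closure directly, but your primary filter-based argument is complete as stated.
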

\begin{proof}
This follows from Thm.~\ref{thm:GD_Kcomplete}, Cor.~\ref{cor:GD-separated},
and Cor.~\ref{cor:GD-inducesSigma_K}. \end{proof}
\begin{cor}
\label{cor:boundedInGD}Let $B\subseteq\GD(U)$, then $B$ is bounded
in $\GD(U)$ if and only if there exists a non-empty $K\fcmp U$ such
that $B$ is bounded in $\GD_{K}(U)$.\end{cor}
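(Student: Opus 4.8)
The plan is to deduce the statement from the realization of $\GD(U)$ as a \emph{countable strict} inductive limit, together with the fact that each step embeds as a closed topological subspace of the next, and then to run the Dieudonn\'e--Schwartz argument (equivalently, to invoke the corresponding general result of \cite{Gar05}). The easy direction is immediate: if $B$ is bounded in $\GD_{K}(U)$ for some non-empty $K\fcmp U$, then since the canonical inclusion $\iota_{K}:\GD_{K}(U)\hookrightarrow\GD(U)$ is $\Rtil$-linear and continuous (it is one of the structure maps of the inductive limit) and continuous $\Rtil$-linear maps carry bounded sets to bounded sets, $B=\iota_{K}(B)$ is bounded in $\GD(U)$.

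For the converse I would first pass to the countable family from Thm~\ref{thm:GDasCountableColimit}: fixing any $x\in U$ and the associated $N$, we have $\GD(U)=\indlim\GD_{K_{j}}(U)$ with $j\ge N$, where $K_{j}\subseteq K_{j+1}$ and $U=\bigcup_{j}K_{j}$. By Thm~\ref{thm:GDasCountableColimit}.\ref{enu:four} every element of $\GD(U)$ lies in some $\GD_{K_{j}}(U)$, so it suffices to show that a bounded $B$ is contained in a single $\GD_{K_{j}}(U)$: boundedness there then follows because, by Cor~\ref{cor:GD-inducesSigma_K}, $\GD_{K_{j}}(U)$ carries the trace of $\sigma(U)$, so a $\sigma(U)$-bounded subset of it is $\sigma_{K_{j}}$-bounded, and $K_{j}\fcmp U$ supplies the required $K$.

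The heart of the matter is that a $\sigma(U)$-bounded set $B$ cannot meet infinitely many of the shells $\GD_{K_{j+1}}(U)\setminus\GD_{K_{j}}(U)$. Suppose it did; passing to a subsequence we obtain indices $j_{1}<j_{2}<\cdots$ and points $b_{k}\in B$ with $b_{k}\in\GD_{K_{j_{k+1}}}(U)\setminus\GD_{K_{j_{k}}}(U)$. Using that $\GD_{K_{j_{k}}}(U)$ is closed in $\GD_{K_{j_{k+1}}}(U)$ (Lemma~\ref{lem:GD_H-closed-GD_K}) and carries the induced topology (Prop~\ref{thm:GD_K-inducedTop}), I would construct recursively an increasing chain of absolutely $\Rtil$-convex, $\Rtil$-absorbent neighborhoods $V_{k}$ of $0$ in $\GD_{K_{j_{k}}}(U)$ with $V_{k+1}\cap\GD_{K_{j_{k}}}(U)=V_{k}$ and $\diff{\eps}^{-k}\cdot b_{k}\notin V_{k+1}$. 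The second condition can be arranged because $\diff{\eps}^{-k}$ is invertible in $\Rtil$, so $\diff{\eps}^{-k}\cdot b_{k}$ still lies outside the closed submodule $\GD_{K_{j_{k}}}(U)$ and can be separated from $0$ by an absolutely $\Rtil$-convex neighborhood extending $V_{k}$. Setting $V:=\bigcup_{k}V_{k}$ yields an absolutely $\Rtil$-convex subset of $\GD(U)$ whose trace on every $\GD_{K_{j}}(U)$ is a neighborhood of $0$, so $V$ is a $\sigma(U)$-neighborhood of $0$ by the defining property of the inductive topology (Thm~\ref{thm:1.18Gar}). Since $\diff{\eps}^{-k}\cdot b_{k}\notin V$ for every $k$, the gauge $\mathcal{P}_{V}$ of \cite{Gar05} is unbounded on $B$, i.e.\ $V$ fails to $\Rtil$-absorb $B$, contradicting boundedness.

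I expect the main obstacle to be the recursive construction of the $V_{k}$: this is precisely the $\Rtil$-module analogue of the extension-of-neighborhoods lemma underlying the classical Dieudonn\'e--Schwartz theorem, and it must be executed with absolutely $\Rtil$-convex, $\Rtil$-absorbent sets and the gauge functionals $\mathcal{P}_{V}$ in place of ordinary seminorms, with the scalings taken along the powers $\diff{\eps}^{-k}$ dictated by the non-Archimedean structure of $\Rtil$ rather than along reals $1/k$. In practice the cleanest route is to verify that the hypotheses of the general theorem on bounded subsets of countable strict inductive limits in \cite{Gar05} are met --- strictness from Thm~\ref{thm:GDasCountableColimit}.\ref{enu:GDasCountColimit-GDasCountColimit} and closedness of the steps from Lemma~\ref{lem:GD_H-closed-GD_K} --- and to appeal to it, rather than reproduce the separation argument in full.
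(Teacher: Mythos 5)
Your proposal takes essentially the same route as the paper: both directions are ultimately settled by passing to the countable strict inductive limit $\GD(U)=\indlim\GD_{K_{j}}(U)$ of Thm.~\ref{thm:GDasCountableColimit} and appealing to Garetto's general theorem on bounded subsets of countable strict inductive limits, \cite[Thm.~1.26]{Gar05}, together with closedness of the steps (Lemma~\ref{lem:GD_H-closed-GD_K}) --- which is precisely the paper's citation. Two remarks. First, your ``easy'' direction is actually slightly more direct than the paper's: the paper transfers boundedness from the arbitrary $\GD_{K}(U)$ to some $\GD_{K_{j}}(U)$ via the sequential characterization \cite[Lem.~1.27]{Gar05} and the $K$-independence of the norms $\Vert-\Vert_{m}$, and only then applies Thm.~1.26; your observation that $\iota_{K}$ is continuous by the very definition of the inductive topology and that continuous $\Rtil$-linear maps preserve boundedness (from $\diff{\eps}^{a}B\subseteq\iota_{K}^{-1}(V)$ one gets $\diff{\eps}^{a}B\subseteq V$) is correct and shorter. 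Second, in your optional sketch of the Dieudonn\'e--Schwartz argument the scaling is inverted: arranging $\diff{\eps}^{-k}\cdot b_{k}\notin V_{k+1}$ gives no contradiction, because by $\Rtil$-balancedness of $V$ the set $\left\{ b\in\R\mid b_{k}\in\diff{\eps}^{b}\cdot V\right\} $ is downward closed, so excluding $b=k$ only yields $\mathcal{P}_{V}(b_{k})\ge e^{-k}$, which is compatible with boundedness. To defeat $\Rtil$-absorption you must keep the scaled-\emph{down} points outside $V$, i.e.\ arrange $\diff{\eps}^{k}\cdot b_{k}\notin V_{k+1}$ (equivalently $b_{k}\notin\diff{\eps}^{-k}\cdot V$); then for any fixed $b$ and any $k\ge-b$, balancedness gives $b_{k}\notin\diff{\eps}^{b}\cdot V$, so $B\not\subseteq\diff{\eps}^{b}\cdot V$ for every $b$, i.e.\ $\mathcal{P}_{V}(b_{k})\ge e^{k}\to\infty$. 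The recursive extension step is unaffected by this fix, since $\diff{\eps}^{k}$ is likewise invertible, so $\diff{\eps}^{k}\cdot b_{k}$ still lies outside the closed submodule $\GD_{K_{j_{k}}}(U)$. Since your stated ``cleanest route'' is to verify the hypotheses of \cite[Thm.~1.26]{Gar05} and invoke it rather than execute the sketch, this slip does not affect the validity of the proposal as a whole.
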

\begin{proof}
$\Rightarrow$: This follows from \cite[Thm. 1.26]{Gar05} and Lemma
\ref{lem:GD_H-closed-GD_K}.

$\Leftarrow$: If $B$ is bounded in $\GD{}_{K}(U)$, then \cite[Lem. 1.27]{Gar05}
yields 
\begin{equation}
\forall(u_{n})_{n}\in B^{\N}\,\forall(\lambda_{n})_{n}\in\Rtil^{\N}:\ \lambda_{n}\to0\text{ in }\Rtil\then\lambda_{n}u_{n}\to0\text{ in }\GD_{K}(U).\label{eq:boundedBySequences}
\end{equation}
Pick $j\ge N$ such that $K\subseteq K_{j}$. Since generalized norms
$\Vert-\Vert_{m}$ do not depend on $K$, $K_{j}$, condition \eqref{eq:boundedBySequences}
holds also in $\GD_{K_{j}}(U)\supseteq\GD_{K}(U)$. Therefore, from
\cite[Lem. 1.27]{Gar05} we get that $B$ is bounded in $\GD_{K_{j}}(U)$
and \cite[Thm. 1.26]{Gar05} yields that $B$ is bounded in $\GD(U)$. 
\end{proof}
A similar proof applies to this corollary, which is a consequence
of \cite[Cor. 1.29]{Gar05}: 
\begin{cor}
\label{cor:sequenceConvergenceInGD}Let $(u_{n})_{n}\in\GD(U)^{\N}$,
then $u_{n}\to0$ in $\GD(U)$ if and only if there exists a non-empty
$K\fcmp U$ such that $u_{n}\in\GD_{K}(U)$ and $u_{n}\to0$ in $\GD_{K}(U)$. 
\end{cor}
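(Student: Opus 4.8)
The plan is to exploit the presentation of $\GD(U)$ as a countable strict inductive limit furnished by Thm.~\ref{thm:GDasCountableColimit}, namely $\GD(U)=\indlim\GD_{K_j}(U)$ with $j\ge N$, and then to apply the sequential convergence criterion \cite[Cor. 1.29]{Gar05} for such limits, in complete analogy with the proof of Cor.~\ref{cor:boundedInGD}. The substantial implication is $\Rightarrow$; the reverse one will be essentially immediate.

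For the implication $\Rightarrow$, I would assume $u_n\to 0$ in $\GD(U)$ and apply \cite[Cor. 1.29]{Gar05} directly to the strict inductive limit of Thm.~\ref{thm:GDasCountableColimit}.\ref{enu:GDasCountColimit-GDasCountColimit}. This yields an index $j\ge N$ such that $u_n\in\GD_{K_j}(U)$ for all $n$ and $u_n\to 0$ in $\GD_{K_j}(U)$. Since $K_j$ is non-empty and $K_j\fcmp U$ by Thm.~\ref{thm:GDasCountableColimit}.\ref{enu:GDasCountColimit-K_j-fcmp}, taking $K:=K_j$ already produces the functionally compact set required by the statement.

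For the implication $\Leftarrow$, suppose there is a non-empty $K\fcmp U$ with $u_n\in\GD_K(U)$ and $u_n\to 0$ in $\GD_K(U)$. The inclusion $\GD_K(U)\hookrightarrow\GD(U)$ is continuous by the very definition of the inductive-limit topology (Thm.~\ref{thm:1.18Gar}), and continuous maps preserve sequential limits, so $u_n\to 0$ in $\GD(U)$. Alternatively, and more in the spirit of Cor.~\ref{cor:boundedInGD}, one may pick $j\ge N$ with $K\subseteq K_j$ via Thm.~\ref{thm:GDasCountableColimit}.\ref{enu:four}, observe that $u_n\to 0$ in $\GD_{K_j}(U)$ by Prop.~\ref{thm:GD_K-inducedTop} together with the $K$-independence of the norms $\Vert-\Vert_m$ from Cor.~\ref{cor:normNotDependsOnK}, and then conclude once more by \cite[Cor. 1.29]{Gar05}.

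I do not expect a genuine obstacle here: all the real difficulty has already been absorbed into Thm.~\ref{thm:GDasCountableColimit}, which both identifies $\GD(U)$ with a \emph{countable} strict inductive limit and, through part \ref{enu:four}, bridges an arbitrary $K\fcmp U$ with the canonical exhausting family $(K_j)_{j\ge N}$. The only point to keep in view is that \cite[Cor. 1.29]{Gar05} is formulated precisely for countable strict inductive limits, which is exactly the structure guaranteed by Thm.~\ref{thm:GDasCountableColimit}.
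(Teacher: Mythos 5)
Your proposal is correct and takes essentially the same route as the paper, whose proof consists precisely of the remark that ``a similar proof'' to that of Cor.~\ref{cor:boundedInGD} applies, i.e.\ one invokes \cite[Cor. 1.29]{Gar05} for the countable strict inductive limit $\GD(U)=\indlim\GD_{K_{j}}(U)$ of Thm.~\ref{thm:GDasCountableColimit}, using Prop.~\ref{thm:GD_K-inducedTop}, Lemma \ref{lem:GD_H-closed-GD_K} and the $K$-independence of the norms to pass between an arbitrary $K\fcmp U$ and the exhausting family $(K_{j})_{j\ge N}$. Your alternative justification of the reverse implication via continuity of the inclusion $\iota_{K}$ is a harmless shortcut within the same framework.
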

Finally, from \cite[Thm. 1.32]{Gar05}, Lemma \ref{lem:GD_H-closed-GD_K}
and Thm.~\ref{thm:GD_Kcomplete} we obtain: 
\begin{cor}
\label{cor:GDcomplete}$\GD(U)$ with the sharp topology is complete. 
\end{cor}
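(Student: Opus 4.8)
The plan is to read Cor.~\ref{cor:GDcomplete} as a direct application of Garetto's general completeness criterion for countable strict inductive limits of locally convex $\Rtil$-modules, \cite[Thm. 1.32]{Gar05}, after verifying its hypotheses in our concrete setting. The decisive structural fact is already in place: Thm.~\ref{thm:GDasCountableColimit}.\ref{enu:GDasCountColimit-GDasCountColimit} exhibits $\GD(U)$ as the \emph{strict} inductive limit of the increasing countable chain
\[
\GD_{K_N}(U)\subseteq\GD_{K_{N+1}}(U)\subseteq\GD_{K_{N+2}}(U)\subseteq\cdots,
\]
with $K_j\subseteq K_{j+1}$ by Thm.~\ref{thm:GDasCountableColimit}.\ref{enu:GDasCountColimit-K_j-increasing}, and where each $\GD_{K_j}(U)$ carries precisely the topology induced from $\GD_{K_{j+1}}(U)$ by Prop.~\ref{thm:GD_K-inducedTop}. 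Thus the limit really is strict, which is exactly the situation \cite[Thm. 1.32]{Gar05} is designed for.

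First I would check the two remaining conditions of \cite[Thm. 1.32]{Gar05}, namely completeness of each step space and closedness of each step in the next. Completeness of every $\GD_{K_j}(U)$ in its sharp topology $\sigma_{K_j}$ is exactly Thm.~\ref{thm:GD_Kcomplete}. For the closedness, I would apply Lemma \ref{lem:GD_H-closed-GD_K} with $H:=K_j$ and $K:=K_{j+1}$ (legitimate since $K_j\subseteq K_{j+1}$), which yields that $\GD_{K_j}(U)$ is closed in $\GD_{K_{j+1}}(U)$ for every $j\ge N$. With completeness of the building blocks and closedness of each in its successor both established, all hypotheses of \cite[Thm. 1.32]{Gar05} are met, and its conclusion is that the strict inductive limit is complete. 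Hence $\GD(U)=\indlim\GD_{K_j}(U)$ is complete in the sharp topology $\sigma(U)$.

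I do not expect a genuine obstacle here: all the analytic substance has already been absorbed into the completeness of the constituents $\GD_{K_j}(U)$ (Thm.~\ref{thm:GD_Kcomplete}) and into the strictness of the chain. The only point that deserves care is to make sure the inductive limit is honestly strict, i.e.\ that the given topology $\sigma_{K_j}$ on each step coincides with the subspace topology inherited from $\GD_{K_{j+1}}(U)$; this is guaranteed by Prop.~\ref{thm:GD_K-inducedTop} and is precisely the feature that allows us to invoke \cite[Thm. 1.32]{Gar05} rather than a weaker statement. Everything else is a routine verification of the theorem's premises.
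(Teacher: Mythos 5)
Your proposal is correct and follows exactly the paper's own route: the paper derives Cor.~\ref{cor:GDcomplete} from \cite[Thm. 1.32]{Gar05} together with Lemma \ref{lem:GD_H-closed-GD_K} and Thm.~\ref{thm:GD_Kcomplete}, relying on the strict countable inductive limit structure of Thm.~\ref{thm:GDasCountableColimit}. Your verification of strictness via Prop.~\ref{thm:GD_K-inducedTop} just makes explicit what the paper leaves implicit.
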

Using Lemma \ref{lem:funCmptAndStronglyInternal}, we can also show
that any compactly supported generalized smooth function $f\in\GD_{K}(U,Y)$
on a sharply open set $U\subseteq\Rtil^{n}$ is defined by a net $(u_{\eps})$
of smooth functions which are compactly supported in an arbitrarily
small extension $\left[K_{\eps}+\overline{\Eball_{\eps^{a}}(0)}\right]$
of $K=[K_{\eps}]$. We recall that in this section we are assuming
that $U$ is a strongly internal set.
\begin{thm}
\label{thm:cmptlySuppDefNet}Let $\emptyset\ne K\fcmp U$. Let $Y\subseteq\Rtil^{d}$,
$f\in\GD_{K}(U,Y)$ and $K=[K_{\eps}]\fcmp\Rtil^{n}$. Let $j\in\N$
be as in \eqref{eq:conclLemfunCmptSint} and $a\in\R$ such that $a\ge j$.
Then there exist nets $(u_{\eps})$, $(H_{\eps})$ such that: 
\begin{enumerate}
\item $[H_{\eps}]\fcmp U$ 
\item $(u_{\eps})$ defines $f$ and $u_{\eps}\in\D_{H_{\eps}}(\R^{n},\R^{d})$
for all $\eps$ 
\item $H_{\eps}\subseteq K_{\eps}+\overline{\Eball_{\eps^{a}}(0)}$ for
all $\eps$. 
\end{enumerate}
\end{thm}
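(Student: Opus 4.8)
The plan is to modify a defining net of $f$ by a net of cut-off functions adapted to $K_\eps$, and then to use the distance estimate \eqref{eq:conclLemfunCmptSint} to keep the resulting supports inside $U$. First I would fix a net $(v_\eps)$, $v_\eps\in\Coo(\R^n,\R^d)$, which defines $f$ and satisfies Def.~\ref{def:cmptlySuppGSF}, and (as in the earlier proofs) assume without loss of generality that $K_\eps\ne\emptyset$ for all $\eps$. By a standard construction (compare the cut-offs used in Thm.~\ref{thm:CGFasCompctlySuppGSF} and in the remark following Lemma~\ref{lem:suppAndSupp-f_eps}) I would choose $\chi_\eps\in\Coo(\R^n,\R)$ with $0\le\chi_\eps\le1$, $\chi_\eps\equiv1$ on $\{x\mid d(x,K_\eps)\le\frac{1}{4}\eps^a\}$, $\supp(\chi_\eps)\subseteq\{x\mid d(x,K_\eps)\le\frac{1}{2}\eps^a\}$, and with moderate derivative bounds $\sup_{x}|\partial^\beta\chi_\eps(x)|\le C_\beta\eps^{-a|\beta|}$ for all $\beta\in\N^n$. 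I then set $u_\eps:=\chi_\eps\cdot v_\eps$ and $H_\eps:=\supp(u_\eps)$. Property (iii) is immediate, since $H_\eps\subseteq\supp(\chi_\eps)\subseteq K_\eps+\overline{\Eball_{\eps^a}(0)}$.

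Next I would verify (i), i.e.\ $[H_\eps]\fcmp U$. Each $H_\eps$ is a closed subset of the compact set $K_\eps+\overline{\Eball_{\eps^a}(0)}$, hence compact, and sharp boundedness of $(K_\eps)$ together with $\eps^a\le1$ gives sharp boundedness of $(H_\eps)$; thus $[H_\eps]$ is internal and sharply bounded, so functionally compact by Lemma~\ref{lem:equdef}. To see $[H_\eps]\subseteq U=\sint{U_\eps}$, take any $[z_\eps]\in[H_\eps]$, so that $z_\eps\in H_\eps$ for $\eps$ small; writing $z_\eps=k_\eps+b_\eps$ with $k_\eps\in K_\eps$ a nearest point and $|b_\eps|\le\frac{1}{2}\eps^a$, moderateness of $(z_\eps)$ forces $(k_\eps)$ to be moderate, so $[k_\eps]\in[K_\eps]$. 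Applying \eqref{eq:conclLemfunCmptSint} to $[k_\eps]$ and using $a\ge j$ yields, for $\eps$ small,
\[
d(z_\eps,U_\eps^c)\ge d(k_\eps,U_\eps^c)-|b_\eps|\ge\eps^j-\tfrac{1}{2}\eps^a\ge\tfrac{1}{2}\eps^j,
\]
whence $[z_\eps]\in\sint{U_\eps}=U$ by \cite[Thm. 8]{GKV}. Here the margin $\frac{1}{2}$ is exactly what rescues the borderline case $a=j$.

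It remains to prove (ii): that $(u_\eps)$ defines $f$ and that $u_\eps\in\D_{H_\eps}(\R^n,\R^d)$. Smoothness and compact support of $u_\eps$ are clear. For any $[y_\eps]\in\exterior{K}$, Def.~\ref{def:cmptlySuppGSF} gives $[\partial^\gamma v_\eps(y_\eps)]=0$ for all $\gamma$, so the Leibniz rule together with the moderate bounds on $\partial^\beta\chi_\eps$ yields $[\partial^\alpha u_\eps(y_\eps)]=0$; in particular $(u_\eps)$ again satisfies Def.~\ref{def:cmptlySuppGSF}, and moderateness of $(\partial^\alpha u_\eps(x_\eps))$ for $x\in U$ follows the same way. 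The one genuinely nontrivial point --- which I expect to be the main obstacle --- is that $u_\eps$ and $v_\eps$ define the same values on all of $U$, i.e.\ $(\chi_\eps(x_\eps)-1)v_\eps(x_\eps)\sim0$ for every $x=[x_\eps]\in U$, not merely for $x\in K$ or $x\in\exterior{K}$. I would argue by contradiction: if this net were non-negligible, there would be $b>0$ and $\eps_k\downarrow0$ with $|v_{\eps_k}(x_{\eps_k})|>\eps_k^b$ and $d(x_{\eps_k},K_{\eps_k})>\frac{1}{4}\eps_k^a$ (the latter because $\chi_{\eps_k}(x_{\eps_k})\ne1$). Applying Lemma~\ref{lem:twoCases} to the sequence $(x_{\eps_k})_k$ with nearest points $x_k'\in K_{\eps_k}$, case \ref{enu:Icase} produces a point of $\exterior{K}$ joining $(x_{\eps_k})_k$, on which $v_\eps$ is negligible, contradicting $|v_{\eps_k}(x_{\eps_k})|>\eps_k^b$; case \ref{enu:IIcase} produces coincident joining points in $K$, forcing $d(x_{\eps_k},K_{\eps_k})$ to be negligible and contradicting $d(x_{\eps_k},K_{\eps_k})>\frac{1}{4}\eps_k^a$. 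Either way we reach a contradiction, so $[u_\eps(x_\eps)]=[v_\eps(x_\eps)]=f(x)$ for all $x\in U$, which completes the proof.
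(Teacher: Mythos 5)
Your proposal is correct and follows essentially the same route as the paper's proof: a cut-off at scale $\eps^{a}$ around $K_{\eps}$ (the paper realizes your abstract $\chi_{\eps}$ explicitly as a mollified characteristic function $\chi_{L_{\eps}}*\psi_{\eps}$ with $L_{\eps}=K_{\eps}+\overline{\Eball_{\eps^{a}/2}(0)}$), Lemma \ref{lem:funCmptAndStronglyInternal} to keep the supports inside $U=\sint{U_{\eps}}$, and Lemma \ref{lem:twoCases} for the contradiction argument showing that $(u_{\eps})$ still defines $f$ on all of $U$. The only deviations are cosmetic: you make the half-width margin and the distance estimate $d(z_{\eps},U_{\eps}^{c})\ge\eps^{j}-\tfrac{1}{2}\eps^{a}\ge\tfrac{1}{2}\eps^{j}$ explicit, and in case \ref{enu:IIcase} you contradict $d(x_{\eps_{k}},K_{\eps_{k}})>\tfrac{1}{4}\eps_{k}^{a}$ via negligibility of the distance, whereas the paper uses that $u_{\eps}=v_{\eps}$ on $K_{\eps}$ --- both arguments are valid.
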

\begin{proof}
Let $U=\sint{U_{\eps}}$, where each $U_{\eps}\subseteq\R^{n}$ is
an open set (cf.~\cite[Cor. 9]{GKV}), and let $(v_{\eps})$ satisfy
Def.~\ref{def:cmptlySuppGSF} for $f$ and $K=[K_{\eps}]$. By \cite[Thm. 11]{GKV},
we can assume $K_{\eps}\subseteq U_{\eps}$ for all $\eps$. Let $L_{\eps}:=K_{\eps}+\overline{\Eball_{\eps^{a}/2}(0)}$,
and denote by $\chi_{L_{\eps}}$ the characteristic function of $L_{\eps}$.
Let $\psi\in\mathcal{D}(\Eball_{1}(0))$ have unit integral and set
$\psi_{\eps}:=(\eps^{a}/2)^{-n}\psi(2x/\eps^{a})$. Then $(\psi_{\eps})\in\mathcal{E}_{M}^{s}(\R^{n})$,
and $(\chi_{L_{\eps}}*\psi_{\eps^{a}})|_{K_{\eps}}=1$. Set $u_{\eps}:=(\chi_{L_{\eps}}*\psi_{\eps})\cdot v_{\eps}$.
Then $(\chi_{L_{\eps}}*\psi_{\eps})$ defines a GSF of the type $\Rtil^{n}\ra\Rtil$
and hence $(u_{\eps})$ defines a GSF of the type $U\ra\Rtil^{d}$.
Moreover, $H_{\eps}:=\text{supp}(u_{\eps})\subseteq\text{supp}(\chi_{L_{\eps}}*\psi_{\eps})\subseteq K_{\eps}+\overline{\Eball_{\eps^{a}}(0)}$.
Since $a\ge j$, we have $[H_{\eps}]\subseteq\sint{U_{\eps}}=U$.

It remains to prove that $f(x)=[u_{\eps}(x_{\eps})]$ for all $x=[x_{\eps}]\in U$.
Suppose this was not the case. Then there would exist some $y=[y_{\eps}]\in U$,
some $b>0$ and a sequence $\eps_{k}\searrow0$ such that 
\begin{equation}
|u_{\eps_{k}}(y_{\eps_{k}})-v_{\eps_{k}}(y_{\eps_{k}})|\ge\eps_{k}^{b}\label{last}
\end{equation}
for all $k\in\N$. By Lemma \ref{lem:twoCases}, we may without loss
of generality assume that either $y_{\eps}\in K_{\eps}$ for all $\eps$
or that $y\in\exterior{K}$. In the first case, $v_{\eps}(y_{\eps})=u_{\eps}(y_{\eps})$
for all $\eps$, contradicting \eqref{last}. In the second case,
\[
\left|v_{\eps}(y_{\eps})-u_{\eps}(y_{\eps})\right|\le2|v_{\eps}(y_{\eps})|.
\]
Since $\left(v_{\eps}(y_{\eps}))\right)$ is negligible, we again
arrive at a contradiction to \eqref{last}. 
\end{proof}
Assume that we have an operator $I:\D(\R^{n})\ra\R$ with the property
that if $(u_{\eps})$ and $(v_{\eps})$ define $f\in\GD(U)$, where
$u_{\eps}$, $v_{\eps}\in\D(\R^{n})$, then $\left[I(u_{\eps})\right]=\left[I(v_{\eps})\right]\in\Rtil$.
Then Thm.~\ref{thm:cmptlySuppDefNet} permits to extend $I$ to the
whole of $\GD(U)$.

Using this result, we can now prove the extension of property \ref{enu:GDKSubGlobal}
of Thm.~\ref{thm:globallyDefGDK} to arbitrary codomains $Y\subseteq\Rtil^{d}$:
\begin{thm}
\label{thm:extensionAndCodomains}Let $\emptyset\ne K\fcmp U$, $Y\subseteq\Rtil^{d}$
and $f\in\GD_{K}(U,Y)$, then $\exists!\bar{f}\in\GD^{\text{\emph{g}}}(K,Y):\ \bar{f}|_{K}=f|_{K}$.\end{thm}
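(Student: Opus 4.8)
The plan is to reduce the statement to the already-established full-codomain case and then to upgrade the codomain to $Y$. First I would invoke Thm.~\ref{thm:globallyDefGDK}.\ref{enu:GDKSubGlobal} (equivalently Thm.~\ref{thm:extensionCmptlySupportedGSF}) applied with codomain $\Rtil^{d}$: this produces a GSF $\bar{f}\in\GD^{\mathrm{g}}(K,\Rtil^{d})$, defined by a net $(u_{\eps})$ satisfying Def.~\ref{def:cmptlySuppGSF}, with $\bar{f}|_{K}=f|_{K}$ and $\bar{f}|_{\exterior{K}}=0$. Since $\GD^{\mathrm{g}}(K,Y)\subseteq\GD^{\mathrm{g}}(K,\Rtil^{d})$ and the defining requirement $\bar{f}|_{K}=f|_{K}$ is the same in both, the uniqueness assertion is inherited from the $\Rtil^{d}$-valued case. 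Hence the only genuinely new point is to show that this canonical extension actually takes values in $Y$, i.e.\ that $\bar{f}\in\Gcinf(\Rtil^{n},Y)$.

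To this end I would fix $x=[x_{\eps}]\in\Rtil^{n}$ together with a representative and exhibit a point $t\in U$ with $\bar{f}(x)=f(t)$; since $f(t)\in Y$, this yields $\bar{f}(x)\in Y$. The construction uses a zero of $f$ lying in the strong exterior: because $K\fcmp U$ with $U$ strongly internal, $K$ sits polynomially far inside $U$ by Lemma~\ref{lem:funCmptAndStronglyInternal}, so there is room to choose $b=[b_{\eps}]\in\exterior{K}\cap U$; by Def.~\ref{def:cmptlySuppGSF}.\ref{enu:def-cmptlySuppGSF-u_eps-K_eps-relations} one then has $f(b)=[u_{\eps}(b_{\eps})]=0$. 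Letting $j$ be as in Lemma~\ref{lem:funCmptAndStronglyInternal} for $K\fcmp U$, I fix a threshold $s_{0}>j$, split $I$ by setting $S:=\{\eps\mid d(x_{\eps},K_{\eps})\le\eps^{s_{0}}\}$, and form the interleaved point $t:=e_{S}\,x+e_{S^{c}}\,b$, i.e.\ $t_{\eps}=x_{\eps}$ for $\eps\in S$ and $t_{\eps}=b_{\eps}$ for $\eps\in S^{c}$.

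The verification has two halves. For $t\in U=\sint{U_{\eps}}$ I would estimate the distance to $U_{\eps}^{c}$: on $S^{c}$ this is controlled since $b\in U$, while on $S$ the point $x_{\eps}$ lies within $\eps^{s_{0}}$ of $K_{\eps}$, whose points are at distance $\ge\eps^{j}$ from $U_{\eps}^{c}$ by Lemma~\ref{lem:funCmptAndStronglyInternal}; as $s_{0}>j$ this yields a uniform bound $\ge\eps^{j+1}$, stable under negligible perturbations, so $t\in U$ by \cite[Thm.~8]{GKV}. For the equality $\bar{f}(x)=f(t)$ I would compare $u_{\eps}(x_{\eps})$ and $u_{\eps}(t_{\eps})$: they agree on $S$, so it remains to check that $e_{S^{c}}\bar{f}(x)=0=e_{S^{c}}f(t)$. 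Here $u_{\eps}(b_{\eps})$ is negligible because $b\in\exterior{K}$, and $u_{\eps}(x_{\eps})$ is negligible on $S^{c}$ because the \emph{swapped} point $x'':=e_{S^{c}}\,x+e_{S}\,b$ satisfies $d(x''_{\eps},K_{\eps})>\eps^{s_{1}}$ for a suitable $s_{1}$ (using $d(x_{\eps},K_{\eps})>\eps^{s_{0}}$ on $S^{c}$ and $b\in\exterior{K}$ on $S$), hence $x''\in\sint{K_{\eps}^{c}}=\exterior{K}$ by Lemma~\ref{lem:exteriorAndKc}, so that $e_{S^{c}}\bar{f}(x)=e_{S^{c}}[u_{\eps}(x''_{\eps})]=0$ by Def.~\ref{def:cmptlySuppGSF}. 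Thus $u_{\eps}(x_{\eps})-u_{\eps}(t_{\eps})$ is negligible and $\bar{f}(x)=f(t)\in Y$.

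The main obstacle I anticipate is the nonemptiness of $\exterior{K}\cap U$ and, relatedly, the robustness of $t\in U$ across all representatives: both hinge on the quantitative ``$K$ lies strictly inside $U$'' statement of Lemma~\ref{lem:funCmptAndStronglyInternal}, and the threshold must be chosen $s_{0}>j$ precisely so that the inner margin $\eps^{j}$ dominates the excursion $\eps^{s_{0}}$ away from $K$. The decomposition trick—keeping $t=x$ on the ``near'' part $S$ and routing the ``far'' part through the zero $b$—is what avoids having to approximate $u_{\eps}(x_{\eps})$ by the value of $u_{\eps}$ at the projection of $x_{\eps}$ onto $K_{\eps}$, an approximation that would be only polynomially, not negligibly, accurate for a fixed threshold and would therefore break down at the level of equality in $\Rtil^{d}$.
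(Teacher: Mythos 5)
Your proof is correct, but it takes a genuinely different route from the paper's. The paper deduces the theorem from Thm.~\ref{thm:cmptlySuppDefNet}, proved just before: via a mollified cut-off $\chi_{L_{\eps}}*\psi_{\eps}$ it replaces the given defining net by one with $u_{\eps}\in\D_{H_{\eps}}(\R^{n},\R^{d})$ and $[H_{\eps}]\fcmp U$ (this is where Lemma~\ref{lem:funCmptAndStronglyInternal} and the choice $a\ge j$ enter). With honest $\eps$-wise compact supports the codomain upgrade is then exact and two lines long: set $\bar{x}_{\eps}:=x_{\eps}$ if $x_{\eps}\in H_{\eps}$ and $\bar{x}_{\eps}:=h_{\eps}\in\partial H_{\eps}$ otherwise, so that $u_{\eps}(x_{\eps})=u_{\eps}(\bar{x}_{\eps})$ for \emph{every} $\eps$ (both sides vanish identically when $x_{\eps}\notin H_{\eps}$) and $\bar{f}(x)=f(\bar{x})\in Y$ with $\bar{x}\in[H_{\eps}]\subseteq U$. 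You instead keep an arbitrary net satisfying Def.~\ref{def:cmptlySuppGSF} and compensate with the idempotent splitting $S=\{\eps\mid d(x_{\eps},K_{\eps})\le\eps^{s_{0}}\}$, $s_{0}>j$, routing the far part through a zero $b\in\exterior{K}\cap U$ and certifying $e_{S^{c}}[u_{\eps}(x_{\eps})]=0$ via the swapped point $x''$; equality then holds only up to negligibility rather than $\eps$-wise. Your closing remark correctly identifies the common obstacle (projection onto $K_{\eps}$ is only polynomially accurate): the paper removes it by improving the net, you remove it by improving the point. What your approach buys is independence from the mollification machinery of Thm.~\ref{thm:cmptlySuppDefNet}; what it costs is extra bookkeeping, and two steps deserve to be written out: (i) Lemma~\ref{lem:funCmptAndStronglyInternal} bounds $d(\cdot,U_{\eps}^{c})$ only along nets representing generalized points of $K$, with an eventual quantifier, not uniformly on $K_{\eps}$ --- so on $S$ you should assemble the nearest points $k_{\eps}\in K_{\eps}$ to $x_{\eps}$ into a single moderate net (padding with arbitrary points of $K_{\eps}$ off $S$) and apply the lemma to $[k_{\eps}]\in K$ to get $d(x_{\eps},U_{\eps}^{c})\ge\eps^{j}-\eps^{s_{0}}\ge\eps^{j+1}$; (ii) the nonemptiness of $\exterior{K}\cap U$ needs one line, e.g.\ by continuity of $y\mapsto d(y,K_{\eps})$ and sharp boundedness of $(K_{\eps})$ pick $b_{\eps}$ with $d(b_{\eps},K_{\eps})=\eps^{j+1}$, whence $d(b_{\eps},U_{\eps}^{c})\ge\eps^{j}-\eps^{j+1}$ by (i). Your reduction of uniqueness to the $\Rtil^{d}$-valued case coincides with the paper's implicit treatment via Thm.~\ref{thm:extensionCmptlySupportedGSF}.
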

\begin{proof}
We only have to prove that $\bar{f}(x)\in Y$ for all $x\in\Rtil^{n}$.
Let $(u_{\eps})$ and $(H_{\eps})$ as in Thm.~\ref{thm:cmptlySuppDefNet}.
By Thm.~\ref{thm:extensionCmptlySupportedGSF}, we have that $(u_{\eps})$
also defines  $\bar{f}$. For each $\eps$ pick any point $h_{\eps}\in\partial H_{\eps}$
and set $\bar{x}_{\eps}:=x_{\eps}$ if $x_{\eps}\in H_{\eps}$, and
$\bar{x}_{\eps}:=h_{\eps}$ otherwise. Therefore $\bar{x}:=[\bar{x}_{\eps}]\in[H_{\eps}]\subseteq U$.
Then, if $x_{\eps}\notin H_{\eps}$, $|u_{\eps}(x_{\eps})-u_{\eps}(\bar{x}_{\eps})|=|u_{\eps}(x_{\eps})-u_{\eps}(h_{\eps})|=0$
because $u_{\eps}\in\D_{H_{\eps}}(\R^{n},\R^{d})$. Thus $\bar{f}(x)=f(\bar{x})\in Y$.
\end{proof}

\section{Conclusions and Further developments}

The notion of functionally compact set we introduced in the present
work permits to show that compactly supported GSF are close analogues
of classical compactly supported smooth functions. In particular,
their functional analytic properties parallel those of the test function
space of distribution theory. At the same time, for suitable $K$,
the space $\GD_{K}(\Rtil^{n})$ contains extensions to all CGF $\gs(\Omega)$
and hence also all Schwartz distributions.

The theory developed here opens the door to addressing several central
topics in the theory of nonlinear generalized functions from a new
angle. As indicated after Thm.~\ref{thm:cmptlySuppDefNet}, a direct
generalization of the integral of compactly supported functions to
compactly supported GSF is feasible. An immediate application of this
lies in a theory of integration for GSF that we hope will allow to
harmonize the Schwartz view of generalized functions as functionals
with that prevalent in Colombeau theory of considering generalized
functions as pointwise maps. Our approach will take inspiration from
Garetto's very fruitful duality theory of locally convex $\Ctil$-modules
\cite{Gar05b,GH,Garfund}.

A further natural development of the present article goes in the direction
of a generalization to suitable types of asymptotic gauges (see \cite{Gio-Nig14,Gio-Lup14})
and hence to the full and the diffeomorphism invariant Colombeau algebras.

Moreover, one can ask whether $\Rtil$-valued generalized norms in
$\GD_{K}(U)$ permit to generalize results from classical analysis,
like a Picard-Lindelöf theorem for ODE with a GSF right hand side,
or a Hahn-Banach theorem for functionals $I:\GD_{K}(U)\ra\Rtil$ defined
by diffeologically smooth functionals (see \cite{Gio-Wu14}) of the
type $I_{\eps}:\D_{K_{\eps}}(U_{\eps})\ra\R$, analogously to the
way a GSF is defined by a net of smooth functions. \medskip{}

\textbf{Acknowledgment:} We would like to thank H.~Vernaeve for helpful
discussions and the anonymous referee for several suggestions that
have led to considerable improvements in Sec.~\ref{sec3}.

\end{document}